\documentclass[12pt, reqno]{amsart}
\usepackage{mathrsfs}
\usepackage{amsfonts,bbm,bm,pifont,mathrsfs,mathtools,stmaryrd}
\usepackage[sorted,compressed-cites,sorted-cites,initials]{amsrefs}
\usepackage[centertags]{amsmath}
\usepackage{amssymb,array}
\usepackage{amsthm,youngtab}
\usepackage{graphicx}
\usepackage{caption}
\usepackage{ytableau}
\usepackage{enumerate,enumitem}
\SetLabelAlign{center}{\hfill #1\hfill}
\usepackage[textwidth=16cm, hmarginratio=1:1]{geometry}
\usepackage{tikz-cd}
\usepackage{rotating}
\usepackage[all,cmtip]{xy}
\usepackage[titletoc, title]{appendix}
\usepackage{amscd}
\usepackage[colorlinks]{hyperref}
\usepackage{float}
\hypersetup{
bookmarksnumbered,
pdfstartview={FitH},
breaklinks=true,
linkcolor=blue,
urlcolor=blue,
citecolor=blue,
bookmarksdepth=2
}

\newcommand{\la}{\langle}
\newcommand{\ra}{\rangle}
\newcommand{\fgfrm}[2]{\llparenthesis#1,#2\rrparenthesis}

\allowdisplaybreaks[4]
\usetikzlibrary{matrix,arrows,decorations.pathmorphing}

\newcommand{\lact}{\triangleright}
\newtheorem{theorem}{Theorem}[section]
\newtheorem{corollary}[theorem]{Corollary}
\newtheorem{lemma}[theorem]{Lemma}
\newtheorem{proposition}[theorem]{Proposition}
\theoremstyle{definition}
\newtheorem{definition}[theorem]{Definition}
\newtheorem{remark}[theorem]{Remark}

\newtheorem{problem}[theorem]{Problem}

\newtheorem{conjecture}[theorem]{Conjecture}

\newcommand{\lla}{\la\!\la}
\newcommand{\rra}{\ra\!\ra}

\numberwithin{equation}{section}

\newcommand{\tensor}{\otimes}

\newcommand{\kk}{\Bbbk}
\newcommand{\lie}[1]{{\mathfrak{#1}}}
\newcommand{\ZZ}{{\mathbb Z}}

\DeclareMathOperator{\id}{id}
\DeclareMathOperator{\wt}{wt}
\DeclareMathOperator*{\cl}{cl}
\DeclareMathOperator*{\supp}{supp}

\DeclareMathOperator{\ad}{ad}

\DeclareMathOperator{\Cact}{\mathsf{Cact}}

\DeclareMathOperator{\End}{End}

\DeclareMathOperator{\Br}{\mathsf{Br}}

\DeclareMathOperator*{\Ann}{Ann}

\DeclareMathOperator*{\dist}{dist}

\DeclareMathOperator{\Hom}{Hom}

\begin{document}

\title{On cacti and crystals}
\author{Arkady Berenstein, Jacob Greenstein and Jian-Rong Li}
\address{Arkady Berenstein, Department of Mathematics, University of Oregon, Eugene, OR 97403, USA}
\email{arkadiy@math.uoregon.edu}
\address{Jacob Greenstein, Department of Mathematics, University of California, Riverside, CA 92521, USA}
\email{jacobg@ucr.edu}
\address{Jian-Rong Li, Department of Mathematics, The Weizmann Institute of Science, Rehovot 76100, Israel}
\email{lijr07@gmail.com}
\date{}

\thanks{This work was partially supported by a BSF grant no.~2016363 (A.~Berenstein), the Simons foundation collaboration grant no.~245735 (J.~Greenstein)
and by the Minerva foundation
with funding from the Federal German Ministry for Education and Research (J.-R. Li).
}

\begin{abstract}
In the present work we study actions of various groups generated by involutions on the 
category~$\mathscr O^{int}_q(\lie g)$ of integrable highest weight 
$U_q(\lie g)$-modules and their crystal bases for any symmetrizable Kac-Moody algebra~$\lie g$.
The most notable of them are the cactus group and (yet conjectural) Weyl group action on
any highest weight integrable module and its lower and upper crystal bases. Surprisingly, some generators of cactus groups are anti-involutions of 
the Gelfand-Kirillov model for~$\mathscr O^{int}_q(\lie g)$
closely related to the 
remarkable quantum twists discovered
by Kimura and Oya in~\cite{KiO}.
\end{abstract}

\dedicatory{To Anthony Joseph,
with admiration}

\maketitle 

\tableofcontents

\section{Introduction}
In the present work we study the action of various groups generated by involutions on the 
category~$\mathscr O^{int}_q(\lie g)$ of integrable highest weight 
$U_q(\lie g)$-modules for any symmetrizable Kac-Moody algebra~$\lie g$ (the necessary notation is introduced in Section~\ref{sect:defn not}). 

Let $V\in \mathscr O^{int}_q(\lie g)$. We claim that for every node~$i$ of the Dynkin diagram~$I$ of~$\lie g$
there exists a unique linear operator~$\sigma^i_V$ on~$V$ such that 
\begin{equation}\label{eq:eta^i defn}\sigma^i_V(E_i^{(k)}(u))=E_i^{(l-k)}(u)
\end{equation}
for all $l\ge k\ge 0$ and for all $u\in \ker F_i\cap \ker(K_i-q_i^{-l})$.
Clearly, $(\sigma^i_V)^2=\id_V$.
Denote by $\mathsf W(V)$ be the subgroup of~$\operatorname{GL}_\kk(V)$ generated by the $\sigma^{i}_V$, $i\in I$.
\begin{theorem}\label{thm:crystal weyl group}
For any non-zero module~$V\in\mathscr O^{int}_q(\lie g)$, 
the assignments $$
\sigma^i_V\mapsto \begin{cases}1,& i\in J(V)\\
               s_i,& \text{otherwise}
              \end{cases}
$$
where $J(V)=\{ i\in I\,:\, F_i(V)=\{0\}\}$, 
define a homomorphism $\psi_V$ from~$\mathsf W(V)$ to the Weyl group~$W$ of~$\lie g$. 
\end{theorem}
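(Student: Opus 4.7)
The strategy is to realize $\psi_V$ as the composition of the tautological permutation action of $\mathsf W(V)$ on $\wt(V)$ with the inverse of the (faithful) restriction to $\wt(V)$ of the standard parabolic subgroup $W_{I'}:=\langle s_i\,:\,i\in I\setminus J(V)\rangle$ of~$W$.

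I will first check that $\sigma^i_V$ maps $V_\mu$ bijectively onto $V_{s_i\mu}$ for every weight $\mu$ of~$V$. If $u\in\ker F_i\cap\ker(K_i-q_i^{-l})$ has weight $\mu_0$, so that $\mu_0(\alpha_i^\vee)=-l$, then $E_i^{(k)}(u)\in V_{\mu_0+k\alpha_i}$ is sent to $E_i^{(l-k)}(u)\in V_{\mu_0+(l-k)\alpha_i}=V_{s_i(\mu_0+k\alpha_i)}$. This yields a group homomorphism $\bar\phi_V\colon\mathsf W(V)\to\mathfrak S(\wt(V))$ with $\bar\phi_V(\sigma^i_V)=s_i|_{\wt(V)}$. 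Moreover, since $F_i|_V=0$ forces every $\lie{sl}_2^{(i)}$-string in the integrable module~$V$ to be a singleton, for $i\in J(V)$ every $\mu\in\wt(V)$ satisfies $\mu(\alpha_i^\vee)=0$, so $s_i|_{\wt(V)}=\id$. Consequently $\im\bar\phi_V$ is contained in the image of $W_{I'}$ in $\mathfrak S(\wt(V))$.

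Next I will argue that the restriction $W_{I'}\to\mathfrak S(\wt(V))$ is injective. For each $i\in I\setminus J(V)$, the non-vanishing of $F_i|_V$ produces weights $\mu,\mu-\alpha_i\in\wt(V)$, so $\alpha_i$ lies in the $\mathbb Z$-span of differences $\wt(V)-\wt(V)$. Hence any $w\in W_{I'}$ fixing $\wt(V)$ pointwise also fixes every $\alpha_i$ with $i\in I\setminus J(V)$, and so acts as the identity on the sub-root lattice $Q_{I'}:=\bigoplus_{i\in I\setminus J(V)}\mathbb Z\alpha_i$. Since the principal sub-matrix $(a_{ij})_{i,j\in I\setminus J(V)}$ of the Cartan matrix is again symmetrizable, $W_{I'}$ coincides with the Weyl group of the corresponding Kac-Moody subalgebra and acts faithfully on $Q_{I'}\otimes\mathbb R$ by Tits's theorem on the geometric representation of a Coxeter system; therefore $w=1$.

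Combining the two steps, $\psi_V$ is the composition
\[
 \mathsf W(V)\xrightarrow{\bar\phi_V}\im\bar\phi_V\xrightarrow{\sim}W_{I'}\hookrightarrow W,
\]
which by construction sends $\sigma^i_V\mapsto s_i$ for $i\notin J(V)$ and $\sigma^i_V\mapsto 1$ for $i\in J(V)$. The main technical ingredient is the faithfulness asserted in the middle arrow — a standard consequence of Tits's theorem that I would invoke rather than re-derive; everything else is a direct calculation on weight spaces using~\eqref{eq:eta^i defn}.
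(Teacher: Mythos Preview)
Your argument is correct and takes a genuinely different route from the paper's. The paper proves Theorem~\ref{thm:crystal weyl group} by analysing the action of~$\mathsf W(V)$ on the set~$[v]_W$ of \emph{extremal vectors} (Proposition~\ref{prop:ficus action}): it shows $\sigma^i([v(\lambda)]_w)=[v(\lambda)]_{s_iw}$, identifies $[v]_W$ with a disjoint union of coset spaces $W/W_{J_\lambda}$ via Proposition~\ref{prop:w-translates of hw}, and then invokes the explicit computation of the kernel of the natural $W$-action on such coset spaces (Theorem~\ref{thm:kernel of natural action}). Your approach bypasses extremal vectors entirely: you pass directly to the permutation action on~$\wt(V)$, observe that $\alpha_i\in\wt(V)-\wt(V)$ for each $i\notin J(V)$, and reduce to the faithfulness of $W_{I'}$ on~$Q_{I'}\otimes\mathbb R$, which is a standard fact for Kac--Moody Weyl groups.

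Your route is more elementary and self-contained, needing only the linear action on weights and a single classical faithfulness statement. The paper's route costs more setup (\S\ref{subs:spec mon}--\S\ref{subs:extrem vecs}) but yields finer information as a byproduct: Proposition~\ref{prop:ficus action}\ref{prop:ficus action.b} identifies the image of~$\mathsf W(V)$ in $\operatorname{Bij}([v]_W)$ with an explicit parabolic subgroup, and the extremal-vector framework is reused later in the paper. One small remark: when you write ``acts faithfully on $Q_{I'}\otimes\mathbb R$ by Tits's theorem'', the relevant representation is the one via the Cartan submatrix $(a_{ij})_{i,j\in I'}$ rather than the symmetric Tits form; the faithfulness you need is that of the Weyl group of a Kac--Moody algebra on the real span of its simple roots (e.g.\ \cite{Kac}*{\S3.13}), which follows from but is not literally identical to Tits's theorem for abstract Coxeter groups.
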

We prove Theorem~\ref{thm:crystal weyl group} in \S\ref{subs:cryst weyl group} by
showing that the image of~$\psi_V$ can be described in terms of a natural action of~$W$ on a certain set of 
extremal vectors in~$V$. In particular, $\psi_V$ is surjective if and only if $J(V)=\emptyset$.
Moreover, we show that $\sigma^i_V=\id_V$ if and only if $i\in J(V)$. This suggests the following
\begin{conjecture}\label{conj:weyl}
The homomorphism~$\psi_V$ is injective for any~$V\in\mathscr O^{int}_q(\lie g)$. 
\end{conjecture}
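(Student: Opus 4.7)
The plan is to establish injectivity by reducing first to irreducible modules, then to rank-$2$ subalgebras, and finally verifying the relevant braid identities. Since $\mathscr O^{int}_q(\lie g)$ is semisimple, I would decompose $V=\bigoplus_\alpha V(\lambda_\alpha)$; each $\sigma^i_V$ preserves this decomposition, so $\mathsf W(V)$ embeds in $\prod_\alpha \mathsf W(V(\lambda_\alpha))$ compatibly with the $\psi$'s (with some care about how the sets $J$ behave under direct sum). The Coxeter presentation of $W$ has defining relations $s_i^2=1$, $(s_is_j)^{m_{ij}}=1$ when $m_{ij}<\infty$, and the assertion that $s_is_j$ has infinite order when $m_{ij}=\infty$. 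The involution relations $(\sigma^i_V)^2=\id_V$ are built into the definition, so injectivity of $\psi_V$ reduces to verifying the analogous braid relations and order assertions for the $\sigma^i_V$.

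Next I would reduce to rank $2$. Since the formula defining $\sigma^i_V$ uses only the $U_q(\lie{sl}_2)_i$-module structure on $V$, the relation $(\sigma^i_V\sigma^j_V)^{m_{ij}}=\id_V$, and the order of $\sigma^i_V\sigma^j_V$ when $m_{ij}=\infty$, depend only on the restriction of $V$ to the rank-$2$ subalgebra generated by $E_i,F_i,K_i^{\pm 1},E_j,F_j,K_j^{\pm 1}$. This restriction is still integrable and decomposes into irreducible summands treated individually. For $m_{ij}=\infty$, I would use Theorem~\ref{thm:crystal weyl group} to exhibit a chain of extremal vectors on which the infinite dihedral group acts freely, thereby ruling out nontrivial alternating relations. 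For $m_{ij}\in\{2,3,4,6\}$, one must verify $(\sigma^i_V\sigma^j_V)^{m_{ij}}=\id_V$ on each irreducible integrable $U_q(\lie g_{i,j})$-module by tracking the action on weight vectors across the rank-$2$ weight diagram.

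The main obstacle is this final rank-$2$ verification, especially in types $B_2$ and $G_2$. Lusztig's braid operators $T_i$ on any integrable $U_q(\lie g)$-module do satisfy the braid relations, but they are not involutions, and the $\sigma^i_V$ are essentially the involutive rescalings of $T_i$ along each $i$-string. Such weight-dependent rescalings are invisible to the Weyl group but can a priori destroy the braid identity. A natural strategy is to write $\sigma^i_V=\phi_i\,T_i$ for an explicit scalar $\phi_i$ depending on the $i$-string data, and reduce the braid relation for the $\sigma^i_V$ to a combinatorial identity among products of $\phi_i,\phi_j$ along the $(i,j)$-strings of the weight diagram. An alternative, hinted at in the abstract, is to realize the $\sigma^i_V$ via the Kimura--Oya quantum twists of~\cite{KiO} on the Gelfand--Kirillov model, where the desired braid relations might follow from more transparent anti-involutive properties. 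The still-conjectural status of the statement suggests that uniformly executing such a reduction in rank $2$ is where the real work lies.
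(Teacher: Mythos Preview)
This statement is a \emph{conjecture} in the paper, not a theorem; the paper does not prove it. What the paper offers is: the case $m_{ij}=2$ (Proposition~\ref{prop:cacti}), the case of thin modules (Theorem~\ref{thm:thin modules}), and computational verification for $\lie g=\lie{sl}_3$ up to $l_1+l_2\le 14$ (end of~\S6.3). Your proposal is not a proof either, and you acknowledge this in your final paragraph; so the honest comparison is between your proposed strategy and the partial evidence the paper actually provides.

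Your reduction to verifying $(\sigma^i_V\sigma^j_V)^{m_{ij}}=\id_V$ is exactly what the paper states right after the conjecture. Your remark about the $m_{ij}=\infty$ case is unnecessary: when $m_{ij}=\infty$ there is no Coxeter relation to lift, so nothing needs to be checked for injectivity of~$\psi_V$. Your idea of writing $\sigma^i_V=\phi_i\,T_i$ with a weight-dependent scalar is essentially formula~\eqref{eq:defn-eta^J} with $J=\{i\}$; note, however, that the scalar depends not only on the weight~$\beta$ but on the $U_q(\lie g^{\{i\}})$-isotypic component~$\lambda_{\{i\}}$, and this is precisely what obstructs deducing the braid relation for~$\sigma^i,\sigma^j$ from that for~$T_i^\pm,T_j^\pm$. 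Your alternative via Kimura--Oya twists is not what the paper does for this problem: those twists are used to realize~$\sigma^I$ (Theorem~\ref{thm:twist}), not the individual~$\sigma^i$, and the paper explicitly notes after Theorem~\ref{thm:twist-intrd} that the~$\sigma^i$ are \emph{not} anti-automorphisms of~$\mathcal C_q(\lie{sl}_3)$, so that route does not obviously apply. In short, your outline is reasonable and matches the paper's framing, but the gap you identify---a uniform rank-$2$ argument---remains open, and the paper makes no claim to have closed it.
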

Clearly, it is equivalent to $(\sigma^i\sigma^j)^{m_{ij}}=\id_V$, $i\not=j\in I$ for appropriate choices of~$m_{ij}$.
We proved it for $m_{ij}=2$ and 
we have ample evidence that this conjecture holds for~$m_{ij}=3$.
We also verified it for all modules in which weight spaces of non-zero weight are one dimensional (see Theorem~\ref{thm:thin modules}). This 
class of modules includes all miniscule and quasi-miniscule ones.
Conjecture~\ref{conj:weyl} combined with Theorem~\ref{thm:crystal weyl group} implies that $W$ acts naturally and faithfully on objects in~$\mathscr O^{int}_q(\lie g)$, 
which is quite surprising. Informally speaking, this conjecture asserts that Kashiwara's action of the Weyl group on crystal bases 
lifts to an action on the corresponding module (see Remark~\ref{rem:crystal Weyl group}).
\begin{remark}\label{rem:classical limit}
The definition~\eqref{eq:eta^i defn} of $\sigma^i_V$ makes sense for any integrable $U(\lie g)$-module where~$\lie g$ is a semisimple or a (not necessarily symmetrizable) Kac-Moody Lie algebra. 
The ``classical'' Theorem~\ref{thm:crystal weyl group} holds verbatim. Moreover, Conjecture~\ref{conj:weyl} implies its classical
version for all (even not symmetrizable) Kac-Moody algebras.
\end{remark}

It turns out that we can extend the group $\mathsf W(V)$ by adding involutions $\sigma^J$ for any non-empty $J\subset I$ such that the subgroup $W_J=\la s_i\,:\, i\in J\ra$ is finite;
we denote the set of all such~$J$ by~$\mathscr J$. Note that~$\{i\}\in\mathscr J$ for all~$i\in I$ and in particular~$\mathscr J$ is non-empty.
\begin{proposition}[Proposition~\ref{prop:prop eta^J}]\label{prop:existence-eta-J}
For any $V\in\mathscr O^{int}_q(\lie g)$, $J\in\mathscr J$ there exists a unique $\kk$-linear map $\sigma^J=\sigma^J_V:V\to V$ such that 
\begin{enumerate}[label={\rm(\alph*)},leftmargin=*]
 \item\label{prop:existence-eta-J.a} $\sigma^J(v)=v^J$ for any $v\in \bigcap_{i\in J} \ker E_i$ where 
 $v^J$ is a distinguished element in~$\bigcap\limits_{i\in J}\ker F_i\cap U_q(\lie g^J)v$ defined in~Proposition~\ref{prop:prop eta^J}\ref{prop:prop eta^J.a'};
 \item\label{prop:existence-eta-J.b} $\sigma^J(F_j(v))=E_{j^\star}(\sigma^J(v))$, $\sigma^J(E_j(v))=F_{j^\star}(\sigma^J(v))$ for all~$j\in J$, $v\in V$
 where ${}^\star:J\to J$ is the involution on~$J$ induced by the longest element~$w_\circ^J$ of~$W_J$ via $s_{j^\star}=w_\circ^J s_j w_\circ^J$, $j\in J$
 (see~\S\ref{subs:coxeter groups}).
\end{enumerate} 
Moreover, for any morphism $f:V\to V'$ in~$\mathscr O^{int}_q(\lie g)$ the following diagram commutes
 \begin{equation}\label{eq:eta^J funct}
\vcenter{ \xymatrix{ V\ar[d]_{f}\ar[r]^{\sigma^J_V} & V\ar[d]^{f}\\
 V'\ar[r]^{\sigma^J_{V'}} &V'}}
 \end{equation}
\end{proposition}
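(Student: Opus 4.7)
The plan is to reduce to the finite-type setting by exploiting that $W_J$ is finite. Since $J\in\mathscr J$, the Levi-type subalgebra $\lie g^J$ is semisimple of finite type, and the restriction of $V$ to $U_q(\lie g^J)$ lies in $\mathscr O^{int}_q(\lie g^J)$. By complete reducibility in the latter category, $V$ decomposes as a direct sum of finite-dimensional simple highest weight $U_q(\lie g^J)$-submodules; equivalently, $V$ is generated as a $U_q(\lie g^J)$-module by the subspace $V^{+,J}:=\bigcap_{i\in J}\ker E_i$, and every element of $V$ has the form $F_{j_1}\cdots F_{j_k}v$ for some $v\in V^{+,J}$ and some word $j_1,\ldots,j_k\in J$.

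Uniqueness of $\sigma^J$ is then immediate: condition~(a) fixes its values on $V^{+,J}$, and the first identity in~(b) forces $\sigma^J(F_{j_1}\cdots F_{j_k}v)=E_{j_1^\star}\cdots E_{j_k^\star}\sigma^J(v)$, determining $\sigma^J$ on all of $V$. For existence, I would work on a single simple $U_q(\lie g^J)$-summand $V_\alpha\subset V$ with highest weight vector $v$, set $\sigma^J(v):=v^J$ (the distinguished lowest weight vector guaranteed by Proposition~\ref{prop:prop eta^J}\ref{prop:prop eta^J.a'}), and extend by the rule just stated. Consistency amounts to showing that every relation $\sum_\ell c_\ell F_{j_1^{(\ell)}}\cdots F_{j_{k_\ell}^{(\ell)}}v=0$ implies the corresponding relation $\sum_\ell c_\ell E_{j_1^{(\ell)\star}}\cdots E_{j_{k_\ell}^{(\ell)\star}}v^J=0$. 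This follows because $V_\alpha$ is a finite-dimensional simple $U_q(\lie g^J)$-module with one-dimensional highest and lowest weight spaces, and because the assignment $F_j\mapsto E_{j^\star}$ is realized by the unique (up to scalar) longest-element intertwiner on $V_\alpha$ arising from $w_\circ^J(\alpha_j)=-\alpha_{j^\star}$; normalization~(a) selects the precise scalar. Summing over summands then produces $\sigma^J_V$, with independence of the chosen decomposition following from the canonical nature of $v\mapsto v^J$.

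The main obstacle I anticipate is precisely the compatibility between the intrinsic description of $\sigma^J$ on $V^{+,J}$ via the distinguished element $v^J$ in~(a) and the equivariant extension forced by~(b): without the canonical normalization provided by Proposition~\ref{prop:prop eta^J}\ref{prop:prop eta^J.a'}, one would only recover $\sigma^J$ up to a scalar on each simple summand, and the scalars must be shown to match across summands within a common isotypic component. Once this is in hand, functoriality is essentially automatic. Any morphism $f:V\to V'$ in $\mathscr O^{int}_q(\lie g)$ is in particular $U_q(\lie g^J)$-linear, so it maps $V^{+,J}$ into $(V')^{+,J}$; by the functoriality built into the construction $v\mapsto v^J$ one has $f(v^J)=f(v)^J$, i.e., $f\circ\sigma^J_V=\sigma^J_{V'}\circ f$ on $V^{+,J}$. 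Combined with $f\circ F_j=F_j\circ f$ and $f\circ E_j=E_j\circ f$, property~(b) propagates this commutation to the $U_q(\lie g^J)$-span of $V^{+,J}$, which is all of $V$, yielding the square~\eqref{eq:eta^J funct}.
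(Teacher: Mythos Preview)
Your approach is correct but genuinely different from the paper's. You argue abstractly: $\theta_J$ is an algebra automorphism of $U_q(\lie g^J)$, the $\theta_J$-twist of a simple $U_q(\lie g^J)$-module $V_\alpha$ of highest weight $\lambda_J$ is again simple of highest weight $\lambda_J$ (because $\theta_J(K_\mu)=K_{w_\circ^J\mu}$ and the lowest weight vector becomes highest), so by Schur there is a unique-up-to-scalar $\theta_J$-intertwiner $V_\alpha\to V_\alpha$; normalising via $v\mapsto v^J=F_{w_\circ^J,\lambda_J}(v)$ pins down the scalar and, since $v^J$ is linear in $v$, makes the construction independent of the chosen decomposition. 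This is a sound proof of existence, uniqueness, and functoriality. One small point you leave implicit: your extension rule uses only the first identity in~(b), but the second identity $\sigma^J(E_j(v))=F_{j^\star}(\sigma^J(v))$ does not follow from it formally; it follows because the intertwiner you invoke is $\theta_J$-equivariant for the whole of $U_q(\lie g^J)$, not just $U_q^-(\lie g^J)$.

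The paper takes the opposite route: it \emph{defines} $\sigma^J$ by an explicit formula in terms of modified Lusztig symmetries,
\[
\sigma^J(v)=(-1)^{\rho^\vee_J(\lambda_J\mp\beta)}\, q^{\frac12((\beta,\beta)-(\lambda_J,\lambda_J))-(\lambda_J,\rho_J)}\, T_{w_\circ^J}^\pm(v),
\]
and then verifies properties~(a), (b) and functoriality by direct computation with the $T^\pm_w$ (Lemma~\ref{lem:exist eta}, Proposition~\ref{prop:prop eta^J}). Your argument is more elementary and avoids Lusztig symmetries entirely. The paper's approach, however, is not gratuitous: the explicit formula is what drives the subsequent proof of the cactus relations (Theorem~\ref{thm:main thm}), where one must compute compositions such as $\sigma^{J'}\circ\sigma^J$ via the factorisation $T^\pm_{w_\circ^{J'}}=T^\pm_{w_J}\circ T^\pm_{w_\circ^J}$. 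Your abstract characterisation would not by itself yield those relations.
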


By definition, $\sigma^J=\sigma^i$ if~$J=\{i\}$.
The following is the main result of this paper.
\begin{theorem}\label{thm:main thm}
Let~$V\in\mathscr O^{int}_q(\lie g)$. Then for any~$J\in\mathscr J$ we have in~$\operatorname{GL}_\kk(V)$
\begin{enumerate}[label={\rm(\alph*)},leftmargin=*]
 \item\label{thm:main thm.a} $\sigma^J\circ\sigma^J=1$;
 \item\label{thm:main thm.b} If $J=J'\cup J''$ where $J'$ and~$J''$ are orthogonal (that is, $J'\cap J''=\emptyset$ and
 $s_{j'}s_{j''}=s_{j''}s_{j'}$ for all $j'\in J'$, $j''\in J''$)
 then $\sigma^J=\sigma^{J'}\circ \sigma^{J''}$; in particular, $\sigma^{J'}\circ \sigma^{J''}=\sigma^{J''}\circ\sigma^{J'}$ if $J',J''\in\mathscr J$ are orthogonal.

 \item\label{thm:main thm.c} $\sigma^{J}\circ \sigma^{K}=\sigma^{K^\star}\circ \sigma^J$ for any~$K\subset J$, where ${}^\star:J\to J$ is as in 
 Proposition~\ref{prop:existence-eta-J}\ref{prop:existence-eta-J.b}.
 \end{enumerate}
\end{theorem}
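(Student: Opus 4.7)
The plan is to prove each of (a), (b), (c) by verifying that the operator on the left fulfills the uniqueness characterization of Proposition~\ref{prop:existence-eta-J}: correct value on $\bigcap_{i\in J}\ker E_i$ together with the prescribed intertwining with $E_j, F_j$ for $j\in J$. Throughout I use that $J\in\mathscr J$ makes $\lie g^J$ finite-dimensional semisimple, so $V$ restricted to $U_q(\lie g^J)$ is a direct sum of integrable irreducibles, each generated by a vector in $\bigcap_{i\in J}\ker E_i$. For \textbf{(a)}, applying the intertwining of Proposition~\ref{prop:existence-eta-J}\ref{prop:existence-eta-J.b} twice and using that ${}^\star:J\to J$ is an involution gives $\sigma^J\sigma^J F_j=F_j\sigma^J\sigma^J$ and analogously for $E_j$, so $\sigma^J\circ\sigma^J$ is $U_q(\lie g^J)$-linear; on any $v\in\bigcap_{i\in J}\ker E_i$, the symmetry of the $v\mapsto v^J$ construction (built into Proposition~\ref{prop:prop eta^J}\ref{prop:prop eta^J.a'}) yields $\sigma^J(v^J)=v$, and a $U_q(\lie g^J)$-linear endomorphism fixing generators of every irreducible summand must be the identity.

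For \textbf{(b)}, in the orthogonal case $w_\circ^J=w_\circ^{J'}w_\circ^{J''}$, the involution ${}^{\star_J}$ restricts to ${}^{\star_{J'}}$ on $J'$ and to ${}^{\star_{J''}}$ on $J''$, and orthogonality ($a_{j'j''}=0$ for $j'\in J', j''\in J''$) forces the generators of $U_q(\lie g^{J'})$ and $U_q(\lie g^{J''})$ to commute. Consequently $\sigma^{J''}$ commutes with $E_j,F_j$ for $j\in J'$ and vice versa, so the intertwining of $\sigma^{J'}\circ\sigma^{J''}$ with $E_j,F_j$ for $j\in J$ follows by splitting according to whether $j\in J'$ or $j\in J''$. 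On $v\in\bigcap_{i\in J}\ker E_i$, $\sigma^{J''}(v)=v^{J''}\in U_q(\lie g^{J''})v$ is still annihilated by $E_j$ for $j\in J'$ (by commuting $E_j$ through $U_q(\lie g^{J''})$), so $\sigma^{J'}(v^{J''})=(v^{J''})^{J'}=v^J$ by the product decomposition of the longest-element construction in Proposition~\ref{prop:prop eta^J}. Uniqueness then gives $\sigma^J=\sigma^{J'}\circ\sigma^{J''}$, and symmetry between $J', J''$ gives commutativity.

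For \textbf{(c)}, the driving Weyl-group identity is $w_\circ^J w_\circ^K w_\circ^J=w_\circ^{K^\star}$ (conjugation by $w_\circ^J$ maps $W_K$ to $W_{K^\star}$ and longest elements to longest elements), equivalently $\star_{K^\star}=\star_J\circ\star_K\circ\star_J$ on $K^\star$. Using (a), the claim rewrites as $\tau:=\sigma^J\sigma^K\sigma^J=\sigma^{K^\star}$. For $j\in K^\star$ one has $j^{\star_J}\in K\subset J$, so three successive applications of the intertwinings of $\sigma^J$, $\sigma^K$, $\sigma^J$ yield
\[
\tau F_j=\sigma^J\sigma^K E_{j^{\star_J}}\sigma^J=\sigma^J F_{(j^{\star_J})^{\star_K}}\sigma^K\sigma^J=E_{((j^{\star_J})^{\star_K})^{\star_J}}\tau=E_{j^{\star_{K^\star}}}\tau,
\]
and analogously $\tau E_j=F_{j^{\star_{K^\star}}}\tau$. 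The intertwining forces $\tau(w)\in\bigcap_{i\in K^\star}\ker F_i$ for every $w\in\bigcap_{i\in K^\star}\ker E_i$; to conclude $\tau(w)=w^{K^\star}$ I would track the $U_q(\lie g^J)$- and $U_q(\lie g^K)$-submodules containing $w$, $\sigma^J(w)$, and $\sigma^K\sigma^J(w)$ through the three stages and match the normalization with the distinguished element of Proposition~\ref{prop:prop eta^J} inside $U_q(\lie g^{K^\star})w$.

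The main obstacle is this last step in (c): the combinatorial intertwining is forced by the Weyl-group identity and the definition of ${}^\star$, but identifying $\tau(w)$ with the normalized $w^{K^\star}$ rather than a scalar multiple requires a careful analysis of the explicit $v\mapsto v^J$ construction and its interaction with the nested parabolic subalgebras $U_q(\lie g^K)\subseteq U_q(\lie g^J)$. The cases (a) and (b) rely on companion properties (symmetry and orthogonal-product compatibility of $v\mapsto v^J$) that one expects to be recorded in Proposition~\ref{prop:prop eta^J}; if these are in hand, (a) and (b) reduce to the uniqueness argument sketched above.
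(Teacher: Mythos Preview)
Your approach via the uniqueness characterization of Proposition~\ref{prop:existence-eta-J} is genuinely different from the paper's. The paper does not use uniqueness for parts~(b) and~(c) at all; instead it works throughout with the explicit formula~\eqref{eq:defn-eta^J},
\[
\sigma^J(v)=(-1)^{\rho^\vee_J(\lambda_J\mp\beta)} q^{-\frac12((\lambda_J,\lambda_J)-(\beta,\beta))-(\lambda_J,\rho_J)}\,T_{w_\circ^J}^\pm(v),
\]
and reduces all three relations to identities among the modified Lusztig symmetries $T_w^\pm$ together with a bookkeeping of the scalar prefactors. For~(c) the key braid-group identity is $T^\pm_{w_J}=T^\pm_{w_\circ}\circ (T^\pm_{w_\circ^K})^{-1}=(T^\pm_{w_\circ^{K^\star}})^{-1}\circ T^\pm_{w_\circ}$ where $w_J=w_\circ w_\circ^K$ (Proposition~\ref{prop:factor eta_J}); for~(b) it is $T^\pm_{w_\circ}=T^\pm_{w_\circ^{J'}}\circ T^\pm_{w_\circ^{J''}}$ (Proposition~\ref{prop:cacti}). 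In each case one computes $\sigma\circ\sigma^K(v)$ and $\sigma^{K^\star}\circ\sigma(v)$ (resp.\ $\sigma^{J'}\circ\sigma^{J''}(v)$ and $\sigma(v)$) directly as explicit scalar multiples of the same $T^\pm_{w_J}(v)$ (resp.\ $T^\pm_{w_\circ}(v)$), and checks that the scalars match. Part~(a) is closer to what you wrote: the paper also argues that $\sigma^\epsilon\circ\sigma^{\epsilon'}$ is $U_q(\lie g^J)$-linear and fixes highest weight vectors (Lemma~\ref{lem:exist eta}\ref{lem:exist eta.c}).

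Your arguments for~(a) and~(b) go through; the factorization $(v^{J''})^{J'}=v^J$ you invoke follows from $F_{w_\circ^J,\lambda}=F_{w_\circ^{J'},\,w_\circ^{J''}\lambda}\,F_{w_\circ^{J''},\lambda}$, which is immediate from the definition~\eqref{eq:F i lambda} once one concatenates reduced words. The gap you flag in~(c) is real and is exactly what the paper's approach sidesteps. To finish~(c) your way you would have to evaluate $\sigma^J$ on a vector $w$ that is only $K^\star$-highest (not $J$-highest), then $\sigma^K$ on the result, then $\sigma^J$ again, and pin down the answer inside $U_q(\lie g^{K^\star})w$ with the correct normalization. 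There is no soft reason this scalar is $1$: tracking it amounts to reproducing the sign-and-$q$-power computation the paper carries out with the explicit formula, so your route does not avoid that work. The advantage of the paper's method is that the scalar is visible from the outset and the verification is a short calculation using $W$-invariance of $(\cdot,\cdot)$ and the identities $\rho^\vee_{J^\star}(w_\circ\mu)=-\rho^\vee_J(\mu)$, $w_\circ\rho_{J^\star}=-\rho_J$; the advantage of yours is conceptual clarity for~(a) and~(b), but it does not close~(c) without importing essentially the same computation.
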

We prove Theorem~\ref{thm:main thm} in~\S\ref{subs:parab inv} using appropriate modifications of Lusztig's symmetries (which we introduce in
\S\ref{subs:mod lus sym}).

Following (and slightly generalizing) \cite{Los} (see also~\cite{Bon}), we denote $\Cact_W$ the group generated by
the $\tau_J$, $J\in\mathscr J$ subject to all relations of Theorem~\ref{thm:main thm}. Indeed, this definition coincides with that in~\cite{Los}*{(1.1)}
if~$W$ is finite because $\tau_J=\tau_{J'}\tau_{J''}$ for any~$J$ as in Theorem~\ref{thm:main thm}\ref{thm:main thm.b}.
By definition, the assignments $\tau_J\mapsto \sigma^J_V$, $J\in\mathscr J$ define a representation of~$\Cact_W$ on~$V$. 
In view of~\eqref{eq:eta^J funct} we obtain the following immediate corollary of Theorem~\ref{thm:main thm} (see Section~\ref{sect:mod Lusz symm} for the notation)
\begin{corollary}\label{cor:cactus categorical-action}
The group~$\Cact_W$ acts on the category~$\mathscr O^{int}_q(\lie g)$ via~$\tau_J\mapsto \sigma^J_\bullet$, $J\in\mathscr J$.
\end{corollary}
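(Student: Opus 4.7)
The plan is to interpret the asserted action as a group homomorphism from $\Cact_W$ into the group of natural automorphisms of the forgetful functor on $\mathscr O^{int}_q(\lie g)$, and then reduce every claim to the two inputs already available. Concretely, for each $J\in\mathscr J$ I would first package the family $\{\sigma^J_V\}_{V\in\Ob\mathscr O^{int}_q(\lie g)}$ into a single natural automorphism $\sigma^J_\bullet$ of the forgetful functor $\mathrm{For}:\mathscr O^{int}_q(\lie g)\to\vect_\kk$. The commutativity of the square in~\eqref{eq:eta^J funct} from Proposition~\ref{prop:existence-eta-J} is exactly the naturality condition $f\circ\sigma^J_V=\sigma^J_{V'}\circ f$ for every morphism $f:V\to V'$, and invertibility of each component $\sigma^J_V$ follows from Theorem~\ref{thm:main thm}\ref{thm:main thm.a}. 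Hence $\sigma^J_\bullet\in\Aut(\mathrm{For})$.

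Next I would extend the assignment $\tau_J\mapsto\sigma^J_\bullet$ to a group homomorphism out of $\Cact_W$. Since $\Cact_W$ is defined by the generators $\tau_J$, $J\in\mathscr J$, subject to exactly the relations listed in Theorem~\ref{thm:main thm}\ref{thm:main thm.a}--\ref{thm:main thm.c}, it suffices to verify these same relations among the $\sigma^J_\bullet$ in $\Aut(\mathrm{For})$. Equality of natural transformations is tested objectwise, so each cactus relation in $\Aut(\mathrm{For})$ reduces to the corresponding identity in $\operatorname{GL}_\kk(V)$ for every $V\in\Ob\mathscr O^{int}_q(\lie g)$, which is supplied directly by Theorem~\ref{thm:main thm}.

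There is no genuine obstacle once Proposition~\ref{prop:existence-eta-J} and Theorem~\ref{thm:main thm} are in hand; the only point worth flagging is that $\sigma^J_V$ is \emph{not} a $U_q(\lie g)$-module map --- it intertwines $E_j$ with $F_{j^\star}$ according to Proposition~\ref{prop:existence-eta-J}\ref{prop:existence-eta-J.b} --- so the action naturally lives on the forgetful functor rather than on the identity functor of $\mathscr O^{int}_q(\lie g)$ itself. With this caveat acknowledged, the corollary follows immediately.
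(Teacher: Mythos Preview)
Your proposal is correct and matches the paper's approach exactly: the paper declares the corollary an immediate consequence of Theorem~\ref{thm:main thm} together with the naturality square~\eqref{eq:eta^J funct}, and in Section~\ref{sect:mod Lusz symm} it defines an action of a group on $\mathscr O^{int}_q(\lie g)$ precisely as a family $\{g_V\in\operatorname{GL}_\kk(V)\}$ commuting with morphisms and satisfying $(gg')_V=g_V\circ g'_V$, i.e., a homomorphism into $\Aut(\mathrm{For})$ as you describe. Your remark that $\sigma^J_V$ is only $\kk$-linear (not $U_q(\lie g)$-linear) is exactly why the paper's definition is phrased this way.
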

The study of cactus groups began with $\Cact_n:=\Cact_{S_n}$ which appeared, to name but a few, in~\cites{De,DJS,DR,Ryb,White} in connection with the study of moduli spaces of rational 
curves with $n+1$ marked points and their applications in mathematical physics. It is easy to see that $\Cact_n$ is generated by 
involutions $\tau_{i,j}=\tau_{\{i,\dots,j-1\}}$, $1\le i<j\le n$ subject to 
the relations 
\begin{alignat*}{2}
&\tau_{i,j}\tau_{k,l}=\tau_{k,l}\tau_{i,j},&\qquad & i<j<k<l\\
&\tau_{i,l}\tau_{j,k}=\tau_{i+l-k,i+l-j}\tau_{i,l},&& i\le j<k\le l.
\end{alignat*}
Categorical actions of~$\Cact_n$ on $n$-fold tensor products in symmetric coboundary categories (first introduced in~\cite{Dr}) were studied in~\cites{HK,Sav} 
and also implicitly in~\cite{BZw} where the braided structure on the category~$\mathscr O_q^{int}(\lie g)$ was converted into a symmetric coboundary structure
for any complex reductive Lie algebra~$\lie g$ (for non-abelian  examples of coboundary categories see the discussion after Theorem~\ref{thm:main thm2}).
It would be interesting to compare these actions of~$\Cact_n$ with the one given by Corollary~\ref{cor:cactus categorical-action}. We expect that 
they are connected in some cases via the celebrated Howe duality (see e.g. the forthcoming paper~\cite{BGL}). 
In view of Corollary~\ref{cor:cactus categorical-action} it is natural to seek other categorical representations of~$\Cact_W$ for all Coxter groups~$W$.

Conjecture~\ref{conj:weyl} suggests that our representation of~$\Cact_W$ on~$\mathscr O^{int}_q(\lie g)$ is not faithful. Therefore, we can pose the following 
\begin{problem}\label{prob:kernel}
Find the kernel~$\mathsf K_{\lie g}$ of this representation of~$\Cact_W$.
\end{problem}
For example, if $\lie g=\lie{sl}_3$ then $\tau_{1,2}\tau_{1,3}=\tau_{1,3}\tau_{2,3}$ and so
$\Cact_W$ is freely generated by involutions $\tau_{1,2}$ and $\tau_{1,3}$.
It is easy to see that $\tau_{1,2}\notin \mathsf K_{\lie{sl}_3}$, while $\tau_{1,3}\notin\mathsf K_{\lie{sl}_3}$ by 
Remark~\ref{rem:tau 13 not in subgroup}. Thus, if Conjecture~\ref{conj:weyl} holds then 
$\mathsf K_{\lie{sl}_3}=\{ (\tau_{1,2}\tau_{1,3})^{6n}\,:\, n\in\ZZ\}$ which would 
solve the problem for $W=S_3$.

To outline an approach to Problem~\ref{prob:kernel} in general, 
denote $\Phi_V$, $V\in\mathscr O^{int}_q(\lie g)$ the subgroup of~$\operatorname{GL}_\kk(V)$ generated by the $\sigma^J_V$, $J\in\mathscr J$.
Then clearly $\mathsf K_{\lie g}$ is the intersection of kernels of canonical homomorphisms $\Cact_W\to \Phi_V$ over all $V\in\mathscr O^{int}_q(\lie g)$.
We show (Proposition~\ref{prop:ficus}) that 
$\Phi_V\cong \Phi_{\underline V}$ where $\underline V=\bigoplus\limits_{\lambda\in P^+\,:\, \Hom_{U_q(\lie g)}(V_\lambda,V)\not=0} V_\lambda$.
In particular, $\Cact_W/\mathsf K_{\lie g}$ is isomorphic to $\Phi_{\mathcal C_q(\lie g)}$ where $\mathcal C_q(\lie g)=\bigoplus_{\lambda\in P^+} V_\lambda$
is the Gelfand-Kirillov model for~$\mathscr O^{int}_q(\lie g)$; in fact, it has a structure of an associative algebra (see Section~\ref{sect:can bas}).
Thus, in view of the above we expect that $\Cact_3/\mathsf K_{\lie{sl}_3}$ is isomorphic to the dihedral group of order~$12$. However,
it is likely that~$\Phi_{\mathcal C_q(\lie g)}$ is infinite for simple $\lie g$ different from $\lie{sl}_2$ and $\lie{sl}_3$.

It turns out that the action of~$\Cact_W$ on~$\mathscr O^{int}_q(\lie g)$ descends to a permutation representation 
on any crystal basis of any object~$V$ (see \S\ref{subs:prelim crystal bases} for definitions and notation). Thus, we obtain 
the following refinement of~\cite{HKRW}*{Theorem~5.9}.
\begin{theorem}\label{thm:main thm2}
Let $V\in\mathscr O^{int}_q(\lie g)$. Then for any lower or upper crystal basis $(L,B)$ of~$V$ at $q=0$ the group $\Phi_V$ preserves~$L$ and acts 
on~$B$ by permutations.
\end{theorem}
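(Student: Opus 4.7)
The plan is to verify the statement for each generator $\sigma^J_V$, $J \in \mathscr J$, of $\Phi_V$; since $\Phi_V$ is generated by these, the theorem then follows.

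I would begin with the rank-one case $J = \{i\}$. Decomposing $V$ into $U_{q_i}(\lie{sl}_2)$-isotypic strings $\bigoplus_{k=0}^l \kk E_i^{(k)} u$ indexed by $u \in \ker F_i \cap \ker(K_i - q_i^{-l})$, formula~\eqref{eq:eta^i defn} says $\sigma^i_V$ reflects each such string via $E_i^{(k)} u \leftrightarrow E_i^{(l-k)} u$. At $q = 0$, this is precisely Kashiwara's crystal Weyl-group symmetry $S_i$, which is already known (cf.\ \cite{HKRW}*{Theorem~5.9}, going back to Kashiwara) to preserve both lower and upper crystal lattices and to induce an involution on $B$. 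The only calibration required is to match the lowest-weight-string formulation of~\eqref{eq:eta^i defn} with the standard description of $S_i$ via the Kashiwara operators $\tilde E_i, \tilde F_i$, which is immediate from the $U_{q_i}(\lie{sl}_2)$-commutation relations on an $(l+1)$-dimensional string.

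For general $J \in \mathscr J$, the operator $\sigma^J_V$ depends only on the restriction of $V$ to the parabolic subalgebra $U_q(\lie g^J)$, which is of finite type because $W_J$ is finite. I would restrict $(L, B)$ to $U_q(\lie g^J)$; it decomposes into crystal bases of the finite-dimensional irreducible $U_q(\lie g^J)$-summands of $V$. On each such summand, Proposition~\ref{prop:existence-eta-J}\ref{prop:existence-eta-J.b} imposes the swap rules $\sigma^J F_j = E_{j^\star} \sigma^J$ and $\sigma^J E_j = F_{j^\star} \sigma^J$ for $j \in J$; combined with the rank-one case, these rules propagate preservation of $L$ and permutation of $B$ from a single $U_q(\lie g^J)$-highest-weight vector to the whole irreducible, via successive applications of Kashiwara operators, which are known to preserve $L$ and shuffle $B \sqcup \{0\}$.

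The principal obstacle is the base of this propagation: one must show that the distinguished vector $v^J$ of Proposition~\ref{prop:prop eta^J}\ref{prop:prop eta^J.a'} lies in $L$ and equals, modulo $qL$, the lowest-weight crystal vector of the finite-type $U_q(\lie g^J)$-irreducible generated by $v$. Equivalently, the normalization implicit in $v^J$ must be the crystal-compatible one. Since the paper's proof of Theorem~\ref{thm:main thm} proceeds through modified Lusztig symmetries (\S\ref{subs:mod lus sym}), I expect this to follow by identifying $\sigma^J$ with a modification of Lusztig's $T_{w_\circ^J}$ and invoking the classical fact that Lusztig's symmetries preserve the crystal lattice and act on $B$ as Kashiwara's Weyl-group operators. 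Once this base is secured, any residual signs square to $1$ by Theorem~\ref{thm:main thm}\ref{thm:main thm.a}, upgrading the signed permutation to a genuine one.
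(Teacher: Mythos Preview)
Your strategy is broadly aligned with the paper's: both reduce to showing, for each $J\in\mathscr J$, that $\sigma^J$ commutes with Kashiwara operators in the form $\sigma^J\circ\tilde f_j=\tilde e_{j^\star}\circ\sigma^J$ (this is the paper's Proposition~\ref{prop:eta-crystal-op}\ref{prop:eta-crystal-op.a}), and then use this to push $\sigma^J$ through monomials in Kashiwara operators. Your propagation paragraph gestures at this but does not actually state it; note that the swap rules of Proposition~\ref{prop:existence-eta-J}\ref{prop:existence-eta-J.b} involve $E_j,F_j$, not $\tilde e_j,\tilde f_j$, and passing from one to the other requires exactly the string computation the paper carries out (and it is here that the upper/lower normalizations enter, via condition~\eqref{eq:cnd for eta}).

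The genuine gap is your resolution of what you correctly call the principal obstacle, namely that $\sigma^J(v)=v^J$ lies in~$L$ and hits the lowest-weight crystal element. You propose to invoke ``the classical fact that Lusztig's symmetries preserve the crystal lattice.'' This is not a classical fact; unadorned Lusztig symmetries $T_{w_\circ^J}$ do \emph{not} preserve crystal lattices, and the specific $q$-power and sign twist in~\eqref{eq:defn-eta^J} is precisely what must be shown to repair this. The paper avoids this entirely: its Lemma~\ref{lem:extrem vecs} observes that $v^J=F_{w_\circ^J,\lambda}(v_+)$ is already a monomial $\tilde e_{i_1,-a_1}^{\mathbf c}\cdots\tilde e_{i_N,-a_N}^{\mathbf c}(v_+)$ in Kashiwara operators, because each successive $F_{i_k}^{(a_k)}$ is applied to a vector in $\ker E_{i_k}$, where divided powers and Kashiwara operators agree (this uses the second condition in~\eqref{eq:cnd for eta}). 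This places $v^J$ in~$L$ and in~$B$ modulo $qL$ without any appeal to Lusztig symmetries on lattices, and your final paragraph about residual signs becomes unnecessary.
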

We prove Theorem~\ref{thm:main thm2} in Section~\ref{sect:action on crystals} by means of what we call $\mathbf c$-crystal bases, which allow one
to treat lower and upper crystal bases uniformly. Taking into account that $B$ is graded by the weight lattice of~$\lie g$, all weights occur 
in a crystal basis of~$\mathcal C_q(\lie g)$ and that $W$ acts faithfully on the weight lattice, we obtain an immediate
\begin{corollary}\label{cor:kernel to W}
The assignments $\sigma^J\mapsto w_\circ^J$, $J\in\mathscr J$ define a surjective homomorphism $\Cact_W/\mathsf K_{\lie g}\to W$ which 
refines the natural epimorphism $\Cact_W\to W$ from~\cite{Los}.
\end{corollary}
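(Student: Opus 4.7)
The plan is to lift the assignment $\sigma^J \mapsto w_\circ^J$ through the faithful action of $W$ on the weight lattice $P$, reducing the problem to a statement about the crystal basis of $\mathcal C_q(\lie g)$ via Theorem~\ref{thm:main thm2}.

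First I would verify that every operator $\sigma^J_V$ is weight-homogeneous of shift $w_\circ^J$, in the sense that $\sigma^J_V(V_\mu) \subseteq V_{w_\circ^J \mu}$ for every $\mu \in P$. For a $U_q(\lie g^J)$-highest weight vector $v$ of weight $\lambda$, Proposition~\ref{prop:existence-eta-J}\ref{prop:existence-eta-J.a} identifies $\sigma^J(v) = v^J$ with an element of $\bigcap_{j \in J}\ker F_j$ inside the irreducible $U_q(\lie g^J)$-submodule generated by $v$, and any such lowest weight vector has weight $w_\circ^J \lambda$. Because $W_J$ is finite, $V$ decomposes as a direct sum of finite-dimensional integrable $U_q(\lie g^J)$-modules, so every weight vector of $V$ is obtained by applying a product $F_{j_1}\cdots F_{j_r}$ with $j_\ell \in J$ to some $J$-highest weight vector. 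The intertwining identities in Proposition~\ref{prop:existence-eta-J}\ref{prop:existence-eta-J.b}, combined with $w_\circ^J \alpha_j = -\alpha_{j^\star}$ for $j \in J$, then propagate the weight shift from $J$-highest weight vectors to all of~$V$.

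Next I would specialise to the Gelfand--Kirillov module $\mathcal C_q(\lie g) = \bigoplus_{\lambda \in P^+} V_\lambda$. Theorem~\ref{thm:main thm2} asserts that each $\sigma^J$ preserves every crystal basis $B$ of this module and permutes~$B$. The previous paragraph shows that the induced action on weights satisfies $\wt(\sigma^J b) = w_\circ^J\, \wt(b)$; moreover $\wt(B) = P$, as every weight occurs in $\mathcal C_q(\lie g)$ by the observation recorded just before the corollary.

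Finally, suppose a word $g = \tau_{J_1}\cdots\tau_{J_k}$ represents an element of $\mathsf K_{\lie g}$, i.e.\ $\sigma^{J_1}\cdots\sigma^{J_k}$ acts as the identity on $\mathcal C_q(\lie g)$. Then it permutes $B$ trivially, so the product $w_\circ^{J_1}\cdots w_\circ^{J_k}$ fixes every element of $\wt(B) = P$. Faithfulness of $W$ on $P$ forces that product to equal $1$ in $W$, so $\tau_J \mapsto w_\circ^J$ descends to a well-defined group homomorphism $\Cact_W/\mathsf K_{\lie g} \to W$. Surjectivity is immediate because $W$ is generated by $s_i = w_\circ^{\{i\}}$, $i \in I$, and the refinement claim is the factorization $\Cact_W \to \Cact_W/\mathsf K_{\lie g} \to W$ of the natural epimorphism of~\cite{Los}. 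The only step that requires genuine argument is the weight-homogeneity in paragraph two; everything thereafter is a formal consequence.
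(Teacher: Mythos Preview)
Your argument is correct and follows the same route as the paper: use Theorem~\ref{thm:main thm2} on~$\mathcal C_q(\lie g)$, the weight shift $\sigma^J(V(\beta))\subset V(w_\circ^J\beta)$, the fact that every weight occurs in~$\mathcal C_q(\lie g)$, and faithfulness of~$W$ on~$P$. The only difference is that you rederive the weight shift from the characterizing properties in Proposition~\ref{prop:existence-eta-J}, whereas the paper records it directly as Proposition~\ref{prop:prop eta^J}\ref{prop:prop eta^J.b'}, an immediate consequence of the explicit formula~\eqref{eq:defn-eta^J} and the fact that $T^\pm_{w_\circ^J}(V(\beta))=V(w_\circ^J\beta)$.
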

Analogously to the notion of the pure braid group, one calls the kernel of the natural homomorphism $\Cact_W\to W$
the pure cactus group
(this term was used for~$\Cact_n$ in e.g.~\cites{DR,Ryb,White}). Thus, Corollary~\ref{cor:kernel to W} asserts 
that~$\mathsf K_{\lie g}$ is pure.

The involution $\sigma^I_V$ was first defined in~\cite{BZ1} for $\lie g=\lie{gl}_n$ and simple polynomial representations~$V_\lambda$
and explicitly computed on the corresponding crystal in~\cite{KB}. In fact, it coincides with the famous Sch\"uztenberger involution
(see Remark~\ref{rem:eta inv}). 
Following a suggestion of the first author and~\cite{KB}, an action of~$\Cact_n$ on the category of crystal bases was constructed in~\cite{HK} thus turning it 
into a symmetric coboundary category.

We expect that to solve Problem~\ref{prob:kernel} it suffices to find kernels of permutation representations of~$\Cact_W$ on all~$B$.

Since $\mathsf W(V)$ is naturally a subgroup of~$\Phi_V$, its action on~$V$ induces an action on~$B$ by permutations
which coincides with Kashiwara's crystal Weyl group (see Remark~\ref{rem:crystal Weyl group}).

In case when $\lie g$ is reductive we can refine Theorem~\ref{thm:main thm2} as follows.
\begin{theorem}\label{thm:main thm3}
Let $\lie g$ be a reductive Lie algebra and let $V\in\mathscr O^{int}_q(\lie g)$. Then for any 
crystal basis $(L,B)$ of~$V$ the involution $\sigma^I_V$ preserves the corresponding upper global crystal basis~$\mathbf B_V$ of~$V$.
\end{theorem}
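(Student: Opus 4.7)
The plan is to reduce to simple highest weight modules $V_\lambda$, shrink the problem to a commutation with the bar involution via Theorem~\ref{thm:main thm2} and the uniqueness of the upper global basis, and then shrink it further via the characterization in Proposition~\ref{prop:existence-eta-J} to the bar-invariance of a single distinguished vector.

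Since $\lie g$ is reductive, $\mathscr O^{int}_q(\lie g)$ is semisimple, so by the functoriality~\eqref{eq:eta^J funct} both $\sigma^I_V$ and $\mathbf B_V$ respect the isotypic decomposition of~$V$; it therefore suffices to prove the theorem for $V=V_\lambda$, $\lambda\in P^+$. On $V_\lambda$ the upper global basis $\mathbf B_{V_\lambda}=\{G^{up}(b):b\in B_\lambda\}$ is uniquely characterized by the two conditions $G^{up}(b)\equiv b\pmod{q^{-1}L^{up}}$ and $\psi_\lambda(G^{up}(b))=G^{up}(b)$, where $\psi_\lambda$ is the bar involution on $V_\lambda$ normalized by $\psi_\lambda(v_\lambda)=v_\lambda$. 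By Theorem~\ref{thm:main thm2} the operator $\sigma^I_{V_\lambda}$ preserves $L^{up}$ and induces a permutation $\eta$ of $B_\lambda$, which by Proposition~\ref{prop:existence-eta-J}\ref{prop:existence-eta-J.b} is the Sch\"utzenberger involution. Hence $\sigma^I_{V_\lambda}(G^{up}(b))\equiv \eta(b)\pmod{q^{-1}L^{up}}$ automatically, and the entire theorem reduces to the commutation $\psi_\lambda\circ\sigma^I_{V_\lambda}=\sigma^I_{V_\lambda}\circ\psi_\lambda$.

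To reduce this commutation further, recall that Proposition~\ref{prop:existence-eta-J} characterizes $\sigma^I_{V_\lambda}$ as the unique $\kk$-linear endomorphism of $V_\lambda$ sending $v_\lambda$ to the distinguished element $v_\lambda^I\in \bigcap_{i\in I}\ker F_i\cap V_\lambda$ and satisfying $\sigma^I_{V_\lambda}(F_j v)=E_{j^\star}(\sigma^I_{V_\lambda}v)$ for all $j\in I$, $v\in V_\lambda$. Since $\psi_\lambda$ commutes with each $E_j$ and each $F_j$, the conjugated operator $\psi_\lambda\circ\sigma^I_{V_\lambda}\circ\psi_\lambda$ obeys the very same intertwining relations and sends $v_\lambda$ to $\psi_\lambda(v_\lambda^I)$. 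By the uniqueness statement of Proposition~\ref{prop:existence-eta-J}, the desired commutation is therefore equivalent to the single bar-invariance
\[
\psi_\lambda(v_\lambda^I)=v_\lambda^I.
\]

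The main obstacle is precisely this last identity. Because $V_\lambda$ has a one-dimensional lowest weight space, $v_\lambda^I=c\cdot v_{w_\circ\lambda}$ for some scalar $c\in\kk$, and one must check that $c$ is fixed by the bar map $q\mapsto q^{-1}$. The plan is to compute $v_\lambda^I$ explicitly as the image of $v_\lambda$ under an appropriately renormalized product of the modified Lusztig symmetries of~\S\ref{subs:mod lus sym} along a reduced expression for $w_\circ$ (which exists precisely because $\lie g$ is reductive and $W$ finite). The balanced form of~\eqref{eq:eta^i defn}, in which $E_i^{(k)}u$ and $E_i^{(l-k)}u$ play symmetric roles, is designed to cancel the inversion $\psi\circ T_i=T_i^{-1}\circ\psi$ that prevents Lusztig's unmodified braid symmetries from being bar-equivariant; tracking the resulting $q$-scalars along the length of $w_\circ$ should yield $c=1$. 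This normalization bookkeeping, together with the verification that the distinguished choice in Proposition~\ref{prop:existence-eta-J} is indeed the bar-balanced one, is the principal technical content of the proof; once it is in place, uniqueness of $\mathbf B_{V_\lambda}$ forces $\sigma^I_{V_\lambda}(G^{up}(b))=G^{up}(\eta(b))$ and completes the argument.
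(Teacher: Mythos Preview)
Your reduction to $V_\lambda$ matches the paper, and the commutation $\psi_\lambda\circ\sigma^I=\sigma^I\circ\psi_\lambda$ you aim for is indeed true---it is Proposition~\ref{prop:sigma bar}, proved essentially by the argument you sketch (since $\sigma^I(v_\lambda)=F_{w_\circ,\lambda}(v_\lambda)$ is a product of bar-fixed divided powers applied to~$v_\lambda$, one gets $c=1$ directly). The gap is in the characterization of the upper global basis on which your reduction rests. You assert that $G^{up}(b)$ is the unique bar-invariant element of~$L$ congruent to~$b$ modulo~$qL$, but Kashiwara's uniqueness requires a third condition: membership in the $\mathbb Q[q,q^{-1}]$-form $V_\lambda^{\mathbb Q}$. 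Without it, nonzero bar-invariant elements of~$q\mathbb A$ such as $q/(1+q+q^2)$ spoil uniqueness.

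This is not cosmetic. The paper itself observes (immediately after Theorem~\ref{thm:main thm3} and in Remark~\ref{rem:GT bas and sigma}) that for every $J\in\mathscr J$ the operator $\sigma^J$ satisfies both of your two conditions---it preserves~$L$ and permutes~$B$ by Theorem~\ref{thm:main thm2}, and $\sigma^J(G^{up}(b))$ is bar-invariant by Proposition~\ref{prop:sigma bar}---yet $\sigma^i$ fails to preserve the canonical basis of~$V_{2\rho}$ for $\lie g=\lie{sl}_3$ and $i\in\{1,2\}$. Hence ``the entire theorem reduces to the commutation'' is false as stated: what distinguishes $J=I$ must be the integrality condition $\sigma^I(V_\lambda^{\mathbb Q})\subset V_\lambda^{\mathbb Q}$, which you neither state nor verify (and which is not obvious, given the half-integral $q$-powers in~\eqref{eq:defn-eta} and in the modified symmetries~$T_i^\pm$). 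The paper bypasses this issue by a different route: it realizes $\sigma^I$ on $\mathcal C_q(\lie g)=\bigoplus_\lambda V_\lambda$ as the restriction of an algebra anti-homomorphism built from the Kimura--Oya quantum twist (Theorem~\ref{thm:twist}) and then invokes their result that this twist preserves the dual canonical basis (Theorem~\ref{thm:eta-bas}).
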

An analogous result for $J\subsetneq I$ is weaker. We prove (Proposition~\ref{prop:sigma bar}) that
the image of any element of~$\mathbf B_V$ under~$\sigma^J$, $J\in\mathscr J$ is a $\bar\cdot$-invariant element of~$V$
where $\bar\cdot$ is the anti-linear involution fixing~$\mathbf B_V$. However, as explained in Remark~\ref{rem:GT bas and sigma},
$\sigma^J$ does not need to preserve~$\mathbf B_V$ if $J\subsetneq I$. For example, if $V$ is the $27$-dimensional simple 
module $V_{2\rho}$ for $\lie g=\lie{sl}_3$ then the $\sigma^i_V$, $i=1,2$ do not preserve the canonical 
basis of~$V$

An analogue of Theorem~\ref{thm:main thm3} for a simple~$V$ and its lower global crystal basis was deduced from~\cite{Lus}*{Proposition~21.1.2} in~\cite{HK}*{Theorem~5}. 

We prove Theorem~\ref{thm:main thm3} in~\S\ref{subs:pf main thm3}. A central role in our argument is played by the following surprising property 
of $\sigma^I$ on the aforementioned quantum Gelfand-Kirillov model~$\mathcal C_q(\lie g)$ of~$\mathscr O^{int}_q(\lie g)$.
\begin{theorem}[Theorem~\ref{thm:twist}]\label{thm:twist-intrd}
For~$\lie g$ reductive finite dimensional,
$\sigma^I_{\mathcal C_q(\lie g)}$ is an anti-involution on~$\mathcal C_q(\lie g)$.
\end{theorem}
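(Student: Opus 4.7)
The strategy is to prove the anti-multiplicative identity $\sigma^I(xy)=\sigma^I(y)\sigma^I(x)$ for all $x,y\in\mathcal C_q(\lie g)$, since the involutivity $(\sigma^I)^2=\id$ is already part of Theorem~\ref{thm:main thm}\ref{thm:main thm.a}. I would first reduce to checking this identity on pairs of highest weight vectors $v_\lambda\in V_\lambda$, $v_\mu\in V_\mu$: every element of $V_\lambda$ has the form $u\cdot v_\lambda$ with $u$ in the negative subalgebra of~$U_q(\lie g)$, so one propagates the identity from the highest-weight pairs using the twisted Leibniz rule $F_j(xy)=F_j(x)\cdot K_j^{-1}(y)+x\cdot F_j(y)$ inherited from the coproduct (and its analogue for $E_j$), together with the intertwining properties $\sigma^I\circ F_j=E_{j^\star}\circ\sigma^I$ and $\sigma^I\circ E_j=F_{j^\star}\circ\sigma^I$ from Proposition~\ref{prop:existence-eta-J}\ref{prop:existence-eta-J.b}. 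The Dynkin involution $j\mapsto j^\star$ is exactly what is required for the $K$-shifts coming from the noncocommutative coproduct to align correctly after applying~$\sigma^I$.

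For the base case one must verify $\sigma^I(v_\lambda\cdot v_\mu)=\sigma^I(v_\mu)\cdot\sigma^I(v_\lambda)$. The left-hand side equals $\sigma^I(v_{\lambda+\mu})=v^I_{\lambda+\mu}$ by Proposition~\ref{prop:existence-eta-J}\ref{prop:existence-eta-J.a}. The right-hand side is the Cartan-component projection of $v^I_\mu\otimes v^I_\lambda\in V_\mu\otimes V_\lambda$ onto $V_{\lambda+\mu}$; a weight count places this projection in the lowest weight space of $V_{\lambda+\mu}$, so it is automatically a scalar multiple of $v^I_{\lambda+\mu}$.

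The main obstacle is the computation of this scalar: one has to show that the chosen normalization of the canonical lowest weight vectors $v^I_\bullet$ from Proposition~\ref{prop:existence-eta-J}\ref{prop:existence-eta-J.a} is the one for which the quantum Cartan projection of $v^I_\mu\otimes v^I_\lambda$ lands on $v^I_{\lambda+\mu}$ with no $q$-power correction. This is where the connection with the quantum twists of Kimura and Oya~\cite{KiO} highlighted in the introduction becomes essential, and it explains why $\sigma^I$ on the Gelfand--Kirillov model is anti-multiplicative rather than multiplicative. Once the normalizations match, the inductive propagation from the highest-weight base case to arbitrary $x,y\in\mathcal C_q(\lie g)$ is essentially formal.
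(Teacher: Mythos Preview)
Your strategy is genuinely different from the paper's and, once two inaccuracies are corrected, it does yield a proof of Theorem~\ref{thm:twist-intrd} that is more direct than the paper's own argument.

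First, you have misidentified where the difficulty lies. The base case is elementary and does \emph{not} require the Kimura--Oya twist. By Proposition~\ref{prop:prop eta^J}\ref{prop:prop eta^J.a'} one has $\sigma^I(v_\lambda)=v_{w_\circ\lambda}$, and Lemma~\ref{lem:extrem vecs Ore}\ref{lem:extrem vecs Ore.a} (a short induction on~$\ell(w)$ using only the Leibniz rule~\eqref{eq:leibnitz rule}) gives $v_{w_\circ\mu}\cdot v_{w_\circ\lambda}=v_{w_\circ(\mu+\lambda)}=v_{w_\circ(\lambda+\mu)}$. Hence $\sigma^I(v_\mu)\sigma^I(v_\lambda)=\sigma^I(v_{\lambda+\mu})=\sigma^I(v_\lambda v_\mu)$ with no scalar ambiguity whatsoever.

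Second, the propagation step is not as ``essentially formal'' as you claim. A naive induction on the number of~$F_i$'s applied does not close: writing $D(x,y)=\sigma^I(xy)-\sigma^I(y)\sigma^I(x)$, the Leibniz rule together with the intertwining property yields only the coupled relation $D(F_i(x),K_{\frac12\alpha_i}(y))+D(K_{-\frac12\alpha_i}(x),F_i(y))=E_{i^\star}(D(x,y))$, which from $D(v_\lambda,v_\mu)=0$ gives a linear relation between two unknown values, not their individual vanishing. The clean fix is to observe that both $x\otimes y\mapsto\sigma^I(xy)$ and $x\otimes y\mapsto\sigma^I(y)\sigma^I(x)$ are $\theta$-twisted $U_q(\lie g)$-module maps $V_\lambda\otimes V_\mu\to V_{\lambda+\mu}$ (this is exactly the compatibility you describe); composing $D$ with $\sigma^I_{V_{\lambda+\mu}}$ then gives a genuine module homomorphism, which by multiplicity one of the Cartan component must be a scalar multiple of the projection, and vanishing at $v_\lambda\otimes v_\mu$ forces that scalar to be zero.

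The paper takes an entirely different route: it constructs an explicit anti-homomorphism $\widehat\sigma:\mathcal B_q(\lie g)\to\widehat{\mathcal B}_q(\lie g)^{op}$ into a localization, using the Kimura--Oya twist as the essential input (Proposition~\ref{prop:Kimura}), and then proves that $\widehat\sigma$ preserves each $V_\lambda$ and agrees there with~$\sigma^I$ (Theorem~\ref{thm:twist}\ref{thm:twist.c}). The payoff of this heavier approach is the explicit formula for $\widehat\sigma$ on all of~$\mathcal B_q(\lie g)$ and its localization, which is precisely what drives the canonical-basis result Theorem~\ref{thm:main thm3}; your argument proves anti-multiplicativity but does not produce that formula.
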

We do not expect an analogous result for $J\subsetneq I$; 
for example, for $\lie g=\lie{sl}_3$, the $\sigma^i$, $i\in\{1,2\}$ are not anti-automorphisms of~$\mathcal C_q(\lie g)$.
Our proof of Theorems~\ref{thm:main thm3} and~\ref{thm:twist-intrd} rely in a crucial way on properties of a remarkable quantum twist defined in~\cite{KiO}.

In view of Theorem~\ref{thm:main thm2} we can refine Conjecture~\ref{conj:weyl} for every~$V\in\mathscr O^{int}_q(\lie g)$ with 
$\underline V=\mathcal C_q(\lie g)$ as follows. We expect that in the notation
of Theorem~\ref{thm:main thm2} the group~$\Phi_V$ acts on~$B$ faithfully. Morally, this means that each element of~$\Phi_V$ is semisimple in~$\operatorname{GL}_\kk(V)$.

Similarly to Remark~\ref{rem:classical limit}, 
our constructions, results and conjectures make sense if one replaces $U_q(\lie g)$ by~$U(\lie g)$ for any (symmetrizable or not) Kac-Moody algebra $\lie g$. 
Some results (for example, Theorem~\ref{thm:main thm2}) should be possible to rescue even when $W$ is not crystallographic (and so $\lie g$ does not exists)
with the aid of theory of continuous crystals initiated by A.~Joseph in~\cites{Jos}.

\subsection*{Acknowledgements}
The first two authors are grateful to Anton Alekseev and Universit\'e de Gen\`eve, Switzerland, for their hospitality. An important part of this work was done during 
the second author's stay at the Weizmann Institute of Science, Israel and during the conference in honor of Anthony Joseph's 75th birthday, at the Weizmann Institute 
and at the University of Haifa. The authors would like to use
this opportunity to
thank Maria Gorelik and Anna Melnikov for organizing that wonderful event.

\section{Preliminaries}\label{sect:defn not}

\subsection{Coxeter groups}\label{subs:coxeter groups}
Let~$I$ be a finite set.
Let $W$ be a Coxeter group with Coxeter generators $s_i$, $i\in I$ subject to the relations $(s_i s_j)^{m_{ij}}=1$ where
$m_{ii}=1$, $m_{ij}=m_{ji}$ and $m_{ij}\in\{0\}\cup\mathbb Z_{\ge 2}$ for $i\not=j\in I$.
Let $\ell:W\to\ZZ_{\ge 0}$ be the Coxeter length function, that is, $\ell(w)$ is the minimal length of a presentation of~$w$ as a product of the~$s_i$, $i\in I$.
We say that $\mathbf i=(i_1,\dots,i_r)\in I^r$ is reduced if $\ell(s_{i_1}\cdots s_{i_r})=r$ 
and denote by~$R(w)$ the set of reduced words for~$w$, that is, 
$R(w)=\{ (i_1,\dots,i_{\ell(w)})\in I^{\ell(w)}\,:\, w=s_{i_1}\cdots s_{i_{\ell(w)}}\}$.

Given $J\subset I$ we denote by $W_J$ the subgroup of~$W$ generated by the $s_i$, $i\in J$.
We will need the following standard fact (see~\cite{Bou}*{IV.1.8, Th\'eor\`eme 2}).
\begin{lemma}\label{lem:intersect parabolics}
For any $J,J'\subset I$
\begin{enumerate}[label={\rm(\alph*)},leftmargin=*]
 \item \label{lem:intersect parabolics.a} $W_J\cap W_{J'}=W_{J\cap J'}$;
 \item \label{lem:intersect parabolics.b} $W_J\subset W_{J'}$ if and only if $J\subset J'$.
\end{enumerate}
\end{lemma}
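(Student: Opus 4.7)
The plan is to reduce both parts to a single foundational fact about reduced expressions in Coxeter groups: for any $w \in W_J$, every reduced word $(i_1, \ldots, i_r) \in R(w)$ satisfies $i_k \in J$ for all $k$. Call this the \emph{key claim}. Granting it, the two trivial directions are immediate: $s_i \in W_J \cap W_{J'}$ for every $i \in J \cap J'$ gives $W_{J \cap J'} \subset W_J \cap W_{J'}$ in~\ref{lem:intersect parabolics.a}, and $J \subset J'$ obviously implies $W_J \subset W_{J'}$ in~\ref{lem:intersect parabolics.b}.

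For the non-trivial direction of~\ref{lem:intersect parabolics.a}, I would take $w \in W_J \cap W_{J'}$ and fix any $(i_1, \ldots, i_r) \in R(w)$; a first application of the key claim inside $W_J$ forces all $i_k \in J$, and a second application inside $W_{J'}$ forces all $i_k \in J'$, whence $w \in W_{J \cap J'}$. For the non-trivial direction of~\ref{lem:intersect parabolics.b}, given $W_J \subset W_{J'}$ and $i \in J$, the simple reflection $s_i$ lies in $W_{J'}$ and has $(i)$ as its unique reduced word, so the key claim applied inside $W_{J'}$ immediately yields $i \in J'$.

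To establish the key claim, I would proceed in two steps. First, since $W_J$ is generated by $\{s_i : i \in J\}$ by definition, every $w \in W_J$ admits at least one expression in these generators; iterating the deletion condition in $W$ (Bourbaki~\cite{Bou}) removes non-reduced letter pairs without ever introducing new indices, so one extracts from it a reduced word for $w$ whose letters all lie in $J$. Second, Matsumoto's exchange theorem asserts that any two reduced words for $w$ are connected by braid moves $s_is_js_i\cdots = s_js_is_j\cdots$ of length $m_{ij}$, and each such move preserves the multiset of letters used. Consequently every element of $R(w)$ lies in $J^{\ell(w)}$.

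The main obstacle is precisely the key claim: both of its ingredients (the deletion condition and Matsumoto's theorem) are classical but substantial, so here I would cite Bourbaki rather than reprove them from scratch. Beyond invoking these foundational facts, everything above is purely formal, and the argument is correspondingly short.
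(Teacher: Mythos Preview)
Your argument is correct. The paper does not actually prove this lemma at all: it simply records it as a standard fact and cites \cite{Bou}*{IV.1.8, Th\'eor\`eme~2}, so your sketch is already more explicit than what appears there. The reduction to the ``key claim'' (that every reduced word of $w\in W_J$ uses only letters from~$J$) via the deletion condition plus Matsumoto's theorem is exactly the standard route to this result, and is essentially what the Bourbaki reference unpacks.
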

Let~$\mathscr J=\{ J\subset I,\,:\, |W_J|<\infty\}$.
If~$J\in\mathscr J$ we denote by $w_\circ^J$ the unique longest element of~$W_J$; thus, $\ell(s_j w_\circ^J)<\ell(w_\circ^J)$ for all $j\in J$.
If $I\in\mathscr J$ we abbreviate $w_\circ=w_\circ^I$. 
Given $J\in\mathscr J$ and $j\in J$, there exists a unique $j^\star\in J$ such that $s_{j^\star}=w_\circ^J s_j w_\circ^J$; the assignments
$j\mapsto j^\star$ define an involution ${}^\star:J\to J$.

Given $J\subset I$, we set $J^\perp=\{i\in I\setminus J\,:\, m_{ij}=2,\, \forall\,j\in J\}=\{ i\in I\setminus J\,:\, s_i s_j=s_j s_i,\, \forall\, j\in J\}$. We say that 
$J,J'\subset I$ are {\em orthogonal} if $J\cap J'=\emptyset$ and $J'\subset J^\perp$ (whence $J\subset J'{}^\perp$).

Define a relation $\sim$ on~$I$ by $i\sim j$ if $i=j$ or $m_{ij}>2$. Then the transitive closure of this relation is an equivalence on~$I$ which we still denote by~$\sim$.
In particular, if $i\sim i'$ then there exists a sequence (called {\em admissible}) $(i_0,\dots,i_d)\in I^{d+1}$ with $i_0=i$, $i_d=i'$
and~$m_{i_{r-1},i_r}>2$, $1\le r\le d$. 
Define $\dist(i,i')$ to be the minimal length of an admissible sequence beginning with~$i$ and ending with~$i'$. Clearly, this defines a metric on~$I$. 

Define a topology on~$I$ by declaring that the fundamental neighborhood of each $i\in I$ is its equivalence class with respect to~$\sim$. In particular,
each open set is closed and vice versa and is a union of equivalence classes. For $J\subset I$ we denote by $\cl(J)$ its closure in that topology, that
is, the union of equivalence classes of elements of~$J$. Denote $\partial(J)$ the boundary of~$J$, that is, the complement of~$J$ in~$\cl(J)$.
The following is immediate.
\begin{lemma}\label{lem:topology}
Let $J\subset I$. Then $I=J\cup J^\perp$ if and only if $J$ is closed in the above topology.
\end{lemma}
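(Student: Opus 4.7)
My plan is to unpack both sides of the equivalence and apply the characterization of $\sim$ via admissible sequences. First I would note that, directly from the definition of the topology, $J$ is closed iff $J=\cl(J)$, iff $J$ is a union of $\sim$-equivalence classes, iff no element of $I\setminus J$ is $\sim$-equivalent to any element of $J$. So the lemma reduces to showing that $I=J\cup J^\perp$ if and only if $i\not\sim j$ for all $i\in I\setminus J$ and $j\in J$.

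For the ``$\Leftarrow$'' direction, given $i\in I\setminus J$ and $j\in J$, closedness gives $i\not\sim j$, hence $i\ne j$ and $m_{ij}\not>2$; since $m_{ij}\in\{0\}\cup\mathbb{Z}_{\ge 2}$ with $m_{ij}=0$ conventionally standing for $\infty$ (hence $>2$), the only remaining possibility is $m_{ij}=2$, so $i\in J^\perp$. For the ``$\Rightarrow$'' direction, I would argue by contradiction: assuming $I=J\cup J^\perp$ but $J$ not closed, there exist $i\in I\setminus J$ and $j\in J$ with $i\sim j$; choosing an admissible sequence $i=i_0,i_1,\ldots,i_d=j$ and taking the minimal $r$ with $i_r\in J$ (so $1\le r\le d$ because $i_0\notin J$ while $i_d\in J$), the element $i_{r-1}\in I\setminus J$ must lie in $J^\perp$, forcing $m_{i_{r-1},i_r}=2$ and contradicting admissibility.

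I do not anticipate any substantial obstacle; the argument is essentially a direct unpacking of the definition of the transitive closure $\sim$. The only delicate point is the interpretation of $m_{ij}=0$ as $\infty$ (and so as $>2$), which is indispensable for the two descriptions of closedness to match and which accords with the standard Coxeter-theoretic conventions.
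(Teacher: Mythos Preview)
Your proof is correct and is exactly the natural unpacking of the definitions; the paper itself does not supply a proof, merely declaring the statement ``immediate.'' Your observation about the convention $m_{ij}=0$ standing for $\infty$ (and hence counting as ``$>2$'') is apt and is consistent with how the paper uses admissible sequences elsewhere (e.g.\ in Lemma~\ref{lem:train}, where $m_{i_{r-1},i_r}>2$ is used to deduce $\alpha_{i_{r-1}}(\alpha_{i_r}^\vee)<0$, which in the Kac--Moody setting corresponds to all of $m_{ij}\in\{3,4,6,0\}$).
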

The following is a reformulation of a well-known fact (\cite{Bou}*{IV.1.9, Proposition~2})
\begin{lemma}\label{lem:factorization of W}
Let $J\subset I$ be a closed subset. Then $W$ is the internal direct product of~$W_J$ and~$W_{J^\perp}$.
\end{lemma}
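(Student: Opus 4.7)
The plan is to verify the three conditions characterising an internal direct product decomposition $W=W_J\times W_{J^\perp}$: (i) the two subgroups intersect trivially; (ii) they commute elementwise; (iii) together they generate~$W$. Each of these follows directly from a result already recorded in the excerpt, so the proof is essentially a short assembly of Lemmas~\ref{lem:intersect parabolics} and~\ref{lem:topology} together with the definition of~$J^\perp$.

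For (i), I would invoke Lemma~\ref{lem:intersect parabolics}\ref{lem:intersect parabolics.a}, which gives $W_J\cap W_{J^\perp}=W_{J\cap J^\perp}$. Since $J^\perp\subset I\setminus J$ by definition, $J\cap J^\perp=\emptyset$ and the intersection collapses to $W_\emptyset=\{1\}$.

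For (ii), the definition of $J^\perp$ says that $m_{ij}=2$ for every $i\in J^\perp$ and every $j\in J$, hence $s_is_j=s_js_i$. By a straightforward induction on word length in the Coxeter generators, every element of $W_{J^\perp}$ then commutes with every element of $W_J$; in particular the product set $W_JW_{J^\perp}$ is automatically a subgroup of~$W$. For (iii), Lemma~\ref{lem:topology} translates the hypothesis that $J$ is closed into the equality $I=J\cup J^\perp$. Hence each Coxeter generator $s_i$ of~$W$ lies in $W_J\cup W_{J^\perp}\subset W_JW_{J^\perp}$, so $W=W_JW_{J^\perp}$.

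Combining (i)--(iii) yields precisely the claim that $W$ is the internal direct product of $W_J$ and $W_{J^\perp}$. I do not anticipate any real obstacle here: the statement is a formal consequence of the two preceding lemmas, and the only mildly non-trivial point is the passage from commutation of generators to commutation of arbitrary elements in (ii), which is a routine induction on the length of reduced expressions. No appeal to the deletion condition or exchange lemma is needed.
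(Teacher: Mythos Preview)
Your argument is correct. The paper itself does not prove this lemma at all: it simply records it as a reformulation of a well-known fact and cites \cite{Bou}*{IV.1.9, Proposition~2}. Your proof is the standard elementary verification, and it is entirely self-contained within the paper's own setup, using only Lemma~\ref{lem:intersect parabolics}\ref{lem:intersect parabolics.a} for the trivial intersection and Lemma~\ref{lem:topology} for the generation. In that sense you have supplied more than the paper does, and your approach is exactly what one would expect a reader to reconstruct from the Bourbaki reference.
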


Given a group $G$ acting on a set~$X$, denote by $\mathsf K_X(G)$ the kernel of the natural homomorphism of groups~$G\to \operatorname{Bij}(X)$
induced by the action. By definition, the action of~$G$ on~$X$ is faithful if and only if~$\mathsf K_X(G)=\{1\}$.

The following is the main result of~\S\ref{subs:coxeter groups} (which is probably known although we could not find it in the literature). 
\begin{theorem}\label{thm:kernel of natural action}
We have $\mathsf K_J:=\mathsf K_{W/W_J}(W)=W_{I\setminus \cl(I\setminus J)}$ for any $J\subset I$. In particular, if $I$ is connected and~$J\subsetneq I$ then $W$ acts faithfully on~$W/W_J$. 
\end{theorem}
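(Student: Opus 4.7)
The plan is to prove $\mathsf K_J = W_K$ where $K := I\setminus\cl(I\setminus J)$, from which the second assertion follows immediately (for $I$ connected and $J\subsetneq I$ one has $\cl(I\setminus J)=I$, whence $K=\emptyset$). The easy inclusion $W_K\subseteq\mathsf K_J$ uses that $K$ is by construction a union of $\sim$-equivalence classes of $I$ lying entirely inside $J$, so $K$ is closed and Lemma~\ref{lem:factorization of W} gives $W=W_K\times W_{I\setminus K}$; hence $W_K$ is normal in $W$ and contained in $W_J$. For the reverse inclusion I would use the analogous decomposition $W_J=W_K\times W_{J\setminus K}$: writing every $w\in W$ as $w_1w_2$ with $w_1\in W_K$, $w_2\in W_{I\setminus K}$ and using that $W_K$ commutes elementwise with $W_{I\setminus K}$, one finds $wW_Jw^{-1}=W_K\cdot w_2W_{J\setminus K}w_2^{-1}$. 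Intersecting over all $w$ yields $\mathsf K_J=W_K\cdot\mathsf K'$ where $\mathsf K':=\bigcap_{w_2\in W_{I\setminus K}}w_2W_{J\setminus K}w_2^{-1}$; decomposing $I\setminus K$ into its $\sim$-equivalence classes $C_\alpha$ (each satisfying $C_\alpha\cap J\subsetneq C_\alpha$, by the very definition of $K$) makes $\mathsf K'$ split as a product over $\alpha$ and reduces everything to the connected case.

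For the connected case, assume $I$ is connected, $J\subsetneq I$, and suppose for contradiction that $x\in\mathsf K_J\setminus\{1\}$; set $S:=\supp(x)$, so $\emptyset\ne S\subseteq J$. The crucial ingredient is the following support-growth lemma: \emph{if $y\in W$, $j\in I\setminus\supp(y)$ and $j\notin\supp(y)^\perp$, then $\supp(s_jys_j)=\supp(y)\cup\{j\}$}. I would prove it by showing that the word $s_j\cdot(\text{reduced expression of }y)\cdot s_j$ is itself reduced in the geometric representation of $W$: each intermediate root $s_js_{i_1}\cdots s_{i_{k-1}}\alpha_{i_k}$ is the $s_j$-image of a positive root in $\Phi^+_{I\setminus\{j\}}$ (the $s_{i_l}$ with $i_l\ne j$ preserve $V_{I\setminus\{j\}}$), which has zero $\alpha_j$-coefficient and so differs from $\alpha_j$, hence stays positive under $s_j$; while the final root $s_jy\alpha_j$ is positive precisely because $y\alpha_j\ne\alpha_j$, which for $y\in W_{I\setminus\{j\}}$ is equivalent to $\supp(y)\not\subseteq\{j\}^\perp$ --- this equivalence follows from Steinberg's fixed-point theorem applied to $W_{I\setminus\{j\}}$ together with the observation that the positive roots in $\Phi^+_{I\setminus\{j\}}$ orthogonal to $\alpha_j$ are exactly $\Phi^+_{\{j\}^\perp}$.

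With the support-growth lemma in hand, normality of $\mathsf K_J$ gives $s_kxs_k\in\mathsf K_J\subseteq W_J$ for every $k\in I$. For any $k\in I\setminus(S\cup S^\perp)$ the lemma forces $\supp(s_kxs_k)=S\cup\{k\}\subseteq J$, hence $k\in J$, and the enlarged support is attained by an element of $\mathsf K_J$. Iterating (each step strictly enlarges the support inside the finite set $J$) terminates at a nonempty $S^\ast\subseteq J$ for which no $k\in I\setminus S^\ast$ satisfies $k\notin(S^\ast)^\perp$; equivalently $I\setminus S^\ast\subseteq(S^\ast)^\perp$, so $S^\ast$ is a nonempty union of $\sim$-equivalence classes of $I$. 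This contradicts connectedness of $I$ together with $\emptyset\ne S^\ast\subseteq J\subsetneq I$, completing the argument. The main obstacle is setting up the support-growth lemma; the key technical input is the careful use of Steinberg's theorem on pointwise stabilizers in the geometric representation, valid for any Coxeter group.
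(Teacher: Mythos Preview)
Your argument is correct and takes a somewhat different route. Both proofs rest on the same key fact---your support-growth lemma is the contrapositive of the paper's Lemma~\ref{lem:perp}: for $u\in W_{I\setminus\{j\}}$, one has $s_jus_j\notin W_{I\setminus\{j\}}$ iff $u\notin W_{\{j\}^\perp}$---but the organization differs. The paper proves its Lemma~\ref{lem:perp} by pure word combinatorics (via the coset decomposition of Lemma~\ref{lem:Tits}), then iterates it along admissible sequences (Lemma~\ref{lem:perp III}) to show directly that any $w\in\mathsf K_J$ lies in $W_{(I\setminus J)^\perp}\cap W_{\partial(I\setminus J)^\perp}=W_{I\setminus\cl(I\setminus J)}$, without ever reducing to connected~$I$. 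You instead first split off the closed part~$K$, reduce to the connected case, and argue by contradiction, repeatedly conjugating $x\in\mathsf K_J\setminus\{1\}$ to enlarge its support until it becomes a nonempty closed subset of $J\subsetneq I$. Your proof of the key lemma via positivity of the root sequence in the geometric representation is more geometric; the paper's is entirely word-theoretic and avoids the reduction step.

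One point needs sharpening. Your support-growth lemma requires $\mathrm{Stab}_{W_{I\setminus\{j\}}}(\alpha_j)=W_{\{j\}^\perp}$, and ``Steinberg's fixed-point theorem'' as usually stated (pointwise stabilizers in a reflection representation are reflection subgroups) can fail for infinite Coxeter groups acting on~$V$; the standard result applies to points in the Tits cone in~$V^*$, and $\alpha_j\in V$ is not obviously there. The fix is short: since the form $B$ is $W$-invariant, $y\alpha_j=\alpha_j$ forces $y$ to fix $f_j:=-B(\alpha_j,\cdot)\in V^*$, which satisfies $f_j(\alpha_i)=\cos(\pi/m_{ij})\ge 0$ for $i\ne j$ and hence lies in the closed fundamental chamber for $W_{I\setminus\{j\}}$; the stabilizer there is the standard parabolic $W_{\{i\ne j:\,m_{ij}=2\}}=W_{\{j\}^\perp}$ by the Tits-cone result. (Equivalently, $y\alpha_j=\alpha_j$ gives $ys_jy^{-1}=s_j$, hence $s_jys_j=y\in W_{I\setminus\{j\}}$, and the paper's combinatorial Lemma~\ref{lem:perp} then yields $y\in W_{\{j\}^\perp}$.)
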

\begin{proof}
The following is immediate.
\begin{lemma}\label{lem:kern natural act}
Let $G$ be a group and~$H$ be a subgroup of~$G$. Then 
$\mathsf K_{G/H}(G)=\{ k\in H\,:\, g^{-1}k g\in H,\, \forall\, g\in G\}$ is a subgroup of~$H$. 
\end{lemma}

The following Lemmata are apparently well-known. We provide their proof for the reader's convenience.
\begin{lemma}\label{lem:Tits}
Let $w\in W$ and let $J\subset I$ be such that $\ell(s_j w)=\ell(w)-1$ for all $j\in J$. Then $W_J$ is finite 
and $w=w_\circ^J w'$ for some~$w'\in W$ with $\ell(w)=\ell(w')+\ell(w_\circ^J)$.
\end{lemma}
\begin{proof}
By \cite{Bou}*{Ch.~IV, Ex. 3}, every 
$u\in W$ can be written uniquely as $[u]_J \cdot {}^J[u]$ where $[u]_J\in W_J$, ${}^J[u]\in {}^J W=\{ x\in W\,:\, \ell(s_j x)>\ell(x),\,\forall\, j\in J\}$
and $\ell(u)=\ell([u]_J)+\ell({}^J[u])$.
The uniqueness of such a presentation implies that $[s_j w]_J=s_j[w]_J$ and ${}^J[s_j w]={}^J[w]$ for all $j\in J$ and so that 
$\ell(s_j[w]_J)<\ell([w]_J)$ for all $j\in J$. This implies that $W_J$ is finite 
and $[w]_J$ is its longest element~$w_\circ^J$. The assertion follows with $w'={}^J[w]$.
\end{proof}
\begin{lemma}\label{lem:perp}
For $i\in I$ and $u\in W_{I\setminus\{i\}}$ the following are equivalent.
\begin{enumerate}[label={\rm(\alph*)},leftmargin=*]
 \item\label{lem:perp.a} $u\in W_{\{i\}^\perp}$ (in particular, $s_i u=u s_i$);
\item\label{lem:perp.b} $s_i u s_i\in W_{I\setminus\{i\}}$.
\end{enumerate}
\end{lemma}
\begin{proof}
The implication \ref{lem:perp.a}$\implies$\ref{lem:perp.b} is obvious. To prove 
the opposite implication, note that 
the assumption in~\ref{lem:perp.b} implies that $s_i u=u' s_i$ for some $u'\in W_{I\setminus\{i\}}$.
Then $\ell(s_i u)=\ell(u)+1$ and $\ell(u's_i)=\ell(u')+1$
whence $\ell(u)=\ell(u')$.
We prove the assertion
\begin{equation}\label{eq:ind hyp W}
s_i u=u's_i\implies u=u'\in W_{\{i\}^\perp}
\end{equation}
by induction on~$\ell(u)=\ell(u')$, the case $\ell(u)=\ell(u')=0$ being obvious.
If $\ell(u)=\ell(u')>0$ then there exists $j\not=i\in I$ such that $\ell(s_j u')<\ell(u')$.
Let $w=s_i u$. Then $\ell(s_j w)<\ell(w)$ and
$\ell(s_i w)<\ell(w)$. Applying Lemma~\ref{lem:Tits} to $w$ and~$J=\{i,j\}$ we conclude that 
$w=(\underbrace{s_i s_j\cdots}_{m_{ij}})u'$ with $\ell(w)=m_{ij}+\ell(u')$ and so 
$u=(\underbrace{s_js_i\cdots}_{m_{ij}-1})u'$ with $\ell(u)=m_{ij}-1+\ell(u')$. Since $u\in W_{I\setminus\{i\}}$, a reduced word for~$u$
cannot contain~$i$, yet for any $(i_1,\dots,i_r)\in R(u')$, $(\underbrace{j, i,\dots}_{m_{ij}-1},i_1,\dots,i_r)\in R(u)$. Thus,  
$m_{ij}=2$ and so $j\in \{i\}^\perp$. Then $s_i(s_j u)=s_j s_i u=(s_ju')s_i$. Thus,  
$s_j u$, $s_j u'$ satisfy~\eqref{eq:ind hyp W} and $\ell(s_j u)<\ell(u)$. Then the induction hypothesis implies that $s_j u=s_j u'\in W_{\{i\}^\perp}$ and hence 
$u=u'\in W_{\{i\}^\perp}$.
\end{proof}

\begin{lemma}\label{lem:perp III}
Let $i$, $i'$ be connected in~$I$ and let $(i=i_0,i_1,\dots,i_d=i')\in I^{d+1}$ be an admissible sequence with $d=\dist(i,i')$.
Suppose that $w\in W_{I\setminus\{i\}}$ and $s_{i_0}\cdots s_{i_d}ws_{i_d}\cdots s_{i_0}\in W_{I\setminus\{i\}}$.
Then 
$w\in W_{\{i_0,\dots,i_d\}^\perp}$.
\end{lemma}
\begin{proof}
The argument is by induction on~$d$. The case~$d=0$ (that is, $i=i'$) is established in Lemma~\ref{lem:perp}.
Suppose that $d>0$. 
Let $u=s_{i_1}\cdots s_{i_d} w s_{i_d}\cdots s_{i_1}$. By Lemma~\ref{lem:perp}, $u\in W_{\{i\}^\perp}$. Since $m_{i,i_1}>2$, $i_1\notin\{i\}^\perp$.
Thus, $u\in W_{I'\setminus\{i_1\}}$ where $I'=I\setminus \{i\}$ and $\dist(i_1,i')=d-1$. By the induction hypothesis,
$u\in W_{\{i_1,\dots,i_d\}^\perp}$ and in particular $u=w$. But then $w\in W_{\{i\}^\perp}\cap W_{\{i_1,\dots,i_d\}^\perp}=
W_{\{i\}^\perp\cap\{i_1,\dots,i_d\}^\perp}=W_{\{i_0,\dots,i_d\}^\perp}$ where we used Lemma~\ref{lem:intersect parabolics}\ref{lem:intersect parabolics.a}
and the observation that $J^\perp\cap J'{}^\perp=(J\cup J')^\perp$.
\end{proof}

By Lemma~\ref{lem:kern natural act}, $\mathsf K_J=\{ w\in W\,:\, uwu^{-1}\in W_J,\,\forall\, u\in W\}$
and is a subgroup of~$W_J$.
Suppose that $w\in \mathsf K_J$; in particular, $w\in W_J$. 
Furthermore, using Lemma~\ref{lem:perp} with $u=w$ and $i\in I\setminus J$, we conclude that 
$w\in \bigcap_{i\in I\setminus J} W_{\{i\}^\perp}=W_{(I\setminus J)^\perp}$. Let $i'\in \partial(I\setminus J)$. By definition,
there exists $i\in I\setminus J$ and an admissible sequence $(i_0,\dots,i_d)$ with $d=\dist(i,i')$, $i_0=i$ and $i_d=i'$. 
Since $u w u^{-1}\in W_J$ with $u=s_{i_0}\cdots s_{i_d}$, it follows from Lemma~\ref{lem:perp III} that $w\in W_{\{ i_0,\dots,i_d\}^\perp}\subset W_{\{i'\}^\perp}$.
Thus, $w\in W_{ (I\setminus J)^\perp\cap \partial(I\setminus J)^\perp}=W_{ (\cl(I\setminus J))^\perp}=W_{ I\setminus \cl(I\setminus J)}$.
We proved that $\mathsf K_J\subset W_{J_0}$ where~$J_0=I\setminus\cl(I\setminus J)$.

To complete the proof of Theorem~\ref{thm:kernel of natural action} we need the following.
\begin{lemma}\label{lem:closure}
Let $J'\subset J$ which is closed in~$I$. Then $W_{J'}\subset \mathsf K_J$.
\end{lemma}
\begin{proof}
Since~$J'$ is closed, $W_J=W_{J'}\times W_{J\setminus J'}$ and $W=W_{J'}\times W_{I\setminus J'}$ by Lemma~\ref{lem:factorization of W}. Then
$W/W_J=W_{I\setminus J'}/W_{J\setminus J'}$. Since $W_{J'}$ acts by left multiplication in the first factor,
this implies that $W_{J'}$ acts trivially on $W/W_J$.
\end{proof}
Applying Lemma~\ref{lem:closure} with $J'=J_0=I\setminus\cl(I\setminus J)$ we conclude that $W_{J_0}\subset \mathsf K_J$. Thus,
$\mathsf K_J=W_{J_0}$. This completes the proof of Theorem~\ref{thm:kernel of natural action}.
\end{proof}

\subsection{Cartan data and Weyl group}
In this section we mostly follow~\cite{Kac}.
Let 
$A=(a_{ij})_{i,j\in I}$ be a symmetrizable generalized Cartan matrix, that is 
$a_{ii}=2$, $i\in I$, $-a_{ij}\in\ZZ_{\ge 0}$ and $a_{ij}=0\implies a_{ji}=0$, $i\not=j$ and 
$d_i a_{ij}=d_j a_{ji}$ for some $\mathbf d=(d_i)_{i\in I}\in \ZZ_{>0}^I$.
We fix the following data:
\begin{enumerate}[label={-},leftmargin=*]
 \item a finite dimensional complex vector space $\lie h$;
 \item linearly independent subsets $\{\alpha_i\}_{i\in I}$ of~$\lie h^*$ and $\{\alpha_i^\vee\}_{i\in I}$ of~$\lie h$;
 \item a symmetric non-degenerate bilinear form $(\cdot,\cdot)$ on~$\lie h^*$, and 
 \item a lattice $P\subset \lie h^*$ of rank $\dim\lie h^*$  
\end{enumerate}
such that
\begin{enumerate}[label={$\arabic*^\circ$},leftmargin=*]
 \item $\alpha_j(\alpha_i^\vee)=a_{ij}$, $i,j\in I$;
 \item $(\alpha_i,\alpha_i)\in 2\ZZ_{>0}$;
 \item $\lambda(\alpha_i^\vee)=2(\lambda,\alpha_i)/(\alpha_i,\alpha_i)$ for all $\lambda\in\lie h^*$;
 \item $\alpha_i\in P$ for all $i\in I$;
 \item $\lambda(\alpha_i^\vee)\in\ZZ$ for all $\lambda\in P$;
 \item $(P,P)\subset \frac1d\ZZ$ for some $d\in\ZZ_{>0}$.
\end{enumerate}
These assumptions imply, in particular, that $\dim\lie h\ge 2|I|-\operatorname{rank}A$.

Denote by~$Q$ (respectively, $Q^+$) the subgroup (respectively, the submonoid) of $P$ generated by the~$\alpha_i$.
Let $P^+=\{ \lambda\in P\,:\, \lambda(\alpha_i^\vee)\in\ZZ_{\ge 0},\,\forall i\in I\}$. 

Define $\omega_i\in \lie h^*$, $i\in I$,
by $\omega_i(\alpha_j^\vee)=\delta_{i,j}$, $j\in J$
and $\omega_i(h)=0$ for all $h\in \bigcap_{i\in I}\ker \alpha_i$.
We will assume that 
$\omega_i\in P$, $i\in I$ and denote by $P_{int}$ (respectively, $P_{int}^+$) the subgroup (respectively, the submonoid) of $P$ generated by the $\omega_i$, $i\in I$.
Given any $J\subset I$, denote $\rho_J=\sum_{j\in J}\omega_j\in P$; we abbreviate $\rho_I=\rho$.

Let $W$ be the Weyl group associated with the matrix~$A$, that is, the Coxeter group 
with $m_{ij}=2$ if $a_{ij}=0$,
$m_{ij}=3$ if $a_{ij}a_{ji}=1$, $m_{ij}=4$ if $a_{ij}a_{ji}=2$, $m_{ij}=6$ if $a_{ij}a_{ji}=3$ and $m_{ij}=0$ if $a_{ij}a_{ji}>3$.
It is well-known that $W$ is finite if and only if $A$ is positive  definite. It should be noted that in that case $\alpha_i\in P_{int}$ for all~$i\in I$.
The group $W$ acts on $\lie h$ (respectively, on~$\lie h^*$)
by $s_ih=h-\alpha_i(h)\alpha_i^\vee$ (respectively, $s_i\lambda=\lambda-\lambda(\alpha_i^\vee)\alpha_i$), $h\in\lie h$, $\lambda\in\lie h^*$ and 
$i\in I$. Then we have $(w\lambda)(h)=\lambda(w^{-1}h)$ for all $w\in W$, $h\in \lie h$ and~$\lambda\in\lie h^*$. Clearly, $W(P)=P$
and $P=P_{int}\oplus P^W$ where $P^W=\{ \lambda\in P\,:\, w\lambda=\lambda,\,\forall\, w\in W\}=\{ \lambda\in P\,:\, \lambda(\alpha_i^\vee)=0,\,\forall\, i\in I\}$.

Given~$J\subset I$ we define 
a linear map $\rho_J^\vee:\lie h^*\to \mathbb C$ by $\rho_J^\vee(\alpha_i)=1$, $i\in J$ and $\rho_J^\vee(\lambda)=0$ if $(\lambda,\alpha_i)=0$ for all~$i\in J$.
As before, we abbreviate $\rho^\vee_I=\rho^\vee$. If~$J\in\mathscr J$ then it can be shown that $\rho_J^\vee(\lambda)$ is equal to $\frac12\lambda(\sum_{h\in R_J^\vee}
h)$ where $R_J^\vee=\{ h\in \lie h\,:\, h\in (\bigcup_{i\in J} W_J\alpha_i^\vee)\cap \sum_{i\in J} \ZZ_{\ge 0}\alpha_i^\vee\}$ is the set of 
positive co-roots of~$W_J$. In particular, this implies that $\rho^\vee_J(P)\subset\frac 12\ZZ$.

If~$J\in\mathscr J$ then for each $j\in J$ 
we have
$w_\circ^J(\alpha_j)=-\alpha_{j^\star}$. 

Given~$\lambda\in P^+$, denote $J_\lambda=\{ i\in I\,:\, \lambda(\alpha_i^\vee)=0\}=\{i\in I\,:\, s_i\lambda=\lambda\}$. 
It is well-known that $\operatorname{Stab}_W\lambda=W_{J_\lambda}$ for~$\lambda\in P^+$.

\subsection{Quantum groups}
We associate with the datum $(A, \lie h, \{\alpha_i\}_{i\in I},\{\alpha_i^\vee\}_{i\in I})$ a complex Lie algebra~$\lie g$ generated by the $e_i$, $f_i$, $i\in I$ and~$h\in \lie h$
subject to the relations 
\begin{gather*}
[h,h']=0,\quad [h,e_i]=\alpha_i(h)e_i,\quad [h,f_i]=-\alpha_i(h)f_i,\quad [e_i,f_j]=\delta_{i,j}\alpha_i^\vee,\quad h,h'\in \lie h,\,i,j\in I\\
(\ad e_i)^{1-a_{ij}}(e_j)=0=(\ad f_i)^{1-a_{ij}}(f_j),\qquad i\not=j.
\end{gather*}
If~$A$ is positive definite then $\lie g$ is a reductive finite dimensional Lie algebra. For~$J\subset I$ we denote by~$\lie g^J$ the subalgebra 
of~$\lie g$ generated by the $e_i$, $f_i$, $i\in J$ and $\lie h$. It can also be regarded as the Lie algebra corresponding to the 
datum $(A|_{J\times J},\lie h,\{\alpha_i\}_{i\in J},\{\alpha^\vee_i\}_{i\in J})$.
In particular, if $J\in\mathscr J$ then $\lie g^J$ is a reductive finite dimensional Lie algebra.

Let $\kk$ be any field of characteristic zero containing $q^{\frac1{2d}}$ which is purely transcendental over~$\mathbb Q$.
Given any~$v\in\kk^\times$ with $v^2\not=1$ define  
$$
(n)_v=\frac{v^{n}-v^{-n}}{v-v^{-1}},\qquad (n)_v!=\prod_{s=1}^n (s)_v,\qquad \binom{n}{k}_v=\prod_{s=1}^k \frac{(n-s+1)_v}{(s)_v}.
$$
Let $q_i=q^{\frac12(\alpha_i,\alpha_i)}$. Henceforth, given any associative algebra~$\mathcal A$ over~$\kk$ and $X_i\in\mathcal A$, $i\in I$
denote $X_i^{(n)}:=X_i^n/(n)_{q_i}!$. We will always use the convention that $X_i^{(n)}=0$ if~$n<0$.

Define the Drinfeld-Jimbo quantum group $U_q(\lie g)$ corresponding to~$\lie g$ as the associative algebra over~$\kk$ with generators
$K_\lambda$, $\lambda\in\frac12 P$ and $E_i,F_i$, $i\in I$ subject to the relations 
\begin{gather*}
K_\lambda E_i=q^{(\lambda,\alpha_i)}E_i K_\lambda,\,\, K_\lambda F_i=q^{-(\lambda,\alpha_i)}F_i K_\lambda,\,\,
[E_i,F_j]=\delta_{ij}\, \frac{ K_{\alpha_i}-K_{-\alpha_i}}{q_i-q_i^{-1}},\quad \lambda\in\tfrac12 P,\, i,j\in I\\
\sum_{r+s=1-a_{ij}} (-1)^r E_i^{(r)}E_j E_i^{(s)}=0=\sum_{r+s=1-a_{ij}} (-1)^r F_i^{(r)}F_j F_i^{(s)}.
\end{gather*}
This is a Hopf algebra with with 
the ``balanced'' comultiplication
\begin{equation}\label{eq:comult}
\Delta(E_i)=E_i\tensor K_{\frac12\alpha_i}+K_{-\frac12\alpha_i}\tensor E_i,\,\, 
\Delta(F_i)=F_i\tensor K_{\frac12\alpha_i}+K_{-\frac12\alpha_i}\tensor F_i,\, i\in I,
\end{equation}
while $\Delta(K_\lambda)=K_\lambda\tensor K_\lambda$, $\lambda\in\frac12 P$. Denote $U_q^+(\lie g)$ (respectively, $U_q^-(\lie g)$) the subalgebra of~$U_q(\lie g)$
generated by the $E_i$ (respectively, the $F_i$), $i\in I$. Then $U_q^\pm(\lie g)$ is graded by $\pm Q^+$ with $\deg E_i=\alpha_i=-\deg F_i$. 
Given~$\nu\in Q^+$, denote by~$U_q^\pm(\lie g)(\pm\nu)$
the subspace of homogeneous elements of~$U_q^\pm(\lie g)$
of degree~$\pm\nu$.

Given $J\subset I$ we denote by $U_q(\lie g^J)$ the subalgebra of~$U_q(\lie g)$ generated by the $E_j$, $F_j$, $j\in I$ and $K_\lambda$, $\lambda\in\frac12 P$
and set $U_q^\pm(\lie g^J)=U_q^\pm(\lie g)\cap U_q(\lie g^J)$.

If~$J\in\mathscr J$ then the algebra~$U_q(\lie g^J)$ admits an automorphism $\theta_J$ defined by $\theta_J(E_i)=F_{i^\star}$,
$\theta_J(F_i)=E_{i^\star}$ and $\theta_J(K_\lambda)=K_{w_\circ^J\lambda}$, $\lambda\in \frac12 P$. If~$I\in\mathscr J$ we abbreviate $\theta_I=\theta$.

\subsection{Integrable modules}
We say that a $U_q(\lie g)$-module $M$ is {\em integrable} if $M=\bigoplus\limits_{\beta\in P}M(\beta)$ where $M(\beta)=\{ m\in M\,:\, 
K_\lambda(m)=q^{(\lambda,\beta)}m,\,\forall \lambda\in\frac12 P\}$ and the $E_i$, $F_i$, $i\in I$ act locally nilpotently on~$M$. 
Given any $m\in M$ we can write uniquely 
\begin{equation}\label{eq:weight-components}m=\bigoplus_{\beta\in P} m(\beta)
\end{equation}
where $m(\beta)\in M(\beta)$ and $m(\beta)=0$ for all but finitely many $\beta\in P$.
Denote $\supp m=\{ \beta\in P\,:\, m(\beta)\not=0\}$. By definition, if $m\in M(\beta)$ and $u_\pm \in U_q^\pm(\lie g)(\pm\nu)$,
$\nu\in \pm Q^+$ then $u_\pm(m)\in M(\beta\pm \nu)$. We say that $m\in M(\beta)$, $\beta\in P$ is homogeneous of weight~$\beta$ and 
call $M(\beta)$ a weight subspace of~$M$.

\begin{definition}\label{def:cat O}
The category $\mathscr O^{int}_q(\lie g)$ is the full subcategory of the category of~$U_q(\lie g)$-modules whose objects 
are integrable $U_q(\lie g)$-modules~$M$ 
with the following property: given $m\in M$, there exists~$N(m)\ge 0$ such that $U_q^+(\lie g)(\nu)(m)=0$ for all $\nu\in Q^+$ with
${\rho^\vee(\nu)\ge N(m)}$. 
\end{definition}

Given $V\in\mathscr O^{int}_q(\lie g)$, let $V_+=\bigcap_{i\in I} \ker E_i$ where the $E_i$ are regarded as linear endomorphisms of~$V$. 
For any subset~$S$ of an object~$V$ in~$\mathscr O^{int}_q(\lie g)$ we denote $S(\beta)=S\cap V(\beta)$ and $S_+=S\cap V_+$.

It is well-known (see e.g.~\cite{Lus}*{Theorem~6.2.2}) that 
$\mathscr O^{int}_q(\lie g)$ is semisimple and its simple objects are simple highest weight modules $V_\lambda$, $\lambda\in P^+$ with $(V_\lambda)_+=(V_\lambda)_+(\lambda)$ one-dimensional.
Furthermore, every $V\in\mathscr O^{int}_q(\lie g)$ is generated by $V_+$ as a $U_q(\lie g)$-module and $V_+(\lambda)\not=0$ implies that~$\lambda\in P^+$.
Given $\lambda\in P^+$ and $V\in \mathscr O^{int}_q(\lie g)$ denote $\mathcal I_\lambda(V)$ the $\lambda$-isotypical component of~$V$
as a $U_q(\lie g)$-module. Thus, every simple submodule (and hence a direct summand) of~$\mathcal I_\lambda(V)$ is isomorphic to~$V_\lambda$
and $\mathcal I_\lambda(V)_+=V_+(\lambda)$.
Furthermore, for any $v\in V_+$ we have the following equality of~$U_q(\lie g)$-submodules of~$V$
\begin{equation}\label{eq:sum  submodules}
U_q(\lie g)(v)=\sum_{\lambda\in\supp v} U_q(\lie g)(v(\lambda)),
\end{equation}
where the sum is direct and each summand is simple and isomorphic to~$V_\lambda$.

It is immediate from the definition that every object~$\mathscr O^{int}_q(\lie g)$ can be regarded as an object in~$\mathscr O^{int}_q(\lie g^J)$, $J\subset I$.
Denote $P^+_J=\{ \mu\in P\,:\,
\mu(\alpha_j^\vee)\in\ZZ_{\ge 0},\,\forall\, j\in J\}$. 
Given $V\in \mathscr O^{int}_q(\lie g)$ and $\lambda_J\in P^+_J$ 
denote by $\mathcal I^J_{\lambda_J}(V)$  the $\lambda_J$-isotypical component of~$V$ as a $U_q(\lie g^J)$-module.
Clearly, $\mathcal I^J_{\lambda_J}(\mathcal I_\lambda(V))=\mathcal I_\lambda(V)\cap \mathcal I^J_{\lambda_J}(V)$ for any $V\in\mathscr O^{int}_q(\lie g)$,
$\lambda\in P^+$, $\lambda_J\in P^+_J$. 
We denote $V_+^J=\bigcap_{j\in J} \ker E_j\subset V_+$. Then $\mathcal I^J_{\lambda_J}(V)$ is generated by $V_+^J(\lambda_J)$
as a $U_q(\lie g^J)$-module.

\subsection{Crystal operators, lattices and bases}\label{subs:prelim crystal bases}
Here we recall some necessary facts from Kashiwara's theory of crystal bases. To treat lower and upper 
crystal operators and lattices uniformly, we find it convenient to interpolate between them using {\em $\mathbf c$-crystal
operators} and lattices (for other generalizations see e.g.~\cites{CJM,GLam}).

The following fact is standard (for example, see~\cite{Lus}*{Lemma~16.1.4}). 
\begin{lemma}\label{lem:sl2 decomp}
Let $V\in\mathscr O^{int}_q(\lie g)$ 
and fix~$i\in I$. Then 
$$
V=\bigoplus_{0\le n\le l} F_i^n(\ker E_i\cap \ker(K_{\alpha_i}-q_i^l))
=\bigoplus_{0\le n\le l} E_i^n(\ker F_i\cap \ker(K_{\alpha_i}-q_i^{-l}))
$$
\end{lemma}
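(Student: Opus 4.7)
The plan is to reduce the statement to the well-known decomposition theorem for integrable modules over the quantum $\mathfrak{sl}_2$. Let $U_i$ denote the subalgebra of $U_q(\lie g)$ generated by $E_i$, $F_i$ and $K_{\pm\alpha_i}$; it is a copy of $U_{q_i}(\mathfrak{sl}_2)$. I would first observe that restricted to $U_i$, the module $V$ is integrable: the $E_i$, $F_i$ act locally nilpotently by definition of $\mathscr O^{int}_q(\lie g)$, while each weight space $V(\beta)$, $\beta\in P$, is a $K_{\alpha_i}$-eigenspace with eigenvalue $q^{(\alpha_i,\beta)}=q_i^{\beta(\alpha_i^\vee)}\in q_i^{\ZZ}$.

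Next, I would invoke the standard theorem (see e.g.\ \cite{Lus}*{Ch.~6}) that any such integrable $U_{q_i}(\mathfrak{sl}_2)$-module is the direct sum of its isotypic components $V^{[l]}$, $l\in\ZZ_{\ge 0}$, where each $V^{[l]}$ is a (possibly infinite) direct sum of copies of the simple $(l+1)$-dimensional module~$V_l$. Inside each copy of $V_l$, the highest weight vector $v$ is characterized by $E_i v=0$ and $K_{\alpha_i}v=q_i^l v$, and $F_i^n$ restricts to an injection of the highest weight line into $V(\beta-n\alpha_i)$ for $0\le n\le l$ (and vanishes for $n>l$). Collecting the highest weight vectors yields
$$
V^{[l]}\cap \ker E_i = \ker E_i\cap\ker(K_{\alpha_i}-q_i^l),
$$
the latter equality because any $E_i$-kernel vector lies automatically in the isotypic component dictated by its $K_{\alpha_i}$-eigenvalue. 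Applying $F_i^n$ for $0\le n\le l$ to this space and summing gives the first decomposition. The second one is symmetric, using lowest weight vectors in place of highest ones.

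The only point needing a moment's care is the directness of the sum across different pairs $(n,l)$. Distinct pairs may produce spaces of the same $K_{\alpha_i}$-weight $q_i^{l-2n}$, so directness is not obvious from the weight grading alone; however, the decomposition $V=\bigoplus_l V^{[l]}$ into $U_i$-isotypic components separates them, since the subspace $F_i^n(\ker E_i\cap\ker(K_{\alpha_i}-q_i^l))$ sits entirely within $V^{[l]}$. Thus the only real content is the $U_{q_i}(\mathfrak{sl}_2)$-decomposition, which is classical; the rest is bookkeeping. I expect no serious obstacle.
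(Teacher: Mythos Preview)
Your approach is correct and essentially identical to the paper's: the paper does not give a proof at all but simply declares the fact standard and cites \cite{Lus}*{Lemma~16.1.4}, which is precisely the $U_{q_i}(\mathfrak{sl}_2)$-decomposition you invoke. Your write-up merely unpacks that citation with the expected bookkeeping.
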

Let
$\mathbb D=\{ (l,k,s)\in\mathbb D\,:\, k-l\le s\le k\le l\}$.
Fix a map $\mathbf c:\mathbb D\to\mathbb Q(z)^\times$ and denote its value at~$(l,k,s)$ by $\mathbf c_{l,k,s}$. We use the convention that $\mathbf c_{l,k,s}=0$ whenever~$(l,k,s)\in\ZZ^3\setminus\mathbb D$. 
Using Lemma~\ref{lem:sl2 decomp} we can define {\em generalized Kashiwara operators} $\tilde e_{i,s}^{\mathbf c}\in\End_\kk V$, $s\in\ZZ$ by 
\begin{equation}\label{eq:kash-op-defn}
\tilde e_{i,s}^{\mathbf c}(F_i^k(u))=\mathbf c_{l,k,s}(q_i) F_i^{k-s}(u),
\end{equation}
for every $u\in \ker E_i\cap \ker(K_{\alpha_i}-q_i^l)$, $0\le k\le l$. 
Note that under these assumptions on~$u$,
$\tilde e_{i,s}^{\mathbf c}(F_i^{(k)}(u))\not=0$ if and only if~$(l,k,s)\in\mathbb D$.
Clearly, like lower or upper Kashiwara operators, the generalized ones commute with morphisms in~$\mathscr O^{int}_q(\lie g)$. 
\begin{lemma}\label{lem:2nd defn Kash op}
Let $u\in\ker E_i\cap \ker( K_{\alpha_i}-q_i^l)$ and 
$u'\in\ker F_i\cap \ker (K_{\alpha_i}-q_i^{-l})$, $0\le k\le l$. Then 
\begin{equation}\label{eq:kash-op-defn-alt}
\tilde e_{i,s}^{\mathbf c}(F_i^{(k)}(u))=\underline{\mathbf c}_{l,k,s}(q_i) F_i^{(k-s)}(u),\quad
\tilde e_{i,s}^{\mathbf c}(E_i^{(k)}(u'))
=\underline{\mathbf c}_{l,l-k,s}(q_i) E_i^{(k+s)}(u'),
\end{equation}
for all~$(l,k,s)\in\mathbb D$, where $\underline{\mathbf c}_{l,k',s'}={\mathbf c}_{l,k',s'}(k'-s')_z!/(k')_z!$.
\end{lemma}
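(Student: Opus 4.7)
The plan is to derive both equations directly from the definition \eqref{eq:kash-op-defn} of $\tilde e_{i,s}^{\mathbf c}$, reducing the second equation to the first via a simple $U_{q_i}(\mathfrak{sl}_2)$-duality between the two decompositions in Lemma~\ref{lem:sl2 decomp}. The first identity is pure bookkeeping: writing $F_i^{(k)}(u) = F_i^k(u)/(k)_{q_i}!$ and applying~\eqref{eq:kash-op-defn} yields
\[\tilde e_{i,s}^{\mathbf c}\bigl(F_i^{(k)}(u)\bigr) = \frac{\mathbf c_{l,k,s}(q_i)}{(k)_{q_i}!}\, F_i^{k-s}(u) = \frac{\mathbf c_{l,k,s}(q_i)\,(k-s)_{q_i}!}{(k)_{q_i}!}\, F_i^{(k-s)}(u),\]
which matches the asserted formula by the definition of $\underline{\mathbf c}_{l,k,s}$.

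The core of the second equation is the following $U_{q_i}(\mathfrak{sl}_2)$-duality: given $u' \in \ker F_i \cap \ker(K_{\alpha_i} - q_i^{-l})$, the element $\tilde u := E_i^{(l)}(u')$ belongs to $\ker E_i \cap \ker(K_{\alpha_i} - q_i^l)$ and satisfies
\[E_i^{(k)}(u') = F_i^{(l-k)}(\tilde u), \qquad 0 \le k \le l.\]
By Lemma~\ref{lem:sl2 decomp} the $U_{q_i}(\mathfrak{sl}_2)$-submodule of $V$ generated by $u'$ is the $(l+1)$-dimensional irreducible, and $\tilde u$ is its highest weight vector. A routine induction using the quantum commutation relation $[E_i, F_i^{(n)}] = F_i^{(n-1)}(K_{\alpha_i}q_i^{1-n} - K_{-\alpha_i}q_i^{n-1})/(q_i-q_i^{-1})$ applied to $\tilde u$ gives $E_i\bigl(F_i^{(n)}(\tilde u)\bigr) = (l-n+1)_{q_i}\, F_i^{(n-1)}(\tilde u)$, whence iteration and passage to divided powers yields $E_i^{(k)}\bigl(F_i^{(l)}(\tilde u)\bigr) = F_i^{(l-k)}(\tilde u)$. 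Specializing $k = l$ gives $E_i^{(l)}\bigl(F_i^{(l)}(\tilde u)\bigr) = \tilde u = E_i^{(l)}(u')$; since $E_i^{(l)}$ is an isomorphism from the one-dimensional $q_i^{-l}$-weight space of this irreducible onto its $q_i^l$-weight space, this forces $F_i^{(l)}(\tilde u) = u'$, and the duality then follows for all $k$.

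With the duality in hand, applying the first formula to $\tilde u$ with $l-k$ substituted for $k$ produces
\[\tilde e_{i,s}^{\mathbf c}\bigl(E_i^{(k)}(u')\bigr) = \tilde e_{i,s}^{\mathbf c}\bigl(F_i^{(l-k)}(\tilde u)\bigr) = \underline{\mathbf c}_{l,l-k,s}(q_i)\, F_i^{(l-k-s)}(\tilde u) = \underline{\mathbf c}_{l,l-k,s}(q_i)\, E_i^{(k+s)}(u'),\]
as required. The only mild obstacle is fixing the precise scalar in the duality rather than merely identifying the two sides up to a constant; this is taken care of by the injectivity of $E_i^{(l)}$ on the extremal weight space within the irreducible generated by $u'$.
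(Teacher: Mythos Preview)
Your proof is correct and follows essentially the same route as the paper's: both derive the first identity directly from~\eqref{eq:kash-op-defn}, set $u=E_i^{(l)}(u')$, invoke the duality $E_i^{(k)}(u')=F_i^{(l-k)}(u)$, and then apply the first identity with $l-k$ in place of~$k$. The only difference is that the paper cites~\cite{Lus}*{\S3.4.2} for the duality, whereas you supply a self-contained derivation via the commutator $[E_i,F_i^{(n)}]$ and injectivity of $E_i^{(l)}$ on the extremal weight space.
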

\begin{proof}
The first identity in~\eqref{eq:kash-op-defn-alt} is immediate from~\eqref{eq:kash-op-defn}. 
To prove the second, note that 
$E_i^{(l+1)}(u')=0$ and so $u=E_i^{(l)}(u')\in\ker E_i\cap \ker (K_{\alpha_i}-q_i^l)$. 
It follows from~\cite{Lus}*{\S3.4.2} that $E_i^{(k)}(u')=F_i^{(l-k)}(u)$. 
Using the first identity in~\eqref{eq:kash-op-defn-alt} we obtain
\begin{equation*}
\tilde e_{i,s}^{\mathbf c}(E_i^{(k)}(u'))=\tilde e_{i,s}^{\mathbf c}(F_i^{(l-k)}(u))=\underline{\mathbf c}_{l,l-k,s}(q_i) F_i^{(l-k-s)}(u)=
\underline{\mathbf c}_{l,l-k,s}(q_i) E_i^{(k+s)}(u').\qedhere
\end{equation*}
\end{proof}
The following is immediate.
\begin{lemma}\label{lem:composition and identity}
Given $\mathbf c:\mathbb D\to \mathbb Q(z)^\times$ we have 
\begin{enumerate}[label={\rm(\alph*)},leftmargin=*]
 \item $\tilde e_{i,0}^{\mathbf c}=\id_V$ for all $V\in\mathscr O^{int}_q(\lie g)$ if and only if $\mathbf c_{l,k,0}=1$ for all $0\le k\le l$;
 \item 
$\tilde e_{i,t}^{\mathbf c}\circ\tilde e_{i,s}^{\mathbf c}=\tilde e_{i,s+t}^{\mathbf c}$ for all $s,t\in\ZZ$ with $st\ge 0$ 
and for all $V\in\mathscr O^{int}_q(\lie g)$
if and only if~$\mathbf c_{l,k,s+t}=
\mathbf c_{l,k,s}\mathbf c_{l,k-s,t}$ for all $0\le k\le l$, $s,t\in\ZZ$, $st\ge 0$.
\end{enumerate}
\end{lemma}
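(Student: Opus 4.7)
The plan is to verify both statements by direct computation, reducing to the model vectors supplied by Lemma~\ref{lem:sl2 decomp}: since $V=\bigoplus_{0\le k\le l} F_i^k\bigl(\ker E_i\cap \ker(K_{\alpha_i}-q_i^l)\bigr)$, it suffices to check each operator identity on vectors of the form $F_i^k(u)$ with $u\in\ker E_i\cap\ker(K_{\alpha_i}-q_i^l)$ and $0\le k\le l$. For the converse direction in each part, I need test modules realising every admissible triple $(l,k,s)\in\mathbb D$: the simple highest weight modules $V_{l\omega_i}\in\mathscr O_q^{int}(\lie g)$ do the job, as their highest weight vector $u$ satisfies $F_i^k(u)\ne 0$ for every $0\le k\le l$, and transcendentality of $q$ (hence of $q_i$) over $\mathbb Q$ lifts any scalar identity at $z=q_i$ to an identity of rational functions in $\mathbb Q(z)$.

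For part~(a), formula~\eqref{eq:kash-op-defn} at $s=0$ reads $\tilde e_{i,0}^{\mathbf c}(F_i^k(u))=\mathbf c_{l,k,0}(q_i)F_i^k(u)$, so both implications are immediate from the preceding paragraph.

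For part~(b), two applications of~\eqref{eq:kash-op-defn} give
\[
\tilde e_{i,t}^{\mathbf c}\bigl(\tilde e_{i,s}^{\mathbf c}(F_i^k(u))\bigr)=\mathbf c_{l,k,s}(q_i)\,\mathbf c_{l,k-s,t}(q_i)\,F_i^{k-s-t}(u),\qquad \tilde e_{i,s+t}^{\mathbf c}(F_i^k(u))=\mathbf c_{l,k,s+t}(q_i)\,F_i^{k-s-t}(u).
\]
The hypothesis $st\ge 0$ is used to align three range conditions: if $s,t\ge 0$ and $(l,k,s+t)\in\mathbb D$ (that is, $0\le s+t\le k$) then automatically $(l,k,s),(l,k-s,t)\in\mathbb D$, and symmetrically for $s,t\le 0$; whereas if $(l,k,s+t)\notin\mathbb D$ then $F_i^{k-s-t}(u)=0$ and at least one intermediate coefficient vanishes, so both sides of the operator identity are zero regardless of~$\mathbf c$. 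Consequently the operator identity is nontrivially tested exactly on triples in~$\mathbb D$, and on those it is equivalent to the scalar identity $\mathbf c_{l,k,s+t}=\mathbf c_{l,k,s}\mathbf c_{l,k-s,t}$; the ``if'' direction is then immediate, and ``only if'' follows by testing on $V_{l\omega_i}$.

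The main (though mild) obstacle is precisely this range bookkeeping around~$\mathbb D$: one has to verify that the hypothesis $st\ge 0$ cannot be dropped, because when $st<0$ the intermediate vector $\tilde e_{i,s}^{\mathbf c}(F_i^k(u))$ can fall outside the window where $\tilde e_{i,t}^{\mathbf c}$ is nonvanishing while $\tilde e_{i,s+t}^{\mathbf c}$ itself still acts nontrivially, producing spurious operator identities that do not reflect identities of coefficients. Once this bookkeeping is in place, the remainder of the argument is a mechanical unwinding of~\eqref{eq:kash-op-defn}.
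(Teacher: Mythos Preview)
Your argument is correct and is exactly the direct verification the paper has in mind when it labels the lemma ``immediate'': unwind~\eqref{eq:kash-op-defn} on the decomposition of Lemma~\ref{lem:sl2 decomp}, then test on $V_{l\omega_i}$ and use transcendence of~$q_i$ to pass from scalar identities at $z=q_i$ to identities in~$\mathbb Q(z)$. Your range analysis for part~(b) is also right: when $st\ge 0$ and $(l,k,s+t)\notin\mathbb D$, either $(l,k,s)\notin\mathbb D$ or $(l,k-s,t)\notin\mathbb D$, so both sides vanish by the convention $\mathbf c_{l,k,s}=0$ off~$\mathbb D$.
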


The following is easy to deduce from~\cite{Kas93}*{\S3.1}
\begin{lemma}
Define $\mathbf c^{low},\mathbf c^{up}:\mathbb D\to\mathbb Q(z)^\times$ by
\begin{equation}\label{eq:low upper Kash}
\mathbf c^{low}_{l,k,s}=\frac{(k)_z!}{(k-s)_z!},\qquad 
\mathbf c^{up}_{l,k,s}=\frac{(l-k+s)_z!}{(l-k)_z!},\qquad (l,k,s)\in\mathbb D.
\end{equation}
We have  
$$
(\tilde e_i^{low})^s=\tilde e^{\mathbf c^{low}}_{i,s},\qquad (\tilde e_i^{up})^s=\tilde e^{\mathbf c^{up}}_{i,s},\qquad i\in I,\,s\in\ZZ,
$$
where $\tilde e_i^{low}$ (respectively, $\tilde e_i^{up}$) are lower (respectively, upper) Kashiwara's operators
as defined in~\cite{Kas93}*{\S3.1}.
\end{lemma}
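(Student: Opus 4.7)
The plan is a direct verification on the string bases $\{F_i^{(k)}(u)\}_{0\le k\le l}$ and $\{E_i^{(k)}(u')\}_{0\le k\le l}$ provided by Lemma~\ref{lem:sl2 decomp}, combining the reformulation of the generalized Kashiwara operators given in Lemma~\ref{lem:2nd defn Kash op} with the explicit action of Kashiwara's operators recalled from \cite{Kas93}*{\S3.1}. Since all operators in sight are $\kk$-linear and commute with the weight decomposition, it suffices to check equality on a single such string.

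For the lower case I would substitute $\mathbf c^{low}_{l,k,s}=(k)_z!/(k-s)_z!$ into the definition $\underline{\mathbf c}_{l,k,s}=\mathbf c_{l,k,s}(k-s)_z!/(k)_z!$ and observe the telescoping $\underline{\mathbf c}^{low}_{l,k,s}=1$ for every $(l,k,s)\in\mathbb D$. By the first identity in \eqref{eq:kash-op-defn-alt} this yields $\tilde e^{\mathbf c^{low}}_{i,s}(F_i^{(k)}(u))=F_i^{(k-s)}(u)$ whenever $u\in\ker E_i\cap\ker(K_{\alpha_i}-q_i^l)$. On the other hand, \cite{Kas93}*{\S3.1} defines the lower Kashiwara operators so that they shift by one along the string basis, and iterating gives $(\tilde e_i^{low})^s(F_i^{(k)}(u))=F_i^{(k-s)}(u)$ for all $s\in\ZZ$, with the convention that negative iterates of $\tilde e_i^{low}$ mean positive iterates of $\tilde f_i^{low}$. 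The two operators thus agree on a spanning set, hence on all of $V$.

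For the upper case I would apply the second identity in \eqref{eq:kash-op-defn-alt} to $E_i^{(k)}(u')$, substitute $\mathbf c^{up}_{l,k,s}=(l-k+s)_z!/(l-k)_z!$, and match the resulting scalar $\underline{\mathbf c}^{up}_{l,l-k,s}$ against the scaling factor by which $(\tilde e_i^{up})^s$ sends $E_i^{(k)}(u')$ to $E_i^{(k+s)}(u')$ according to Kashiwara's definition. The main (and essentially only) obstacle is aligning the normalization convention for the upper operator from~\cite{Kas93}*{\S3.1} with our ansatz: the asymmetry between the $F$- and $E$-sides is precisely what forces $\mathbf c^{up}$ to be written in terms of $(l-k+s)_z!/(l-k)_z!$ rather than the $F$-side expression $(k)_z!/(k-s)_z!$ used for $\mathbf c^{low}$. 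Once Kashiwara's convention is correctly unwound, the verification reduces to elementary identities between quantum factorials, and Lemma~\ref{lem:sl2 decomp} again propagates the equality from the string basis to all of $V$.
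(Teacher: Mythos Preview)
Your approach is correct and is essentially what the paper intends: the paper does not give an explicit proof of this lemma but simply says it is ``easy to deduce from~\cite{Kas93}*{\S3.1}'', and your direct verification on the $\lie{sl}_2$-strings via Lemma~\ref{lem:2nd defn Kash op} is precisely such a deduction. The only minor comment is that in the upper case you remain slightly schematic (``once Kashiwara's convention is correctly unwound\ldots''); to be fully rigorous you should write down explicitly that Kashiwara's upper operator satisfies $\tilde e_i^{up}(F_i^k(u))=(l-k+1)_{q_i}F_i^{k-1}(u)$ (equivalently, in divided-power form, $\tilde e_i^{up}(F_i^{(k)}(u))=\frac{(l-k+1)_{q_i}}{(k)_{q_i}}F_i^{(k-1)}(u)$) and then iterate, but this is exactly the routine computation you describe.
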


Fix $\mathbf c:\mathbb D\to \mathbb Q(z)^\times$ and let $V\in\mathscr O^{int}_q(\lie g)$.
Let~$\mathbb A$ be the local subring of $\mathbb Q(q)\subset \kk$ consisting of rational functions regular at~$0$.
Generalizing well-known definitions of Kashiwara, we say that an $\mathbb A$-submodule~$L$ of~$V$ is a $\mathbf c$-crystal lattice 
if $V=\kk\tensor_{\mathbb A} L$, $L=\bigoplus_{\beta\in P}(L\cap V(\beta))$,
and $\tilde e_{i,s}^{\mathbf c}(L)\subset L$ for all $i\in I$, $s\in\mathbb Z$.

We will be mostly interested in a special class of crystal lattices which we refer to as monomial.
We need the following notation. Given $v\in V$ set 
$$\mathsf M_J^{\mathbf c}(v)=\{v\}\cup \bigcup_{k\in\ZZ_{>0}} \{ \tilde e^{\mathbf c}_{i_1,m_1}\cdots\tilde e^{\mathbf c}_{i_k,m_k}(v)\,:\,
(i_1,\dots,i_k)\in J^k,\, (m_1,\dots,m_k)\in\ZZ^k\}.
$$
We abbreviate $\mathsf M^{\mathbf c}(v)=\mathsf M_I^{\mathbf c}(v)$. 
We call an $\mathbb A$-submodule $L$ of~$V$ a {\em $(\mathbf c,J)$-monomial lattice} if $$
L=\sum_{v_+} \mathsf M_J^{\mathbf c}(v_+)
$$
where the sum is over all $v_+\in L\cap V_+^J(\lambda_J)$, $\lambda_J\in P^+$.
Clearly, $L$ inherits a weight decomposition from~$V$ and $\tilde e_{j,a}^{\mathbf c}(L)\subset L$ for all $j\in J$, $a\in\ZZ$.
In particular, if $L$ is a $(\mathbf c,I)$-monomial and $\kk\tensor_{\mathbb A}L=V$ then $L$ is a $\mathbf c$-crystal lattice.

Denote $\tilde L$ the $\mathbb Q$-vector space $L/q L$. Given $\tilde v\in \tilde L$, denote 
$$\tilde{\mathsf M}^{\mathbf c}_J(\tilde v)=\{\tilde v\}\cup \bigcup_{k\in\ZZ_{>0}} \{ \tilde e^{\mathbf c}_{i_1,m_1}\cdots\tilde e^{\mathbf c}_{i_k,m_k}(\tilde v)\,:\,
(i_1,\dots,i_k)\in J^k,\, (m_1,\dots,m_k)\in\ZZ^k\}\subset \tilde L.
$$
As before, we abbreviate $\tilde{\mathsf M}^{\mathbf c}(v)=\tilde{\mathsf M}_I^{\mathbf c}(v)$.

By~\cite{Kas91}*{Theorem~3} and~\cite{Kas93}*{Theorem~3.3.1}, if $\mathbf c=\mathbf c^{low}$ or $\mathbf c=\mathbf c^{up}$ then
every object in~$\mathscr O^{int}_q(\lie g)$ admits a $\mathbf c$-crystal lattice. Moreover, in that case for any $\lambda\in P^+$,
and any $v_\lambda\in V_\lambda(\lambda)$ the 
smallest $\mathbb A$-submodule of~$V_\lambda$ containing $v_\lambda$ and 
invariant with respect to the $\tilde e_{i,s}^{\mathbf c}$, $i\in I$, $s\in\ZZ_{<0}$ is a $\mathbf c$-crystal lattice.

Let $L$ be a $\mathbf c$-crystal lattice of~$V\in\mathscr O^{int}_q(\lie g)$. Clearly, operators $\tilde e_{i,s}^{\mathbf c}$
commute with the action of~$q$ on~$L$ and thus
factor through to $\mathbb Q$-linear operators on~$\tilde L=L/q L$ denoted by the same symbols.
Similarly to~\cites{Kas91,Kas93}, we say that $(L,B)$, where $B$ is a weight basis of~$\tilde L$, 
is a 
$\mathbf c$-crystal basis of~$V$ at $q=0$ 
if $\tilde e_{i,s}^{\mathbf c}(B)\subset B\cup\{0\}$, $i\in I$, $s\in\ZZ$. 
By~\cites{Kas91,Kas93}, every object in~$\mathscr O^{int}_q(\lie g)$ admits a crystal basis provided that $\mathbf c\in\{\mathbf c^{up},\mathbf c^{low}\}$.

The following is well-known (cf.~\cites{Kas91,Kas93}).
\begin{lemma}\label{lem:facts on crystal bases}
Let $V\in\mathscr O^{int}_q(\lie g)$, $\mathbf c\in\{\mathbf c^{low},\mathbf c^{up}\}$ and let $(L,B)$ be a $\mathbf c$-crystal basis at~$q=0$. Then for any $J\subset I$ 
\begin{enumerate}[label={\rm(\alph*)},leftmargin=*]
 \item \label{lem:facts on crystal bases.a} $L$ is a $(\mathbf c,J)$-monomial lattice; 
 \item \label{lem:facts on crystal bases.b} $\tilde{\mathsf M}^{\mathbf c}_J(b)\subset B\cup\{0\}$ for any $b\in B$;
 \item \label{lem:facts on crystal bases.c} $B=\bigcup_{b_+\in B_+^J} \tilde{\mathsf M}^{\mathbf c}_J(b_+)\setminus \{0\}$ where $B_+^J=\bigcap_{j\in J}\ker\tilde e_{j,1}^{\mathbf c}\subset B$;
\end{enumerate}
\end{lemma}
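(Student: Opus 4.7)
The plan is to establish~(b) directly from the crystal basis axioms, deduce~(c) by restricting the action to~$U_q(\lie g^J)$, and then derive~(a) by lifting and applying Nakayama's lemma weight space by weight space. Part~(b) is immediate: by definition $\tilde e^{\mathbf c}_{i,s}(B)\subset B\cup\{0\}$ for every $i\in I$ and $s\in \ZZ$; iterating operators with indices in~$J$ yields the claim.

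For~(c), I would view $V$ as an object of $\mathscr O^{int}_q(\lie g^J)$ via restriction. Since the $\tilde e^{\mathbf c}_{j,s}$ ($j\in J$) and the weight grading are unchanged, $(L,B)$ is again a $\mathbf c$-crystal basis, now for~$U_q(\lie g^J)$. Semisimplicity of~$\mathscr O^{int}_q(\lie g^J)$ decomposes $V$ into simple highest-weight submodules; each contributes connected components of the $J$-crystal graph with a unique $J$-highest vector, which are precisely the elements of~$B^J_+=\bigcap_{j\in J}\ker \tilde e^{\mathbf c}_{j,1}$. Every $b\in B$ therefore belongs to $\tilde{\mathsf M}^{\mathbf c}_J(b_+)$ for some $b_+\in B^J_+$, proving~(c).

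For~(a), the inclusion $\sum_{v_+\in L\cap V^J_+}\mathsf M^{\mathbf c}_J(v_+)\subset L$ is automatic, since $L$ is stable under each $\tilde e^{\mathbf c}_{j,s}$, $j\in J$. For the reverse, using the isotypical decomposition $V=\bigoplus_{\lambda_J\in P^+_J}\mathcal I^J_{\lambda_J}(V)$, I would choose for each $b_+\in B^J_+$ of $J$-weight $\lambda_J$ a lift $v_{b_+}\in L\cap V^J_+(\lambda_J)$ reducing to~$b_+$ modulo~$qL$. Writing $b=\tilde e^{\mathbf c}_{j_1,m_1}\cdots \tilde e^{\mathbf c}_{j_k,m_k}(b_+)$ thanks to~(c), the element $v_b:=\tilde e^{\mathbf c}_{j_1,m_1}\cdots \tilde e^{\mathbf c}_{j_k,m_k}(v_{b_+})$ then lies in $\mathsf M^{\mathbf c}_J(v_{b_+})\cap L$ and reduces to~$b$ modulo~$qL$. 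The family $\{v_b\}_{b\in B}$ thus provides a compatible lift of the basis~$B$, and Nakayama's lemma applied to each weight space of~$L$ yields $L=\sum_{v_+\in L\cap V^J_+}\mathsf M^{\mathbf c}_J(v_+)$.

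The main obstacle is the lifting step in~(a): producing a genuinely $J$-highest-weight lift $v_{b_+}\in L\cap V^J_+$ of each $b_+\in B^J_+$ requires verifying that the restriction of~$(L,B)$ to each~$\mathcal I^J_{\lambda_J}(V)$ is itself a crystal basis, i.e., compatibility of Kashiwara's theory with restriction to Levi subalgebras, together with the observation—valid when $\mathbf c\in\{\mathbf c^{low},\mathbf c^{up}\}$—that the scalars produced by iterated generalized Kashiwara operators reduce to units modulo~$q$, so that no lift is lost in passing to~$\tilde L$. This is what forces the hypothesis on~$\mathbf c$.
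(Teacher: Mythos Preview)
The paper does not actually prove this lemma; it is stated with the remark ``The following is well-known (cf.~\cites{Kas91,Kas93})'' and no argument is given. Your outline is the standard route one extracts from Kashiwara's papers: part~(b) is immediate from the axioms, part~(c) follows by restricting to~$U_q(\lie g^J)$ and invoking Kashiwara's structure theorem that every crystal basis decomposes into connected components each with a unique highest-weight element, and part~(a) then follows by lifting. You have correctly isolated the one nontrivial input, namely that for $\mathbf c\in\{\mathbf c^{low},\mathbf c^{up}\}$ Kashiwara's theory guarantees $B^J_+$ is a $\mathbb Q$-basis of $(L\cap V^J_+)/q(L\cap V^J_+)$, so that genuine $J$-highest lifts $v_{b_+}\in L\cap V^J_+$ exist; this is exactly what~\cite{Kas91}*{Theorem~3} (and its upper analogue in~\cite{Kas93}) provides. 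One small caution on the Nakayama step: weight spaces of an arbitrary object of~$\mathscr O^{int}_q(\lie g)$ need not be finite dimensional, so the cleanest way to finish is to argue on each simple summand separately (where weight spaces are finite dimensional) and use that crystal bases are compatible with direct sum decompositions.
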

\begin{remark}\label{rem:restr crys bas}
It is not hard to see that if for given $\mathbf c:\mathbb D\to \mathbb Q(z)^\times$ and~$J\subset I$
Lemma~\ref{lem:facts on crystal bases}\ref{lem:facts on crystal bases.a}--\ref{lem:facts on crystal bases.c} hold then 
$(L,B)$ is a $\mathbf c$-basis at $q=0$ of $V$ regarded as a $U_q(\lie g^J)$-module.
\end{remark}

\section{Properties of \texorpdfstring{$\sigma^i$}{eta i} and proof of Theorem~\ref{thm:crystal weyl group}}\label{sect:crystal weyl group}
\subsection{Special monomials in \texorpdfstring{$U_q^\pm(\lie g)$}{U\_q +-(g)}}\label{subs:spec mon}
Given any reduced sequence $\mathbf i=(i_1,\dots,i_m)\in I^m$ and~$\lambda\in P^+$ we define $F_{\mathbf i,\lambda}\in U_q^-(\lie g)$ and 
$E_{\mathbf i,\lambda}\in U_q^+(\lie g)$, $\lambda\in P^+$ by
\begin{equation}\label{eq:F i lambda}
F_{\mathbf i,\lambda}=F_{i_1}^{(a_{1})}
\cdots F_{i_m}^{(a_m)},\qquad 
E_{\mathbf i,\lambda}=E_{i_1}^{(a_1)}
\cdots E_{i_m}^{(a_m)},
\end{equation}
where $a_k=a_{k}(\mathbf i,\lambda)=s_{i_{k+1}}\cdots s_{i_m}\lambda(\alpha_{i_k}^\vee)=\lambda(s_{i_m}\cdots s_{i_{k+1}}\alpha_{i_k}^\vee)\in\ZZ_{\ge 0}$.
\begin{lemma}\label{lem:EF lambda w}
Let $w\in W$, $\lambda\in P^+$, $i\in I$. Then 
\begin{enumerate}[label={\rm(\alph*)},leftmargin=*]
 \item\label{lem:EF lambda w.a}
$F_{\mathbf i,\lambda}=F_{\mathbf i',\lambda}$ and $E_{\mathbf i,\lambda}=E_{\mathbf i',\lambda}$ for any $\mathbf i,\mathbf i'\in R(w)$ and $\lambda\in P^+$.
Thus, we can define $F_{w,\lambda}:=F_{\mathbf i,\lambda}$ and $E_{w,\lambda}:=E_{\mathbf i,\lambda}$ for some $\mathbf i\in R(w)$;
\item\label{lem:EF lambda w.b} 
If $\ell(s_i w)=\ell(w)+1$ then $F_{s_iw,\lambda}=F_i^{(w\lambda(\alpha_i^\vee))}F_{w,\lambda}$
and $E_{s_i w,\lambda}=E_i^{(w\lambda(\alpha_i^\vee))}E_{w,\lambda}$;
\item\label{lem:EF lambda w.c} If $\ell(s_i w)=\ell(w)-1$ then $F_{w,\lambda}=F_i^{(-w\lambda(\alpha_i^\vee))} F_{s_i w,\lambda}$ and~$E_{w,\lambda}=E_i^{(-w\lambda(\alpha_i^\vee))} 
E_{s_i w,\lambda}$;
\item\label{lem:EF lambda w.c'} If $s_i\lambda=\lambda$ then $F_{ws_i,\lambda}=F_{w,\lambda}$;
\item\label{lem:EF lambda w.c''} $\deg F_{w,\lambda}=-\deg E_{w,\lambda}=w\lambda-\lambda$;
\item\label{lem:EF lambda w.d} Suppose that $W$ is finite. Then
$\theta(F_{w,\lambda})=E_{w_\circ ww_\circ,-w_\circ\lambda}$.
\end{enumerate}
\end{lemma}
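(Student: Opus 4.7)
The plan is to establish (a) first, after which all remaining items will follow by mechanical manipulation of the definition.

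For (a), I would argue by induction on $\ell(w)$ and reduce, via Matsumoto's theorem, to comparing two reduced words differing by a single elementary braid move of length $m_{pq}$ for some $p\neq q\in I$. Contributions from positions outside the changed segment are identical in both expressions: positions to the right of the move carry the same exponents, while positions to the left see the same running product $s_{i_{k+1}}\cdots s_{i_m}\lambda$ (the two rank-two Coxeter words equal one another in $W$, so they act identically on~$\lambda$). The problem therefore localises to an identity of divided-power monomials inside $U_q^-(\lie g^{\{p,q\}})$ for an auxiliary dominant weight. The commutation case $m_{pq}=2$ is immediate. The cases $m_{pq}\in\{3,4,6\}$ are the well-known higher-order Verma/braid identities among divided powers (cf.~\cite{Lus}); securing these rank-two identities is the main obstacle.

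Parts (b) and (c) then drop out directly. For (b), appending $i$ to any $\mathbf i\in R(w)$ yields $(i,i_1,\dots,i_m)\in R(s_iw)$; the new leading exponent evaluates to $s_{i_1}\cdots s_{i_m}\lambda(\alpha_i^\vee)=w\lambda(\alpha_i^\vee)$ while the remaining exponents are unchanged, so (a) delivers the claim. For (c), apply (b) to $w'=s_iw$ and use $(s_iw\lambda)(\alpha_i^\vee)=-(w\lambda)(\alpha_i^\vee)$.

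For (c'), in the case $\ell(ws_i)=\ell(w)+1$, append $i$ to a reduced word for $w$: the trailing exponent is $\lambda(\alpha_i^\vee)=0$, so the extra factor $F_i^{(0)}=1$ contributes nothing, and the remaining exponents are undisturbed because $s_i\lambda=\lambda$; the opposite length case reduces to this one by substituting $w\mapsto ws_i$. Part (c'') is a telescoping computation: $\lambda-w\lambda=\sum_{k=1}^{m}(s_{i_{k+1}}\cdots s_{i_m}-s_{i_k}\cdots s_{i_m})\lambda=\sum_k a_k(\mathbf i,\lambda)\alpha_{i_k}$, using the one-step identity $\mu-s_j\mu=\mu(\alpha_j^\vee)\alpha_j$. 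Finally, for (d), since $\star$ is a length-preserving automorphism of the Coxeter system, $(i_1^\star,\dots,i_m^\star)\in R(w_\circ ww_\circ)$; applying $\theta$ to $F_{\mathbf i,\lambda}$ replaces each $F_{i_k}^{(a_k)}$ by $E_{i_k^\star}^{(a_k)}$, and a direct check using $\alpha_{i_k^\star}^\vee=-w_\circ\alpha_{i_k}^\vee$ together with the $W$-invariance of the natural pairing $\lie h^*\times\lie h\to\mathbb C$ confirms that these exponents coincide with $a_k(\mathbf i^\star,-w_\circ\lambda)$, completing the identification with $E_{w_\circ ww_\circ,-w_\circ\lambda}$.
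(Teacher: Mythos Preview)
Your proposal is correct and follows essentially the same architecture as the paper's proof: part~(a) is reduced via Matsumoto's theorem to a rank-two identity which is then sourced from~\cite{Lus} (the paper cites Proposition~39.3.7 there), and parts~(b), (c), (c') are read off by choosing suitable reduced decompositions. The only differences are in (c'') and (d): the paper proves both by induction on~$\ell(w)$, whereas you give direct one-shot arguments---a telescoping sum for~(c'') and an explicit verification that $a_k(\mathbf i,\lambda)=a_k(\mathbf i^\star,-w_\circ\lambda)$ for~(d). Your direct arguments are slightly cleaner and avoid the bookkeeping of the inductive step, but both routes are equally elementary.
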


\begin{proof}
It is well-known that~$\mathbf i$ can be obtained from~$\mathbf i'$ by a finite sequence of rank~$2$ braid moves
of the form $\underbrace{s_i s_j\cdots}_{m_{ij}}=\underbrace{s_j s_i\cdots}_{m_{ij}}$ with $m_{ij}$ finite. Thus, it 
suffices to prove part~\ref{lem:EF lambda w.a} in case when $w$ is the longest element in the subgroup of~$W$ generated 
by $s_i$, $s_j$, $i\not=j\in I$. But in that case it was established in~\cite{Lus}*{Proposition~39.3.7}.

Parts~\ref{lem:EF lambda w.b}, \ref{lem:EF lambda w.c} and~\ref{lem:EF lambda w.c'} are obtained from part~\ref{lem:EF lambda w.a} by choosing appropriate
reduced decompositions.
To prove parts~\ref{lem:EF lambda w.c''} and~\ref{lem:EF lambda w.d} we use 
induction on~$\ell(w)$, the case $\ell(w)=0$ being obvious. For the inductive step, suppose that $\ell(s_iw)=\ell(w)+1$. 
Since $\mu(\alpha_i^\vee)\alpha_i=\mu-s_i\mu$, $\mu\in P$,
by part~\ref{lem:EF lambda w.b} and the induction hypothesis we have $\deg F_{s_iw,\lambda}=-w\lambda(\alpha_i^\vee)\alpha_i+w\lambda-\lambda
=s_iw\lambda-\lambda$. This proves the inductive step in part~\ref{lem:EF lambda w.c''}. 
Since $s_{i^\star}=w_\circ s_i w_\circ$ we have by Lemma~\ref{lem:EF lambda w}\ref{lem:EF lambda w.b}
\begin{align*}
\theta(F_{s_i w,\lambda})&=\theta(F_i^{(w\lambda(\alpha_i^\vee))}F_{w,\lambda})
=E_{i^\star}^{(w\lambda(\alpha_i^\vee))}E_{w_\circ w w_\circ,-w_\circ\lambda}=E_{i^{\star}}^{(-w\lambda(w_\circ\alpha^\vee_{i^\star}))}E_{w_\circ w w_\circ,-w_\circ\lambda}
\\&=E_{i^{\star}}^{((w_\circ ww_\circ(-w_\circ\lambda))(\alpha^\vee_{i^\star}))}E_{w_\circ w w_\circ,-w_\circ\lambda}
=E_{s_{i^\star}w_\circ ww_\circ,-w_\circ\lambda}=E_{w_\circ s_i w w_\circ,-w_\circ\lambda}.
\end{align*}
The inductive step in part~\ref{lem:EF lambda w.d} is proven.
\end{proof}

\subsection{Extremal vectors}\label{subs:extrem vecs}
Let $V\in\mathscr O^{int}_q(\lie g)$. Given $v\in V_+(\lambda)\setminus\{0\}$, $w\in W$, define the {\em standard extremal vectors} 
$[v]_w$ of~$v$ by 
$[v]_{w}:=F_{w,\lambda}(v)$. 
Furthermore, given $v\in V_+\setminus\{0\}$, define 
\begin{equation}\label{eq:v_W}
[v]_W:=\{ [v(\lambda)]_{w}\,:\, w\in W,\,\lambda\in \supp v\}.
\end{equation}
\begin{proposition}\label{prop:w-translates of hw}
Let $V\in\mathscr O^{int}_q(\lie g)$. Then for any $v\in V_+\setminus\{0\}$
\begin{enumerate}[label={\rm(\alph*)},leftmargin=*]
\item\label{prop:w-translates of hw.a}
if $v$ is homogeneous then  
$[v]_{w}=[v]_{w'}$ if and only if $w'\in w W_{J(v)}$. In particular, the assignments $[v]_w\mapsto w W_{J(v)}$ define a bijection $\mathsf J_v:[v]_W\to W/W_{J(v)}$;
\item\label{prop:w-translates of hw.b}
for any $v\in V_+\setminus\{0\}$ the set $[v]_W$ is linearly independent.
\end{enumerate}
\end{proposition}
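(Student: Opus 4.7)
The plan is to derive both parts from Lemma~\ref{lem:EF lambda w} together with the direct-sum decomposition~\eqref{eq:sum  submodules}. For a homogeneous $v\in V_+(\lambda)$, I will use that $J(v)=J_\lambda=\{i\in I\,:\,\lambda(\alpha_i^\vee)=0\}$, so $W_{J(v)}=\operatorname{Stab}_W\lambda$. For the ``if'' direction of part~\ref{prop:w-translates of hw.a}, I induct on~$\ell(u)$ for $u\in W_{J_\lambda}$: if $\ell(u)>0$, then since every reduced expression for~$u$ uses only letters from~$J_\lambda$, there exists $j\in J_\lambda$ with $\ell(us_j)=\ell(u)-1$; writing $u=u's_j$ and applying Lemma~\ref{lem:EF lambda w}\ref{lem:EF lambda w.c'} with $w$ replaced by $wu'$ (legitimate since $s_j\lambda=\lambda$) gives $F_{wu,\lambda}=F_{(wu')s_j,\lambda}=F_{wu',\lambda}$, and the induction hypothesis yields $F_{wu',\lambda}(v)=F_{w,\lambda}(v)$.

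For the ``only if'' direction I appeal to the weight grading: by Lemma~\ref{lem:EF lambda w}\ref{lem:EF lambda w.c''}, $[v]_w$ lies in the weight subspace $V(w\lambda)$. The crucial input is non-vanishing, namely $[v]_w\neq 0$ for any $w\in W$ whenever $v\in V_+(\lambda)\setminus\{0\}$; this is a classical fact, since by semisimplicity of~$\mathscr O^{int}_q(\lie g)$ one has $U_q(\lie g)(v)\cong V_\lambda$, inside which $F_{w,\lambda}(v)$ spans the one-dimensional extremal weight space of weight~$w\lambda$ (cf.\ Lusztig's book and Kashiwara's work on global bases). Hence $[v]_w=[v]_{w'}$ forces $w\lambda=w'\lambda$, i.e.\ $w^{-1}w'\in\operatorname{Stab}_W\lambda=W_{J(v)}$; combined with the ``if'' direction this produces the well-defined bijection $\mathsf J_v:[v]_W\to W/W_{J(v)}$.

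For part~\ref{prop:w-translates of hw.b}, write $v=\sum_{\lambda\in\supp v}v(\lambda)$ with each $v(\lambda)\in V_+(\lambda)\setminus\{0\}$. By~\eqref{eq:sum  submodules}, $U_q(\lie g)(v)=\bigoplus_{\lambda\in\supp v}U_q(\lie g)(v(\lambda))$ is an internal direct sum of pairwise non-isomorphic simples~$V_\lambda$, and each $[v(\lambda)]_w$ lives in the corresponding summand. It therefore suffices to prove linear independence within each summand, which is immediate from part~\ref{prop:w-translates of hw.a}: the distinct elements of $\{[v(\lambda)]_w\,:\,w\in W\}$ correspond to distinct cosets $wW_{J_\lambda}$ and hence to distinct weights~$w\lambda$, and weight vectors of distinct weights are automatically linearly independent. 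The sole nontrivial ingredient of the whole argument is the non-vanishing of $F_{w,\lambda}(v)$; everything else is formal bookkeeping with Lemma~\ref{lem:EF lambda w} and the decomposition~\eqref{eq:sum  submodules}.
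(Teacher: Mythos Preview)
Your proof is correct and follows essentially the same route as the paper: the ``if'' direction via induction on~$\ell(u)$ using Lemma~\ref{lem:EF lambda w}\ref{lem:EF lambda w.c'}, the ``only if'' direction via the weight grading from Lemma~\ref{lem:EF lambda w}\ref{lem:EF lambda w.c''}, and part~\ref{prop:w-translates of hw.b} via the direct-sum decomposition~\eqref{eq:sum  submodules}. The one place where you are more careful than the paper is in making the non-vanishing of $F_{w,\lambda}(v)$ explicit; the paper's phrase ``linearly independent (and hence not equal)'' tacitly presupposes this, so your addition is a genuine (if standard) clarification rather than a deviation.
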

\begin{proof}
To prove~\ref{prop:w-translates of hw.a}, let $v\in V_+(\lambda)\setminus\{0\}$ for some $\lambda\in P^+$ and 
recall that $\operatorname{Stab}_W\lambda=W_{J_\lambda}$.
It follows from Lemma~\ref{lem:EF lambda w}\ref{lem:EF lambda w.c'}
by an obvious induction on~$\ell(w'')$ that $F_{ww'',\lambda}=F_{w,\lambda}$ for all $w''\in W_{J_\lambda}$. 
Since $w\lambda=w'\lambda$ implies that $w'=w w''$ for some~$w''\in W_{J_\lambda}$, it follows that $F_{w',\lambda}(v)=F_{w,\lambda}(v)$.
Conversely, by Lemma~\ref{lem:EF lambda w}\ref{lem:EF lambda w.c''} we have $F_{w,\lambda}(v)\in V(w\lambda)$. Thus, 
if $w\lambda\not=w'\lambda$ then $F_{w,\lambda}(v)$ and $F_{w',\lambda}(v)$ are
in different weight subspaces of~$V$ and are linearly 
independent (and hence not equal).

In particular, we proved that $[v(\lambda)]_W$ is linearly independent for all $v\in V_+\setminus\{0\}$ and $\lambda\in\supp v$.
This, together with~\eqref{eq:sum submodules}, proves part~\ref{prop:w-translates of hw.b}.
\end{proof}

Given $v\in V$, denote $J(v)=\{i\in I\,:\, F_i(v)=0\}$ and define $J(S)=\bigcap_{v\in S} J(v)$, $S\subset V$.
\begin{remark}
It is not hard to show that $E_i(V)=\{0\}$ for every $i\in J(V)$.
\end{remark}
We will need the following basic properties of these sets.
\begin{proposition}\label{prop:stabilizer}
Let $V\in\mathscr O^{int}_q(\lie g)$. Then 
\begin{enumerate}[label={\rm(\alph*)},leftmargin=*]
\item\label{prop:stabilizer.a} $J(v)=J_\lambda$ for any $v\in V_+(\lambda)\setminus\{0\}$, $\lambda\in P^+$;
\item\label{prop:stabilizer.e} There exists $v\in V_+$ such that $J(V)=J(U_q(\lie g)(v))$.
\end{enumerate}
\end{proposition}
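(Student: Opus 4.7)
For part~\ref{prop:stabilizer.a} I work inside the rank-one subalgebra $U_q(\lie g^{\{i\}})\cong U_{q_i}(\lie{sl}_2)$ for each $i\in I$. If $v\in V_+(\lambda)\setminus\{0\}$ then $E_i(v)=0$ and $K_{\alpha_i}(v)=q_i^{\lambda(\alpha_i^\vee)}v$, and integrability forces $\lambda(\alpha_i^\vee)\in\ZZ_{\ge 0}$. Thus $v$ is the highest weight vector of a simple $U_{q_i}(\lie{sl}_2)$-submodule of dimension $\lambda(\alpha_i^\vee)+1$. Since $F_i$ annihilates the highest weight vector of such a module if and only if the module is one-dimensional, we get $F_i(v)=0\Longleftrightarrow\lambda(\alpha_i^\vee)=0$, equivalently $i\in J_\lambda$. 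Hence $J(v)=J_\lambda$.

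For part~\ref{prop:stabilizer.e} the plan is to reduce $J(V)$ to a finite intersection of $J$'s of simple modules and then exhibit a single generator realizing that intersection. First I establish
\[ J(V)=\bigcap_{\lambda\in P^+,\,V_+(\lambda)\neq 0} J(V_\lambda) \]
using the isotypic decomposition $V=\bigoplus_\lambda \mathcal I_\lambda(V)$ (valid because $\mathscr O^{int}_q(\lie g)$ is semisimple), with each $\mathcal I_\lambda(V)$ a direct sum of copies of $V_\lambda$: clearly $F_i$ vanishes on $V$ iff it vanishes on every isotypic summand iff it vanishes on $V_\lambda$ for every $\lambda$ with $V_+(\lambda)\neq 0$. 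Because $I$ is finite, this intersection is already achieved by some finite subcollection $\lambda_1,\dots,\lambda_m$, so that $J(V)=\bigcap_{k=1}^m J(V_{\lambda_k})$.

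Finally, pick any nonzero $v_k\in V_+(\lambda_k)$ and set $v:=v_1+\cdots+v_m\in V_+$. Since the weights $\lambda_k$ are pairwise distinct, formula~\eqref{eq:sum submodules} gives
\[ U_q(\lie g)(v)=\bigoplus_{k=1}^m U_q(\lie g)(v_k),\qquad U_q(\lie g)(v_k)\cong V_{\lambda_k}, \]
so applying the displayed identity above to $U_q(\lie g)(v)$ in place of~$V$ yields $J(U_q(\lie g)(v))=\bigcap_{k=1}^m J(V_{\lambda_k})=J(V)$, as required. No serious obstacle is expected: (a) is a direct $\lie{sl}_2$-computation, and (e) reduces via~\eqref{eq:sum submodules} and a finiteness argument to the isotypic decomposition already at our disposal.
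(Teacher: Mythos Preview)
Your proof is correct and follows essentially the same route as the paper. For part~\ref{prop:stabilizer.a} the paper cites the well-known fact that the annihilator of~$v$ in $U_q^-(\lie g)$ is generated by the $F_i^{\lambda(\alpha_i^\vee)+1}$, while you reach the same conclusion via the rank-one $\lie{sl}_2$ restriction; these are equivalent. For part~\ref{prop:stabilizer.e} the paper packages the finiteness step into a general lemma about annihilators of a finite subset $S\subset R$ acting on a direct sum of modules (applied with $S=\{F_i:i\in I\}$), whereas you argue directly that an intersection of subsets of the finite set~$I$ is realized by finitely many terms---but after that both proofs pick $v=\sum_k v_{\lambda_k}$ and invoke~\eqref{eq:sum  submodules} in the same way.
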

\begin{proof}
It is well-known (see e.g.~\cite{Lus}*{Chap. 6}) that the annihilating ideal of~$v$ in~$U_q^-(\lie g)$ 
is generated by the $F_i^{\lambda(\alpha_i^\vee)+1}$, $i\in I$. Thus, $F_i(v)=0$ if and only if~$\lambda(\alpha_i^\vee)=0$.
This proves part~\ref{prop:stabilizer.a}. 
To prove part~\ref{prop:stabilizer.e}, note the following obvious fact.
\begin{lemma}\label{lem:annihilators}
Let $R$ be a ring and let $M=\bigoplus_{\alpha\in A} M_\alpha$ as $R$-modules. Let~$S$ be a subset of~$R$. Then 
$\Ann_S M=\bigcap_{\alpha\in A'} \Ann_S M_\alpha=\Ann_S M'$ where $A'$ is any subset of~$A$ such that for each $\alpha\in A$
there exists $\alpha'\in A'$ such that $M_\alpha\cong M_{\alpha'}$ and $M'=\bigoplus_{\alpha\in A'} M_\alpha$. In particular, if $S$ is finite 
then $\Ann_S M=\bigcap_{\alpha\in A_0}\Ann_S M_\alpha=\Ann_S M_0$ where $A_0$ is a finite subset of~$A'$ and $M_0=\bigoplus_{\alpha\in A_0} M_\alpha$.
\end{lemma}

Apply this Lemma to $R=U_q(\lie g)$ and $S=\{ F_i\,:\, i\in I\}$, which identifies with~$I$, and $M=V$. Clearly, $J(V)=\{ i\in I\,:\, F_i\in \Ann_S V\}$.
Since~$S$ is finite and $V$ is a direct sum of simple modules, it follows from Lemma~\ref{lem:annihilators} that
$\Ann_S V=\Ann_S V'$ where $V'=\bigoplus_{\lambda\in\Omega} U_q(\lie g)(v_\lambda)$ for some finite $\Omega\subset 
\{\lambda\in P^+\,:\,\mathcal I_\lambda(V)\not=0\}$ and $v_\lambda\in V_+(\lambda)\setminus\{0\}$, $\lambda\in\Omega$. 
Since $V'=U_q(\lie g)(v)$ with $v=\sum_{\lambda\in\Omega} v_\lambda$, part~\ref{prop:stabilizer.e} follows.
\end{proof}

\begin{proposition}\label{prop:kernels}
Let $V\in\mathscr O^{int}_q(\lie g)$. For each $i\in I$, $v\in V_+\setminus\{0\}$ the following are equivalent.
\begin{enumerate}[label={\rm(\alph*)},leftmargin=*]
\item\label{prop:kernels.a} $(\lambda,w\alpha_i)=0$ for all $\lambda\in\supp v$, $w\in W$;
 \item\label{prop:kernels.d} 
 $\cl(\{i\})\in J(U_q(\lie g)(v))$.
\end{enumerate}
\end{proposition}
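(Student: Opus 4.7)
The plan is to reduce both conditions to a common statement about the weights of the simple modules $V_\lambda$ for $\lambda\in\supp v$. By~\eqref{eq:sum submodules} we have $U_q(\lie g)(v)=\bigoplus_{\lambda\in\supp v}U_q(\lie g)(v(\lambda))$ with each summand isomorphic to $V_\lambda$, so $J(U_q(\lie g)(v))=\bigcap_{\lambda\in\supp v}J(V_\lambda)$. The elementary observation underpinning both directions is the following: for $\mu\in P^+$ and $j\in I$, $j\in J(V_\mu)$ if and only if every weight $\nu$ of $V_\mu$ satisfies $\nu(\alpha_j^\vee)=0$. Indeed, if all weights of $V_\mu$ vanish on $\alpha_j^\vee$, then for any weight vector $u\in V_\mu(\nu)$ the element $F_j(u)\in V_\mu(\nu-\alpha_j)$ would have weight value $-2\ne 0$ on $\alpha_j^\vee$, forcing $F_j(u)=0$; the converse is immediate from integrability.

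Granting this observation, the implication (d)$\Rightarrow$(a) is immediate: $i\in J(V_\lambda)$ means $\nu(\alpha_i^\vee)=0$ for every weight $\nu$ of $V_\lambda$, in particular for $\nu=w\lambda$, and $(w\lambda)(\alpha_i^\vee)=\tfrac{2}{(\alpha_i,\alpha_i)}(w\lambda,\alpha_i)=\tfrac{2}{(\alpha_i,\alpha_i)}(\lambda,w^{-1}\alpha_i)$.

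For the harder direction (a)$\Rightarrow$(d) the key step is: for every $j\in\cl(\{i\})$, $\alpha_j$ lies in the $\mathbb Q$-linear span of $W\alpha_i$. This is proved by induction on an admissible sequence $(i=i_0,\dots,i_d=j)$ with $m_{i_{r-1},i_r}>2$, using the identity $s_{i_r}\alpha_{i_{r-1}}=\alpha_{i_{r-1}}-a_{i_r,i_{r-1}}\alpha_{i_r}$ in which $a_{i_r,i_{r-1}}\ne0$ by symmetrizability and $m_{i_{r-1},i_r}>2$. Consequently, (a) yields $(\lambda,\alpha_j)=0$, hence $\lambda(\alpha_j^\vee)=0$, for all $\lambda\in\supp v$ and all $j\in\cl(\{i\})$. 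To upgrade this to $j\in J(V_\lambda)$ I invoke the second key point: if $j\in\cl(\{i\})$ and $k\in I\setminus\cl(\{i\})$ then $m_{jk}=2$ and hence $a_{jk}=a_{kj}=0$, so $U_q(\lie g^{\cl(\{i\})})$ and $U_q(\lie g^{I\setminus\cl(\{i\})})$ commute inside $U_q(\lie g)$. Since $F_j v_\lambda=0$ for all $j\in\cl(\{i\})$ by the vanishing of $\lambda(\alpha_j^\vee)$, this commutation yields $V_\lambda=U_q^-(\lie g^{I\setminus\cl(\{i\})})v_\lambda$, so every weight of $V_\lambda$ is of the form $\lambda-\sum_{k\notin\cl(\{i\})}n_k\alpha_k$ and thus vanishes on $\alpha_j^\vee$ for any $j\in\cl(\{i\})$. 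Applying the elementary observation we conclude $\cl(\{i\})\subset J(V_\lambda)$ for every $\lambda\in\supp v$, i.e.\ (d) holds.

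The main obstacle is the two-step verification in the third paragraph: first showing that $\alpha_j$ belongs to $\mathbb Q\cdot W\alpha_i$ for every $j\in\cl(\{i\})$, and then leveraging the commuting pair of subalgebras $U_q(\lie g^{\cl(\{i\})})$ and $U_q(\lie g^{I\setminus\cl(\{i\})})$ to pin down all weights of $V_\lambda$. The rest of the argument is formal once these are in hand.
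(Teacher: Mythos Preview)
Your proof is correct and follows the same overall architecture as the paper's: both reduce to showing $\lambda(\alpha_j^\vee)=0$ for every $j\in J:=\cl(\{i\})$ and $\lambda\in\supp v$, and then use that $J$ is closed (so $[U_q^-(\lie g^{J^\perp}),F_j]=0$) to conclude $V_\lambda=U_q^-(\lie g^{J^\perp})v_\lambda$ and hence $J\subset J(V_\lambda)$. The technical devices differ slightly. For (a)$\Rightarrow$(d) the paper proves a sharper integral statement (there exists $w$ with $w\alpha_i\in\ZZ_{>0}\alpha_j+\sum_{k\ne j}\ZZ_{\ge0}\alpha_k$) and deduces $l_j=0$ by positivity, whereas your observation $\alpha_j\in\mathbb Q\cdot W\alpha_i$ is weaker but simpler and already suffices. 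For (d)$\Rightarrow$(a) the paper argues via $W\alpha_i\subset\sum_{j\in J}\ZZ\alpha_j$ (using $W=W_J\times W_{J^\perp}$), while you go through your weight characterization of $J(V_\mu)$ evaluated at the extremal weights $w\lambda$; the latter is a clean unifying device not made explicit in the paper. One small remark: invoking symmetrizability to get $a_{i_r,i_{r-1}}\ne0$ is unnecessary, since $m_{i_{r-1},i_r}>2$ already forces both off-diagonal entries to be nonzero by the very definition of $m_{ij}$ in terms of $a_{ij}a_{ji}$.
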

\begin{proof}
 Let $J$ be the neighborhood of~$i$. In particular, $J\cup J^\perp=I$.

\ref{prop:kernels.a}$\implies$\ref{prop:kernels.d}
We need the following Lemma.
\begin{lemma}\label{lem:train}
For every $i\sim j\in I$, $i\not=j$ there exists $w=w_{i,j}\in W$ such that 
$w_{i,j}\alpha_i\in \ZZ_{>0} \alpha_j+\sum_{k\in I\setminus\{j\}} \ZZ_{\ge 0}\alpha_k$.
\end{lemma}
\begin{proof}
Let $\mathbf i=(i=i_0,i_1,\dots,i_d=j)\in I^{d+1}$ be an admissible sequence with $d=\dist(i,j)$. In particular, this sequence is repetition free.
Denote $\beta_k=s_{i_k}\cdots s_{i_1}(\alpha_i)$. We claim that $\beta_k\in \sum_{0\le r\le k} \ZZ_{>0} \alpha_{i_r}$.
We argue by induction on~$k$, the case $k=0$ being obvious. For the inductive step, note that $$
\beta_k=s_{i_k}(\beta_{k-1})
\in \sum_{0\le r\le k-1}\ZZ_{>0} s_{i_k}\alpha_{i_r}=\sum_{0\le r\le k-1} \ZZ_{>0} (\alpha_{i_r}-\alpha_{i_r}(\alpha_{i_k}^\vee)\alpha_{i_k}).
$$
Since $\mathbf i$ is admissible, 
$\alpha_{i_{k-1}}(\alpha_{i_k}^\vee)<0$ while $\alpha_{i_r}(\alpha_{i_k}^\vee)\le 0$ for all $0\le r\le k-2$. 
Therefore, $\beta_k\in\sum_{0\le r\le k} \ZZ_{>0} \alpha_{i_r}$. In particular, $w=s_{i_d}\cdots s_{i_1}$ is the desired $w_{i,j}\in W$.
\end{proof}
 Write $\lambda\in\supp v$ as $\lambda=\lambda'+\sum_{k\in I} l_k\omega_k$ where 
$\lambda'\in P^W$ and $l_k\in\ZZ_{\ge 0}$, $k\in I$. Let~$j\in J$. In the notation of Lemma~\ref{lem:train}, we have 
$w_{i,j}\alpha_i=\sum_{k\in I} n_k\alpha_k$ with $n_j\in\ZZ_{>0}$ and $n_k\in\ZZ_{\ge 0}$, $k\in I\setminus\{j\}$, for some~$w_{i,j}\in W$. Then $0=2(\lambda,w_{i,j}\alpha_i)
=(\alpha_j,\alpha_j)l_jn_j+\sum_{k\in I\setminus\{j\}} (\alpha_k,\alpha_k) l_k n_k$. Since $n_j>0$ and $n_k\ge 0$ for all $k\not=j$
this forces $l_j=0$.

In particular, $J\subset J(v)$.
Since $J$ is closed in~$I$,
$[U_q^-(\lie g^{J^\perp}),F_j]=0$.
Let $V'=U_q(\lie g)(v)$. Then $V'=U_q^-(\lie g)(v)=U_q^-(\lie g^{J^\perp})(v)$ and so $J\subset J(V')$.

\ref{prop:kernels.d}$\implies$\ref{prop:kernels.a}
Since $J\subset J(U_q(\lie g)(v))$ it follows that $(\lambda,\alpha_j)=0$ for all~$j\in J$. Since
$W\alpha_j\in\sum_{j'\in J}\ZZ \alpha_{j'}$ for any $j\in J$, the assertion follows.
\end{proof}

\begin{lemma}\label{lem:set I_V}
Let $V\in\mathscr O^{int}_q(\lie g)$ such that $V=U_q(\lie g)(v)$ for some~$v\in V_+$. The following are equivalent for~$i\in I$.
\begin{enumerate}[label={\rm(\alph*)},leftmargin=*]
\item\label{lem:set I_V.a} $i\in J(V)$.
\item\label{lem:set I_v.b} $[v(\lambda)]_{s_i w}=[v(\lambda)]_w$ for all $\lambda\in\supp v$ and for all $w\in W$;
\end{enumerate}
\end{lemma}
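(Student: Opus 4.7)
The plan is to translate the vector identity of~\ref{lem:set I_v.b} into a combinatorial condition on weights via Proposition~\ref{prop:w-translates of hw} and then apply Proposition~\ref{prop:kernels}; both implications reduce, in the end, to the observation that~$F_i(V)=0$ is equivalent to the whole $W$-orbit of each $\lambda\in\supp v$ lying in~$\ker\alpha_i^\vee$.

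For \ref{lem:set I_v.b}$\Rightarrow$\ref{lem:set I_V.a}, I would apply Proposition~\ref{prop:w-translates of hw}\ref{prop:w-translates of hw.a} to~$v(\lambda)$ (whose~$J$ equals~$J_\lambda$ by Proposition~\ref{prop:stabilizer}\ref{prop:stabilizer.a}): the hypothesis $[v(\lambda)]_{s_iw}=[v(\lambda)]_w$ becomes $s_iw\in wW_{J_\lambda}$, i.e.\ $w^{-1}s_iw\in W_{J_\lambda}=\operatorname{Stab}_W\lambda$. This forces $s_i(w\lambda)=w\lambda$, equivalently $(\lambda,w^{-1}\alpha_i)=0$. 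Letting~$w$ run over~$W$ yields $(\lambda,w\alpha_i)=0$ for all $w\in W$ and all $\lambda\in\supp v$, so Proposition~\ref{prop:kernels} produces $\cl(\{i\})\subset J(U_q(\lie g)(v))=J(V)$, which in particular gives $i\in J(V)$.

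For the converse \ref{lem:set I_V.a}$\Rightarrow$\ref{lem:set I_v.b}, I would first restrict $V$ to the subalgebra $U_q(\lie g^{\{i\}})\cong U_q(\lie{sl}_2)$. Since $E_i$ already acts locally nilpotently, this restriction lies in $\mathscr O^{int}_q(\lie g^{\{i\}})$ and is therefore a direct sum of simple integrable highest-weight modules; the hypothesis $F_i(V)=0$ rules out every nontrivial summand, so the restricted module is trivial and every weight~$\mu$ of~$V$ satisfies $\mu(\alpha_i^\vee)=0$. Because $V=\bigoplus_{\lambda\in\supp v}U_q(\lie g)(v(\lambda))$ with each summand isomorphic to~$V_\lambda$, the extremal orbit $W\lambda$ is contained in $\supp V_\lambda\subset\supp V$, and hence $w\lambda(\alpha_i^\vee)=0$ for every $w\in W$ and $\lambda\in\supp v$. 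Now apply Lemma~\ref{lem:EF lambda w}\ref{lem:EF lambda w.b} when $\ell(s_iw)=\ell(w)+1$, or Lemma~\ref{lem:EF lambda w}\ref{lem:EF lambda w.c} when $\ell(s_iw)=\ell(w)-1$: the relevant exponent $\pm w\lambda(\alpha_i^\vee)$ vanishes, $F_i^{(0)}=\id$, and one reads off $[v(\lambda)]_{s_iw}=[v(\lambda)]_w$.

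The only step requiring real care is propagating the vanishing of~$F_i$ from a generator~$v$ to arbitrary weight vectors of~$V$; once that is in place via the semisimplicity of~$\mathscr O^{int}_q(\lie{sl}_2)$, both directions are formal consequences of the preceding Propositions. The lemma can in fact be read as the statement that~$J(V)$ is closed in the topology of~$I$ defined in~\S\ref{subs:coxeter groups}, since \ref{lem:set I_V.a}$\Leftrightarrow$\ref{lem:set I_v.b}$\Leftrightarrow$ $\cl(\{i\})\subset J(V)$ by the above argument.
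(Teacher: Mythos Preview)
Your proof is correct and follows essentially the same route as the paper's. The direction \ref{lem:set I_v.b}$\Rightarrow$\ref{lem:set I_V.a} is identical in substance: both arguments extract $(\lambda,w\alpha_i)=0$ from the equality of extremal vectors (you via Proposition~\ref{prop:w-translates of hw}\ref{prop:w-translates of hw.a}, the paper by comparing weights directly) and then invoke Proposition~\ref{prop:kernels}.

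For \ref{lem:set I_V.a}$\Rightarrow$\ref{lem:set I_v.b} there is a small difference worth noting. You pass through the $\lie{sl}_2$-semisimplicity of the restriction to conclude that \emph{every} weight $\mu$ of~$V$ satisfies $\mu(\alpha_i^\vee)=0$, and then specialize to $\mu=w\lambda$. The paper's argument is more local: since $F_i([v(\lambda)]_w)=0$ while $[v(\lambda)]_{s_iw}=F_i^{(w\lambda(\alpha_i^\vee))}[v(\lambda)]_w$ is a nonzero extremal vector, the exponent is forced to vanish directly. Your version proves a slightly stronger intermediate statement at the cost of invoking the $\lie{sl}_2$ category structure; the paper's version is shorter but uses only the nonvanishing of extremal vectors. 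Either way, the conclusion $w\lambda(\alpha_i^\vee)=0$ and the final step via Lemma~\ref{lem:EF lambda w}\ref{lem:EF lambda w.b},\ref{lem:EF lambda w.c} are the same.
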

\begin{proof}
The condition in part~\ref{lem:set I_v.b} implies that $s_i w\lambda=w\lambda$ for all $\lambda\in\supp v$ and $w\in W$.
Since $s_i w\lambda=w\lambda-(w\lambda)(\alpha_i^\vee)\alpha_i$, it follows that $(\lambda,w\alpha_i)=0$ for all $\lambda\in \supp v$
and $w\in W$ and so $i\in J(V)$ by Proposition~\ref{prop:kernels}.

Conversely, if $i\in J(V)$ then $F_i([v]_w)=0$ for all $w\in W$. In particular, if $\ell(s_i w)=\ell(w)+1$
then, since $[v]_{s_i w}=F_i^{(w\lambda(\alpha_i^\vee))}[v]_w\not=0$ it follows that $(w\lambda,\alpha_i)=0$
and thus $[v]_{s_i w}=[v]_w$. Similarly, if $\ell(s_i w)=\ell(w)-1$ applying the previous argument 
to $w'=s_i w$ we obtain the same equality.
\end{proof}

\subsection{Proof of Theorem~\ref{thm:crystal weyl group}}\label{subs:cryst weyl group}
We will now express the action of the~$\sigma^i$, $i\in I$ on extremal vectors in terms of 
the natural action of $W$ on~$W/W_J$.
\begin{proposition}\label{prop:ficus action}
Let $V\in\mathscr O^{int}_q(\lie g)$ and $v\in V_+\setminus\{0\}$. Then 
\begin{enumerate}[label={\rm(\alph*)},leftmargin=*]
 \item\label{prop:ficus action.a} The set $[v]_W$ is $\mathsf W(V)$-invariant. More precisely, $\sigma^i([v(\lambda)]_w)=[v(\lambda)]_{s_i w}$ for 
 all $i\in I$, $w\in W$ and $\lambda\in\supp v$;
 \item\label{prop:ficus action.b} The canonical image of $\mathsf W(V)$ in $\operatorname{Bij}([v]_W)$ is isomorphic to $W_{J_0}$ where 
 $J_0=I\setminus\cl(J(U_q(\lie g)(v)))$.
 \end{enumerate}
\end{proposition}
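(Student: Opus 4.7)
For part (a), the plan is to analyze the $U_q(\lie{sl}_2)^{(i)}$-submodule of $V$ generated by the pair of extremal vectors $[v(\lambda)]_w$ and $[v(\lambda)]_{s_iw}$. Set $n:=(w\lambda)(\alpha_i^\vee)$ and first suppose $\ell(s_iw)=\ell(w)+1$, which forces $n\ge 0$ since $\lambda\in P^+$. By induction on $\ell(w)$, with base case $E_iv(\lambda)=0$ (because $v(\lambda)\in V_+$) and inductive step using the commutation of $E_i$ with $F_j^{(m)}$ (or equivalently Lusztig's braid symmetry $T_{w^{-1}}$ applied to $v(\lambda)$), I would prove $E_i[v(\lambda)]_w=0$ and $K_{\alpha_i}[v(\lambda)]_w=q_i^n[v(\lambda)]_w$. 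Combined with Lemma~\ref{lem:EF lambda w}\ref{lem:EF lambda w.b} this shows $[v(\lambda)]_w$ and $[v(\lambda)]_{s_iw}=F_i^{(n)}[v(\lambda)]_w$ are the highest- and lowest-weight vectors of an irreducible $(n+1)$-dimensional $U_q(\lie{sl}_2)^{(i)}$-submodule. Writing $[v(\lambda)]_w=E_i^{(n)}[v(\lambda)]_{s_iw}$ and invoking \eqref{eq:eta^i defn} yields $\sigma^i([v(\lambda)]_w)=[v(\lambda)]_{s_iw}$. The case $\ell(s_iw)<\ell(w)$ follows by swapping roles, and when $n=0$ one has $s_i\in wW_{J_\lambda}w^{-1}$, so $[v(\lambda)]_{s_iw}=[v(\lambda)]_w$ by Proposition~\ref{prop:w-translates of hw}\ref{prop:w-translates of hw.a}, while $\sigma^i$ acts trivially on the resulting one-dimensional $U_q(\lie{sl}_2)^{(i)}$-submodule.

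For part (b), by (a) the bijections $\mathsf J_{v(\lambda)}:[v(\lambda)]_W\to W/W_{J_\lambda}$ of Proposition~\ref{prop:w-translates of hw}\ref{prop:w-translates of hw.a} intertwine each $\sigma^i$ with left translation by $s_i$. Thus the image of $\mathsf W(V)$ in $\operatorname{Bij}([v]_W)$ equals $W/K$, where $K=\bigcap_{\lambda\in\supp v}\mathsf K_{W/W_{J_\lambda}}(W)$. Combining Theorem~\ref{thm:kernel of natural action} with Lemma~\ref{lem:intersect parabolics}\ref{lem:intersect parabolics.a} gives $K=W_{J_*}$ for $J_*:=I\setminus \cl(I\setminus\bigcap_\lambda J_\lambda)$.

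It remains to identify $J_*$ with $J^*:=J(U_q(\lie g)(v))$. Since $J^*\subset\bigcap_\lambda J_\lambda$, it suffices for the inclusion $J^*\subset J_*$ to show $J^*$ is closed in the sense of \S\ref{subs:coxeter groups}. This is a consequence of Proposition~\ref{prop:kernels}: if $i\in J^*$ then $F_i$ annihilates each simple summand $V_\lambda$ of $U_q(\lie g)(v)$, and restricting $V_\lambda$ to $U_q(\lie{sl}_2)^{(i)}$ forces every weight $w\lambda$ to satisfy $(w\lambda)(\alpha_i^\vee)=0$, hence $(\lambda,W\alpha_i)=0$, whence $\cl(\{i\})\subset J^*$. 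Conversely, if $i\in J_*$ then $\cl(\{i\})\subset\bigcap_\lambda J_\lambda$; using the decomposition $W=W_{\cl(\{i\})}\times W_{I\setminus\cl(\{i\})}$ from Lemmas~\ref{lem:topology} and~\ref{lem:factorization of W} together with the fact that $W_{I\setminus\cl(\{i\})}$ fixes $\alpha_i$ (because $a_{ji}=0$ whenever $j\not\sim i$), every $w\alpha_i$ lies in $\operatorname{span}\{\alpha_j:j\in\cl(\{i\})\}$, so $(\lambda,w\alpha_i)=0$ for all $w,\lambda$, hence $i\in J^*$ by Proposition~\ref{prop:kernels}. Thus $K=W_{J^*}$; since $J_0=I\setminus J^*$ and $J^*$ are orthogonal closed complementary subsets, Lemma~\ref{lem:factorization of W} yields $W=W_{J_0}\times W_{J^*}$, whence $W/K\cong W_{J_0}$.

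The main obstacle is the extremality assertion in part (a), namely $E_i[v(\lambda)]_w=0$ when $\ell(s_iw)>\ell(w)$: the most transparent derivation uses Lusztig's braid operators, though a direct induction via rank-two commutation relations also works. Once this is in place, part (b) reduces to bookkeeping combining Theorem~\ref{thm:kernel of natural action}, Proposition~\ref{prop:kernels}, and the orthogonality results of \S\ref{subs:coxeter groups}.
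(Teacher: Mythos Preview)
Your proposal is correct and follows essentially the same approach as the paper. For part~(a), both arguments use the $U_q(\lie{sl}_2)^{(i)}$-module structure at an extremal vector together with Lemma~\ref{lem:EF lambda w}\ref{lem:EF lambda w.b}, and the case $\ell(s_iw)<\ell(w)$ is handled by swapping roles (the paper phrases this as using that $\sigma^i$ is an involution). For part~(b), both identify $[v]_W$ with $\bigsqcup_{\lambda}W/W_{J_\lambda}$, compute the kernel of the $W$-action as an intersection of parabolics via Theorem~\ref{thm:kernel of natural action}, and conclude by showing $J(U_q(\lie g)(v))$ is closed. The only organizational difference is that the paper invokes Lemma~\ref{lem:set I_V} to identify the kernel directly with $W_{J(V')}$, whereas you first write down the explicit formula $J_*=I\setminus\cl(I\setminus\bigcap_\lambda J_\lambda)$ and then match it with $J^*$ via Proposition~\ref{prop:kernels}; these are equivalent bookkeeping choices.
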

\begin{proof}
To prove part~\ref{prop:ficus action.a}, let $\lambda\in P^+$, $w\in W$, $i\in I$ and 
suppose first that $\ell(s_i w)=\ell(w)+1$. Then $w\lambda(\alpha_i^\vee)>0$ and 
$[v(\lambda)]_w\in\ker E_i$, whence $\sigma^i([v(\lambda)]_w)=F_i^{(w\lambda(\alpha_i^\vee))}([v(\lambda)]_w)=F_{s_iw,\lambda}(v(\lambda))=[v(\lambda)]_{s_iw}$
where we used Lemma~\ref{lem:EF lambda w}\ref{lem:EF lambda w.b}. 
If $\ell(s_i w)=\ell(w)-1$ then $w=s_i w'$. By the above, $[v(\lambda)]_w=[v(\lambda)]_{s_i w'}=\sigma^i([v(\lambda)]_{w'})=\sigma^i([v(\lambda)]_{s_iw})$.
Since $\sigma^i$ is an involution, it follows that $\sigma^i([v(\lambda)]_{w})=[v(\lambda)]_{s_iw}$.

To prove~\ref{prop:ficus action.b}, the bijections from Proposition~\ref{prop:w-translates of hw}\ref{prop:w-translates of hw.a} allow one to identify
$[v]_W$ with $\bigsqcup\limits_{\lambda\in\supp v} W/W_{J_\lambda}$. In paricular, this induces an action of~$W$ on~$[v]_W$ via
$w\cdot [v(\lambda)]_{w'}=[v(\lambda)]_{ww'}$, $\lambda\in\supp v$, $w,w'\in W$. By part~\ref{prop:ficus action.a} the canonical images of 
$\mathsf W(V)$ and $W$ in $\operatorname{Bij}([v]_W)$ coincide. We need the following general fact.
\begin{lemma}
Let $G$ be a group acting on $X=\bigsqcup_{\alpha\in A} X_\alpha$. 
Then the canonical image of $G$ in $\operatorname{Bij}(X)$
is isomorphic to $G/K$ where $K=\bigcap_{\alpha\in A} K_\alpha$ and $K_\alpha=\{ g\in G\,:\, g x=x,\,\forall x\in X_\alpha\}$ is the kernel of the 
action of~$G$ on $X_\alpha$.
\end{lemma}
Applying this Lemma to $X=[v]_W$, $X_\lambda=[v(\lambda)]_W$ and $G=W$ and using the fact that $K_\lambda=W_{J_\lambda}$ by 
Theorem~\ref{thm:kernel of natural action} we conclude that $K=\bigcap_{\lambda\in\supp v}W_{J_\lambda}$. By Lemma~\ref{lem:intersect parabolics}, 
$K=W_{J'}$ where $J'$ is the set of~$i\in I$ such that $s_i$ fixes~$[v]_W$ elementwise. By Lemma~\ref{lem:set I_V}, $J'=J(V')$
where $V'=U_q(\lie g)(v)$. Since $J'$ is closed, being an intersection of closed sets, $W/W_{J'}\cong W_{I\setminus J'}$.
\end{proof}

\begin{proof}[Proof of Theorem~\ref{thm:crystal weyl group}]

It follows from Proposition~\ref{prop:w-translates of hw}\ref{prop:w-translates of hw.a} that
the assignments $[v]_w\mapsto w W_J$ define a bijection $\mathsf J:[v]_W\to W/W_J$.
This induces a group homomorphism $\xi_V:\mathsf W(V)\to \operatorname{Bij}(W/W_J)$ 
via $\xi_V(\sigma^i)(w W_J)=\mathsf J(\sigma^i([v]_w))=\mathsf J([v]_{s_i w})=s_i w W_J$.
It follows that $\xi_V(\mathsf W(V))$ coincides with the image of $W$ in $\operatorname{Bij}(W/W_J)$ given 
by the natural action. By Lemma~\ref{lem:factorization of W}, the latter is canonically isomorphic to $W_{I\setminus J_0}$ where $J_0=\cl(I\setminus J)$.
\end{proof}

\section{Modified Lusztig symmetries and involutions \texorpdfstring{$\sigma^J$}{eta J}}\label{sect:mod Lusz symm}

Let $\mathscr C$ be a $\kk$-linear category whose objects are~$\kk$-vector spaces and let~$G$ be a group. An action of $G$ on~$\mathscr C$
is an assignment $g\mapsto g_\bullet=\{ g_V\,:\, V\in\mathscr C\}$, $g\in G$, 
where $g_V\in\operatorname{GL}_\kk(V)$ such that $(gg')_V=g_V\circ g'_V$ for all $g,g'\in G$ and $V\in\mathscr C$
and $g_{V'}\circ f=f\circ g_V$ for any $g\in G$ and any morphism $f:V\to V'$ in~$\mathscr C$.

Recall that the braid group~$\Br_W$ associated with a Coxeter group~$W$ is generated by the $T_i$, $i\in I$ subject to the relations $\underbrace{T_i T_j\cdots}_{m_{ij}}=
\underbrace{T_j T_i\cdots }_{m_{ij}}$ for all $i\not=j\in I$ in the notation of~\S\ref{subs:coxeter groups}.
In this section we discuss modified Lusztig symmetries
which provide an action of~$\Br_W$ associated with the Weyl group~$W$ of~$\lie g$
on the category~$\mathscr O^{int}_q(\lie g)$ and use them to construct an action of 
$\Cact_W$ on the same category.

\subsection{Modified Lusztig's symmetries}\label{subs:mod lus sym}
Given $i\in I$ and $V\in\mathscr O^{int}_q(\lie g)$ define $T_i^\pm\in\End_\kk V$ by
$$
T_i^+= T'_{i,1}K_{\frac12\alpha_i},\qquad T_i^-= T''_{i,1}K_{-\frac12\alpha_i}
$$
where 
$T'_{i,1}, T''_{i,1}\in\End_\kk V$ are Lusztig symmetries (see~\cite{Lus}*{\S5.2}). We refer to these operators as {\em modified Lusztig symmetries}. By definition,
$T_i^\pm(V(\beta))=V(s_i\beta)$ and so $T_i^\pm K_\lambda=K_{s_i\lambda} T_i^\pm$, $\lambda\in\frac12 P$.

\begin{lemma}\label{cor:T i +-}
The assignments $T_i\mapsto T_i^+$ (respectively, $T_i\mapsto T_i^-$) define 
an action of~$\Br_W$ on~$\mathscr O^{int}_q(\lie g)$.
\end{lemma}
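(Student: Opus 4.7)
The plan is to deduce the braid relations for the modified symmetries $T_i^\pm$ from the well-known braid relations for Lusztig's original symmetries $T'_{i,1},T''_{i,1}$ on integrable modules (cf.~\cite{Lus}*{Chapter~39}), by carefully tracking how the extra $K$-factors commute past alternating products.

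Functoriality and invertibility come essentially for free: each $T'_{i,1},T''_{i,1}$ is a $\kk$-linear bijection on every $V\in\mathscr O^{int}_q(\lie g)$ which commutes with all morphisms (being defined by universal formulas on weight subspaces, see \cite{Lus}*{\S5.2}), and the same holds for the $K_\lambda$. Thus the only nontrivial point is the braid identity
\begin{equation*}
\underbrace{T_i^+\,T_j^+\,T_i^+\cdots}_{m_{ij}\text{ factors}}\;=\;\underbrace{T_j^+\,T_i^+\,T_j^+\cdots}_{m_{ij}\text{ factors}}
\end{equation*}
for each pair $i\neq j$ with $m_{ij}<\infty$, and analogously for $T_i^-$.

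The core computation exploits the fact that $T'_{j,1}$ sends $V(\beta)$ to $V(s_j\beta)$, so $K_\lambda\circ T'_{j,1}=T'_{j,1}\circ K_{s_j\lambda}$ for $\lambda\in\tfrac12 P$. Pushing every $K$-factor to the right by induction on length, for any word $\mathbf i=(i_1,\dots,i_k)\in I^k$ one obtains
\begin{equation*}
T_{i_1}^+\circ T_{i_2}^+\circ\cdots\circ T_{i_k}^+\;=\;T'_{i_1,1}\circ T'_{i_2,1}\circ\cdots\circ T'_{i_k,1}\circ K_{\gamma(\mathbf i)/2},
\end{equation*}
where $\gamma(\mathbf i):=\sum_{j=1}^{k}s_{i_k}s_{i_{k-1}}\cdots s_{i_{j+1}}\alpha_{i_j}$. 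When $\mathbf i$ is a reduced word for $w=s_{i_1}\cdots s_{i_k}$, the summands form precisely the inversion set $\{\alpha>0\,:\,w\alpha<0\}$, so $\gamma(\mathbf i)=\rho-w^{-1}\rho\in Q\subset P$ depends only on $w$; in particular $K_{\gamma(\mathbf i)/2}\in U_q(\lie g)$ is well-defined.

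Applying this with the two reduced words $(i,j,i,\dots)$ and $(j,i,j,\dots)$ of length $m_{ij}$ for $w_\circ^{\{i,j\}}\in W_{\{i,j\}}$, both sides of the desired braid relation factor as the \emph{same} $K_{(\rho-w_\circ^{\{i,j\}}\rho)/2}$ composed with products of $T'$'s that coincide by the classical braid relations of~\cite{Lus}. The argument for $T_i^-$ is identical, with $T''_{i,1}$ in place of $T'_{i,1}$ and $K_{-\alpha_i/2}$ replacing $K_{\alpha_i/2}$. The main subtle point is the inductive verification of the formula for $\gamma(\mathbf i)$ and its identification with $\rho-w^{-1}\rho$; in the Kac-Moody setting $\rho$ itself is only pinned down modulo $P^W$, but this ambiguity is irrelevant since $\gamma(\mathbf i)\in Q$ is canonical.
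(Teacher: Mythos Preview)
Your proof is correct and follows essentially the same approach as the paper: the paper defers the braid relations to~\cite{BG-dcb}*{Lemma~5.2} (which is exactly your computation of pushing the $K$-factors to the right and identifying the resulting exponent as $\tfrac12(\rho-w^{-1}\rho)$; cf.\ also Lemma~\ref{lem: T^+ T Lus}\ref{lem: T^+ T Lus.b}), and handles functoriality via the explicit expansion~\eqref{eq:T_i+- expl} of $T_i^\pm$ as locally finite sums of elements of~$U_q(\lie g)$, which is just a concrete version of your ``comes for free'' remark.
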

\begin{proof}
It can be deduced from~\cite{Lus}*{Proposition~39.4.3} along the lines of~\cite{BG-dcb}*{Lemma~5.2} that 
the $T_i^+$ (and the $T_i^-$), $i\in I$, satisfy the defining relations of~$\Br_W$ as endomorphisms of $V$ for each 
$V\in\mathscr O^{int}_q(\lie g)$. 
To prove that this action of~$\Br_W$ commutes with morphisms, write, using~\cite{LusProb}*{\S3.1}, for $V\in\mathscr O^{int}_q(\lie g)$ and~$v\in V$
\begin{equation}\label{eq:T_i+- expl}
\begin{aligned}
&T_i^+(v)=\sum_{(a,b,c)\in\ZZ_{\ge 0}^3} (-1)^b q_i^{b-a c} K_{\frac12(a-c-1)\alpha_i}F_i^{(a)}E_i^{(b)}F_i^{(c)}K_{\frac12(a-c)\alpha_i}(v),\\
&T_i^-(v)=\sum_{(a,b,c)\in\ZZ_{\ge 0}^3} (-1)^b q_i^{b-a c} K_{\frac12(c-a+1)\alpha_i}E_i^{(a)}F_i^{(b)}E_i^{(c)}K_{\frac12(c-a)\alpha_i}(v)
\end{aligned}
\end{equation}
where the sum is finite since $V$ is integrable. It is now obvious that the~$T_i^\pm$, $i\in I$ commute with homomorphisms of~$U_q(\lie g)$-modules.
\end{proof}

 It is well-known (see e.g.~\cite{Lus}*{\S39.4.7}) 
that the element $T_{i_1}\cdots T_{i_r}$
with $\mathbf i=(i_1,\dots,i_r)\in I^r$ reduced depends only on~$w=s_{i_1}\cdots s_{i_r}$ and not on~$\mathbf i$. This allows to define the canonical section of the 
natural 
group homomorphism $\Br_W\to W$, $T_i\mapsto s_i$, $i\in I$,
by $w\mapsto T_w:=T_{i_1}\cdots T_{i_r}$ where $(i_1,\dots,i_r)\in R(w)$.
Denote $T^\pm_w$ the
linear endomorphisms of any $V\in \mathscr O^{int}_q(\lie g)$ arising from Lemma~\ref{cor:T i +-} which correspond to the canonical element~$T_w$ of~$\Br_W$.
The elements $T_w$, $T^\pm_w$ are characterized by the following well-known property.
\begin{lemma}[\cite{Lus}*{\S39.4.7}]\label{lem:product can elts br}
Let $w,w'\in W$ be such that $\ell(w)+\ell(w')=\ell(ww')$. Then $T_{ww'}=T_w T_{w'}$ and 
$T^\pm_{ww'}=T^\pm_w\circ T^\pm_{w'}$ as linear endomorphisms of~$V\in\mathscr O^{int}_q(\lie g)$.
\end{lemma}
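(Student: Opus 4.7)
The plan is to deduce both statements directly from the well-definedness of $T_w$ established just before the lemma, together with a standard fact about reduced expressions in Coxeter groups.

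First I would recall that the hypothesis $\ell(w)+\ell(w')=\ell(ww')$ is equivalent to the statement that for any $(i_1,\dots,i_r)\in R(w)$ and $(j_1,\dots,j_s)\in R(w')$, the concatenation $(i_1,\dots,i_r,j_1,\dots,j_s)$ lies in $R(ww')$. Indeed, the subadditivity $\ell(ww')\le\ell(w)+\ell(w')$ always holds, and equality forces the concatenation, which expresses $ww'$ as a product of $\ell(ww')$ simple reflections, to be reduced.

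Next I would apply the definition of the canonical section $w\mapsto T_w$ recorded in~\S\ref{subs:mod lus sym}: for any $(k_1,\dots,k_{\ell(u)})\in R(u)$ one has $T_u=T_{k_1}\cdots T_{k_{\ell(u)}}$, and this is independent of the reduced word chosen. Using the concatenated reduced expression above yields
$$
T_{ww'}=T_{i_1}\cdots T_{i_r}T_{j_1}\cdots T_{j_s}=(T_{i_1}\cdots T_{i_r})(T_{j_1}\cdots T_{j_s})=T_w T_{w'},
$$
which proves the first assertion in $\Br_W$.

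For the operator identity, I would invoke Lemma~\ref{cor:T i +-}, which provides group homomorphisms $\Br_W\to\operatorname{GL}_\kk(V)$ sending $T_i\mapsto T_i^+$ and $T_i\mapsto T_i^-$ respectively, for every $V\in\mathscr O^{int}_q(\lie g)$. By construction, $T_w^\pm$ is the image of the canonical element $T_w\in\Br_W$ under this homomorphism. Applying either homomorphism to the equation $T_{ww'}=T_w T_{w'}$ just established gives $T_{ww'}^\pm=T_w^\pm\circ T_{w'}^\pm$. There is no genuine obstacle here: once well-definedness of $T_w$ and the braid relations for $T_i^\pm$ are in hand, the lemma is a formal consequence, which is why it is attributed directly to~\cite{Lus}*{\S39.4.7}.
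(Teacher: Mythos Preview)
Your argument is correct and is precisely the standard proof; the paper does not supply its own proof but simply cites~\cite{Lus}*{\S39.4.7}, and your reasoning reconstructs that reference's argument faithfully.
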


\begin{proposition}\label{prop:T_w extreme vectors}
Let $V\in\mathscr O^{int}_q(\lie g)$. Then 
for any $w,w'\in W$, $\lambda\in P^+$ and $v\in V_+(\lambda)$ we have:
\begin{enumerate}[label={\rm(\alph*)},leftmargin=*]
 \item\label{prop:T_w extreme vectors.a} if $\ell(ww')=\ell(w)+\ell(w')$ then 
\begin{align*}
 T_w^{+}(F_{w',\lambda}(v))&=q^{\frac12(w'\lambda,\rho-w^{-1}\rho)}F_{ww',\lambda}(v),
 \\
 T_w^{-}(F_{w',\lambda}(v))&=(-1)^{\rho^\vee(w'\lambda-ww'\lambda)}q^{\frac12(w'\lambda,\rho-w^{-1}\rho)}F_{ww',\lambda}(v).
\end{align*}
\item\label{prop:T_w extreme vectors.b}
 if $\ell(ww')=\ell(w')-\ell(w)$ then 
\begin{align*}
 T_w^{+}(F_{w',\lambda}(v))&=(-1)^{\rho^\vee(w'\lambda-ww'\lambda)}q^{-\frac12(w'\lambda,\rho-w^{-1}\rho)}F_{ww',\lambda}(v),
 \\
 T_w^{-}(F_{w',\lambda}(v))&=q^{-\frac12(w'\lambda,\rho-w^{-1}\rho)}F_{ww',\lambda}(v).
\end{align*}
\item\label{prop:T_w extreme vectors.c}
if $\ell(ww')=\ell(w)-\ell(w')$ then 
\begin{align*}
T_w^+(F_{w',\lambda}(v))&=(-1)^{\rho^\vee(w'\lambda-\lambda)}q^{\frac12(\lambda,2\rho-w'^{-1}(\rho+w^{-1}\rho))} F_{ww',\lambda}(v)\\
T_w^-(F_{w',\lambda}(v))&=(-1)^{\rho^\vee(\lambda-ww'\lambda)}q^{\frac12(\lambda,2\rho-w'^{-1}(\rho+w^{-1}\rho))} F_{ww',\lambda}(v).
\end{align*}
\end{enumerate}
\end{proposition}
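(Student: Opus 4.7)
My plan is to establish all three parts by induction on $\ell(w)$, using braid multiplicativity (Lemma~\ref{lem:product can elts br}) to reduce each statement to the base case $w = s_i$ and then invoking Lusztig's explicit formula~\eqref{eq:T_i+- expl} directly. I treat the cases in the order (a), (b), (c), with (c) reduced to the first two.

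For the base case of part (a), let $w = s_i$ and suppose $\ell(s_i w') = \ell(w') + 1$; set $n := w'\lambda(\alpha_i^\vee) \ge 0$ (non-negativity follows because $(w')^{-1}\alpha_i$ is then a positive root). The key preliminary step is to show that $u := F_{w', \lambda}(v)$ lies in $\ker E_i$; together with $K_{\alpha_i} u = q_i^n u$, this makes $u$ the highest weight vector of a simple $(n+1)$-dimensional $U_{q_i}(\lie{sl}_2)$-submodule with basis $u, F_i u, \ldots, F_i^{(n)} u = F_{s_iw', \lambda}(v)$, the last equality being Lemma~\ref{lem:EF lambda w}\ref{lem:EF lambda w.b}. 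This kernel claim follows by induction on $\ell(w')$: writing $F_{w', \lambda} = F_j^{(m)} F_{s_j w', \lambda}$ via Lemma~\ref{lem:EF lambda w}\ref{lem:EF lambda w.b} for a suitable $j \ne i$, one pushes $E_i$ to the right past $F_j^{(m)}$ using the Serre-type commutation relations and invokes the induction hypothesis (after verifying the compatible length condition). Granting this, a direct evaluation of~\eqref{eq:T_i+- expl} on the $U_{q_i}(\lie{sl}_2)$-submodule containing $u$ yields $T_i^+(u) = q^{\frac12(w'\lambda,\alpha_i)} F_i^{(n)}u$. Since $\rho - s_i\rho = \alpha_i$, this matches the claimed scalar $q^{\frac12(w'\lambda, \rho - s_i\rho)}$; the analogous computation for $T_i^-$ produces an additional sign $(-1)^n = (-1)^{\rho^\vee(w'\lambda - s_i w'\lambda)}$.

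The base case of part (b) is dual: if $\ell(s_i w') = \ell(w') - 1$, write $w' = s_i \tilde w'$ with $\ell(\tilde w') = \ell(w') - 1$, so by Lemma~\ref{lem:EF lambda w}\ref{lem:EF lambda w.b}, $F_{w',\lambda}(v) = F_i^{(\tilde n)} F_{\tilde w', \lambda}(v)$ is the \emph{lowest} weight vector of the same simple $U_{q_i}(\lie{sl}_2)$-submodule (with $\tilde n = \tilde w'\lambda(\alpha_i^\vee)$). Evaluating~\eqref{eq:T_i+- expl} on this lowest weight vector produces the claimed scalar multiple of $F_{\tilde w', \lambda}(v) = F_{s_i w', \lambda}(v)$. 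The inductive steps for (a) and (b) then follow from $T_w^\pm = T_i^\pm \circ T_{\tilde w}^\pm$ (for $w = s_i\tilde w$ with $\ell(w) = \ell(\tilde w)+1$) and the induction hypothesis on $\tilde w$, possibly transitioning between cases (a) and (b) at the last step. Part (c) is then deduced from (a) and (b): factoring $w = (ww') \cdot w'^{-1}$ with $\ell(w) = \ell(ww') + \ell(w'^{-1})$ gives $T_w^\pm = T_{ww'}^\pm \circ T_{w'^{-1}}^\pm$, where case (b) applied to $T_{w'^{-1}}^\pm(F_{w',\lambda}(v))$ yields a scalar multiple of $v$, and case (a) applied to $T_{ww'}^\pm(v)$ then yields a scalar multiple of $F_{ww',\lambda}(v)$; the composed scalar is verified to match the formula in (c) by a short algebraic manipulation.

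The main obstacle is the careful bookkeeping of signs and $q$-powers through the induction and across the three cases. The $q$-factor multiplicativity rests on the cocycle identity
\[
(\lambda, \rho - (s_i\tilde w)^{-1}\rho) = (\lambda, \rho - \tilde w^{-1}\rho) + (\tilde w \lambda, \alpha_i),
\]
which comes from $\rho - w^{-1}\rho = \sum_{\alpha \in \Phi^+,\, w\alpha \in \Phi^-}\alpha$. The sign factors $(-1)^{\rho^\vee(\cdot)}$ propagate analogously via telescoping along a reduced decomposition, each elementary step contributing a sign $(-1)^{n_k}$ for the relevant simple coroot pairing. Verifying that these combinatorial identities close the induction and are compatible with the transitions among (a), (b), (c) is the most delicate aspect of the argument.
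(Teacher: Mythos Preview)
Your overall architecture matches the paper's proof exactly: induction on~$\ell(w)$, reduction to the case $w=s_i$ via Lemma~\ref{lem:product can elts br}, evaluation on the $U_{q_i}(\lie{sl}_2)$-submodule generated by $F_{w',\lambda}(v)$, and then deducing~\ref{prop:T_w extreme vectors.c} from~\ref{prop:T_w extreme vectors.a} and~\ref{prop:T_w extreme vectors.b} by factoring $T_w^\pm=T_{ww'}^\pm\circ T_{w'^{-1}}^\pm$. The paper packages the base case as the formula~\eqref{eq:T_i+- extrem} obtained by citing \cite{Lus}*{Propositions~5.2.2, 5.2.3} for the action of $T'_{i,1},T''_{i,1}$ on the standard $\lie{sl}_2$-basis, rather than evaluating the triple sum~\eqref{eq:T_i+- expl} directly; your route is equivalent but more laborious. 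Your cocycle identity and the sign bookkeeping are exactly what the paper uses (note that in your displayed identity the role of~$\lambda$ should be played by~$w'\lambda$), and the phrase ``possibly transitioning between cases (a) and (b)'' is unnecessary: in each inductive step the length hypothesis for the shorter word is forced, so no crossover occurs.

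There is, however, a genuine gap in your proposed proof that $u=F_{w',\lambda}(v)\in\ker E_i$ when $\ell(s_iw')=\ell(w')+1$. Your induction on~$\ell(w')$ writes $F_{w',\lambda}=F_j^{(m)}F_{s_jw',\lambda}$ with $j\neq i$ and then asserts that the induction hypothesis applies to $F_{s_jw',\lambda}(v)$; but this requires $\ell(s_i s_jw')=\ell(s_jw')+1$, which can fail (e.g.\ $W=S_3$, $w'=s_2s_1$, $i=1$: the only left descent is $j=2$, and $s_jw'=s_1$ has $\ell(s_1\cdot s_1)=0<1$). The correct argument, which the paper treats as ``clear'', is a weight argument: if $E_i(u)\neq 0$ then $V_\lambda(w'\lambda+\alpha_i)\neq 0$, hence by $W$-invariance of weight multiplicities $V_\lambda(\lambda+w'^{-1}\alpha_i)\neq 0$; but $w'^{-1}\alpha_i$ is a positive root (since $\ell(s_iw')>\ell(w')$), so $\lambda+w'^{-1}\alpha_i\not\le\lambda$, contradicting the fact that all weights of~$V_\lambda$ lie in $\lambda-Q^+$. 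With this fix (or by simply citing the standard fact about extremal vectors), your proof is complete and coincides with the paper's.
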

\begin{proof}
To prove~\ref{prop:T_w extreme vectors.a} we argue by induction on~$\ell(w)$, the case~$\ell(w)=0$ being vacuously true. 
The following Lemma is the main ingredient in the proof of inductive steps in Proposition~\ref{prop:T_w extreme vectors}.
\begin{lemma}
In the notation of Proposition~\ref{prop:T_w extreme vectors} we have, for all $i\in I$
\begin{equation}\label{eq:T_i+- extrem}
\begin{aligned}
&T^+_i(F_{w',\lambda}(v))=\begin{cases}q^{\frac12 (w'\lambda,\alpha_i)} F_{s_iw',\lambda}(v),& (w'\lambda,\alpha_i)\ge 0,\\
                       (-1)^{w'\lambda(\alpha_i^\vee)} q^{-\frac12 (w'\lambda,\alpha_i)} F_{s_iw',\lambda}(v),&(w'\lambda,\alpha_i)\le 0,
                      \end{cases}
\\        
& 
T^-_i(F_{w',\lambda}(v))=\begin{cases}(-1)^{w'\lambda(\alpha_i^\vee)} q^{\frac12 (w'\lambda,\alpha_i)} F_{s_iw',\lambda}(v),\phantom{\scriptscriptstyle-}& (w'\lambda,\alpha_i)\ge 0,\\
q^{-\frac12 (w'\lambda,\alpha_i)} F_{s_i w',\lambda}(v),& (w'\lambda,\alpha_i)\le 0.
\end{cases}
\end{aligned}
\end{equation}
\end{lemma}
\begin{proof}
Clearly, $F_{w',\lambda}(v)$ is either a highest or a lowest weight vector in the $i$th simple quantum $\lie{sl}_2$-submodule~$V_m$
it generates where $m=|(w'\lambda,\alpha_i^\vee)|$. 
Then~\eqref{eq:T_i+- extrem} follows from~\cite{Lus}*{Propositions~5.2.2, 5.2.3}.
Namely, let $V_m$ be the standard simple $U_\mathbf v(\lie{sl}_2)$-module with the standard basis $\{z_k\}_{0\le k\le m}$ such that $K(z_k)=\mathbf v^{m-2k} z_k$
and $z_k=F^{(k)}(z_0)=E^{(m-k)}(z_m)$, $0\le k\le m$. Recall that $T^+=T'_1 K^{\frac12}$ and $T^-=T''_1 K^{-\frac12}$. Then 
by~\cite{Lus}*{Propositions~5.2.2, 5.2.3} we have 
\begin{equation}\label{eq:T sl2 std bas}
T^+(z_k)=(-1)^k \mathbf v^{k(m-k)+\frac12m} z_{m-k},\,\, T^-(z_k)=(-1)^{m-k} \mathbf v^{k(m-k)+\frac12 m}z_{m-k},\,\, 0\le k\le m.
\end{equation}
Thus, \eqref{eq:T_i+- extrem} is obtained by applying~\eqref{eq:T sl2 std bas} with $k=0$ if $w'\lambda(\alpha_i^\vee)\ge 0$
and $k=m$ if $w'\lambda(\alpha_i^\vee)\le 0$.
\end{proof}

To prove inductive steps in part~\ref{prop:T_w extreme vectors.a} of Proposition~\ref{prop:T_w extreme vectors}, suppose that $w,w'\in W$ and $i\in I$ satisfy $\ell(s_iww')=\ell(w)+\ell(w')+1$. 
In particular, $\ell(s_i w)=\ell(w)+1$ and $\ell(ww')=\ell(w)+\ell(w')$.
Then we have, by the induction hypothesis and \eqref{eq:T_i+- extrem}
\begin{multline*}
T_{s_iw}^+(F_{w',\lambda}(v))=T_i^+ T_{w}^+(F_{w',\lambda}(v))=q^{\frac12(w'\lambda,\rho-w^{-1}\rho)} T_i^+(F_{ww',\lambda}(v))\\
=q^{\frac12 (w'\lambda,\rho-w^{-1}\rho)+\frac12 (ww'\lambda,\alpha_i)} F_{s_iww',\lambda}(v)
=q^{\frac12 (w'\lambda,\rho-w^{-1}\rho+w^{-1}\alpha_i)} F_{s_iww',\lambda}(v)\\
=q^{\frac12 (w'\lambda,\rho-(s_iw)^{-1}\rho)} F_{s_iww',\lambda}(v),
\end{multline*}
where we used the $W$-invariance of~$(\cdot,\cdot)$ and the obvious observation that~$\alpha_i=\rho-s_i\rho$. 
The second identity in part~\ref{prop:T_w extreme vectors.a} is proved similarly using the observation that 
$\mu(\alpha_i^\vee)=\rho^\vee(\mu-s_i \mu)$.

The proof of part~\ref{prop:T_w extreme vectors.b} is identical, the only difference being that we assume $\ell(s_i ww')=
\ell(w')-\ell(w)-1$ which implies that $\ell(s_iw)=\ell(w)+1$ and $\ell(ww')=\ell(w)-\ell(w')$.

To prove part~\ref{prop:T_w extreme vectors.c}, denote $w_1=ww'$. Then $\ell(w)=\ell(w_1)+\ell(w'{}^{-1})$, $w=w_1w'{}^{-1}$ 
and so $T^+_w=T^+_{w_1}\circ T^+_{w'{}^{-1}}$ by Lemma~\ref{lem:product can elts br}.
Applying part~\ref{prop:T_w extreme vectors.b} with 
$w=w'{}^{-1}$ and then part~\ref{prop:T_w extreme vectors.a} with $w'=1$ and $w=w_1$ we obtain 
\begin{multline*}
 T_w^+(F_{w',\lambda}(v))=T_{w_1}^+(T^+_{w'{}^{-1}}(F_{w',\lambda}(v)))\\
=(-1)^{\rho^\vee(w'\lambda-\lambda)}q^{-\frac12(w'\lambda,\rho-w'\rho)}T_{w_1}^+(v)
=(-1)^{\rho^\vee(w'\lambda-\lambda)}q^{-\frac12(\lambda,w'{}^{-1}\rho+w_1^{-1}\rho)} F_{w_1,\lambda}(v)
\\
=(-1)^{\rho^\vee(w'\lambda-\lambda)}q^{(\lambda,\rho)-\frac12(\lambda,w'{}^{-1}\rho)+\frac12(\lambda,\rho-w'^{-1}w^{-1}\rho)} F_{w_1,\lambda}(v)
\\
=(-1)^{\rho^\vee(w'\lambda-\lambda)}q^{\frac12(\lambda,2\rho-w'^{-1}(\rho+w^{-1}\rho))} F_{ww',\lambda}(v).
\end{multline*}
The identity for~$T_w^-$ is proved similarly.
\end{proof}

Recall from~\cite{Lus}*{Chapter 37} that $\Br_W$ also acts on~$U_q(\lie g)$ via Lusztig symmetries and let $T_i^\pm $ be the automorphisms 
of~$U_q(\lie g)$ defined as $T_i^+=T'_{i,1}\ad K_{\frac12\alpha_i}$ and $T_i^-=T''_{i,1}\ad K_{-\frac12 \alpha_i}$, $i\in I$ where $\ad K_{\lambda}(u)=
K_\lambda u K_{-\lambda}$, $\lambda\in\frac12 P$, $u\in U_q(\lie g)$.
\begin{remark}
The operators $T_i^\pm$, viewed as automorphisms of~$U_q(\lie g)$, were already used in~\cite{BG-dcb} for studying double canonical bases of~$U_q(\lie g)$.
\end{remark}

\begin{lemma}\label{lem: T^+ T Lus}
On~$U_q(\lie g)$ we have 
\begin{enumerate}[label={\rm(\alph*)},leftmargin=*]
 \item\label{lem: T^+ T Lus.a} $T^\pm_i\circ \ad K_\lambda=\ad K_{s_i\lambda}\circ T^\pm_i$ for all $\lambda\in \frac12 P$, $i\in I$;
 \item\label{lem: T^+ T Lus.b} $T^+_{w}=T'_{w,1}\circ \ad K_{\frac12(\rho-w^{-1}\rho)}$ and $T^-_{w}=T''_{w,1}\circ\ad K_{\frac12(w^{-1}\rho-\rho)}$ for all~$w\in W$.
\end{enumerate}
\end{lemma}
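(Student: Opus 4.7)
The plan is to first establish part \ref{lem: T^+ T Lus.a} directly from the known action of Lusztig's original symmetries on the Cartan part of $U_q(\lie g)$, and then deduce part \ref{lem: T^+ T Lus.b} by induction on $\ell(w)$ using part \ref{lem: T^+ T Lus.a} and Lemma~\ref{lem:product can elts br}.

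For part \ref{lem: T^+ T Lus.a}, recall from~\cite{Lus}*{Chapter~37} that $T'_{i,1}$ and $T''_{i,1}$ are algebra automorphisms of $U_q(\lie g)$ satisfying $T'_{i,1}(K_\mu) = K_{s_i\mu} = T''_{i,1}(K_\mu)$ for every $\mu \in \tfrac12 P$. Applying these to the product $K_\lambda u K_{-\lambda}$, I obtain the intertwining relations $T'_{i,1} \circ \ad K_\lambda = \ad K_{s_i\lambda} \circ T'_{i,1}$ and likewise for $T''_{i,1}$. Since $\ad K_\bullet$ is a group homomorphism from $\tfrac12 P$ to $\Aut U_q(\lie g)$, composing the above with $\ad K_{\pm\frac12\alpha_i}$ on the left yields
\[
T^\pm_i \circ \ad K_\lambda = T'_{i,1} \circ \ad K_{\pm\frac12\alpha_i + \lambda} = \ad K_{s_i(\pm\frac12\alpha_i + \lambda)} \circ T'_{i,1} = \ad K_{s_i\lambda} \circ T^\pm_i,
\]
using $s_i(\pm\tfrac12\alpha_i) = \mp\tfrac12\alpha_i$ to recover the $\pm\tfrac12\alpha_i$ shift after applying $s_i$. (The computation is identical for the $-$ case with $T''_{i,1}$.)

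For part \ref{lem: T^+ T Lus.b}, I argue by induction on $\ell(w)$. The base case $w = s_i$ is immediate from the definition $T^+_i = T'_{i,1}\circ\ad K_{\frac12\alpha_i}$ together with $\tfrac12(\rho - s_i\rho) = \tfrac12\alpha_i$. For the inductive step, choose $i \in I$ with $\ell(s_iw) = \ell(w)+1$. By Lemma~\ref{lem:product can elts br} we have $T^+_{s_iw} = T^+_i \circ T^+_w$, and the analogous factorization $T'_{s_iw,1} = T'_{i,1}\circ T'_{w,1}$ holds for Lusztig's symmetries. Using the induction hypothesis and iterating part~\ref{lem: T^+ T Lus.a} (so that $T'_{w,1}$ intertwines $\ad K_\mu$ with $\ad K_{w\mu}$), I compute
\[
T^+_{s_iw} = T'_{i,1} \circ \ad K_{\frac12\alpha_i} \circ T'_{w,1} \circ \ad K_{\frac12(\rho - w^{-1}\rho)} = T'_{s_iw,1} \circ \ad K_{\frac12(w^{-1}\alpha_i + \rho - w^{-1}\rho)}.
\]
The key algebraic identity is then $w^{-1}\alpha_i + \rho - w^{-1}\rho = \rho - w^{-1}(\rho - \alpha_i) = \rho - w^{-1}s_i\rho = \rho - (s_iw)^{-1}\rho$, which completes the inductive step. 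The statement for $T^-_w$ follows by the same argument with $T''_{w,1}$ in place of $T'_{w,1}$ and $-\alpha_i$ in place of $\alpha_i$.

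No step looks like a serious obstacle; the only subtle bookkeeping is checking the sign conventions in part~\ref{lem: T^+ T Lus.a} (i.e.\ that the $\pm\tfrac12\alpha_i$ shift from the definition of $T^\pm_i$ is correctly transported through $s_i$) and tracking the identity $w^{-1}\alpha_i + \rho - w^{-1}\rho = \rho - (s_iw)^{-1}\rho$ in the inductive step, both of which reduce to one-line computations using $s_i\rho = \rho - \alpha_i$.
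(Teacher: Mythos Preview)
Your proof is correct and follows essentially the same approach as the paper, which simply states that part~\ref{lem: T^+ T Lus.a} is immediate and that part~\ref{lem: T^+ T Lus.b} follows from~\ref{lem: T^+ T Lus.a} by an induction on $\ell(w)$ parallel to the one in Proposition~\ref{prop:T_w extreme vectors}. One small wording point: in the inductive step you invoke ``iterating part~\ref{lem: T^+ T Lus.a}'' to commute $\ad K_{\frac12\alpha_i}$ past $T'_{w,1}$, but what you actually use there is the intermediate relation $T'_{i,1}\circ\ad K_\lambda=\ad K_{s_i\lambda}\circ T'_{i,1}$ (for the unmodified symmetries), which you established en route to~\ref{lem: T^+ T Lus.a}; you may want to phrase it that way for clarity.
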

\begin{proof}
Part~\ref{lem: T^+ T Lus.a} is immediate, while part~\ref{lem: T^+ T Lus.b} follows from~\ref{lem: T^+ T Lus.a} by induction similar to that 
in Proposition~\ref{prop:T_w extreme vectors}.
\end{proof}

\begin{lemma}\label{lem:T w0 F_i}
Suppose that $W$ is finite. Then for all~$i\in I$ we have
\begin{equation*}
\begin{aligned}
T^\pm_{w_\circ}(E_i)=-q^{-\frac12(\alpha_i,\alpha_i)}F_{i^\star }K_{\alpha_{i^\star }},\quad 
T^\pm_{w_\circ}(F_i)=-q^{-\frac12(\alpha_i,\alpha_i)}E_{i^\star }K_{-\alpha_{i^\star }}.
\end{aligned}
\end{equation*}
\end{lemma}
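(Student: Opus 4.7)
The plan is to reduce to the rank-one case by factoring $w_\circ=w''s_i$ with $\ell(w'')=\ell(w_\circ)-1$ (possible since $w_\circ\alpha_i=-\alpha_{i^\star}$ is a negative root) and writing $T^\pm_{w_\circ}=T^\pm_{w''}\circ T^\pm_i$, the algebra-automorphism analogue of Lemma~\ref{lem:product can elts br} (standard, cf.\ \cite{Lus}*{\S39.4.7}). The rank-one piece can be handled directly, while the outer factor $T^\pm_{w''}$ is essentially a relabeling $i\leftrightarrow i^\star$.

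\emph{Step 1: evaluate $T^\pm_i$ on $E_i$ and $F_i$.} By Lemma~\ref{lem: T^+ T Lus}\ref{lem: T^+ T Lus.b}, $T^+_i=T'_{i,1}\circ\ad K_{\frac12\alpha_i}$ and $T^-_i=T''_{i,1}\circ\ad K_{-\frac12\alpha_i}$; since $(\rho,\alpha_i)=\tfrac12(\alpha_i,\alpha_i)$, the twist multiplies $E_i$ by $q_i^{\pm 1}$ and $F_i$ by $q_i^{\mp 1}$. Combining with Lusztig's rank-one identities (equivalently, verifying directly via formula~\eqref{eq:T_i+- expl} acting on the two-dimensional $U_{q_i}(\lie{sl}_2)$-module $V_{\omega_i}$, consistent with~\eqref{eq:T sl2 std bas}) one obtains
\[
T^\pm_i(E_i)=-q_i^{-1}F_iK_{\alpha_i},\qquad T^\pm_i(F_i)=-q_i^{-1}E_iK_{-\alpha_i}.
\]

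\emph{Step 2: apply $T^\pm_{w''}$.} The crucial observation is that $w''\alpha_i=w_\circ s_i\alpha_i=-w_\circ\alpha_i=\alpha_{i^\star}$ is itself a simple root and $\ell(w''s_i)=\ell(w'')+1$. By Lusztig's \cite{Lus}*{Proposition~37.2.4}, both $T'_{w'',1}$ and $T''_{w'',1}$ send $E_i$ to $E_{i^\star}$ and $F_i$ to $F_{i^\star}$. Moreover, the residual $K$-twist provided by Lemma~\ref{lem: T^+ T Lus}\ref{lem: T^+ T Lus.b} acts trivially on these generators because
\[
(\rho-w''^{-1}\rho,\alpha_i)=(\rho,\alpha_i-\alpha_{i^\star})=\tfrac12(\alpha_i,\alpha_i)-\tfrac12(\alpha_{i^\star},\alpha_{i^\star})=0,
\]
as $\alpha_i$ and $\alpha_{i^\star}$ lie in the same $W$-orbit. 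Hence $T^\pm_{w''}(E_i)=E_{i^\star}$, $T^\pm_{w''}(F_i)=F_{i^\star}$, while $T^\pm_{w''}(K_{\pm\alpha_i})=K_{\pm\alpha_{i^\star}}$. Substituting into Step~1's output yields exactly the two identities stated in the lemma, with $q_i=q_{i^\star}$.

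The main obstacle is the $q_i$-bookkeeping: factors of $q_i$ enter from the $\ad K_{\pm\frac12\alpha_i}$ twist on the rank-one side, from Lusztig's rank-one formula itself, and potentially from the analogous twist on the $w''$ side; one must verify that the last of these is trivial. The clean vanishing $(\rho-w''^{-1}\rho,\alpha_i)=0$ is precisely what pins the final coefficient down to $-q^{-\frac12(\alpha_i,\alpha_i)}=-q_i^{-1}$ rather than a more involved power of $q_i$.
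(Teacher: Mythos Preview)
Your proof is correct and takes essentially the same approach as the paper: both factor through $w''=w_\circ s_i$ (using $w''\alpha_i=\alpha_{i^\star}$), invoke the simple-root-to-simple-root fact that $T^\pm_{w''}$ sends $E_i,F_i$ to $E_{i^\star},F_{i^\star}$ with trivial $K$-twist (the paper cites \cite{BG}*{Lemma~2.8} for this), and reduce to the rank-one identity $T^\pm_j(E_j)=-q_j^{-1}F_jK_{\alpha_j}$. The only cosmetic difference is the order of the two factors: the paper writes $w_\circ=s_{i^\star}w''$ and applies the rank-one step last (at index $i^\star$), whereas you write $w_\circ=w''s_i$ and apply it first (at index $i$).
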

\begin{proof}
By~\cite{BG}*{Lemma~2.8} and Lemma~\ref{lem: T^+ T Lus}\ref{lem: T^+ T Lus.b} we have $T^\pm_w(E_i)=q^{\pm\frac12 (\rho-w^{-1}\rho,\alpha_i)} E_j$
provided that $w\alpha_i=\alpha_j$. Since $(w^{-1}\rho,\alpha_i)=(\rho,w\alpha_i)=(\rho,\alpha_j)$ and $(\alpha_i,\alpha_i)=(\alpha_j,\alpha_j)$
it follows that $T^\pm_w(E_i)=E_j$. In particular, since $w_\circ s_i\alpha_i=\alpha_{i^\star }$ we have $T^\pm_{w_\circ s_i}(E_i)=E_{i^\star }$.

On the other hand, $T^\pm_{w_\circ s_i}(E_i)=T^\pm_{s_{i^\star }w_\circ}(E_i)$ and $\ell(w_\circ)=\ell(s_{i^\star }w_\circ)+1$ whence 
$T^\pm_{w_\circ}(E_i)=T^\pm_{i^\star }(E_{i^\star })=-q^{-\frac12(\alpha_i,\alpha_i)}F_{i^\star }K_{\alpha_{i^\star }}$. The argument for~$T^\pm_{w_\circ}(F_i)$ is similar.
\end{proof}

The following Lemma is immediate from~\cite{Lus}*{Proposition~37.1.2}. 
\begin{lemma}\label{lem:compat symm braid}
Let $V\in\mathscr O_q^{int}(\lie g)$. Then 
$T_i^\pm(x(v))=T_i^\pm(x)(T_i^\pm(v))$ for all $i\in I$, $v\in V$, $x\in U_q(\lie g)$.
\end{lemma}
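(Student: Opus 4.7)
The plan is to reduce the statement directly to Lusztig's classical compatibility result~\cite{Lus}*{Proposition~37.1.2}, which asserts that for $y\in U_q(\lie g)$, $w\in V$, and $\epsilon\in\{',''\}$,
$$T^{\epsilon}_{i,1}(y\cdot w)=T^{\epsilon}_{i,1}(y)\cdot T^{\epsilon}_{i,1}(w),$$
where on the left $T^\epsilon_{i,1}$ is the algebra automorphism of~$U_q(\lie g)$ and on the right it denotes both the algebra automorphism applied to $y$ and the linear operator on~$V$ applied to~$w$.

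The key step is to unpack the two definitions of $T_i^{\pm}$ used in the paper and verify that conjugation by $K_{\pm\frac12\alpha_i}$ intertwines correctly between the module action and the adjoint action. Concretely, for the $T_i^+$ case, using the definitions $T_i^+=T'_{i,1}\circ K_{\frac12\alpha_i}$ on~$V$ and $T_i^+=T'_{i,1}\circ \ad K_{\frac12\alpha_i}$ on~$U_q(\lie g)$, I would compute
\begin{align*}
T_i^+(x(v)) &= T'_{i,1}\bigl(K_{\frac12\alpha_i}(x\cdot v)\bigr)
= T'_{i,1}\bigl((K_{\frac12\alpha_i} x K_{-\frac12\alpha_i})\cdot K_{\frac12\alpha_i}(v)\bigr)\\
&= T'_{i,1}\bigl(\ad K_{\frac12\alpha_i}(x)\cdot K_{\frac12\alpha_i}(v)\bigr).
\end{align*}
Invoking Lusztig's result with $y=\ad K_{\frac12\alpha_i}(x)$ and $w=K_{\frac12\alpha_i}(v)$, this becomes
$$T'_{i,1}\bigl(\ad K_{\frac12\alpha_i}(x)\bigr)\cdot T'_{i,1}\bigl(K_{\frac12\alpha_i}(v)\bigr) = T_i^+(x)\cdot T_i^+(v),$$
as desired. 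The $T_i^-$ case is verbatim the same calculation, substituting $T''_{i,1}$ for $T'_{i,1}$ and $K_{-\frac12\alpha_i}$ for $K_{\frac12\alpha_i}$.

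There is no real obstacle here; the argument is purely a bookkeeping check that the multiplicative twist by $K_{\pm\frac12\alpha_i}$ on the module side corresponds precisely to twisting by $\ad K_{\pm\frac12\alpha_i}$ on the algebra side, so Lusztig's intertwining relation passes cleanly through. The only thing to keep in mind is the distinction (used throughout Section~\ref{sect:mod Lusz symm}) between $T_i^{\pm}$ as module endomorphisms and as algebra automorphisms, which is exactly what makes the two instances of $K_{\pm\frac12\alpha_i}$ in the computation above match up into the adjoint action.
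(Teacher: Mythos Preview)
Your proposal is correct and follows essentially the same approach as the paper, which simply states that the lemma is immediate from~\cite{Lus}*{Proposition~37.1.2}. Your computation spells out exactly the bookkeeping needed to pass from Lusztig's unmodified symmetries to the modified ones $T_i^\pm$, and there is nothing more to it.
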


 \begin{lemma}\label{lem:T w weights isotyp}
  We have $T_w^\pm(\mathcal I_\lambda(V))=\mathcal I_\lambda(V)$ and $T_w^\pm(V(\beta))=V(w\beta)$
 for any $w\in W$, $V\in\mathscr O^{int}_q(V)$, $\lambda\in P^+$ and $\beta\in P$.
 \end{lemma}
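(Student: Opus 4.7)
The plan is to handle the two assertions separately, both by reducing to the case $w=s_i$, $i\in I$, which is either already noted in the excerpt or trivial.

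For the weight claim $T_w^\pm(V(\beta))=V(w\beta)$, I would induct on $\ell(w)$. The base case $w=s_i$ was already observed just after the definition of the modified symmetries in \S\ref{subs:mod lus sym} (it follows from $T_i^\pm K_\lambda=K_{s_i\lambda}T_i^\pm$). For the inductive step, take $w\in W$ and $i\in I$ with $\ell(s_iw)=\ell(w)+1$; by Lemma~\ref{lem:product can elts br}, $T_{s_iw}^\pm=T_i^\pm\circ T_w^\pm$, so $T_{s_iw}^\pm(V(\beta))=T_i^\pm(V(w\beta))=V(s_iw\beta)$. Since $T_w^\pm$ is invertible (being the image of $T_w\in\Br_W$ under the action in Lemma~\ref{cor:T i +-}), the inclusion is an equality.

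For the isotypical claim $T_w^\pm(\mathcal I_\lambda(V))=\mathcal I_\lambda(V)$, the key point is functoriality: by Lemma~\ref{cor:T i +-}, the braid action of~$\Br_W$ on~$\mathscr O^{int}_q(\lie g)$ commutes with all morphisms in the category. Since $\mathscr O^{int}_q(\lie g)$ is semisimple, the decomposition $V=\bigoplus_{\lambda\in P^+}\mathcal I_\lambda(V)$ is canonical and the projection $\pi_\lambda:V\to V$ onto $\mathcal I_\lambda(V)$ (with kernel $\bigoplus_{\mu\neq\lambda}\mathcal I_\mu(V)$) is an idempotent in $\End_{U_q(\lie g)}(V)$. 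Applying the commutation $T_w^\pm\circ \pi_\lambda=\pi_\lambda\circ T_w^\pm$ gives $T_w^\pm(\mathcal I_\lambda(V))=T_w^\pm(\im\pi_\lambda)\subseteq \im\pi_\lambda=\mathcal I_\lambda(V)$, and once again the invertibility of $T_w^\pm$ upgrades this to an equality.

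There is no real obstacle here; both parts are essentially formal consequences of statements already in place. The only mild subtlety is invoking functoriality from Lemma~\ref{cor:T i +-} (rather than trying to verify the invariance by hand on generators), which avoids any computation involving explicit Lusztig symmetry formulas.
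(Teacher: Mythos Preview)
Your proposal is correct. The paper states this lemma without proof, treating both assertions as immediate; your argument supplies exactly the routine details one would expect---induction on $\ell(w)$ via Lemma~\ref{lem:product can elts br} for the weight claim, and functoriality from Lemma~\ref{cor:T i +-} applied to the isotypic projector for the isotypical claim.
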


\subsection{\texorpdfstring{$\sigma$}{eta} via modified Lusztig's symmetries} 
Let $\lie g$ be finite dimensional reductive and 
define
\begin{equation}\label{eq:defn-eta}
\sigma^\pm(v)=(-1)^{\rho^\vee(\lambda\mp \beta)} q^{\frac12((\beta,\beta)-(\lambda,\lambda))-(\lambda,\rho)}
T_{w_\circ}^\pm(v),\quad v\in V(\beta)\cap\mathcal I_\lambda(V).
\end{equation}
The main ingredient in our proof of Theorem~\ref{thm:main thm} is the following result
which generalizes~\cite{LusII}*{Proposition~5.5} to all reductive algebras including those whose 
semisimple part is not necessarily simply laced.
\begin{theorem}\label{thm:prop-eta}
Let~$V\in\mathscr O^{int}_q(\lie g)$. Then
\begin{enumerate}[label={\rm(\alph*)},leftmargin=*]
\item\label{thm:prop-eta.a}
$\sigma^+=\sigma^-$ and is an involution which thus will be denoted by~$\sigma$;

\item\label{thm:prop-eta.b}
$\sigma(x(v))=\theta(x)(\sigma(v))$ for any $x\in U_q(\lie g)$, $v\in V$;

\item\label{thm:prop-eta.a'}
$\sigma$ commutes with morphisms in~$\mathscr O^{int}_q(\lie g)$.
\end{enumerate}

\end{theorem}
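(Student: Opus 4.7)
The plan is to establish~(b) for both $\sigma^+$ and $\sigma^-$ first, then deduce~(a) by checking $\sigma^+=\sigma^-$ and $\sigma^2=\id$ on the generating subspace~$V_+$ and bootstrapping to all of~$V$ via~(b); part~(c) will be immediate.

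For~(b), my plan is to verify the twisted intertwining relation on the generators $E_i$, $F_i$, $K_\mu$ of~$U_q(\lie g)$. The case of~$K_\mu$ reduces to $T^\pm_{w_\circ}(V(\beta))=V(w_\circ\beta)$ (Lemma~\ref{lem:T w weights isotyp}) together with $W$-invariance of~$(\cdot,\cdot)$. For~$E_i$, combining Lemma~\ref{lem:compat symm braid} with Lemma~\ref{lem:T w0 F_i} and the identity $w_\circ\alpha_{i^\star}=-\alpha_i$ gives
\[
T^\pm_{w_\circ}(E_i v)\;=\;-\,q^{-\frac12(\alpha_i,\alpha_i)-(\alpha_i,\beta)}\, F_{i^\star}\, T^\pm_{w_\circ}(v) \qquad (v\in V(\beta)).
\]
Comparing with the scalar prefactor in~\eqref{eq:defn-eta} for $E_i v\in V(\beta+\alpha_i)$ and using $\tfrac12((\beta+\alpha_i,\beta+\alpha_i)-(\beta,\beta))=(\alpha_i,\beta)+\tfrac12(\alpha_i,\alpha_i)$, the $q$-powers cancel and a short sign check produces residual sign $+1$ in both cases, yielding $\sigma^\pm(E_i v)=F_{i^\star}\sigma^\pm(v)=\theta(E_i)\sigma^\pm(v)$. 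The $F_i$ case is parallel.

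For~(a), I would first record the auxiliary identity $\rho^\vee\circ w_\circ=-\rho^\vee$ on~$P$, which follows from $w_\circ\alpha_i=-\alpha_{i^\star}$, the $\star$-invariance $\rho^\vee(\alpha_{i^\star})=\rho^\vee(\alpha_i)=1$, and vanishing of~$\rho^\vee$ on~$P^W$; this gives $\rho^\vee(\lambda-w_\circ\lambda)=2\rho^\vee(\lambda)\in\ZZ$. For $v\in V_+(\lambda)$, Proposition~\ref{prop:T_w extreme vectors}\ref{prop:T_w extreme vectors.a} with $w'=1$ then yields $T^+_{w_\circ}(v)=q^{(\lambda,\rho)}F_{w_\circ,\lambda}(v)$ and $T^-_{w_\circ}(v)=(-1)^{2\rho^\vee(\lambda)}q^{(\lambda,\rho)}F_{w_\circ,\lambda}(v)$, while the scalar prefactors in~\eqref{eq:defn-eta} differ at $\beta=\lambda$ by exactly $(-1)^{2\rho^\vee(\lambda)}$. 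Hence $\sigma^+(v)=\sigma^-(v)=F_{w_\circ,\lambda}(v)$. Since~(b) already holds for both $\sigma^\pm$ and $V=U_q(\lie g)V_+$, uniqueness forces $\sigma^+=\sigma^-=:\sigma$ on all of~$V$. For $\sigma^2=\id$, I invoke Proposition~\ref{prop:T_w extreme vectors}\ref{prop:T_w extreme vectors.c} with $w=w'=w_\circ$: using $w_\circ^{-1}(\rho+w_\circ^{-1}\rho)=0$ the $q$-exponent collapses to $(\lambda,\rho)$ and the sign combines with the prefactor to give $\sigma(F_{w_\circ,\lambda}(v))=v$ for $v\in V_+(\lambda)$; extending by~(b) and $\theta^2=\id$ yields $\sigma^2=\id$ throughout~$V$.

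Part~(c) is then immediate: morphisms in~$\mathscr O^{int}_q(\lie g)$ preserve weight and isotypic decompositions, hence commute with the scalar prefactor in~\eqref{eq:defn-eta}, and commute with $T_{w_\circ}^\pm$ by Lemma~\ref{cor:T i +-}. The main obstacle is the sign bookkeeping underlying~\ref{thm:prop-eta.a}: proving the auxiliary identity $\rho^\vee\circ w_\circ=-\rho^\vee$, the integrality $2\rho^\vee(\lambda)\in\ZZ$, and verifying that every $(-1)^{\rho^\vee(\cdots)}$ factor cancels correctly. The normalization in~\eqref{eq:defn-eta} is precisely calibrated so that both $\sigma^\pm$ reduce on $V_+(\lambda)$ to the extremal map $v\mapsto F_{w_\circ,\lambda}(v)$; this clean identification on highest-weight vectors is the geometric content that powers the whole argument.
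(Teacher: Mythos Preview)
Your proposal is correct and follows essentially the same approach as the paper's proof. Both arguments verify~(b) on the generators $E_i,F_i,K_\mu$ via Lemma~\ref{lem:compat symm braid} and Lemma~\ref{lem:T w0 F_i}, then establish~(a) by checking that $\sigma^+$ and $\sigma^-$ agree on highest-weight vectors (sending $v_\lambda$ to $F_{w_\circ,\lambda}(v_\lambda)$) and bootstrapping via~(b) and $\theta^2=\id$. The paper records the slightly stronger intermediate fact $\sigma^\pm(F_{u,\lambda}(v_\lambda))=F_{w_\circ u,\lambda}(v_\lambda)$ for all extremal vectors (using Proposition~\ref{prop:T_w extreme vectors}\ref{prop:T_w extreme vectors.c} with arbitrary $w'=u$), whereas you only need the case $u=1$; and for~(c) the paper packages the argument into a general Lemma about maps of the form $\xi^\chi$, while you argue directly---but these are cosmetic differences, not substantive ones.
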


\begin{remark}\label{rem:eta inv}
For $\lie g=\lie{sl}_n$ the involution~$\sigma$ coincides with the famous Sch\"utzenberger involution on Young tabeleaux
which was established for the first time in~\cite{BZ1}. Thus, we can regard~$\sigma$ as the generalized Sch\"utzenberger involution.
\end{remark}

\begin{proof}
We need the following properties of~$\sigma^\pm$.

\begin{lemma}\label{lem:exist eta}
For any $V\in\mathscr O^{fin}_q(\lie g)$ we have 
\begin{enumerate}[label={\rm(\roman*)},align=center,leftmargin=*]
\item\label{lem:exist eta.a} $\sigma^\pm(F_{w,\lambda}(v_\lambda))=F_{w_\circ w,\lambda}(v_\lambda)$ for any $v_\lambda\in \mathcal I_\lambda(V)(\lambda)$, $\lambda\in P^+$;
\item \label{lem:exist eta.b} $\sigma^\pm(x(v))=\theta(x)(\sigma^\pm(v))$ for any $x\in U_q(\lie g)$, $v\in V$;
\item \label{lem:exist eta.c}
$\sigma^+=\sigma^-$ and 
$(\sigma^\pm)^2=\id_V$.
\end{enumerate}
\end{lemma}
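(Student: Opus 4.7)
The plan is to prove~\ref{lem:exist eta.a} by direct substitution into Proposition~\ref{prop:T_w extreme vectors}, to derive~\ref{lem:exist eta.b} from Lemmas~\ref{lem:compat symm braid} and~\ref{lem:T w0 F_i} via a check on generators, and finally to deduce~\ref{lem:exist eta.c} from the first two parts by a generation argument. Throughout I will use the identity $w_\circ\rho=-\rho$, which holds since $w_\circ\omega_i=-\omega_{i^\star}$ and $\star$ is an involution on~$I$, together with the $W$-invariance of~$(\cdot,\cdot)$ and the fact that $w_\circ^{-1}=w_\circ$.

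For part~\ref{lem:exist eta.a}, let $w\in W$ and $v_\lambda\in\mathcal I_\lambda(V)(\lambda)$; the vector $F_{w,\lambda}(v_\lambda)$ lies in $V(w\lambda)\cap\mathcal I_\lambda(V)$, and since $\ell(w_\circ w)=\ell(w_\circ)-\ell(w)$ I apply Proposition~\ref{prop:T_w extreme vectors}\ref{prop:T_w extreme vectors.c} with its two arguments set to $w_\circ$ and~$w$. The identity $w_\circ^{-1}\rho=-\rho$ collapses the exponent $\tfrac12(\lambda,2\rho-w^{-1}(\rho+w_\circ^{-1}\rho))$ to $(\lambda,\rho)$, while $(w\lambda,w\lambda)=(\lambda,\lambda)$ makes the $q$-prefactor in~\eqref{eq:defn-eta} equal to $q^{-(\lambda,\rho)}$, so these two $q$-powers cancel exactly. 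The sign factors $(-1)^{\rho^\vee(\lambda\mp w\lambda)}$ from~\eqref{eq:defn-eta} cancel those produced by $T_{w_\circ}^\pm$; in the $-$ case this uses the parity identity $\rho^\vee(\mu+w_\circ\mu)\in 2\mathbb Z$ for $\mu\in P$, which itself follows from $w_\circ\omega_i=-\omega_{i^\star}$. One concludes that $\sigma^\pm(F_{w,\lambda}(v_\lambda))=F_{w_\circ w,\lambda}(v_\lambda)$.

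For part~\ref{lem:exist eta.b}, iterating Lemma~\ref{lem:compat symm braid} along a reduced expression for $w_\circ$ yields $T_{w_\circ}^\pm(x(v))=T_{w_\circ}^\pm(x)(T_{w_\circ}^\pm(v))$, so it suffices to verify the intertwining relation on the generators $E_i,F_i,K_\mu$. For $x=E_i$ and $v\in V(\beta)\cap\mathcal I_\lambda(V)$, Lemma~\ref{lem:T w0 F_i} gives $T_{w_\circ}^\pm(E_i)=-q^{-\tfrac12(\alpha_i,\alpha_i)}F_{i^\star}K_{\alpha_{i^\star}}$, and since $T_{w_\circ}^\pm(v)\in V(w_\circ\beta)$ with $w_\circ\alpha_{i^\star}=-\alpha_i$, the factor $K_{\alpha_{i^\star}}$ contributes the scalar $q^{-(\alpha_i,\beta)}$. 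Substituting into~\eqref{eq:defn-eta} for $E_i(v)\in V(\beta+\alpha_i)\cap\mathcal I_\lambda(V)$, the expansion $(\beta+\alpha_i,\beta+\alpha_i)-(\beta,\beta)=2(\beta,\alpha_i)+(\alpha_i,\alpha_i)$ precisely absorbs the extraneous $q$-contributions, while $\rho^\vee(\alpha_i)=1$ combined with the leading $-1$ cancels the sign shift; one obtains $\sigma^\pm(E_i(v))=F_{i^\star}(\sigma^\pm(v))=\theta(E_i)(\sigma^\pm(v))$. The case $x=F_i$ is analogous, and $x=K_\mu$ reduces to $(w_\circ\mu,w_\circ\beta)=(\mu,\beta)$. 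The main obstacle is precisely this bookkeeping step: the normalization in~\eqref{eq:defn-eta} was engineered so that the factor $-q^{-\tfrac12(\alpha_i,\alpha_i)}$ from Lemma~\ref{lem:T w0 F_i} and the Cartan contribution conspire to produce a clean $\theta$-intertwiner, and one must track the sign and $q$-exponent corrections with care.

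For part~\ref{lem:exist eta.c}, both $\sigma^+$ and $\sigma^-$ send any highest weight vector $v_\lambda\in V_+(\lambda)$ to $F_{w_\circ,\lambda}(v_\lambda)$ by~\ref{lem:exist eta.a}, and both satisfy the same $\theta$-twisted intertwining relation by~\ref{lem:exist eta.b}; since $V$ is generated by $V_+$ as a $U_q(\lie g)$-module, this forces $\sigma^+=\sigma^-$. Similarly, applying~\ref{lem:exist eta.a} twice gives $(\sigma^\pm)^2(v_\lambda)=\sigma^\pm(F_{w_\circ,\lambda}(v_\lambda))=F_{w_\circ^2,\lambda}(v_\lambda)=v_\lambda$, while~\ref{lem:exist eta.b} combined with $\theta^2=\id$ shows that $(\sigma^\pm)^2$ commutes with the $U_q(\lie g)$-action, so again by generation $(\sigma^\pm)^2=\id_V$.
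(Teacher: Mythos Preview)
Your proposal is correct and follows essentially the same route as the paper: part~\ref{lem:exist eta.a} via Proposition~\ref{prop:T_w extreme vectors}\ref{prop:T_w extreme vectors.c} with the $w_\circ\rho=-\rho$ simplification, part~\ref{lem:exist eta.b} via the generator check using Lemmata~\ref{lem:compat symm braid} and~\ref{lem:T w0 F_i}, and part~\ref{lem:exist eta.c} by a generation argument from~$V_+$. The only stylistic difference is in~\ref{lem:exist eta.c}: the paper proves $\sigma^\epsilon\circ\sigma^{\epsilon'}=\id_V$ uniformly for all $\epsilon,\epsilon'\in\{+,-\}$ and reads off both conclusions at once, whereas you first deduce $\sigma^+=\sigma^-$ directly from agreement on~$V_+$ and the common $\theta$-intertwining property, and then establish the involution; both are equivalent. (As a minor remark, your parity identity $\rho^\vee(\mu+w_\circ\mu)\in 2\mathbb Z$ is in fact the equality $\rho^\vee(\mu+w_\circ\mu)=0$, since $\rho^\vee\circ w_\circ=-\rho^\vee$, which is exactly what the paper uses.)
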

\begin{proof}
Let $v_\lambda\in \mathcal I_\lambda(V)(\lambda)$, $\lambda\in P^+$.
Since $\rho+w_\circ\rho=0$, $\rho^\vee(w_\circ\mu)=-\rho^\vee(\mu)$ for any~$\mu\in P$ and $\ell(w_\circ u)=\ell(w_\circ)-\ell(u)$ for any~$u\in W$,
Proposition~\ref{prop:T_w extreme vectors}\ref{prop:T_w extreme vectors.c} with $w=w_\circ$ and $w'=u$ yields
$$
T_{w_\circ}^\pm(F_{u,\lambda}(v_\lambda))=(-1)^{\rho^\vee(\pm u\lambda-\lambda)}q^{(\lambda,\rho)} F_{w_\circ u,\lambda}(v_\lambda),\\
$$
Since $F_{u,\lambda}(v_\lambda)\in V(u\lambda)$, \eqref{eq:defn-eta} yields
$$
\sigma^\pm(F_{u,\lambda}(v_\lambda))=(-1)^{\rho^\vee(\lambda\mp u\lambda)} q^{-(\lambda,\rho)}T_{w_\circ}^\pm(F_{u,\lambda}(v_\lambda))
=F_{w_\circ u,\lambda}(v_\lambda).
$$
This proves part~\ref{lem:exist eta.a}. To prove part~\ref{lem:exist eta.b}, let $v\in V(\beta)$. Then $E_i(v)\in V(\beta+\alpha_i)$,\
and we obtain, by~\eqref{eq:defn-eta} and
Lemmata~\ref{lem: T^+ T Lus}, \ref{lem:T w0 F_i}
\begin{align*}
\sigma^\pm(E_i(v))&=(-1)^{\rho^\vee(\lambda\mp(\beta+\alpha_i))}q^{\frac12((\beta+\alpha_i,\beta+\alpha_i)-(\lambda,\lambda))-(\lambda,\rho)}
T_{w_\circ}^\pm(E_i(v))
\\
&=-(-1)^{\rho^\vee(\lambda\mp \beta)}q^{(\beta,\alpha_i)+\frac12((\alpha_i,\alpha_i)+(\beta,\beta)-(\lambda,\lambda))-(\lambda,\rho)}
T_{w_\circ}^\pm(E_i)(T_{w_\circ}^\pm(v))
\\
&=(-1)^{\rho^\vee(\lambda\mp \beta)}q^{(\beta,\alpha_i)+\frac12((\beta,\beta)-(\lambda,\lambda))-(\lambda,\rho)}
F_{i^\star}K_{i^\star}(T_{w_\circ}^\pm(v))
\\
&=\theta(E_i)(q^{(\beta,\alpha_i+w_\circ\alpha_{i^\star})}\sigma^\pm(v))
=\theta(E_i)\sigma^\pm(v).
\end{align*}
The identity $\sigma^\pm(F_i(v))=\theta(F_i)(\sigma^\pm(v))$ is proved similarly. Finally, for any $\mu\in\frac12 P$ we 
have $\sigma^\pm(K_\mu(v))=q^{(\beta,\mu)}\sigma^\pm(v)=q^{(w_\circ\beta,w_\circ\mu)}\sigma^\pm(v)=\theta(K_\mu)(\sigma^\pm(v))$.

Let $\epsilon,\epsilon'\in\{+,-\}$.
It follows from part~\ref{lem:exist eta.b} that 
$$
\sigma^\epsilon\circ\sigma^{\epsilon'}(x(v))=\sigma^\epsilon(\theta(x)(v))=\theta^2(x)(v)=x(v)
$$
for any $x\in U_q(\lie g)$ and~$v\in V$ since~$\theta$ is an involution. Thus, 
$\sigma^\epsilon\circ\sigma^{\epsilon'}$ is an endomorphism of~$V$ as a $U_q(\lie g)$-module. 
By part~\ref{lem:exist eta.a} we have 
$$
\sigma^\epsilon\circ\sigma^{\epsilon'}(F_{w,\lambda}(v_\lambda))=\sigma^\epsilon(F_{w_\circ w,\lambda}(v_\lambda))
=F_{w,\lambda}(v_\lambda)
$$
for any $w\in W$, $\lambda\in P^+$ and $v_\lambda\in \mathcal I_\lambda(V)(\lambda)$. In particular, $\sigma^\epsilon\circ\sigma^{\epsilon'}(v_\lambda)=v_\lambda$
and so~$\sigma^\epsilon\circ\sigma^{\epsilon'}$ is the identity map on the (simple) $U_q(\lie g)$-submodule of~$V$ generated by~$v_\lambda$.
Since $V$ is generated by $\bigoplus_{\lambda\in P^+}\mathcal I_\lambda(V)(\lambda)$ as a 
$U_q(\lie g)$-module, $\sigma^{\epsilon}\circ\sigma^{\epsilon'}=\id_V$. This proves 
part~\ref{lem:exist eta.c}.
\end{proof}

Parts~\ref{thm:prop-eta.a} and~\ref{thm:prop-eta.b} of Theorem~\ref{thm:prop-eta} were established in Lemma~\ref{lem:exist eta}.
To prove part~\ref{thm:prop-eta.b}, note the following obvious fact.
\begin{lemma}\label{lem:map commutes}
Let $\xi_\bullet=\{ \xi_V\in \End_{U_q(\lie g)}V\,:\, V\in\mathscr O_q^{int}(\lie g)\}$ and suppose that $\xi_\bullet$ commute with morphisms 
in~$\mathscr O^{int}_q(\lie g)$, that is
$\xi_{V'}\circ f=f\circ\xi_V$
for any morphism $f:V\to V'$ in~$\mathscr O^{int}_q(\lie g)$.
Let $\chi:P^+\times P\to \kk$ and define $\xi^\chi_\bullet$ by 
by $\xi^\chi_V(v)=\sum_{\beta\in\supp v}\chi(\lambda,\beta) \xi_V(v(\beta))$, $v\in \mathcal I_\lambda(V)$, $V\in\mathscr O^{int}_q(\lie g)$. 
Then $\xi^\chi_\bullet$ also commutes with morphisms 
in~$\mathscr O_q^{int}(\lie g)$.
\end{lemma}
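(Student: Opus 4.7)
The plan is to unwind the definition of $\xi^\chi_\bullet$ and exploit the fact that morphisms in~$\mathscr O^{int}_q(\lie g)$ respect both the isotypic and the weight decompositions. Concretely, for any morphism $f:V\to V'$ and any $\lambda\in P^+$ one has $f(\mathcal I_\lambda(V))\subset \mathcal I_\lambda(V')$ (isotypic components are functorial because $\mathscr O^{int}_q(\lie g)$ is semisimple with simples $V_\lambda$), and $f(V(\beta))\subset V'(\beta)$ for all $\beta\in P$ (since $f$ commutes with the action of the~$K_\mu$, $\mu\in\tfrac12 P$). Consequently, for $v\in\mathcal I_\lambda(V)$ the weight decomposition~\eqref{eq:weight-components} satisfies $f(v)(\beta)=f(v(\beta))\in \mathcal I_\lambda(V')(\beta)$ and $\supp f(v)\subset\supp v$.

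Having established this compatibility, the verification is a direct computation:
\[
\xi^\chi_{V'}(f(v))=\sum_{\beta\in\supp f(v)}\chi(\lambda,\beta)\,\xi_{V'}(f(v)(\beta))=\sum_{\beta\in\supp v}\chi(\lambda,\beta)\,\xi_{V'}(f(v(\beta))).
\]
Now apply the hypothesis that $\xi_\bullet$ commutes with~$f$ in the form $\xi_{V'}\circ f=f\circ\xi_V$ on each summand to rewrite this as
\[
\sum_{\beta\in\supp v}\chi(\lambda,\beta)\,f(\xi_V(v(\beta)))=f\!\left(\sum_{\beta\in\supp v}\chi(\lambda,\beta)\,\xi_V(v(\beta))\right)=f(\xi^\chi_V(v)),
\]
where the first equality uses $\kk$-linearity of~$f$. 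Finally, because $V=\bigoplus_{\lambda\in P^+}\mathcal I_\lambda(V)$ and both $\xi^\chi_V$ and the composition $f\circ\xi^\chi_V$ are $\kk$-linear, verifying the identity $\xi^\chi_{V'}\circ f=f\circ \xi^\chi_V$ on each isotypic summand is enough, and this is exactly what we just did.

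There is essentially no obstacle here; the lemma is a bookkeeping statement whose content is just that morphisms in~$\mathscr O^{int}_q(\lie g)$ preserve both gradings used to define~$\xi^\chi_\bullet$. The only subtlety worth flagging explicitly is that the scalar~$\chi(\lambda,\beta)$ depends on~$\lambda$ (the isotypic label) as well as on~$\beta$, so it is necessary that~$f$ preserves the isotypic decomposition; this is automatic from semisimplicity of~$\mathscr O^{int}_q(\lie g)$.
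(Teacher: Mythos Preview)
Your proof is correct. The paper does not actually prove this lemma; it simply records it as ``the following obvious fact'' and moves on, so your argument is precisely the natural unpacking of what the authors left implicit: morphisms in $\mathscr O^{int}_q(\lie g)$ preserve both the isotypic and weight decompositions, and the rest is a direct computation.
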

It is immediate from the definition~\eqref{eq:defn-eta}  of~$\sigma$ that $\sigma_\bullet=(T^\pm_\bullet)^{\chi_\pm}$
where $\chi_\pm(\lambda,\beta)=(-1)^{\rho^\vee(\lambda\mp \beta)} q^{\frac12((\beta,\beta)-(\lambda,\lambda))-(\lambda,\rho)}$, $(\lambda,\beta)\in P^+\times P$.
Then part~\ref{thm:prop-eta.a'} follows from Lemma~\ref{lem:map commutes}.
\end{proof}

\subsection{Parabolic involutions and proof of Theorem~\ref{thm:main thm}}\label{subs:parab inv}
In view of Theorem~\ref{thm:prop-eta}, given $V\in\mathscr O^{int}_q(\lie g)$ and $J\in\mathscr J$, let $\sigma^J:V\to V$ 
be $\sigma$ defined by~\eqref{eq:defn-eta} with $U_q(\lie g)$ replaced by~$U_q(\lie g^J)$. Thus,
\begin{equation}\label{eq:defn-eta^J}
\sigma^J(v)=(-1)^{\rho^\vee_J(\lambda_J\mp\beta)} q^{-\frac12((\lambda_J,\lambda_J)-(\beta,\beta))-(\lambda_J,\rho_J)}T_{w_\circ^J}^\pm(v),
\end{equation}
for any~$\lambda_J\in P^+_J$, $\beta\in P$ and  
$v\in \mathcal I_{\lambda_J}^J(V)(\beta)$. 
Note that $\mathcal I_{\lambda_J}^J(V)(\beta)=0$ unless $\lambda_J-\beta\in\sum_{j\in J}\ZZ_{\ge 0}\alpha_j$.
We need the following properties of~$\sigma^J$.
\begin{proposition}\label{prop:prop eta^J}
Let $V\in\mathscr O^{int}_q(\lie g)$. Then 
\begin{enumerate}[label={\rm(\alph*)},leftmargin=*]
\item\label{prop:prop eta^J.a'}
$\sigma^J(F_{w,\lambda_J}(v))=F_{w_\circ^J w,\lambda}(v)$ for any $v\in \mathcal I_{\lambda_J}^J(V)(\lambda_J)$, $\lambda_J\in P^+$.
In particular, $\sigma^J(v)=v^J:=F_{w_\circ^J,\lambda}(v)$.
\item\label{prop:prop eta^J.a} $\sigma^J$ is an involution;
\item\label{prop:prop eta^J.b} $\sigma^J$ commutes with morphisms in~$\mathscr O^{int}_q(\lie g^J)$ and satisfies 
$\sigma^J(x(v))=\theta_J(x)(\sigma^J(v))$, $x\in U_q(\lie g^J)$, $v\in V$;
\item\label{prop:prop eta^J.b'} $\sigma^J(V(\beta))=V(w_\circ^J\beta)$, $\beta\in P$;

\item\label{prop:prop eta^J.c} for any $\lambda_J\in P^+_J$, $\beta\in P$ and $v\in\mathcal I_{\lambda_J}^J(V)(\beta)$ we have 
$$
\sigma^J(v)
=(-1)^{\rho_J^\vee(-\lambda_J\mp\beta)} q^{\frac12((\lambda_J,\lambda_J)-(\beta,\beta))+(\lambda_J,\rho_J)}(T_{w_\circ^J}^\pm)^{-1}(v).
$$
\end{enumerate}
\end{proposition}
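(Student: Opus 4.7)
The plan is to reduce the entire proposition to Theorem~\ref{thm:prop-eta} and Lemma~\ref{lem:exist eta} applied to the subalgebra~$U_q(\lie g^J)$ rather than $U_q(\lie g)$. The assumption~$J\in\mathscr J$ means~$W_J$ is finite, hence $\lie g^J$ is reductive finite dimensional, which is precisely the hypothesis of Theorem~\ref{thm:prop-eta}. Every $V\in\mathscr O^{int}_q(\lie g)$ restricts to an object of~$\mathscr O^{int}_q(\lie g^J)$; and since any reduced expression for~$w_\circ^J$ uses only indices from~$J$, the endomorphism~$T^\pm_{w_\circ^J}$ of~$V$ coincides with the modified Lusztig symmetry computed inside~$U_q(\lie g^J)$. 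Comparing~\eqref{eq:defn-eta^J} with~\eqref{eq:defn-eta} under the translations $\lambda\leftrightarrow\lambda_J$, $\rho\leftrightarrow\rho_J$, $\rho^\vee\leftrightarrow\rho^\vee_J$, $w_\circ\leftrightarrow w_\circ^J$, $\mathcal I_\lambda\leftrightarrow\mathcal I^J_{\lambda_J}$ identifies~$\sigma^J$ with the involution~$\sigma$ of Theorem~\ref{thm:prop-eta} attached to~$(U_q(\lie g^J),V)$.

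With this identification in hand, most of the proposition is immediate. Part~\ref{prop:prop eta^J.a'} is Lemma~\ref{lem:exist eta}\ref{lem:exist eta.a}; the involutivity~\ref{prop:prop eta^J.a} is Theorem~\ref{thm:prop-eta}\ref{thm:prop-eta.a}; the intertwining relation in~\ref{prop:prop eta^J.b} is Theorem~\ref{thm:prop-eta}\ref{thm:prop-eta.b} with~$\theta$ replaced by~$\theta_J$; and the commutativity of~$\sigma^J$ with morphisms in~$\mathscr O^{int}_q(\lie g^J)$ is Theorem~\ref{thm:prop-eta}\ref{thm:prop-eta.a'}. Part~\ref{prop:prop eta^J.b'} then follows because the scalar prefactor in~\eqref{eq:defn-eta^J} preserves weight spaces, reducing the claim to Lemma~\ref{lem:T w weights isotyp} applied with~$w=w_\circ^J$.

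The one item that is not purely mechanical is part~\ref{prop:prop eta^J.c}. The plan is to exploit the involutivity~\ref{prop:prop eta^J.a} to invert the defining formula. Writing~\eqref{eq:defn-eta^J} as $\sigma^J(v)=c_\pm(\lambda_J,\beta)\,T^\pm_{w_\circ^J}(v)$, the vector~$u:=\sigma^J(v)$ lies in~$\mathcal I^J_{\lambda_J}(V)(w_\circ^J\beta)$ by~\ref{prop:prop eta^J.b'}, so applying~$\sigma^J$ once more and rearranging yields
\[
 (T^\pm_{w_\circ^J})^{-1}(v)=c_\pm(\lambda_J,w_\circ^J\beta)\cdot\sigma^J(v).
\]
The main obstacle is then a short but careful scalar matching for~$c_\pm(\lambda_J,w_\circ^J\beta)^{-1}$: the $W$-invariance of~$(\cdot,\cdot)$ gives $(w_\circ^J\beta,w_\circ^J\beta)=(\beta,\beta)$, and the fact that~$w_\circ^J$ sends every positive coroot of~$W_J$ to its negative yields $\rho^\vee_J(w_\circ^J\beta)=-\rho^\vee_J(\beta)$. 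Substituting these, together with the observation that $2\rho^\vee_J(P)\subset\ZZ$ so that signs of the form $(-1)^{-x}$ may be replaced by $(-1)^{x}$, recovers exactly the prefactor asserted in~\ref{prop:prop eta^J.c}.
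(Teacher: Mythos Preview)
Your proposal is correct and follows essentially the same approach as the paper: reduce parts~\ref{prop:prop eta^J.a'}--\ref{prop:prop eta^J.b} to Theorem~\ref{thm:prop-eta} and Lemma~\ref{lem:exist eta} by replacing $\lie g$ with~$\lie g^J$, obtain~\ref{prop:prop eta^J.b'} from~\eqref{eq:defn-eta^J} and Lemma~\ref{lem:T w weights isotyp}, and derive~\ref{prop:prop eta^J.c} by applying~$\sigma^J$ twice and inverting, using $(w_\circ^J\beta,w_\circ^J\beta)=(\beta,\beta)$ and $\rho^\vee_J(w_\circ^J\beta)=-\rho^\vee_J(\beta)$. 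One minor remark: your comment about replacing $(-1)^{-x}$ by $(-1)^x$ is unnecessary, since $-\rho^\vee_J(\lambda_J\pm\beta)=\rho^\vee_J(-\lambda_J\mp\beta)$ already matches the stated exponent verbatim.
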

\begin{proof}
Replacing $\lie g$ by~$\lie g^J$ we obtain 
part~\ref{prop:prop eta^J.a'}
from Lemma~\ref{lem:exist eta}\ref{lem:exist eta.a} and 
parts~\ref{prop:prop eta^J.a}, \ref{prop:prop eta^J.b} from Theorem~\ref{thm:prop-eta}. 
Part~\ref{prop:prop eta^J.b'}
is immediate from~\eqref{eq:defn-eta^J} and Lemma~\ref{lem:T w weights isotyp}.
To prove part~\ref{prop:prop eta^J.c},
let $v'=\sigma^J(v)$.  Then $v=\sigma^J(v')$ and
$v'\in \mathcal I_{\lambda_J}^J(V)(\beta')$ where~$\beta'=w_\circ^J\beta$.
Applying $(T^\pm_{w_\circ})^{-1}$ to~\eqref{eq:defn-eta^J} with~$v$ replaced by~$v'$ we obtain 
$$
(T^\pm_{w_\circ})^{-1}(\sigma^J(v'))=(-1)^{\rho^\vee_J(\lambda_J\mp\beta')} q^{-\frac12((\lambda_J,\lambda_J)-(\beta',\beta'))-(\lambda_J,\rho_J)}v'.
$$
Since $\rho^\vee_J(\beta')=\rho^\vee_J(w_\circ^J\beta)=-\rho^\vee_J(\beta)$ and $(\cdot,\cdot)$ is $W$-invariant, it follows that 
$$
(T^\pm_{w_\circ})^{-1}(v)=(-1)^{\rho^\vee_J(\lambda_J\pm\beta)} q^{-\frac12((\lambda_J,\lambda_J)-(\beta,\beta))-(\lambda_J,\rho_J)}\sigma^J(v).
$$
The assertion is now immediate.
\end{proof}

We need the following results.
\begin{proposition}\label{prop:factor eta_J}
For any $J\subset J'\in\mathscr J$, $\sigma^{J'}\circ \sigma^J=\sigma^{J^\star}\circ \sigma^{J'}$ where ${}^\star:J'\to J'$ is the 
unique involution satisfying $\alpha_{j^\star}=-w_\circ^{J'}\alpha_j$, $j\in J'$. 
\end{proposition}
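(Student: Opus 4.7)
The proof hinges on the Coxeter-group identity $w_\circ^{J'}\, w_\circ^J = w_\circ^{J^\star}\, w_\circ^{J'}$, which I would establish first. Since conjugation by $w_\circ^{J'}$ is a length-preserving involution of $W_{J'}$ that sends each $s_j$ with $j\in J$ to $s_{j^\star}$ (by the very definition of ${}^\star\colon J'\to J'$), it carries the parabolic $W_J$ isomorphically onto $W_{J^\star}$ and hence the unique longest element $w_\circ^J$ of $W_J$ to the unique longest element $w_\circ^{J^\star}$ of $W_{J^\star}$, giving $w_\circ^{J'}\, w_\circ^J\, (w_\circ^{J'})^{-1}=w_\circ^{J^\star}$.

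Next I lift this to $\Br_W$. As $w_\circ^{J'}$ is the longest element of $W_{J'}$, the factorizations $w_\circ^{J'}=(w_\circ^{J'} w_\circ^J)\cdot w_\circ^J=w_\circ^{J^\star}\cdot(w_\circ^{J^\star} w_\circ^{J'})$ have lengths adding up. Lemma~\ref{lem:product can elts br} therefore gives $T_{w_\circ^{J'}}=T_{w_\circ^{J'} w_\circ^J}\,T_{w_\circ^J}=T_{w_\circ^{J^\star}}\,T_{w_\circ^{J^\star} w_\circ^{J'}}$ in $\Br_W$; using that $T_{w_\circ^{J'} w_\circ^J}=T_{w_\circ^{J^\star} w_\circ^{J'}}$ by the Coxeter identity and rearranging yields $T_{w_\circ^{J'}}\,T_{w_\circ^J}=T_{w_\circ^{J^\star}}\,T_{w_\circ^{J'}}$, whence the operator identity
\[
T^+_{w_\circ^{J'}}\circ T^+_{w_\circ^J}=T^+_{w_\circ^{J^\star}}\circ T^+_{w_\circ^{J'}}
\]
on any $V\in\mathscr O^{int}_q(\lie g)$ via Lemma~\ref{cor:T i +-}.

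I then verify the claimed identity on the $J'$-highest subspace. For $v\in V_+^{J'}(\lambda_{J'})$, the inclusions $J,J^\star\subset J'$ give $v\in V_+^J(\lambda_{J'})\cap V_+^{J^\star}(\lambda_{J'})$; two applications of Proposition~\ref{prop:prop eta^J}\ref{prop:prop eta^J.a'} produce
\[
\sigma^{J'}(\sigma^J(v))=F_{w_\circ^{J'} w_\circ^J,\lambda_{J'}}(v),\qquad \sigma^{J^\star}(\sigma^{J'}(v))=F_{w_\circ^{J^\star} w_\circ^{J'},\lambda_{J'}}(v),
\]
which agree by the Coxeter identity. Both composites also enjoy the same twisted $U_q(\lie g^J)$-equivariance: iterating Proposition~\ref{prop:prop eta^J}\ref{prop:prop eta^J.b}, for $x\in U_q(\lie g^J)$ one gets $\sigma^{J'}\sigma^J(xv)=\theta_{J'}\theta_J(x)\sigma^{J'}\sigma^J(v)$ and $\sigma^{J^\star}\sigma^{J'}(xv)=\theta_{J^\star}\theta_{J'}(x)\sigma^{J^\star}\sigma^{J'}(v)$, and the equality $\theta_{J'}\theta_J=\theta_{J^\star}\theta_{J'}$ on $U_q(\lie g^J)$ is checked directly on the $E_j$, again reducing to the Coxeter identity.

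The main obstacle is promoting the agreement on $V_+^{J'}$ to all of $V$: the $U_q(\lie g^J)$-equivariance alone only propagates the identity to the $U_q(\lie g^J)$-submodule generated by $V_+^{J'}$, which is typically proper in $V$. I would close this gap by reducing (via commutativity with morphisms in $\mathscr O^{int}_q(\lie g)$) to a simple $V=V_\lambda$, decomposing $V_\lambda$ into simple $U_q(\lie g^{J'})$-summands $V_\mu^{J'}$ — each preserved by $T^+_{w_\circ^J}$, $T^+_{w_\circ^{J'}}$, $T^+_{w_\circ^{J^\star}}$ and hence by each $\sigma^K$ — and within every such $V_\mu^{J'}$ combining the operator identity above with the explicit scalar formula~\eqref{eq:defn-eta^J} and its variant Proposition~\ref{prop:prop eta^J}\ref{prop:prop eta^J.c} to match scalar prefactors on individual weight vectors. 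The delicate point is tracking how the $J$- and $J^\star$-isotypic labels of an arbitrary weight vector transform under the modified Lusztig symmetries.
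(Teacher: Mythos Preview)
Your plan eventually arrives at the paper's argument, but by a circuitous route. The verification on $V_+^{J'}$ and the $U_q(\lie g^J)$-equivariance are dead ends you yourself diagnose correctly: that equivariance only propagates along the $U_q(\lie g^J)$-submodule generated by $V_+^{J'}$, which is proper. Drop those steps. The paper (after reducing to $J'=I$) goes straight to what you call the final step: pick an arbitrary $v\in V(\beta)\cap\mathcal I_\lambda(V)\cap\mathcal I^J_{\lambda_J}(V)$ and compute both composites as explicit scalars times one and the same modified Lusztig symmetry applied to~$v$.

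The tactical difference is which braid identity to use. Rather than your $T^+_{w_\circ}T^+_{w_\circ^J}=T^+_{w_\circ^{J^\star}}T^+_{w_\circ}$, the paper uses the equivalent form
\[
T^\pm_{w_\circ}\circ(T^\pm_{w_\circ^J})^{-1}=(T^\pm_{w_\circ^{J^\star}})^{-1}\circ T^\pm_{w_\circ}=T^\pm_{w_J},\qquad w_J:=w_\circ w_\circ^J,
\]
and pairs it with Proposition~\ref{prop:prop eta^J}\ref{prop:prop eta^J.c}, which expresses $\sigma^K$ as a scalar times $(T^\pm_{w_\circ^K})^{-1}$. Then $\sigma\circ\sigma^J(v)$ (using~\ref{prop:prop eta^J.c} for $\sigma^J$, then~\eqref{eq:defn-eta} for $\sigma$) and $\sigma^{J^\star}\circ\sigma(v)$ (using~\eqref{eq:defn-eta} for $\sigma$, then~\ref{prop:prop eta^J.c} for $\sigma^{J^\star}$) are both scalar multiples of $T^\pm_{w_J}(v)$. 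Your ``delicate point'' becomes a short scalar match: the $I$-label $\lambda$ is preserved by $\sigma^J$ (Lemma~\ref{lem:T w weights isotyp}), and the $J^\star$-label of $\sigma(v)$ enters only through $\rho^\vee_{J^\star}(\cdot)$, $(\cdot,\cdot)$ and $(\cdot,\rho_{J^\star})$, which the paper converts back to $\lambda_J$ via $\rho^\vee_{J^\star}(w_\circ\nu)=-\rho^\vee_J(\nu)$ and $w_\circ\rho_{J^\star}=-\rho_J$. Carry out that three-line comparison and the proof is complete; no decomposition into $U_q(\lie g^{J'})$-summands is needed.
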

\begin{proof}
We may assume, without loss of generality, that $J'=I$ (and so~$\lie g$ is reductive finite dimensional).
Let $w_J=w_\circ w_\circ^J$. 
Note that $w_\circ=w_J w_\circ^J=w_\circ^{J^\star} w_J$ and $\ell(w_\circ)=\ell(w_J)+\ell(w_\circ^J)=\ell(w_J)+\ell(w_\circ^{J^\star})$.
Then by Lemma~\ref{lem:product can elts br}
\begin{equation}\label{eq:factor T_wJ}
T^\pm_{w_J}=T^\pm_{w_\circ}\circ (T^\pm_{w_\circ^J})^{-1}
=(T^\pm_{w_\circ^{J^\star}})^{-1}\circ T^\pm_{w_\circ}.
\end{equation}

Let $v\in V(\beta)\cap \mathcal I_\lambda(V)\cap \mathcal I^J_{\lambda_J}(V)$, $\lambda\in P^+$, $\lambda_J\in P^+_J$, $\beta\in P$.
Using Lemma~\ref{prop:prop eta^J}\ref{prop:prop eta^J.c}, \eqref{eq:defn-eta}, Lemma~\ref{lem:T w weights isotyp}
and~\eqref{eq:factor T_wJ} 
we obtain 
\begin{multline*}
\sigma\circ \sigma^J(v)=(-1)^{\rho^\vee_J(-\lambda_J\mp\beta)} q^{\frac12((\lambda_J,\lambda_J)-(\beta,\beta))+(\lambda_J,\rho_J)}\sigma( (T^\pm_{w_\circ^J})^{-1}(v))
\\
=(-1)^{\rho^\vee(\lambda\mp w_\circ^J\beta)+\rho^\vee_J(-\lambda_J\mp\beta)} q^{\frac12((\lambda_J,\lambda_J)-(\lambda,\lambda))+(\lambda_J,\rho_J)-(\lambda,\rho)}T_{w_\circ}^\pm
(T^\pm_{w_\circ^J})^{-1}(v)\\
=(-1)^{\rho^\vee(\lambda\mp w_\circ^J\beta)+\rho^\vee_J(-\lambda_J\mp\beta)} q^{\frac12((\lambda_J,\lambda_J)-(\lambda,\lambda))+(\lambda_J,\rho_J)-(\lambda,\rho)}
T^\pm_{w_J}(v).
\end{multline*}
Similarly,
\begin{multline*}
 \sigma^{J^\star}\circ \sigma(v)=(-1)^{\rho^\vee(\lambda\mp\beta)} q^{-\frac12((\lambda,\lambda)-(\beta,\beta))-(\lambda,\rho)}\sigma^{J^\star}(T_{w_\circ}^\pm(v))\\
 =(-1)^{\rho^\vee(\lambda\mp\beta)+\rho^\vee_{J^\star}(w_\circ\lambda_J\mp w_\circ\beta)}  q^{\frac12((\lambda_J,\lambda_J)-(\lambda,\lambda))
 -(w_\circ\lambda_J,\rho_{J^\star})-(\lambda,\rho)}
(T_{w_\circ^{J^\star}}^\pm)^{-1}(T_{w_\circ}^\pm(v))
\\
=(-1)^{\rho^\vee(\lambda\mp\beta)+\rho^\vee_{J}(-\lambda_J\pm \beta)}  q^{\frac12((\lambda_J,\lambda_J)-(\lambda,\lambda))
 +(\lambda_J,\rho_{J})-(\lambda,\rho)}
T_{w_J}^\pm(v)
\end{multline*}
since $\rho^\vee_{J^\star}(w_\circ \mu)=-\rho^\vee_J(\mu)$, $\mu\in P$ and $w_\circ\rho_{J^\star}=-\rho_J$.
Since
$-\rho^\vee_J(\beta)=\rho^\vee_J(w_\circ^J\beta)$ and $\rho^\vee(\beta-w\beta)=\rho^\vee_J(\beta-w\beta)$
for any $w\in W_J$, it follows that
$\sigma^{J^\star}\circ \sigma=\sigma\circ \sigma^J$.
\end{proof}

\begin{proposition}\label{prop:cacti}
Let $J,J'\in\mathscr J$ be orthogonal. Then 
$\sigma^{J\sqcup J'}=\sigma^J\circ\sigma^{J'}$.
\end{proposition}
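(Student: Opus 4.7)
The plan is to verify that $\sigma^J \circ \sigma^{J'}$ satisfies the defining properties of $\sigma^{J\sqcup J'}$ given in Proposition~\ref{prop:existence-eta-J} and then invoke uniqueness. Throughout one uses the fact that orthogonality forces $W_{J\sqcup J'} = W_J\times W_{J'}$ and hence $w_\circ^{J\sqcup J'} = w_\circ^Jw_\circ^{J'} = w_\circ^{J'}w_\circ^J$ with $\ell(w_\circ^{J\sqcup J'}) = \ell(w_\circ^J)+\ell(w_\circ^{J'})$.

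The key preliminary step is a commutation lemma: for orthogonal $J,J'\in\mathscr J$, $\sigma^{J'}$ commutes with the $U_q(\lie g^J)$-action on~$V$ (and symmetrically $\sigma^J$ with $U_q(\lie g^{J'})$). On the braid-group side, orthogonality gives $w_\circ^{J'}\alpha_j=\alpha_j$ for $j\in J$, so $T^{\pm}_{w_\circ^{J'}}(F_j)=F_j$ and $T^{\pm}_{w_\circ^{J'}}(E_j)=E_j$, and Lemma~\ref{lem:compat symm braid} yields $T^{\pm}_{w_\circ^{J'}}(F_j(v))=F_j(T^{\pm}_{w_\circ^{J'}}(v))$. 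The delicate point is that for $j\in J$ the operator $F_j$ does not preserve a fixed $J'$-isotypical label: it sends $\mathcal I^{J'}_{\lambda_{J'}}(V)(\beta)$ into $\mathcal I^{J'}_{\lambda_{J'}-\alpha_j}(V)(\beta-\alpha_j)$. Plugging this into~\eqref{eq:defn-eta^J}, the scalar ratio between $\sigma^{J'}(F_j(v))$ and $F_j(\sigma^{J'}(v))$ reduces, after using $(\alpha_j,\rho_{J'})=0$ and $\rho^\vee_{J'}(\alpha_j)=0$, to $q^{(\lambda_{J'}-\beta,\alpha_j)}$; since $\lambda_{J'}-\beta\in\sum_{j'\in J'}\ZZ_{\ge 0}\alpha_{j'}$ is orthogonal to~$\alpha_j$, this exponent vanishes. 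The analogous identities for $E_j$ and $K_\mu$ are proved the same way.

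To check property~\ref{prop:existence-eta-J.a}, let $v\in\bigcap_{i\in J\sqcup J'}\ker E_i$ have weight~$\mu$. Proposition~\ref{prop:prop eta^J}\ref{prop:prop eta^J.a'} gives $\sigma^{J'}(v)=v^{J'}=F_{w_\circ^{J'},\mu}(v)$, and since $[E_j,F_{j'}]=0$ for $j\in J$, $j'\in J'$, this element is still annihilated by every $E_j$, $j\in J$, and has weight $w_\circ^{J'}\mu$. Then $\sigma^J(v^{J'})=F_{w_\circ^J,w_\circ^{J'}\mu}(v^{J'})$, and because $w_\circ^{J'}$ fixes each co-root $\alpha_j^\vee$ for $j\in J$, comparison of exponents in~\eqref{eq:F i lambda} yields $F_{w_\circ^J,w_\circ^{J'}\mu}=F_{w_\circ^J,\mu}$ as operators on~$V$. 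The same co-root invariance, applied to the concatenation of reduced words for $w_\circ^J$ and $w_\circ^{J'}$, combined with Lemma~\ref{lem:EF lambda w}\ref{lem:EF lambda w.a}, gives $F_{w_\circ^J,\mu}F_{w_\circ^{J'},\mu}=F_{w_\circ^{J\sqcup J'},\mu}$, so $\sigma^J\circ\sigma^{J'}(v)=v^{J\sqcup J'}$.

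For property~\ref{prop:existence-eta-J.b}, since $w_\circ^{J'}$ centralises $s_j$ for $j\in J$, the involution ${}^\star$ on $J\sqcup J'$ attached to $w_\circ^{J\sqcup J'}$ restricts on~$J$ to ${}^{\star_J}$ attached to $w_\circ^J$, and symmetrically on~$J'$. For $j\in J$, the commutation lemma together with Proposition~\ref{prop:prop eta^J}\ref{prop:prop eta^J.b} applied to~$\sigma^J$ yields $\sigma^J\circ\sigma^{J'}(F_j(v))=\sigma^J(F_j(\sigma^{J'}(v)))=E_{j^\star}(\sigma^J\circ\sigma^{J'}(v))$, and similarly for~$E_j$; the case $j\in J'$ is handled symmetrically, using that $\sigma^J$ commutes with $U_q(\lie g^{J'})$. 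Uniqueness in Proposition~\ref{prop:existence-eta-J} then forces $\sigma^J\circ\sigma^{J'}=\sigma^{J\sqcup J'}$. The main obstacle is the scalar bookkeeping in the commutation lemma: the naive commutation of $\sigma^{J'}$ past $F_j$ is \emph{not} automatic from Lemma~\ref{lem:compat symm braid}, because the $J'$-isotypical label of $v$ genuinely shifts under~$F_j$, and one must verify that every resulting shift in the prefactor of~\eqref{eq:defn-eta^J} is absorbed precisely because $J$ and $J'$ are orthogonal.
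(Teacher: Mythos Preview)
Your proof is correct, but it follows a different route from the paper's. The paper argues by direct computation from~\eqref{eq:defn-eta^J}: it reduces to $I=J\sqcup J'$, factors $T^\pm_{w_\circ}=T^\pm_{w_\circ^J}\circ T^\pm_{w_\circ^{J'}}$ via Lemma~\ref{lem:product can elts br}, and then checks on a vector $v\in\mathcal I_\lambda(V)\cap\mathcal I^J_{\lambda_J}(V)\cap\mathcal I^{J'}_{\lambda_{J'}}(V)(\beta)$ that the product of the two scalar prefactors for $\sigma^J$ and $\sigma^{J'}$ equals the single prefactor for $\sigma$, using the orthogonality identities $(\gamma_J,\gamma_{J'})=0$, $\rho^\vee(\gamma)=\rho^\vee_J(\gamma_J)+\rho^\vee_{J'}(\gamma_{J'})$, etc. You instead verify the two characterizing properties of Proposition~\ref{prop:existence-eta-J} for the composite $\sigma^J\circ\sigma^{J'}$ and invoke uniqueness. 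Your commutation lemma ($\sigma^{J'}$ centralises $U_q(\lie g^J)$) is the substantive step, and its proof requires exactly the same orthogonality-driven cancellation of the prefactor in~\eqref{eq:defn-eta^J}, but applied one generator $E_j$ or $F_j$ at a time rather than to the full composition. What your approach buys is modularity: the commutation lemma is a clean standalone fact, and you never have to juggle three isotypical labels simultaneously. What the paper's approach buys is directness: one computation, no appeal to the uniqueness statement, and the factorisation of $T^\pm_{w_\circ}$ makes the role of the braid group transparent.
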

\begin{proof}
As before we may assume, without loss of generality, that $I=J\sqcup J'$. Then $w_\circ=w_\circ^{J}w_\circ^{J'}=
w_\circ^{J'}w_\circ^J$ and hence $T^\pm_{w_\circ}=T^\pm_{w_\circ^J}\circ T^\pm_{w_\circ^{J'}}=T^\pm_{w_\circ^{J'}}\circ T^\pm_{w_\circ^J}$ by Lemma~\ref{lem:product can elts br}.
Let $\lambda\in P^+$, $\lambda_J\in P^+_J$, $\lambda_{J'}\in P^+_{J'}$, $\beta\in P$
and $v\in \mathcal I_\lambda(V)\cap \mathcal I^J_{\lambda^J}(V)\cap \mathcal I^J_{\lambda^{J'}}(V)$. Then $\gamma_J=\lambda_J-\beta\in 
\sum_{j\in J}\ZZ_{\ge 0}\alpha_j$, $\gamma_{J'}=\lambda_{J'}-\beta\in\sum_{j\in J'}\ZZ_{\ge 0}\alpha_{j'}$,
and $\gamma=\lambda-\beta=\gamma_{J}+\gamma_{J'}$.
Then we can rewrite~\eqref{eq:defn-eta^J} and~\eqref{eq:defn-eta} as 
\begin{align*}
&\sigma^{J}(v)=(-1)^{\rho^\vee_{J}(\gamma_{J})} q^{-\frac12(\gamma_{J},\gamma_{J})+(\lambda_{J},\gamma_{J})-(\lambda_{J},\rho_{J})} T^+_{w_\circ^{J}}(v)
\\
&\sigma^{J'}(v)=(-1)^{\rho^\vee_{J'}(\gamma_{J'})} q^{-\frac12(\gamma_{J'},\gamma_{J'})+(\lambda_{J'},\gamma_{J'})-(\lambda_{J'},\rho_{J'})} T^+_{w_\circ^{J'}}(v)
\\
&\sigma(v)=(-1)^{\rho^\vee(\gamma)} q^{-\frac12(\gamma,\gamma)+(\lambda,\gamma)-(\lambda,\rho)} T^+_{w_\circ}(v).
\end{align*}
Since $w_\circ^J(\gamma_{J'})=\gamma_{J'}$ we have 
\begin{multline*}
\sigma^{J}(\sigma^{J'}(v))
=(-1)^{\rho^\vee_{J}(\gamma_J)+\rho^\vee_{J'}(\gamma_{J'})} q^{-\frac12((\gamma_J,\gamma_{J})+(\gamma_{J'},\gamma_{J'}))-(\lambda_{J},\rho_{J})
-(\lambda_{J'},\rho_{J'})+(\lambda_J,\gamma_J)+(\lambda_{J'},\gamma_{J'})}
T^+_{w_\circ}(v)
\\
=(-1)^{\rho^\vee(\gamma)} q^{-\frac12 (\gamma,\gamma)-(\lambda,\rho)+(\lambda,\gamma)}T^+_{w_\circ}(v)=\sigma(v),
\end{multline*}
since $\rho^\vee(\gamma)=\rho^\vee_J(\gamma_J)+\rho^\vee_{J'}(\gamma_{J'})$, $(\gamma,\gamma)=(\gamma_J,\gamma_J)+(\gamma_{J'},\gamma_{J'})$,
$(\lambda_J,\zeta)+(\lambda_{J'},\zeta')=(\lambda_J+\lambda_{J'},\zeta+\zeta')=(\lambda,\zeta+\zeta')$ for any $\zeta\in\sum_{j\in J}\mathbb Q\alpha_j$,
$\zeta'\in\sum_{j\in J'}\mathbb Q\alpha_{j'}$.
\end{proof}

\begin{proof}[Proof of Theorem~\ref{thm:main thm}]
Parts~\ref{thm:main thm.a} (respectively, \ref{thm:main thm.b}, \ref{thm:main thm.c}) of Theorem~\ref{thm:main thm} were established in 
Lemma~\ref{prop:prop eta^J}\ref{prop:prop eta^J.a} (respectively, Proposition~\ref{prop:cacti}, Proposition~\ref{prop:factor eta_J}).
\end{proof}

\subsection{Kernels of actions of cactus groups}
For any $V\in\mathscr O^{int}_q(\lie g)$, denote $\Phi_V$ be the subgroup of~$\operatorname{GL}_\kk(V)$ generated by the $\sigma^J_V$, $J\in\mathscr J$.
We need the following basic properties of~$\Phi_V$.
\begin{lemma}\label{lem:invariance}
For any injective morphism $f:V'\to V$ in~$\mathscr O^{int}_q(\lie g)$ the assignments $\sigma^J_V\mapsto \sigma^J_{V'}$, $J\in\mathscr J$
define a surjective homomorphism 
 $f^*:\Phi_{V}\to\Phi_{V'}$. In particular, if $f$ is an isomorphism then so is~$f^*$.
\end{lemma}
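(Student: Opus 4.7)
The plan is to construct $f^*$ as a restriction map, leveraging semisimplicity of $\mathscr O^{int}_q(\lie g)$ together with the functoriality of the $\sigma^J$'s recorded in~\eqref{eq:eta^J funct}. Without loss of generality I identify $V'$ with $f(V')\subset V$. Since $\mathscr O^{int}_q(\lie g)$ is semisimple, choose a complementary submodule $V''\subset V$, so that $V=V'\oplus V''$ as $U_q(\lie g)$-modules. Applying the commutative diagram~\eqref{eq:eta^J funct} to the inclusions $V'\hookrightarrow V$ and $V''\hookrightarrow V$ (and to the projections onto these summands, which are also morphisms in $\mathscr O^{int}_q(\lie g)$), I get $\sigma^J_V(V')\subset V'$ and $\sigma^J_V(V'')\subset V''$, together with the identifications $\sigma^J_V|_{V'}=\sigma^J_{V'}$ and $\sigma^J_V|_{V''}=\sigma^J_{V''}$ for every $J\in\mathscr J$.

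Since $\Phi_V$ is generated by the $\sigma^J_V$, the whole group $\Phi_V$ preserves the decomposition $V=V'\oplus V''$. The assignment $\phi\mapsto \phi|_{V'}$ is then manifestly a group homomorphism $r:\Phi_V\to \operatorname{GL}_\kk(V')$, and by the previous paragraph it sends each generator $\sigma^J_V$ to $\sigma^J_{V'}$. Hence the image of $r$ contains (and is generated by) all generators of $\Phi_{V'}$, so $r$ factors through a surjective homomorphism $f^*:\Phi_V\to\Phi_{V'}$ defined precisely by $f^*(\sigma^J_V)=\sigma^J_{V'}$, as required.

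Finally, if $f$ is an isomorphism then $V''=0$, so $r$ coincides with conjugation by $f$ and is therefore injective; in this case $f^*$ is an isomorphism. There is no real obstacle here: once one unpacks the functoriality encoded in Proposition~\ref{prop:existence-eta-J} and combines it with semisimplicity of $\mathscr O^{int}_q(\lie g)$, the lemma is formal. The only point to check carefully is that restriction is a well-defined homomorphism on the whole group $\Phi_V$ (not merely on generators), which follows because $\Phi_V$ preserves $V'$ as a subspace.
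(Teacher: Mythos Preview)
Your proof is correct and follows essentially the same route as the paper: use the functoriality \eqref{eq:eta^J funct} to show that each $\sigma^J_V$ preserves $f(V')$ and restricts to $\sigma^J_{V'}$, then observe that restriction is a group homomorphism whose image is generated by the $\sigma^J_{V'}$. The only difference is that the paper does not invoke semisimplicity to choose a complement $V''$: once the generators $\sigma^J_V$ preserve $f(V')$ (immediate from $\sigma^J_V\circ f=f\circ\sigma^J_{V'}$) and are involutions, the whole group $\Phi_V$ preserves $f(V')$ and restriction is already a well-defined homomorphism, so your splitting step is harmless but unnecessary.
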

\begin{proof}
Let $V''=f(V')$
By Theorem~\ref{thm:prop-eta}\ref{thm:prop-eta.a'}, we have $\sigma^J_V\circ f=f\circ \sigma^J_{V'}$ for all $J\in\mathscr J$
and so the group~$\Phi_V$ acts on~$V''$, that is, there is a canonical homomorphism of groups $\rho:\Phi_V\to \operatorname{GL}_\kk(V'')$.
Clearly, the assignments $g\mapsto f\circ g \circ f^{-1}$, $g\in \operatorname{GL}_\kk(V'')$ define 
an isomorphism $\rho_f:\operatorname{GL}_\kk(V'')\to \operatorname{GL}_\kk(V')$. 
Let $f^*=\rho_f\circ\rho:\Phi_V\to \operatorname{GL}_\kk(V')$. We claim that~$f^*(\Phi_V)=\Phi_{V'}$. Indeed, 
$f^*(\sigma^J_V)=\sigma^J_{V'}$ for all $J\in\mathscr J$. Since~$\Phi_{V'}$ is generated by the $\sigma^J_{V'}$, $\Phi_V$ is generated by the $\sigma^J_V$,
$J\in\mathscr J$, and $f^*$ is a 
homomorphism of groups, the assertion follows.
\end{proof}

\begin{proposition}\label{prop:ficus}
Let $V\in\mathscr O^{int}_q(\lie g)$. Then 
$\Phi_V\cong \Phi_{\underline V}$ where $\underline V=\bigoplus\limits_{\lambda\in P^+\,:\, \Hom_{U_q(\lie g)}(V_\lambda,V)\not=0} V_\lambda$.
In particular, for any $V\in\mathscr O^{int}_q(\lie g)$ the group~$\Phi_V$ is a quotient of~$\Phi_{\mathcal C_q(\lie g)}$
where $\mathcal C_q(\lie g)=\bigoplus_{\lambda\in P^+} V_\lambda$.
\end{proposition}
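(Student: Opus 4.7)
The plan is to exploit that any $V\in\mathscr O^{int}_q(\lie g)$ is semisimple together with the naturality of~$\sigma^J$. Writing $\mathcal I_\lambda(V)\cong V_\lambda\otimes M_\lambda$ for multiplicity spaces~$M_\lambda$ so that $V=\bigoplus_{\lambda\in P^+} V_\lambda\otimes M_\lambda$, with $M_\lambda\ne 0$ precisely for those $\lambda$ occurring in~$\underline V$, the first step is to show that each~$\sigma^J_V$ preserves $\mathcal I_\lambda(V)$ and acts there as $\sigma^J_{V_\lambda}\otimes\id_{M_\lambda}$. For any nonzero $m\in M_\lambda$ the map $\iota_m\colon V_\lambda\hookrightarrow V$, $v\mapsto v\otimes m$, is a morphism in $\mathscr O^{int}_q(\lie g)$, and hence in $\mathscr O^{int}_q(\lie g^J)$ for every $J\in\mathscr J$. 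By Proposition~\ref{prop:prop eta^J}\ref{prop:prop eta^J.b}, $\sigma^J_V\circ\iota_m=\iota_m\circ\sigma^J_{V_\lambda}$. Running $m$ through a basis of~$M_\lambda$ yields the claimed block form.

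Next I would define a map $\pi\colon\Phi_V\to \operatorname{GL}_\kk(\underline V)$. The previous step shows each generator $\sigma^J_V$ has the form $\bigoplus_\lambda \sigma^J_{V_\lambda}\otimes\id_{M_\lambda}$, and this form is preserved under composition (since $(\bigoplus_\lambda g_\lambda\otimes\id)\circ(\bigoplus_\lambda h_\lambda\otimes\id)=\bigoplus_\lambda(g_\lambda h_\lambda)\otimes\id$), so every $g\in\Phi_V$ decomposes as $g=\bigoplus_\lambda g_\lambda\otimes\id_{M_\lambda}$ with uniquely determined $g_\lambda\in\operatorname{GL}_\kk(V_\lambda)$. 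Setting $\pi(g)=\bigoplus_\lambda g_\lambda\in\operatorname{GL}_\kk(\underline V)$ is then a well-defined group homomorphism, and by construction $\pi(\sigma^J_V)=\sigma^J_{\underline V}$, whence $\pi(\Phi_V)=\Phi_{\underline V}$. Injectivity of~$\pi$ is immediate: if every $g_\lambda$ is trivial then $g=\id_V$. This delivers the isomorphism $\Phi_V\cong\Phi_{\underline V}$.

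For the ``in particular'' clause, the canonical inclusion $\underline V\hookrightarrow\mathcal C_q(\lie g)=\bigoplus_{\lambda\in P^+}V_\lambda$ is an injective morphism in $\mathscr O^{int}_q(\lie g)$, so Lemma~\ref{lem:invariance} produces a surjection $\Phi_{\mathcal C_q(\lie g)}\twoheadrightarrow\Phi_{\underline V}\cong\Phi_V$, as desired. I do not anticipate any real obstacle: the only delicate point is the well-posedness of~$\pi$, which boils down to the closure under composition of the block-diagonal form $\bigoplus_\lambda g_\lambda\otimes\id_{M_\lambda}$, a routine verification.
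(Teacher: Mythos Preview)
Your proof is correct and takes a somewhat different route from the paper's. The paper fixes an injective morphism $f:\underline V\hookrightarrow V$ (one copy of each relevant~$V_\lambda$), invokes Lemma~\ref{lem:invariance} to obtain the surjection $f^*:\Phi_V\twoheadrightarrow\Phi_{\underline V}$, and then proves injectivity abstractly via the annihilator Lemma~\ref{lem:annihilators} applied to $R=\kk[\Phi_V]$ and $S=\{g-1:g\in\Phi_V\}$: since every simple summand of~$V$ is isomorphic to one in~$f(\underline V)$, any $g\in\Phi_V$ acting trivially on $f(\underline V)$ acts trivially on all of~$V$. You instead make the isotypical decomposition $V\cong\bigoplus_\lambda V_\lambda\otimes M_\lambda$ explicit, use naturality to see that each~$\sigma^J_V$ is block-diagonal of the form $\bigoplus_\lambda\sigma^J_{V_\lambda}\otimes\id_{M_\lambda}$, and read off the isomorphism directly from this block structure. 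Your argument is more concrete and self-contained (it does not need Lemma~\ref{lem:annihilators}), while the paper's argument is more uniform in that it packages both the surjection and its setup into the single Lemma~\ref{lem:invariance}. Both approaches handle the ``in particular'' clause identically, via Lemma~\ref{lem:invariance} applied to $\underline V\hookrightarrow\mathcal C_q(\lie g)$.
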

\begin{proof}
Fix $f_\lambda\in \Hom_{U_q(\lie g)}(V_\lambda,V)\setminus\{0\}$ 
for all $\lambda\in P^+$ with $\Hom_{U_q(\lie g)}(V_\lambda,V)\not=0$ and let $f:\underline V\to V$ be the direct sum of these~$f_\lambda$. Then~$f$ is 
injective.
Applying Lemma~\ref{lem:invariance} with $V'=\underline V$ we obtain 
a surjective group homomorphism $f^*:\Phi_V\to\Phi_{\underline V}$. It remains to prove that its kernel is trivial. We apply 
Lemma~\ref{lem:annihilators} with 
$R=\kk[\Phi_V]$ and $S=\{ g-1\,:\, g\in\Phi_V\}\subset R$. Since $\Phi_V$ is a subgroup of~$\operatorname{GL}_\kk(V)$,
$\Ann_S V=\{0\}$. By our choice of~$\underline V$, $M=V$ and $M'=f(\underline V)$ satisfy the assumptions of Lemma~\ref{lem:annihilators} and so 
$\Ann_S f(\underline V)=\{0\}$. Since $\ker f^*=\{ g\in \Phi_V\,:\, g\circ f=\id_{\underline V}\}$,
it follows that $\ker f^*$ is trivial. The second assertion is immediate from the first one and Lemma~\ref{lem:invariance}.
\end{proof}

\section{An action of \texorpdfstring{$\Cact_W$}{Cact\_W} on \texorpdfstring{$\mathbf c$}{c}-crystal bases and proof of Theorem~\ref{thm:main thm2}}\label{sect:action on crystals}

Retain the notation of~\S\ref{subs:prelim crystal bases} and observe that the assignment $(l,k,s)\mapsto (l,l-k,-s)$, $(l,k,s)\in\mathbb D$, defines an involution 
on~$\mathbb D$.
The following is the main result of this section.
\begin{theorem}\label{thm:eta c-crystal basis}
Let $\lie g$ be reductive. 
Let $\mathbf c:\mathbb D\to\mathbb Q(z)^\times$ satisfying
\begin{equation}\label{eq:cnd for eta}
\underline{\mathbf c}_{l,k,s}=
\underline{\mathbf c}_{l,l-k,-s},\qquad \underline{\mathbf c}_{l,0,-l}=1,\qquad 
(l,k,s)\in\mathbb D
\end{equation}
in the notation of Lemma~\ref{lem:2nd defn Kash op}. 
Then for any $V\in\mathscr O^{int}_q(\lie g)$
\begin{enumerate}[label={\rm(\alph*)},leftmargin=*]
 \item\label{thm:eta c-crystal basis.a} $\sigma^I(L)=L$ for any $(\mathbf c,I)$-monomial lattice~$L$ in~$V$;
 \item\label{thm:eta c-crystal basis.b} If $(L,B)$ is a $\mathbf c$-crystal basis such that $B_+=\{b\in B\,:\, \tilde e_{i,1}^{\mathbf c}(b)=0,\, i\in I\}$
 is a basis of~$L_+/qL_+$ where $L_+=L\cap V_+$, 
 then the induced $\mathbb Q$-linear map 
 $\tilde\sigma^I: L/qL\to L/qL$ preserves~$B$.
\end{enumerate}

\end{theorem}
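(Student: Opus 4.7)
The backbone of the argument is an intertwining identity between $\sigma^I$ and the generalized Kashiwara operators. I claim that
\begin{equation}\label{eq:key-intertw-plan}
\sigma^I \circ \tilde e_{i, s}^{\mathbf c} = \tilde e_{i^\star, -s}^{\mathbf c} \circ \sigma^I, \qquad i \in I,\ s \in \mathbb Z,
\end{equation}
which I would verify via Lemma~\ref{lem:sl2 decomp} on vectors of the form $F_i^{(k)}(u)$ with $u \in \ker E_i \cap \ker(K_{\alpha_i} - q_i^l)$ and $(l,k,s) \in \mathbb D$. Combining the first identity of Lemma~\ref{lem:2nd defn Kash op} with iterated application of Proposition~\ref{prop:prop eta^J}\ref{prop:prop eta^J.b}, the left-hand side becomes $\underline{\mathbf c}_{l,k,s}(q_i)\, E_{i^\star}^{(k-s)}(\sigma^I u)$; since Proposition~\ref{prop:prop eta^J}\ref{prop:prop eta^J.b},\ref{prop:prop eta^J.b'} places $\sigma^I u$ in $\ker F_{i^\star} \cap \ker(K_{\alpha_{i^\star}} - q_{i^\star}^{-l})$, the second identity of Lemma~\ref{lem:2nd defn Kash op} turns the right-hand side into $\underline{\mathbf c}_{l,l-k,-s}(q_{i^\star})\, E_{i^\star}^{(k-s)}(\sigma^I u)$. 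Using $q_i = q_{i^\star}$ together with the first half of~\eqref{eq:cnd for eta} these two expressions agree.

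For part~(a), the $\mathbb A$-linearity of $\sigma^I$ and~\eqref{eq:key-intertw-plan} reduce the inclusion $\sigma^I(L) \subset L$ to showing that $\sigma^I(v_+) \in L$ for each homogeneous $v_+ \in L \cap V_+(\lambda)$, $\lambda \in P^+$: the identity sends every monomial in $\mathsf M^{\mathbf c}(v_+)$ to a monomial applied to $\sigma^I(v_+)$, and the latter stays in $L$ by closure of $L$ under all $\tilde e_{j,t}^{\mathbf c}$. By Proposition~\ref{prop:prop eta^J}\ref{prop:prop eta^J.a'}, $\sigma^I(v_+) = F_{w_\circ, \lambda}(v_+)$, so I would fix $\mathbf i = (i_1, \dots, i_m) \in R(w_\circ)$, set $w_k = s_{i_k} \cdots s_{i_m}$ and $a_k = a_k(\mathbf i, \lambda)$, and prove by descending induction on~$k$ the explicit formula
\[
F_{w_k, \lambda}(v_+) = \tilde e_{i_k, -a_k}^{\mathbf c} \cdots \tilde e_{i_m, -a_m}^{\mathbf c}(v_+).
\]
The inductive step applies the first identity of Lemma~\ref{lem:2nd defn Kash op} to $u_{k+1} := F_{w_{k+1}, \lambda}(v_+)$; once $u_{k+1}$ is known to lie in $\ker E_{i_k} \cap \ker(K_{\alpha_{i_k}} - q_{i_k}^{a_k})$, the normalization $\underline{\mathbf c}_{a_k, 0, -a_k} = 1$ from~\eqref{eq:cnd for eta} forces $\tilde e_{i_k, -a_k}^{\mathbf c}(u_{k+1}) = F_{i_k}^{(a_k)}(u_{k+1})$, which equals $F_{w_k, \lambda}(v_+)$ by Lemma~\ref{lem:EF lambda w}\ref{lem:EF lambda w.b}. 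The $K_{\alpha_{i_k}}$-eigenvalue is immediate from the weight $w_{k+1}\lambda$; the hard part is the annihilation $E_{i_k}(u_{k+1}) = 0$, which is the main obstacle. I plan to extract it by writing $u_{k+1} = q^{c}\, T^+_{w_{k+1}}(v_+)$ via Proposition~\ref{prop:T_w extreme vectors}\ref{prop:T_w extreme vectors.a}, applying Lemma~\ref{lem:compat symm braid}, and invoking the standard fact that $(T^+_{w_{k+1}})^{-1}(E_{i_k})$ lies in the positive-weight part of $U^+_q(\lie g)$ whenever $\ell(s_{i_k} w_{k+1}) = \ell(w_{k+1}) + 1$, hence annihilates~$v_+$. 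The resulting monomial lives in $\mathsf M^{\mathbf c}(v_+) \subset L$, closing the argument.

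For part~(b), reducing \eqref{eq:key-intertw-plan} modulo $q$ yields $\tilde\sigma^I \circ \tilde e_{i, s}^{\mathbf c} = \tilde e_{i^\star, -s}^{\mathbf c} \circ \tilde\sigma^I$ on $L/qL$. By Lemma~\ref{lem:facts on crystal bases}\ref{lem:facts on crystal bases.c} any $b \in B$ is of the form $\tilde e_{i_1, m_1}^{\mathbf c} \cdots \tilde e_{i_r, m_r}^{\mathbf c}(b_+)$ for some $b_+ \in B_+$, so that
\[
\tilde\sigma^I(b) = \tilde e_{i_1^\star, -m_1}^{\mathbf c} \cdots \tilde e_{i_r^\star, -m_r}^{\mathbf c}(\tilde\sigma^I(b_+)).
\]
In view of Lemma~\ref{lem:facts on crystal bases}\ref{lem:facts on crystal bases.b} it suffices to prove $\tilde\sigma^I(B_+) \subset B$. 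Take $b_+ \in B_+$ of weight $\lambda$ with a lift $v_+ \in L \cap V_+$; by the explicit formula established in the previous paragraph, $\sigma^I(v_+) = \tilde e_{j_1, -a_1}^{\mathbf c} \cdots \tilde e_{j_m, -a_m}^{\mathbf c}(v_+)$ for a reduced word $(j_1, \dots, j_m) \in R(w_\circ)$. Reducing mod~$qL$ and applying Lemma~\ref{lem:facts on crystal bases}\ref{lem:facts on crystal bases.b} iteratively places $\tilde\sigma^I(b_+)$ inside $B \cup \{0\}$, and since $\tilde\sigma^I$ is an involution on $L/qL$ the element cannot be zero. Substituting back into the displayed identity and applying Lemma~\ref{lem:facts on crystal bases}\ref{lem:facts on crystal bases.b} once more then gives $\tilde\sigma^I(b) \in B$, completing the proof.
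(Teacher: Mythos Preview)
Your argument is correct and follows essentially the same path as the paper's: establish the intertwining identity $\sigma^I \circ \tilde e_{i,s}^{\mathbf c} = \tilde e_{i^\star,-s}^{\mathbf c} \circ \sigma^I$ (the paper's Proposition~\ref{prop:eta-crystal-op}\ref{prop:eta-crystal-op.a}), prove the explicit monomial formula $\sigma^I(v_+) = \tilde e_{i_1,-a_1}^{\mathbf c} \cdots \tilde e_{i_N,-a_N}^{\mathbf c}(v_+)$ for homogeneous $v_+ \in V_+$ (the paper's Lemma~\ref{lem:extrem vecs}), and deduce both parts from these two ingredients.

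Two small remarks. First, the paper treats $E_{i_k}(u_{k+1})=0$ as the standard fact that an extremal vector $[v_\lambda]_{w}$ lies in $\ker E_i$ whenever $\ell(s_i w)>\ell(w)$ (cf.\ \S\ref{subs:extrem vecs}), rather than routing through Lusztig symmetries; your argument via Proposition~\ref{prop:T_w extreme vectors} and Lemma~\ref{lem:compat symm braid} is valid but more elaborate than necessary. Second, your appeals to Lemma~\ref{lem:facts on crystal bases} are formally out of scope, since that lemma is stated only for $\mathbf c\in\{\mathbf c^{low},\mathbf c^{up}\}$ while Theorem~\ref{thm:eta c-crystal basis} allows general~$\mathbf c$: part~\ref{lem:facts on crystal bases.b} is harmless because it is immediate from the definition of a $\mathbf c$-crystal basis, but for part~\ref{lem:facts on crystal bases.c} the paper instead derives the decomposition $B\cup\{0\}=\bigcup_{b_+\in B_+}\tilde{\mathsf M}^{\mathbf c}(b_+)$ directly from the extra hypothesis in~\ref{thm:eta c-crystal basis.b} that $B_+$ is a basis of $L_+/qL_+$.
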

\begin{proof}
We abbreviate $\sigma=\sigma^I$, $V_+^I=V_+$ and 
$\mathsf M^{\mathbf c}(v_+)=\mathsf M_I^{\mathbf c}(v_+)$ for any homogeneous $v_+\in V_+$.
The key ingredient of our argument is the following
\begin{proposition}\label{prop:eta-crystal-op}
Let $\lie g$ be reductive,   
let $V\in\mathscr O^{int}_q(\lie g)$ and let $\mathbf c:\mathbb D\to \mathbb Q(z)^\times$. 
Then
\begin{enumerate}[label={\rm(\alph*)},leftmargin=*]
 \item\label{prop:eta-crystal-op.a} If $\mathbf c$ satisfies the first condition in~\eqref{eq:cnd for eta} then 
 $\sigma\circ\tilde e_{i,s}^{\mathbf c}=\tilde e_{i^\star,-s}^{\mathbf c}\circ\sigma$ in~$\End_\kk V$ for any $i\in I$, $s\in\ZZ$.
 \item\label{prop:eta-crystal-op.b} If $\mathbf c$ satisfies~\eqref{eq:cnd for eta}  then

 $\sigma(\mathsf M^{\mathbf c}(v_+))=\mathsf M^{\mathbf c}(v_+)$ for any homogeneous $v_+\in V_+$.
 \end{enumerate}
\end{proposition}
\begin{proof}
In view of Lemma~\ref{lem:sl2 decomp} and~\eqref{eq:kash-op-defn-alt},
to prove~\ref{prop:eta-crystal-op.a} 
it suffices to verify the identity for all~$v\in V$ of the form 
$v=F_i^{(k)}(u)$, $u\in\ker E_i\cap \ker(K_{\alpha_i}-q_i^l)$, $0\le k\le l$. We have 
\begin{equation}\label{eq:lhs}
\sigma\circ \tilde e_{i,s}^{\mathbf c}(v)=\underline{\mathbf c}_{l,k,s}(q_i)\sigma(F_i^{(k-s)}(u))=\underline{\mathbf c}_{l,k,s}(q_i) E_{i^\star}^{(k-s)}(\sigma(u)).
\end{equation}
We need the following 
\begin{lemma}\label{lem:eta ker to ker}
$\sigma(u)\in\ker F_{i^\star}\cap \ker(K_{\alpha_{i^\star}}-q_{i^{\star}}^{-l})$.
\end{lemma}
\begin{proof}
Indeed, $F_{i^\star}(\sigma(u))=\sigma(E_i(u))=0$ and $K_{\alpha_i^\star}(\sigma(u))=\sigma(K_{-\alpha_i}(u))=q_i^{-l}\sigma(u)=q_{i^\star}^{-l}\sigma(u)$.
\end{proof}
Using Lemmata~\ref{lem:eta ker to ker} and~\ref{lem:2nd defn Kash op}, we obtain 
\begin{equation}\label{eq:rhs}
\tilde e_{i^\star,-s}^{\mathbf c}(\sigma(v))=\tilde e_{i^\star,-s}^{\mathbf c}(E_{i^\star}^{(k)}(\sigma(u)))
=\underline{\mathbf c}_{l,l-k,-s}(q_{i^\star}) 
E_{i^\star}^{(k-s)}(\sigma(u)).
\end{equation}
Since $q_{i^\star}=q_i$, by assumptions of Proposition~\ref{prop:eta-crystal-op} we have $\underline{\mathbf c}_{l,l-k,-s}(q_{i^\star})=\underline{\mathbf c}_{l,k,s}(q_i)$.
Then~\eqref{eq:lhs} and~\eqref{eq:rhs} imply that $ \tilde e_{i^\star,-s}^{\mathbf c}(\sigma(v))=
\sigma(\tilde e_{i,s}^{\mathbf c}(v))$.

To prove part~\ref{prop:eta-crystal-op.b}, we need the following
\begin{lemma}\label{lem:extrem vecs}
Suppose that $\mathbf c_{l,0,-l}=1/(l)_z!$ for all $l\in\ZZ_{\ge0}$ (that is, $\underline{\mathbf c}_{l,0,-l}=1$ in the notation of Lemma~\ref{lem:2nd defn Kash op}).
Then for any $v_+\in V_+(\lambda)$ and $\mathbf i=(i_1,\dots,i_m)\in I^m$ reduced, $\lambda\in P^+$ we have in the notation of~\eqref{eq:F i lambda},
$F_{\mathbf i,\lambda}(v_+)=\tilde e_{i_1,-a_1(\mathbf i,\lambda)}^{\mathbf c}\cdots \tilde e_{i_m,-a_m(\mathbf i,\lambda)}^{\mathbf c}(v_+)$. In particular, 
$$
\sigma(v_+)=\tilde e_{i_1,-a_1(\mathbf i,\lambda)}^{\mathbf c}\cdots \tilde e_{i_N,-a_N(\mathbf i,\lambda)}^{\mathbf c}(v_+)
$$
where $\mathbf i=(i_1,\dots,i_N)\in R(w_\circ)$.
\end{lemma}
\begin{proof}
We use induction on~$m$, the case $m=0$ being trivial. For $\mathbf i$ and~$\lambda\in P^+$ fixed we abbreviate $a_k=a_k(\mathbf i,\lambda)$.
For the inductive step, note that $F_{\mathbf i,\lambda}=F_{i_1}^{(a_1)}F_{\mathbf i',\lambda}$
where $\mathbf i'=(i_2,\dots,i_m)$ and so $F_{\mathbf i,\lambda}(v_+)=F_{i_1}^{(a_1)}(v')$ where $v'=F_{\mathbf i',\lambda}(v_+)=
\tilde e_{i_2,-a_2}^{\mathbf c}\cdots \tilde e_{i_m,-a_m}^{\mathbf c}(v_+)$ by the induction hypothesis. Since $v'\in\ker E_{i_1}$, it follows from
assumptions of the lemma and the first identity in~\eqref{eq:kash-op-defn-alt} with $i=i_1$, $k=0$ and $l=a_1=-s$
that $F_{i_1}^{(a_1)}(v')=\tilde e_{i_1,-a_1}^{\mathbf c}(v')=\tilde e_{i_1,-a_1}^{\mathbf c}\cdots \tilde e_{i_m,-a_m}^{\mathbf c}(v_+)$. Since $\sigma(v_+)=F_{w_\circ,\lambda}(v_+)$
by Lemma~\ref{lem:exist eta}\ref{lem:exist eta.a} with $w=1$, the 
second assertion follows from the first and Lemma~\ref{lem:EF lambda w}\ref{lem:EF lambda w.a}.
\end{proof}

Suppose now that $v\in\mathsf M^{\mathbf c}(v_+)$ that is $v=\tilde e_{j_1,m_1}^{\mathbf c}\cdots \tilde e_{j_r,m_r}^{\mathbf c}(v_+)\in \mathsf M^{\mathbf c}(v_+)$,
for some $(j_1,\dots,j_r)\in I^r$ and $(m_1,\dots,m_r)\in\ZZ$. Using Lemma~\ref{lem:extrem vecs} and Proposition~\ref{prop:eta-crystal-op}\ref{prop:eta-crystal-op.a}, we obtain  
$$
\sigma(v)=\tilde e_{j_1^\star,-m_1}^{\mathbf c}\cdots \tilde e_{j_r^\star,-m_r}^{\mathbf c}(\sigma(v_+))
=\tilde e_{j_1^\star,-m_1}^{\mathbf c}\cdots \tilde e_{j_r^\star,-m_r}^{\mathbf c}\tilde e_{i_1,-a_1}^{\mathbf c}\cdots \tilde e_{i_N,-a_N}^{\mathbf c}(v_+)\in\mathsf M^{\mathbf c}(v_+),
$$
where $\mathbf i=(i_1,\dots,i_N)\in R(w_\circ)$ and $a_k=a_k(\mathbf i,\lambda)$, $1\le k\le N$. Thus, $\sigma(\mathsf M^{\mathbf c}(v_+))\subset \mathsf M^{\mathbf c}(v_+)$.
Since~$\sigma$ is an involution, it follows that $\sigma(\mathsf M^{\mathbf c}(v_+))=\mathsf M^{\mathbf c}(v_+)$.
\end{proof}

Part~\ref{thm:eta c-crystal basis.a} is immediate from Proposition~\ref{prop:eta-crystal-op}\ref{prop:eta-crystal-op.b}.
In particular, for each $(\mathbf c,I)$-monomial lattice $L$ in~$V$ the involution~$\sigma_V$ induces 
an involution $\tilde \sigma$ on the $\mathbb Q$-vector space~$\tilde L=L/q L$ satisfying 
\begin{equation}\label{eq:eta cr op}
\tilde \sigma\circ \tilde e_{i,s}^{\mathbf c}=\tilde e_{i^\star,-s}^{\mathbf c}\circ\tilde\sigma.
\end{equation}
The following is immediate from Proposition~\ref{prop:eta-crystal-op}\ref{prop:eta-crystal-op.b}.
\begin{corollary}\label{lem:tilde eta mons}
Let $L$ be a $(\mathbf c,I)$-monomial lattice in~$V$. Then $\tilde\sigma(\tilde{\mathsf M}^{\mathbf c}(\tilde v_+))=\tilde{\mathsf M}^{\mathbf c}(\tilde v_+)$
for any $\tilde v_+\in L_+/qL_+$.
\end{corollary}
Using the assumptions of part~\ref{thm:eta c-crystal basis.b} of Theorem~\ref{thm:eta c-crystal basis} we conclude that 
$\bigcup_{b_+\in B^+}\tilde{\mathsf M}^{\mathbf c}(b_+)=B\cup\{0\}$.
Then it follows from Corollary~\ref{lem:tilde eta mons}
that $\tilde\sigma$ preserves $B\cup\{0\}$. Since $\tilde\sigma$ is an involution, it follows that $\tilde\sigma(B)=B$. This completes the proof of
Theorem~\ref{thm:eta c-crystal basis}\ref{thm:eta c-crystal basis.b}.
\end{proof}
Note that~\eqref{eq:eta cr op} implies that 
for any upper crystal basis $(L,B)$ of~$V\in\mathscr O^{int}_q(\lie g)$ the operator $\tilde \sigma^I_V$ satisfies 
\begin{equation}\label{eq:eta cr op up}
\tilde\sigma\circ (\tilde e_i^{up})^s=(\tilde e_{i^\star}^{up})^{-s}\circ\tilde\sigma^I_V.
\end{equation}
In particular, we obtain the following
\begin{corollary}\label{cor:char eta}
Let $\lambda\in P^+$ and $(L_\lambda,B_\lambda)$ be the upper crystal basis of~$V_\lambda$. If $f$ is any non-zero map $B_\lambda\cup\{0\}\to B_\lambda\cup\{0\}$ 
satisfying~\eqref{eq:eta cr op up} then~$f=\tilde\sigma^I_{V_\lambda}|_{B_\lambda\cup\{0\}}$.
\end{corollary}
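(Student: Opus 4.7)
The plan is to show that any non-zero map $f:B_\lambda\cup\{0\}\to B_\lambda\cup\{0\}$ satisfying~\eqref{eq:eta cr op up} is uniquely determined by its value on the unique highest weight element $b_\lambda\in B_\lambda$, and moreover this value is forced to coincide with the lowest weight element of $B_\lambda$. First, I would unpack~\eqref{eq:eta cr op up}: taking $s=1$ yields $f(\tilde e_i^{up}(b))=\tilde f_{i^\star}^{up}(f(b))$ and taking $s=-1$ yields $f(\tilde f_i^{up}(b))=\tilde e_{i^\star}^{up}(f(b))$, under the implicit (and necessary for consistency) convention $f(0)=0$. So $f$ intertwines the $\tilde e_i^{up}$-action with the $\tilde f_{i^\star}^{up}$-action and vice versa.

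Next, applying $f$ to the identity $\tilde e_i^{up}(b_\lambda)=0$ for all $i\in I$ forces $\tilde f_{i^\star}^{up}(f(b_\lambda))=0$ for all $i\in I$; hence $f(b_\lambda)$ is either $0$ or a lowest weight element of $B_\lambda$. Since $V_\lambda$ is simple with one-dimensional lowest weight space $V_\lambda(w_\circ\lambda)$, the crystal $B_\lambda$ contains a unique lowest weight element, which I denote $b_{low}$. Therefore $f(b_\lambda)\in\{0,b_{low}\}$.

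To rule out $f(b_\lambda)=0$, I would use that $B_\lambda$ is connected: every $b\in B_\lambda$ admits a presentation $b=\tilde f_{i_1}^{up}\cdots\tilde f_{i_r}^{up}(b_\lambda)$. Iterating the intertwining relation gives
\[
 f(b)=\tilde e_{i_1^\star}^{up}\cdots\tilde e_{i_r^\star}^{up}(f(b_\lambda)).
\]
If $f(b_\lambda)=0$, this would force $f\equiv 0$ on $B_\lambda$, contradicting the hypothesis. Hence $f(b_\lambda)=b_{low}$, and the displayed formula then determines $f$ uniquely on all of $B_\lambda$ (consistency across different presentations is automatic since the formula equals $f(b)$).

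Finally, I would verify that $\tilde\sigma^I_{V_\lambda}|_{B_\lambda\cup\{0\}}$ satisfies the same characterization: it preserves $B_\lambda$ by Theorem~\ref{thm:eta c-crystal basis}\ref{thm:eta c-crystal basis.b}, satisfies~\eqref{eq:eta cr op up} by~\eqref{eq:eta cr op}, and sends $b_\lambda$ to $b_{low}$ because $\sigma^I$ maps $V_\lambda(\lambda)$ onto $V_\lambda(w_\circ\lambda)$ by Proposition~\ref{prop:prop eta^J}\ref{prop:prop eta^J.b'}, and both weight spaces are one-dimensional. Thus $f$ and $\tilde\sigma^I_{V_\lambda}|_{B_\lambda\cup\{0\}}$ agree on $b_\lambda$ and both are determined by the same intertwining recipe, so they coincide. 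The only real subtlety is the tacit assumption $f(0)=0$ forced by the $s=\pm 1$ relations on $b_\lambda$; the rest of the argument is driven by the connectedness of $B_\lambda$ and the simplicity of $V_\lambda$.
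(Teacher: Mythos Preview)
Your argument is correct and is precisely the unwinding of the paper's implicit reasoning: the corollary is stated without proof, as an immediate consequence of the intertwining relation~\eqref{eq:eta cr op up} together with the connectedness of~$B_\lambda$ and the one-dimensionality of the extremal weight spaces. One small sharpening: the fact that $f(0)=0$ is most cleanly obtained by applying~\eqref{eq:eta cr op up} to $b=0$ itself, which gives $f(0)=(\tilde e_{i^\star}^{up})^{-s}(f(0))$ for all $s$, impossible for a nonzero element of~$B_\lambda$ on weight grounds; your route via $b_\lambda$ gives the same conclusion but less directly.
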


\begin{proof}[Proof of Theorem~\ref{thm:main thm2}]
Note that  
\begin{equation}\label{eq:low upper Kash-underl}
\underline{\mathbf c}^{low}_{l,k,s}=1,\qquad 
\underline{\mathbf c}^{up}_{l,k,s}=\frac{(l-k+s)_z! (k-s)_z!}{(l-k)_z! (k)_z!},\qquad (l,k,s)\in\mathbb D.
\end{equation}
It is now immediate that~\eqref{eq:cnd for eta} holds for $\mathbf c\in\{\mathbf c^{up},\mathbf c^{low}\}$.

Furthermore, by Lemma~\ref{lem:facts on crystal bases} and Remark~\ref{rem:restr crys bas}, Theorem~\ref{thm:eta c-crystal basis} applies to every $\mathbf c$-crystal 
basis at $q=0$ for any $J\in\mathscr J$ with $\lie g$ replaced by $\lie g^J$ and $\mathbf c\in\{\mathbf c^{up},\mathbf c^{low}\}$.
Thus, $\sigma^J$ preserves a lower or upper crystal lattice $L$ and $\tilde\sigma^J$ preserves~$B$.

In particular, $\Phi_V$ acts on $L$ and this action factors through to an action on $L/qL$ and induces an action on~$B$ by permutations. 
\end{proof}
\begin{remark}\label{rem:crystal Weyl group}
Let $L$ be an upper crystal lattice for~$V\in\mathscr O^{int}_q(\lie g)$.
It follows from the definition of~$\sigma^i_V$ that for any $v\in L(\beta)$, $\beta\in P$ we have $\sigma^i_V(v)=\tilde e_i^{-\beta(\alpha_i^\vee)}(v)$.
In particular, the action of~$\tilde\sigma^i_V$ on an upper crystal basis~$(L,B)$ of~$V$ coincides with Kashiwara's crystal Weyl group action (see~\cite{Kas94}).
\end{remark}

We conclude this section with a discussion of the action of~$\Cact_W$ on upper global crystal bases.
Let $\bar\cdot$ be any field involution of~$\kk$ such that $\overline{q^{\frac1{2d}}}=q^{-\frac1{2d}}$. 
\begin{proposition}\label{prop:sigma bar}
Let~$(L,B)$ be an upper crystal basis of~$V\in\mathscr O^{int}_q(\lie g)$ and let $G^{up}(B)$ be the corresponding upper global crystal basis.
Denote by $\bar\cdot$ the (unique) additive map $V\to V$ satisfying $\overline{f\cdot b}=\overline f\cdot \overline b$ for all $f\in\kk$, $b\in G^{up}(B)$.
Then $\overline{\sigma^J(b)}=\sigma^J(b)$ for any $J\in\mathscr J$.
\end{proposition}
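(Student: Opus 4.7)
The plan is to establish the stronger identity $\overline{\sigma^J(v)} = \sigma^J(\bar v)$ for every $v \in V$, from which the proposition follows immediately upon specialization to $b \in G^{up}(B)$ (for which $\bar b = b$). Here I use the standard fact that the bar involution fixing $G^{up}(B)$ also satisfies $\overline{xv} = \bar x \bar v$ for all $x \in U_q(\lie g)$, a consequence of the defining properties of the upper global crystal basis.

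The key observation is that the algebra automorphism $\theta_J$ of $U_q(\lie g^J)$ commutes with the bar involution, which is a direct verification on the generators $E_i$, $F_i$, $K_\lambda$. Consider the anti-linear map $\delta : V \to V$, $\delta(v) = \overline{\sigma^J(v)} - \sigma^J(\bar v)$. Combining Proposition~\ref{prop:existence-eta-J}\ref{prop:existence-eta-J.b} with the identity $\overline{\theta_J(x)} = \theta_J(\bar x)$ yields the ``semi-equivariance'' $\delta(xv) = \theta_J(\bar x)\delta(v)$ for $x \in U_q(\lie g^J)$. Since $V$, viewed as a $U_q(\lie g^J)$-module, is integrable and hence semisimple, it is generated by $V^J_+ = \bigcap_{j \in J}\ker E_j$, so it suffices to show that $\delta$ vanishes on $V^J_+$.

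For this last step, Proposition~\ref{prop:prop eta^J}\ref{prop:prop eta^J.a'} identifies $\sigma^J(v_+) = F_{w_\circ^J, \lambda_J}(v_+)$ for $v_+ \in V^J_+(\lambda_J)$. Since $F_{w_\circ^J, \lambda_J}$ is a monomial in the divided powers $F_j^{(a)}$, it is bar-invariant as an element of $U_q^-(\lie g^J)$; and since the weight spaces and the kernels of the $E_j$ are both preserved by $\bar{\cdot}$, the subspace $V^J_+(\lambda_J)$ is bar-stable. Hence
\[
\sigma^J(\bar v_+) = F_{w_\circ^J, \lambda_J}(\bar v_+) = \overline{F_{w_\circ^J, \lambda_J}(v_+)} = \overline{\sigma^J(v_+)},
\]
so $\delta(v_+) = 0$, completing the argument.

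I do not expect a serious obstacle: the argument reduces the claim to the straightforward case $v \in V^J_+$, on which $\sigma^J$ acts via an explicit bar-invariant element of $U_q^-(\lie g^J)$. The only care needed is in articulating the semi-equivariance of $\delta$ cleanly so that the vanishing on $V^J_+$ propagates to the whole of $V$; this is immediate once one notes that $\delta$ is anti-linear and that any $v \in V$ is a finite $\kk$-sum of elements of the form $x v_+$ with $x \in U_q(\lie g^J)$, $v_+ \in V^J_+$.
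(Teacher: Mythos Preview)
Your proof is correct and follows essentially the same approach as the paper's own argument. The paper packages the propagation step as a separate Lemma (showing that if $\sigma$ is $\eta$-equivariant with $\eta$ commuting with~$\bar\cdot$, and $\sigma$ commutes with~$\bar\cdot$ on a generating set~$V_0$, then it commutes with~$\bar\cdot$ everywhere), and then applies it with $V_0=G^{up}(B)\cap V_+^J$; you instead define $\delta$ directly and take $V_0=V_+^J$, using bar-stability of $V_+^J(\lambda_J)$ rather than bar-invariance of individual basis elements --- but this is the same argument in slightly different clothing.
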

\begin{proof}
Denote by~$\bar\cdot$ the
ring automorphism of~$U_q(\lie g)$ satisfying $\overline{E_i}=E_i$, $\overline{F_i}=F_i$, $i\in I$, $\overline{K_\lambda}=K_{-\lambda}$, $\lambda\in\frac12P$
and $\overline{f u}=\overline f\cdot \overline u$ for all $f\in\kk$, $u\in U_q(\lie g)$. 
The following is immediate from the properties of the upper global crystal basis (\cite{Kas93}).
\begin{lemma}
The map $\bar\cdot:V\to V$ defined in Proposition~\ref{prop:sigma bar}
satisfies 
\begin{equation}\label{eq:bar property}
\overline{u(v)}=\overline u(\overline v),\qquad v\in V,\, u\in U_q(\lie g).
\end{equation}
\end{lemma}

The following is immediate.
\begin{lemma}\label{lem:sigma bar}
Let $\eta:U_q(\lie g)\to U_q(\lie g)$ be any algebra automorphism commuting with~$\bar\cdot:U_q(\lie g)\to U_q(\lie g)$ and 
let $V\in\mathscr O^{int}_q(\lie g)$ with a fixed set~$V_0\subset V$ generating $V$ as a $U_q(\lie g)$-module. 
Let $\bar\cdot:V\to V$ be any map satisfying~\eqref{eq:bar property} and let 
$\sigma\in\End_\kk V$ be such that:
\begin{enumerate}[label={\rm(\roman*)},leftmargin=*]
 \item\label{lem:sigma bar.i} $\sigma(u(v))=\eta(u)(\sigma(v))$, $u\in U_q(\lie g)$, 
$v\in V$;
\item\label{lem:sigma bar.ii} $\sigma(\overline v)=\overline{\sigma(v)}$ for any $v\in V_0$. 
\end{enumerate}
Then $\sigma(\overline v)=\overline{\sigma(v)}$ for all~$v\in V$.
\end{lemma}
The set $V_0=G^{up}(B)\cap V_+^J$ generates $V$ as a $U_q(\lie g^J)$-module.
Clearly, $\theta_J$ commutes with $\bar\cdot$-involution on~$U_q(\lie g^J)$.
The condition~\ref{lem:sigma bar.i} of Lemma~\ref{lem:sigma bar} holds with $\eta=\theta_J$ by Proposition~\ref{prop:prop eta^J}\ref{prop:prop eta^J.b'}.
By Proposition~\ref{prop:prop eta^J}\ref{prop:prop eta^J.a'}, $\sigma^J(b)=F_{w_\circ^J,\lambda}(b)$ for any $b\in V_0(\lambda)$, $\lambda\in P^+_J$. 
Since $\overline{F_{w_\circ^J,\lambda}}=
F_{w_\circ^J,\lambda}$ and $\overline b=b$, the condition~\ref{lem:sigma bar.ii} of Lemma~\ref{lem:sigma bar} is also satisfied.
The assertion follows by Lemma~\ref{lem:sigma bar}.
\end{proof}

\section{\texorpdfstring{$\sigma^I$}{eta\^I} and the canonical basis}\label{sect:can bas}

\subsection{Automorphisms and skew derivations of localizations}
Let $R$ be a unital $\kk$-algebra. Given a monoid $\Gamma$ written multiplicatively and acting on~$R$ by algebra automorphisms, 
define the semidirect product of $R$ with the monoidal algebra~$\kk[\Gamma]$ of~$\Gamma$ as $R\tensor \kk[\Gamma]$
with the multiplication defined by
$$
(r\tensor \gamma)\cdot (r'\tensor \gamma')=r(\gamma\triangleright r')\tensor \gamma\gamma',\qquad r,r'\in R,\, \gamma,\gamma'\in\Gamma,
$$
where $\triangleright$ denotes the action of~$\Gamma$ on~$R$. Since $(r\tensor 1)(1\tensor \gamma)=r\tensor \gamma$, we will henceforth
omit the symbol~$\tensor$ when writing elements of~$R\rtimes\kk[\Gamma]$. In other words, $R\rtimes\kk[\Gamma]$
is generated by $R$ as a subalgebra and $\Gamma$ subject to the relations 
$$
\gamma\cdot r=(\gamma\triangleright r)\cdot \gamma,\qquad r\in R,\,\gamma\in\Gamma.
$$
The following characterization of cross products is immediate.
\begin{lemma}\label{lem:lift homs}
Let $f:R\to \widehat R$ be a homomorphism of $\kk$-algebras and let $g:\Gamma\to \widehat R$ 
be a homomorphism of multiplicative monoids. Suppose that $R$ is a $\kk[\Gamma]$-module algebra. Then 
assignments $r\cdot \gamma\mapsto f(r)\cdot g(\gamma)$, $r\in R$, $\gamma\in\Gamma$ define a homomorphism of $\kk$-algebras
if and only if 
\begin{equation}\label{eq:cross prod cond}
f(\gamma\triangleright r)g(\gamma)=g(\gamma)f(r),\qquad r\in R,\,\gamma\in\Gamma.
\end{equation}
\end{lemma}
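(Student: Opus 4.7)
The plan is to prove both directions separately, using the fact that $R\rtimes\kk[\Gamma]$ is, as a $\kk$-module, just $R\tensor\kk[\Gamma]$, and that it is generated as an algebra by the images of $R$ and of $\Gamma$ subject to the single family of relations $\gamma\cdot r=(\gamma\triangleright r)\cdot\gamma$ together with the algebra structures on~$R$ and on~$\kk[\Gamma]$.

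For the forward implication, suppose the assignment $h:R\rtimes\kk[\Gamma]\to\widehat R$, $r\cdot\gamma\mapsto f(r)g(\gamma)$, is a homomorphism. Applying $h$ to both sides of the defining relation $\gamma\cdot r=(\gamma\triangleright r)\cdot\gamma$ and using that $h$ restricts to~$f$ on $R=R\cdot 1$ and to~$g$ on $\Gamma=1\cdot\Gamma$, we get $g(\gamma)f(r)=f(\gamma\triangleright r)g(\gamma)$, which is exactly~\eqref{eq:cross prod cond}.

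For the converse, the $\kk$-linear map $h:R\tensor\kk[\Gamma]\to\widehat R$ defined by $r\tensor\gamma\mapsto f(r)g(\gamma)$ is well-defined since $R\rtimes\kk[\Gamma]=R\tensor\kk[\Gamma]$ as $\kk$-modules and $\{1\tensor\gamma\}_{\gamma\in\Gamma}$ is a basis of~$\kk[\Gamma]$. To check multiplicativity, I compute
\begin{align*}
h((r\cdot\gamma)(r'\cdot\gamma'))&=h(r(\gamma\triangleright r')\cdot\gamma\gamma')=f(r)f(\gamma\triangleright r')g(\gamma\gamma')\\
&=f(r)f(\gamma\triangleright r')g(\gamma)g(\gamma')=f(r)g(\gamma)f(r')g(\gamma')=h(r\cdot\gamma)h(r'\cdot\gamma'),
\end{align*}
where the third equality uses that $g$ is a monoid homomorphism and the fourth is exactly~\eqref{eq:cross prod cond}. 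Unitality follows from $h(1\cdot 1)=f(1)g(1)=1\cdot 1=1$. By $\kk$-bilinearity this extends to multiplicativity on all of $R\rtimes\kk[\Gamma]$.

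There is no genuine obstacle here: the lemma is a straightforward universal-property statement about semidirect products, and everything reduces to checking the single compatibility~\eqref{eq:cross prod cond} on generators. The only minor subtlety worth flagging explicitly is confirming that the $\kk$-linear extension is enough, which follows because the product on~$R\rtimes\kk[\Gamma]$ is by construction $\kk$-bilinear and the compatibility~\eqref{eq:cross prod cond} controls all reordering of generators needed to reduce any product in $\widehat R$ of the form $h(x)h(y)$ to the normal form $f(\cdot)g(\cdot)$.
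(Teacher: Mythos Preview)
Your proof is correct. The paper does not actually provide a proof of this lemma---it states it as ``immediate''---and your argument is exactly the standard verification one would write out: checking the compatibility condition on generators in both directions using the normal form $r\cdot\gamma$ for elements of $R\rtimes\kk[\Gamma]$.
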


Let $S$ be a submonoid of~$R\setminus\{0\}$. Denote $S^{op}$ the opposite monoid of~$S$ and denote its elements 
by $[s]$, $s\in S$. 
Suppose that $R$ is a $\kk[S^{op}]$-module algebra with $[s]\lact r=\Sigma_s(r)$, $s\in S$ where $\Sigma_s$ is an algebra automorphism of~$R$ and 
assume that 
\begin{equation}\label{eq:admiss aut}
\Sigma_s(s)=s,\qquad s\in S.
\end{equation}
Denote $R[S^{-1}]:=(R\rtimes \kk[S^{op}])/\langle s[s]-1\,:\, s\in S\rangle$. We say that $S$ as above is an Ore submonoid if 
\begin{equation}\label{eq:Ore subm}r s=s\Sigma_s(r),\qquad r\in R,\, s\in S.
\end{equation}
We use the convention that $\Sigma_{\lambda s}=\Sigma_s$ for all $\lambda\in\kk^\times$.
This notation is justified by the following
\begin{lemma}\label{lem:spec Ore}
Suppose that~\eqref{eq:admiss aut} holds. Then 
the following are equivalent:
\begin{enumerate}[label={\rm(\roman*)},leftmargin=*]
\item\label{lem:spec Ore.ii} the natural homomorphism $\mathbf 1_{R,S}:R\to R[S^{-1}]$ is injective;
 \item\label{lem:spec Ore.i}
 $S$ is an Ore submonoid of~$R$ and
 the assignments $r\cdot[s]\mapsto rs^{-1}$, $r\in R$, $s\in S$ define
  an isomorphism 
$R[S^{-1}]\to\underline{R[S^{-1}]}$ where $\underline{R[S^{-1}]}$ is the Ore localization of~$R$ by~$S$;
\end{enumerate}
\end{lemma}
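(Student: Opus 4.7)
The plan is to prove both implications separately, with the bulk of the work in (i)$\Rightarrow$(ii). For the easier direction (ii)$\Rightarrow$(i), under the proposed isomorphism $R[S^{-1}] \xrightarrow{\sim} \underline{R[S^{-1}]}$ the natural map $\mathbf{1}_{R,S}$ corresponds to the canonical map $R \to \underline{R[S^{-1}]}$, $r \mapsto r\cdot 1^{-1}$. Since the classical Ore localization is formed precisely so that $R$ embeds into it (the injectivity of this canonical map is implicit in the identification being meaningful, equivalently $S$ consists of non-zero-divisors), we obtain the injectivity of $\mathbf{1}_{R,S}$.

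For (i)$\Rightarrow$(ii), I would proceed in three steps. \textbf{Step 1:} show that $[s]$ is a two-sided inverse of $s$ in $R[S^{-1}]$. The relation $s\cdot[s] = 1$ is imposed by construction, and for the opposite product the commutation rule $[s]\cdot r = \Sigma_s(r)\cdot[s]$ specialized to $r = s$ together with~\eqref{eq:admiss aut} yields $[s]\cdot s = \Sigma_s(s)\cdot[s] = s\cdot[s] = 1$. \textbf{Step 2:} derive the Ore condition. For arbitrary $r\in R$ and $s\in S$, multiplying $[s]\cdot r = \Sigma_s(r)\cdot[s]$ on the left by $s$ gives $r = s\Sigma_s(r)\cdot[s]$, and multiplying subsequently on the right by $s$ yields $rs = s\Sigma_s(r)$ in $R[S^{-1}]$. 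Injectivity of $\mathbf{1}_{R,S}$ then promotes this identity to $R$, establishing~\eqref{eq:Ore subm}.

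\textbf{Step 3:} construct the isomorphism. Apply Lemma~\ref{lem:lift homs} with $\widehat R = \underline{R[S^{-1}]}$, $f$ the canonical embedding of $R$, and $g([s]) = s^{-1}$; condition~\eqref{eq:cross prod cond} becomes $\Sigma_s(r)\,s^{-1} = s^{-1}\,r$, which is exactly the Ore condition just established. The resulting algebra homomorphism $R\rtimes\kk[S^{op}] \to \underline{R[S^{-1}]}$ annihilates every $s[s]-1$ and so descends to a map $R[S^{-1}] \to \underline{R[S^{-1}]}$ realized by the assignments in the statement. An inverse comes from the universal property of the Ore localization applied to $\mathbf{1}_{R,S}$: by Step~1 every $s\in S$ maps to a unit of $R[S^{-1}]$, so this map factors uniquely as $R \to \underline{R[S^{-1}]} \to R[S^{-1}]$. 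The two constructed maps invert each other on the generating sets $R$ and $\{s^{-1} : s\in S\}$, hence globally.

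The main obstacle will be the delicate interplay between the algebraic presentation of $R[S^{-1}]$ and the classical Ore construction: the Ore condition alone does not force $\mathbf{1}_{R,S}$ to be injective, so injectivity must be used as an active hypothesis both to descend the relation $rs = s\Sigma_s(r)$ from $R[S^{-1}]$ to $R$ in Step~2, and to match $R[S^{-1}]$ with the injective-embedding form of $\underline{R[S^{-1}]}$ in Step~3.
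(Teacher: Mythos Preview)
Your proposal is correct and follows essentially the same approach as the paper: both establish that $[s]$ is a two-sided inverse of~$s$ via~\eqref{eq:admiss aut}, derive the Ore relation $rs=s\Sigma_s(r)$ in $R[S^{-1}]$ by multiplying the commutation rule on both sides by~$s$, and then use injectivity of $\mathbf 1_{R,S}$ to transport this identity back to~$R$. Your Step~3 is slightly more explicit than the paper's (which simply asserts the map $r[s]\mapsto rs^{-1}$ is a surjective homomorphism ``easily seen to be injective''), invoking Lemma~\ref{lem:lift homs} and the universal property of the Ore localization to build a two-sided inverse; this is a harmless elaboration of the same idea.
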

\begin{proof}
In $R\rtimes \kk[S^{op}]$ we have 
\begin{equation}\label{eq:comm-semi-dir-prod}[s]\cdot r=\Sigma_s(r)\cdot [s],\qquad s\in S,\, r\in R.
\end{equation}
In particular,
$[s]\cdot s=s\cdot [s]$, $s\in S$. 
Multiplying both sides of~\eqref{eq:comm-semi-dir-prod} by~$s$ on the left and on the right we
conclude that $r s=s \Sigma_s(r)$ in~$R[S^{-1}]$ for all $r\in R$, $s\in S$. This identity clearly holds in~$\mathbf 1_{R,S}(R)$. Since~$\mathbf 1_{R,S}$ is injective,
this implies that the corresponding identity holds in $R$ and so~$S$ satisfies the two-sided Ore condition and so
$R$ admits the Ore localization $\underline{R[S^{-1}]}$.
The assignments $r[s]\mapsto rs^{-1}$, $r\in R$, $s\in S$ define a surjective
homomorphism from $R[S^{-1}]\to \underline{R[S^{-1}]}$ which is easily seen to be injective.
Thus, \ref{lem:spec Ore.ii} implies \ref{lem:spec Ore.i}.

Conversely, the natural homomorphism $R\to \underline{R[S^{-1}]}$, $r\mapsto r\cdot 1$, $r\in R$ is injective. 
Since it equals the composition of~$\mathbf 1_{R,S}$ and the isomorphism $R[S^{-1}]\to\underline{R[S^{-1}]}$,
it follows that $\mathbf 1_{R,S}$ is injective.
\end{proof}
The following Lemma is immediate.
\begin{lemma}\label{lem:extrem vecs Ore subring}
Let $R$ be a $\kk$-algebra and $S\subset R\setminus\{0\}$ be an Ore submonoid. Let $R'$ be a $\kk$-subalgebra of~$R$
and suppose that $S'\subset R'\cap S$ is an Ore submonoid of~$R'$. Then $R'[S'{}^{-1}]$ is isomorphic to the subalgebra of $R[S^{-1}]$
generated by $R'$ and $\{ s'{}^{-1}\,:\, s'\in S'\}$.
\end{lemma}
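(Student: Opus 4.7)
The plan is to produce an explicit algebra homomorphism $\phi: R'[S'^{-1}] \to R[S^{-1}]$ using the universal-property-type statement of Lemma~\ref{lem:lift homs}, and then verify that $\phi$ is injective with image the prescribed subalgebra. First I would set $f:R'\to R[S^{-1}]$ to be the composition of the inclusion $R'\hookrightarrow R$ with the canonical map $R\to R[S^{-1}]$, and define $g:S'^{op}\to R[S^{-1}]$ by $g([s'])=s'^{-1}$, where $s'^{-1}$ denotes the image of $[s']$ under $R\rtimes \kk[S^{op}]\to R[S^{-1}]$ (well-defined since $S'\subset S$).

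The key compatibility to check for Lemma~\ref{lem:lift homs} is~\eqref{eq:cross prod cond}, which in this setting reads $\Sigma'_{s'}(r')\cdot s'^{-1}=s'^{-1}\cdot r'$ in $R[S^{-1}]$ for all $r'\in R'$ and $s'\in S'$, where $\Sigma'_{s'}$ denotes the Ore automorphism of $R'$. Multiplying on the left by $s'$, this is equivalent to $s'\Sigma'_{s'}(r')=r's'$ in $R[S^{-1}]$; but this identity already holds in $R'$ by the Ore condition~\eqref{eq:Ore subm} for $S'\subset R'$, so it certainly holds in $R[S^{-1}]$. (As a byproduct, combining with the Ore condition for $S$ gives $s'\Sigma_{s'}(r')=s'\Sigma'_{s'}(r')$ in $R$, which after cancellation in $R[S^{-1}]$ shows that $\Sigma_{s'}|_{R'}=\Sigma'_{s'}$; so the two automorphism structures are automatically compatible.) By Lemma~\ref{lem:lift homs} we obtain a homomorphism $\widetilde\phi: R'\rtimes\kk[S'^{op}]\to R[S^{-1}]$. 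Since $\widetilde\phi(s'[s']-1)=s's'^{-1}-1=0$ for every $s'\in S'$, $\widetilde\phi$ descends to the desired homomorphism $\phi: R'[S'^{-1}]\to R[S^{-1}]$.

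The image of $\phi$ contains $R'=\phi(R')$ and $s'^{-1}=\phi([s'])$ for every $s'\in S'$, and is generated by these elements since the same is true of $R'[S'^{-1}]$; thus $\im\phi$ is exactly the subalgebra of $R[S^{-1}]$ claimed. For injectivity I would invoke Lemma~\ref{lem:spec Ore}: since $S'$ is Ore in $R'$, the natural map $R'\to R'[S'^{-1}]$ is injective and every element of $R'[S'^{-1}]$ can be represented as a right Ore fraction $r'\cdot[s']$ with $r'\in R'$, $s'\in S'$. If $\phi(r'\cdot[s'])=r's'^{-1}=0$ in $R[S^{-1}]$, multiplying on the right by $s'$ gives $r'=0$ in $R[S^{-1}]$; by the injectivity of $R\to R[S^{-1}]$ (again from Lemma~\ref{lem:spec Ore} applied to the Ore submonoid $S\subset R$), this forces $r'=0$ in $R$, hence in $R'$, so $r'\cdot[s']=0$ in $R'[S'^{-1}]$.

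The only subtle point — and the one I would flag as the main obstacle — is the reconciliation of the two automorphism structures $\Sigma_{s'}$ (from $S\curvearrowright R$) and $\Sigma'_{s'}$ (from $S'\curvearrowright R'$) on the subalgebra $R'$; once one observes that the compatibility~\eqref{eq:cross prod cond} only needs to be verified inside $R[S^{-1}]$ (rather than inside $R$), this obstacle dissolves because the Ore relation in $R'$ already delivers the required identity after tensoring into $R[S^{-1}]$. Everything else reduces to the universal properties of the constructions in~\S\ref{subs:coxeter groups}--style semidirect product setup and a one-line cancellation argument.
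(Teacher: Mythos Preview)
Your argument is correct; the paper states this lemma as ``immediate'' and provides no proof at all, so you have supplied exactly the details that the authors chose to suppress. The one minor slip is the closing reference to ``\S\ref{subs:coxeter groups}--style semidirect product setup'' --- the relevant constructions are in the subsection on automorphisms and skew derivations of localizations, not the Coxeter group subsection --- but this does not affect the mathematics.
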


\begin{lemma}\label{lem:lifts auto}
Suppose that~\eqref{eq:admiss aut} and the assumptions of Lemma~\ref{lem:spec Ore}\ref{lem:spec Ore.i} hold.
Let $\varphi:R\to R'$ be any $\kk$-algebra homomorphism, $S$ be an Ore submonoid of~$R$ and $S'$ be an Ore submonoid 
of~$R'$ such that $\varphi(S)\subset S'$.
Suppose that $\Sigma'_{\varphi(s)}\circ \varphi=\varphi\circ\Sigma_s$ for all $s\in S$.
Then there exists a unique homomorphism $\widehat\varphi:R[S^{-1}]\to R'[{S'}^{-1}]$ such that $\widehat\varphi|_R=\varphi$.
\end{lemma}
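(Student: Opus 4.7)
The plan is to apply Lemma~\ref{lem:lift homs} to construct $\widehat\varphi$ as a homomorphism out of the semidirect product $R\rtimes\kk[S^{op}]$, and then verify that it descends to the quotient $R[S^{-1}]$.

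First I would take $f:R\to R'[{S'}^{-1}]$ to be the composition of $\varphi$ with the canonical homomorphism $\mathbf 1_{R',S'}:R'\to R'[{S'}^{-1}]$, and define $g:S^{op}\to R'[{S'}^{-1}]$ on generators by $g([s])=[\varphi(s)]$. This is well defined as a homomorphism of multiplicative monoids because $\varphi(S)\subset S'$: in $S^{op}$ we have $[s_1]\cdot[s_2]=[s_2 s_1]$, and $g([s_2 s_1])=[\varphi(s_2 s_1)]=[\varphi(s_2)][\varphi(s_1)]=g([s_1])g([s_2])$ in $R'[{S'}^{-1}]$.

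The key step is checking the compatibility condition~\eqref{eq:cross prod cond}, i.e.\ that $f([s]\lact r)\,g([s])=g([s])\,f(r)$ for all $r\in R$ and $s\in S$. Unpacked, this is $\varphi(\Sigma_s(r))\,[\varphi(s)]=[\varphi(s)]\,\varphi(r)$ in $R'[{S'}^{-1}]$. Using the hypothesis $\Sigma'_{\varphi(s)}\circ\varphi=\varphi\circ\Sigma_s$, the left side rewrites as $\Sigma'_{\varphi(s)}(\varphi(r))\,[\varphi(s)]$; and the identity~\eqref{eq:comm-semi-dir-prod} applied inside $R'\rtimes\kk[{S'}^{op}]$ (and hence in its quotient $R'[{S'}^{-1}]$) yields $[\varphi(s)]\,\varphi(r)=\Sigma'_{\varphi(s)}(\varphi(r))\,[\varphi(s)]$, so the two sides agree. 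Lemma~\ref{lem:lift homs} then produces an algebra homomorphism $R\rtimes\kk[S^{op}]\to R'[{S'}^{-1}]$.

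Finally, I would observe that this homomorphism annihilates the defining relators $s\cdot[s]-1$ of $R[S^{-1}]$, since it sends $s\cdot[s]$ to $\varphi(s)\cdot[\varphi(s)]=1$ by construction of $R'[{S'}^{-1}]$. It therefore factors through $R[S^{-1}]$, producing the required $\widehat\varphi$ extending $\varphi$. Uniqueness is immediate: $R[S^{-1}]$ is generated as a $\kk$-algebra by $R$ together with $\{[s]\,:\,s\in S\}$, and any extension of $\varphi$ must send $[s]$ to the (unique) two-sided inverse of $\varphi(s)$ in $R'[{S'}^{-1}]$, namely $[\varphi(s)]$. The only substantive point in the whole argument is the verification of~\eqref{eq:cross prod cond}, and this is less an obstacle than a sanity check that the hypothesis linking $\Sigma_s$ and $\Sigma'_{\varphi(s)}$ is exactly what is needed to make the twisted multiplications in source and target intertwine under $\varphi$.
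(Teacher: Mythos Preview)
Your proof is correct and follows essentially the same route as the paper: apply Lemma~\ref{lem:lift homs} to build a homomorphism out of $R\rtimes\kk[S^{op}]$, verify~\eqref{eq:cross prod cond} using the hypothesis $\Sigma'_{\varphi(s)}\circ\varphi=\varphi\circ\Sigma_s$, and then pass to the quotient. The only cosmetic difference is that the paper takes $\widehat R=R'\rtimes\kk[{S'}^{op}]$ and afterwards shows the defining ideal maps to the defining ideal, whereas you take $\widehat R=R'[{S'}^{-1}]$ directly; you also supply the uniqueness argument, which the paper leaves implicit.

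One small slip in your monoid check: you write $[\varphi(s_2 s_1)]=[\varphi(s_2)][\varphi(s_1)]=g([s_1])g([s_2])$, but $g([s_1])g([s_2])=[\varphi(s_1)][\varphi(s_2)]$, and in the ${S'}^{op}$ structure this equals $[\varphi(s_2)\varphi(s_1)]=[\varphi(s_2 s_1)]$, so the conclusion is right even though the displayed intermediate product is reversed.
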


\begin{proof}
We apply Lemma~\ref{lem:lift homs} with $\Gamma=S^{op}$, $\widehat R=R'\rtimes \kk[{S'}^{op}]$ and $g:S^{op}\to \widehat R$
defined by $g([s])=[\varphi(s)]$. Then
$$
[\varphi(s)]\varphi(r)=\Sigma'_{\varphi(s)}(\varphi(r))[\varphi(s)]=\varphi(\Sigma_s(r))[\varphi(s)]=\varphi([s]\triangleright r)[\varphi(s)],
$$
and so~\eqref{eq:cross prod cond} holds.
Therefore, 
the assignments $r[s]\mapsto \varphi(r)[\varphi(s)]$, $r\in R$, $s\in S$,
define a homomorphism $\widehat{\widehat\varphi}:R\rtimes \kk[S^{op}]\to R'\rtimes \kk[{S'}^{op}]$.
Since $\widehat{\widehat\varphi}(s[s])=\varphi(s)[\varphi(s)]$ it follows that the image of the defining ideal of~$R[S^{-1}]$ 
under $\widehat{\widehat\varphi}$ is contained in the defining ideal of~$R'[{S'}^{-1}]$. Thus, $\widehat{\widehat\varphi}$ factors through to the desired homomorphism
$\widehat\varphi:R[S^{-1}]\to R'[{S'}^{-1}]$.
\end{proof}

Let $L_\pm:R\to R'$ be $\kk$-algebra homomorphisms and $E:R\to R'$ be a $\kk$-linear map. We say that $E$ is an $(L_+,L_-)$-derivation 
from $R$ to~$R'$ if $E(rr')=E(r)L_+(r')+L_-(r)E(r')$ for all $r,r'\in R$. Denote $\operatorname{Der}_{L_+,L_-}(R,R')$ the $\kk$-subspace of~$\Hom_\kk(R,R')$
of $(L_+,L_-)$-derivations from~$R$ to~$R'$.
We refer to an $(L,L^{-1})$-derivation as an $L$-derivation and abbreviate $\operatorname{Der}_{L_+,L_-}R=\operatorname{Der}_{L_+,L_-}(R,R)$. The following is immediate.
\begin{lemma}\label{lem:diffs restrs}
Let $R_0$ be a generating subset of~$R$ and let 
$D,D'\in \operatorname{Der}_{L_+,L_-}(R,R')$. Then $D|_{R_0}=D'|_{R_0}$ implies that~$D=D'$.
\end{lemma}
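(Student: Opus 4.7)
The plan is to reduce to showing that if $E\in\operatorname{Der}_{L_+,L_-}(R,R')$ vanishes on $R_0$ then $E=0$; this reduction is immediate because the space of $(L_+,L_-)$-derivations is a $\kk$-subspace of $\Hom_\kk(R,R')$, so $E:=D-D'$ lies in $\operatorname{Der}_{L_+,L_-}(R,R')$ and vanishes on~$R_0$ by hypothesis.

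Next I would introduce the set $R_1=\{r\in R\,:\, E(r)=0\}$ and show that it is a unital $\kk$-subalgebra of~$R$ containing~$R_0$; since~$R_0$ generates~$R$ as a $\kk$-algebra this forces $R_1=R$, which is precisely the assertion $D=D'$. Stability under $\kk$-linear combinations is immediate from $\kk$-linearity of~$E$. Stability under multiplication follows from the defining identity: for $r,r'\in R_1$,
\[
E(rr')=E(r)L_+(r')+L_-(r)E(r')=0+0=0.
\]
For $1\in R_1$, apply the derivation property to $1=1\cdot 1$ and use $L_\pm(1)=1$ (since $L_\pm$ are algebra homomorphisms) to get $E(1)=E(1)+E(1)$, whence $E(1)=0$.

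There is no real obstacle here; the statement is the standard fact that an $(L_+,L_-)$-derivation is determined by its values on a generating set, and the only thing to be a little careful about is that one must use that $L_+$ and~$L_-$ are algebra homomorphisms in order to handle the unit.
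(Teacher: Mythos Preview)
Your proof is correct and is exactly the standard argument the paper has in mind; the paper itself simply declares the lemma ``immediate'' and gives no further details, so your write-up is a faithful expansion of that.
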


Given $r'\in R'$, denote by $D^\pm_{r'}$ the linear maps $R\to R'$
\begin{equation}\label{eq:D +-}
D^-_{r'}(x)=r' L_+(x)-L_-(x)r',\qquad D^+_{r'}(x)=L_-(x)r'-r'L_+(x),\qquad x\in R.
\end{equation}

\begin{lemma}\label{lem:construct diffs}
Let $L_\pm:R\to R'$ be $\kk$-algebra homomorphisms. The assignments $r'\mapsto D^+_{r'}$ (respectively, $r'\mapsto D^-_{r'}$), $r'\in R'$
define $\kk$-linear maps $R'\to \operatorname{Der}_{L_+,L_-}(R,R')$.
\end{lemma}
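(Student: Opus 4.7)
The plan is to verify two things for each of $D^+_{r'}$ and $D^-_{r'}$: first, that the assignment $r' \mapsto D^\pm_{r'}$ is $\kk$-linear, and second, that each $D^\pm_{r'}$ indeed satisfies the $(L_+,L_-)$-Leibniz rule $D(xy) = D(x)L_+(y) + L_-(x)D(y)$. The $\kk$-linearity in $r'$ is immediate from the formulas~\eqref{eq:D +-}, since each of the four terms is $\kk$-linear in $r'$; this disposes of the second claim without further work.

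For the derivation property, I would carry out a direct computation exploiting the fact that $L_\pm$ are algebra homomorphisms. For $D^-_{r'}$, expand
\[
D^-_{r'}(xy) = r' L_+(x) L_+(y) - L_-(x) L_-(y) r',
\]
and separately expand
\[
D^-_{r'}(x) L_+(y) + L_-(x) D^-_{r'}(y) = (r' L_+(x) - L_-(x) r') L_+(y) + L_-(x)(r' L_+(y) - L_-(y) r').
\]
The two cross terms $-L_-(x) r' L_+(y)$ and $+L_-(x) r' L_+(y)$ cancel, leaving exactly $r' L_+(x) L_+(y) - L_-(x) L_-(y) r'$, as required. The computation for $D^+_{r'}$ is essentially identical, with the same cancellation of the middle terms $\pm L_-(x) r' L_+(y)$ after using that $L_\pm$ respect products.

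There is no real obstacle here: the derivation identity is forced precisely by the shape of the formulas~\eqref{eq:D +-}, which are designed so that the ``interior'' terms telescope. The only thing worth emphasizing is that the verification uses nothing beyond the multiplicativity of $L_+$ and $L_-$ and the associativity of $R'$, so no hypothesis on $R$, $R'$, or on the relationship between $L_+$ and $L_-$ is needed. One may also note, though it is not used in the statement, that $D^+_{r'} = -D^-_{r'}$ whenever $r'$ commutes with the relevant images, and more generally $D^+_{r'} + D^-_{r'} = 0$ identically, so the two assertions of the lemma are in fact equivalent.
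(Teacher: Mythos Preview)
Your proof is correct and follows essentially the same direct computation as the paper's own proof: expand $D^-_{r'}(xy)$ using multiplicativity of $L_\pm$, observe the telescoping of the cross terms, and note that the case of $D^+_{r'}$ and the $\kk$-linearity in $r'$ are immediate. Your additional observation that $D^+_{r'}+D^-_{r'}=0$ identically (so the two assertions are equivalent) is accurate and is a small bonus not stated in the paper; the phrase ``whenever $r'$ commutes with the relevant images'' is superfluous since the identity holds unconditionally, as you then note.
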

\begin{proof}
For any $x,x'\in R$ we have 
\begin{multline*}
D^-_{r'}(xx')=r' L_+(xx')-L_-(xx')r'\\=(r'L_+(x)-L_-(x)r')L_+(x')+L_-(x)(r'L_+(x')-L_-(x')r')\\=D^-_{r'}(x)L_+(x')+L_-(x)D^-_{r'}(x').
\end{multline*}
Thus, $D^-_{r'}\in\operatorname{Der}_{L_+,L_-}(R,R')$.
The computation for~$D^+_{r'}$ is similar and is omitted. The linearity of both maps in~$r'$ is obvious.
\end{proof}

\subsection{Gelfand-Kirillov model for the category \texorpdfstring{$\mathscr O^{int}_q(\lie g)$}{O int\_q(g)}}\label{subs:GK-model}
Throughout this section we mostly follow the notation from~\cite{BR}*{Section~6}. Let $\Gamma$ be the monoid~$P^+$ written multiplicatively,
with its elements denoted by~$v_\lambda$, $\lambda\in P^+$. Let~$\mathcal A_q(\lie g)$ be an isomorphic copy of~$U_q^-(\lie g)$ whose 
generators are denoted by~$x_i$, $i\in I$. We denote the degree of a homogeneous element $x\in\mathcal A_q(\lie g)$ with 
respect to its $Q$-grading by $|x|\in -Q^+$.
Define an action of~$\Gamma$ on~$\mathcal A_q(\lie g)$ by 
$v_\lambda\triangleright x=q^{(\lambda,|x|)} x$ for~$x\in \mathcal A_q(\lie g)$ homogeneous.
Let $\mathcal B_q(\lie g)=\mathcal A_q(\lie g)\rtimes\kk[\Gamma]$. In particular, we have 
\begin{equation}\label{eq:comm rel B}
v_\lambda x=q^{(\lambda,|x|)}x v_\lambda
\end{equation}
for all $\lambda\in P^+$ and for all $x\in \mathcal A_q(\lie g)$ homogeneous.
We extend the $Q$-grading on~$\mathcal A_q(\lie g)$ to a $P$-grading on $\mathcal B_q(\lie g)$ via $|v_\lambda|=\lambda$
for  $\lambda\in P^+$.
Let $\mathscr O_q(\lie g)$ be the category of~$U_q(\lie g)$-modules whose objects satisfy all assumptions on objects of~$\mathscr O^{int}_q(\lie g)$
except that we do not assume that the $E_i$, $F_i$, $i\in I$, act locally nilpotently while assuming that all weight subspaces are finite dimensional.
The following essentially coincides with~\cite{BR}*{Lemma~6.1}.
\begin{lemma}
\label{lem:action}
The algebra $\mathcal B_q(\lie g)$ is a module algebra in the category~$\mathscr O_q(\lie g)$ with respect to the action given by the following formulae for all $\lambda\in\frac12P$,
$i\in I$
\begin{itemize}
\item $K_\lambda(y)=q^{(\lambda,|y|)}y$ for all homogeneous elements $y\in\mathcal B_q(\lie g)$ and $\lambda\in\frac12 P$;
\item $\displaystyle F_i(y)=\frac{x_i K_{\frac12\alpha_i}(y)-K_{-\frac12\alpha_i}(y)x_i}{q_i-q_i^{-1}}$ for all $y\in \mathcal B_q(\lie g)$ 
and thus is a $K_{\frac12\alpha_i}$-derivation of~$\mathcal B_q(\lie g)$;
\item $E_i$ is the unique $K_{\frac12\alpha_i}$-derivation of $\mathcal B_q(\lie g)$ 
such that 
$E_i(x_j)=\delta_{ij}$ and $E_i(v_\mu)=0$ for all $i,j\in I$, $\mu\in P^+$.
\end{itemize}
\end{lemma}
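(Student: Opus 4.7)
The strategy is to define each operator carefully, verify the defining relations of $U_q(\lie g)$, and then confirm the module-algebra condition and the categorical requirements.

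First I define $K_\lambda$ as the scalar $q^{(\lambda, |y|)}$ on each homogeneous $y \in \mathcal B_q(\lie g)$; since the defining relation $v_\mu x = q^{(\mu, |x|)} x v_\mu$ is homogeneous with respect to the $P$-grading, $K_\lambda$ extends to a well defined algebra automorphism of $\mathcal B_q(\lie g)$, and $K_\lambda K_\mu = K_{\lambda+\mu}$ is automatic. For $F_i$, observe that $(q_i - q_i^{-1}) F_i = D^-_{x_i}$ in the notation of~\eqref{eq:D +-} with $L_\pm = K_{\pm\frac12\alpha_i}$, so by Lemma~\ref{lem:construct diffs} $F_i$ is a $K_{\frac12\alpha_i}$-derivation as required.

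The construction of $E_i$ is the most delicate step. Uniqueness is immediate from Lemma~\ref{lem:diffs restrs} since $\mathcal B_q(\lie g)$ is generated as an algebra by the $x_j$ and the $v_\mu$. For existence I would first construct $E_i$ on the subalgebra $\mathcal A_q(\lie g)$ via the well known Kashiwara-type skew derivation on $U_q^-(\lie g)$ which sends $x_j \mapsto \delta_{ij}$ and is compatible with the quantum Serre relations; then I extend to $\mathcal B_q(\lie g)$ by declaring $E_i(v_\mu) = 0$ and imposing the skew Leibniz rule $E_i(ab) = E_i(a) K_{\frac12\alpha_i}(b) + K_{-\frac12\alpha_i}(a) E_i(b)$. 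The nontrivial consistency check is that this extension respects the relation $v_\mu x = q^{(\mu, |x|)} x v_\mu$: expanding both sides using the Leibniz rule and noting that $|E_i(x)| = |x| + \alpha_i$, the $K_{\pm\frac12\alpha_i}$ and $K_{\pm\frac12\alpha_j}$ factors recombine to give identical scalars on both sides.

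Once all three families of operators are defined, I verify the relations of $U_q(\lie g)$. The $K$-$K$ commutations are trivial, and $K_\lambda E_i K_{-\lambda} = q^{(\lambda,\alpha_i)} E_i$ with its $F_i$-analogue follow from the observation that $E_i$ raises and $F_i$ lowers the $P$-weight by $\alpha_i$. For the quantum Serre relations, both sides are compositions of $K_{\frac12\alpha_i}$-skew derivations of a controlled type; by Lemma~\ref{lem:diffs restrs} it suffices to compare them on the generators $x_k$ and $v_\mu$, where the identity reduces to the Serre relations already known for the Kashiwara skew derivations on $\mathcal A_q(\lie g) \cong U_q^-(\lie g)$. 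The main obstacle is verifying $[E_i, F_j] = \delta_{ij} (K_{\alpha_i} - K_{-\alpha_i})/(q_i - q_i^{-1})$: here I would substitute $F_j = D^-_{x_j}/(q_j - q_j^{-1})$, apply the skew Leibniz rule for $E_i$ with careful bookkeeping of the $K_{\pm\frac12\alpha}$-twists, and use $E_i(x_j) = \delta_{ij}$ to collapse the remaining expression weight by weight.

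Finally, the module-algebra axiom $u(ab) = \sum u_{(1)}(a) u_{(2)}(b)$ is precisely the skew-derivation formulae for $E_i$ and $F_i$ matched against the coproduct~\eqref{eq:comult}, together with multiplicativity of $K_\lambda$ corresponding to $\Delta(K_\lambda) = K_\lambda \tensor K_\lambda$. Membership of $\mathcal B_q(\lie g)$ in $\mathscr O_q(\lie g)$ then follows from the $P$-grading together with the local finiteness of the $E_i$-action, which holds because $E_i$ strictly raises the $P$-weight while any element of $\mathcal B_q(\lie g)$ is supported in a bounded range of $\mathcal A_q(\lie g)$-degrees.
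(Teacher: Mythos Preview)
The paper does not actually prove this lemma: it simply cites \cite{BR}*{Lemma~6.1} and moves on. Your outline therefore supplies more than the paper does, and the overall strategy---define the operators, check the $U_q(\lie g)$-relations on generators, then read off the module-algebra axiom from the comultiplication~\eqref{eq:comult}---is the standard one and is essentially correct.

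One genuine gap: your reduction of the quantum Serre relations to generators via Lemma~\ref{lem:diffs restrs} does not work as stated, because the Serre combination $\sum_{r+s=1-a_{ij}}(-1)^r E_i^{(r)}E_j E_i^{(s)}$ is \emph{not} an $(L_+,L_-)$-derivation for any single pair $(L_+,L_-)$; compositions of skew derivations acquire cross terms. The clean fix is to observe directly from the Leibniz rule and $E_i(v_\mu)=0$ that $E_i(x\,v_\mu)=q^{\frac12(\alpha_i,\mu)}E_i(x)\,v_\mu$ for homogeneous $x\in\mathcal A_q(\lie g)$; iterating, any monomial $E_{i_1}\cdots E_{i_k}$ acts on $x\,v_\mu$ as $q^{\frac12(\alpha_{i_1}+\cdots+\alpha_{i_k},\mu)}$ times its action on~$x$, followed by right multiplication by~$v_\mu$. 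Hence the Serre identity on $\mathcal A_q(\lie g)\cong U_q^-(\lie g)$ (where it is known for the Kashiwara derivations) propagates to all of~$\mathcal B_q(\lie g)$. The same device streamlines the $[E_i,F_j]$ check.

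A minor caveat on the final paragraph: the paper's definition of $\mathscr O_q(\lie g)$ asks for finite-dimensional weight spaces, but $\mathcal B_q(\lie g)(\beta)=\bigoplus_{\lambda\in P^+,\ \lambda-\beta\in Q^+}\mathcal A_q(\lie g)(\beta-\lambda)\,v_\lambda$ is typically infinite-dimensional. What is actually used later (Proposition~\ref{prop:V lambda in B} and Lemma~\ref{lem:dual Verma}) is that each summand $\mathcal A_q(\lie g)\,v_\lambda$ lies in $\mathscr O_q(\lie g)$, and for that your argument is fine. This is a slight imprecision in the lemma's phrasing rather than a flaw in your reasoning.
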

Thus for all $x,y\in\mathcal B_q(\lie g)$ we have  
\begin{equation}\label{eq:leibnitz-1}
X_i(xy)=X_i(x)K_{\frac12\alpha_i}(y)+K_{-\frac12\alpha_i}(x) X_i(y)
\end{equation}
and more generally, for all $n\ge 0$
\begin{equation}\label{eq:leibnitz rule}
X_i^{(n)}(xy)=\sum_{r+s=n}X_i^{(r)}K_{-\frac s2\alpha_i}(x)X_i^{(s)}K_{\frac r2\alpha_i}(y)
\end{equation}
where $X_i$ is either~$E_i$ or~$F_i$.
The following is immediate from the defintion of~$\mathcal B_q(\lie g)$ and its $U_q(\lie g)$-module structure.
\begin{corollary}\label{cor:mod alg decomp}
$\mathcal B_q(\lie g)=\sum_{\lambda\in P^+} \mathcal A_q(\lie g)v_\lambda$ where $\mathcal A_q(\lie g)v_\lambda$ is a 
$U_q(\lie g)$-submodule of~$\mathcal B_q(\lie g)$ for each~$\lambda\in P^+$ and the sum is direct.
\end{corollary}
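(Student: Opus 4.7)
The plan is to unpack the semidirect product structure and verify the three assertions in turn. By definition $\mathcal B_q(\lie g)=\mathcal A_q(\lie g)\rtimes \kk[\Gamma]$ equals $\mathcal A_q(\lie g)\otimes \kk[\Gamma]$ as a $\kk$-vector space, with $\{v_\lambda\}_{\lambda\in P^+}$ a $\kk$-basis of~$\kk[\Gamma]$. So every element of $\mathcal B_q(\lie g)$ is uniquely a finite sum $\sum_\lambda x_\lambda v_\lambda$ with $x_\lambda\in \mathcal A_q(\lie g)$, which simultaneously gives $\mathcal B_q(\lie g)=\sum_{\lambda\in P^+}\mathcal A_q(\lie g)v_\lambda$ and the directness of this sum.

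Next I would verify that each summand $\mathcal A_q(\lie g)v_\lambda$ is stable under the $U_q(\lie g)$-action described in Lemma~\ref{lem:action}. For $K_\mu$ this is obvious since $K_\mu$ acts by the scalar $q^{(\mu,|x|+\lambda)}$ on a homogeneous element $xv_\lambda$. For $E_i$, since $E_i$ is a $K_{\frac12\alpha_i}$-derivation, $E_i(x_j)=\delta_{ij}$ and $E_i(v_\mu)=0$ for all $\mu\in P^+$, the Leibniz rule~\eqref{eq:leibnitz-1} gives
\[
E_i(xv_\lambda)=E_i(x)K_{\frac12\alpha_i}(v_\lambda)+K_{-\frac12\alpha_i}(x)E_i(v_\lambda)=q^{\frac12(\alpha_i,\lambda)}E_i(x)\,v_\lambda\in \mathcal A_q(\lie g)v_\lambda.
\]
The one nontrivial check is for $F_i$. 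Using the explicit formula in Lemma~\ref{lem:action} together with the commutation relation~\eqref{eq:comm rel B} applied to $v_\lambda x_i=q^{-(\lambda,\alpha_i)}x_iv_\lambda$ (recall $|x_i|=-\alpha_i$), I would rewrite $F_i(xv_\lambda)$ in the form $(\ast)\,v_\lambda$ with $(\ast)\in \mathcal A_q(\lie g)$, thereby confirming $F_i(\mathcal A_q(\lie g)v_\lambda)\subset \mathcal A_q(\lie g)v_\lambda$.

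There is no real obstacle here; the statement is essentially a bookkeeping exercise once Lemma~\ref{lem:action} is in place. The only point that requires any thought is the $F_i$-stability, where one must ``move'' the $v_\lambda$ past the left $x_i$-multiplication using~\eqref{eq:comm rel B}. Combining the three cases yields that $\mathcal A_q(\lie g)v_\lambda$ is a $U_q(\lie g)$-submodule, completing the proof of the corollary.
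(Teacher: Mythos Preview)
Your proposal is correct and matches the paper's approach: the paper simply states that the corollary is immediate from the definition of~$\mathcal B_q(\lie g)$ and its $U_q(\lie g)$-module structure, and you have written out precisely the routine verification that makes this ``immediate''. There is nothing to add.
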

In the sequel we will also use~$E_i^*$ which is defined as the unique $K_{-\frac12\alpha_i}$-derivation of~$\mathcal B_q(\lie g)$
satisfying $E_i^*(x_j)=\delta_{i,j}$, $E_i^*(v_\lambda)=0$ for all $\lambda\in P^+$, $j\in I$. It is easy to check that 
$E_i^*(x)=(E_i(x^*))^*$, $x\in \mathcal A_q(\lie g)$, where ${}^*:\mathcal A_q(\lie g)\to \mathcal A_q(\lie g)$ is the unique anti-involution preserving the~$x_i$, $i\in I$.

In the spirit of~\cite{JL},
using the decomposition from Corollary~\ref{cor:mod alg decomp} we can define a linear map $\mathbf j:\mathcal B_q(\lie g)\to \mathcal A_q(\lie g)$ by 
\begin{equation}\label{eq:defn-joseph-map}\mathbf j(x\cdot v_\lambda)=q^{-\frac12(\lambda,|x|)} x
\end{equation}
for all $\lambda\in P^+$ and $x\in \mathcal A_q(\lie g)$ homogeneous. Clearly, $\mathbf j|_{\mathcal A_q(\lie g)v_\lambda}$ is a bijection onto~$\mathcal A_q(\lie g)$.
\begin{lemma}
For any symmetrizable Kac-Moody~$\lie g$ we have:
\begin{enumerate}[label={\rm(\alph*)},leftmargin=*]
\item\label{lem:joseph-map.a} $\mathbf j$ is a surjective homomorphism of~$U_q^+(\lie g)$-modules, with respect to the action defined in Lemma~\ref{lem:action}.
\item\label{lem:joseph-map.b}
$
\mathbf j(x\cdot y)
=q^{\frac12(\lambda,|\mathbf j(y)|)-\frac12(\mu,\mathbf j(|x|))}
\mathbf j(x)\cdot\mathbf j(y)
$ for all $x\in V_\lambda$, $y\in V_\mu$ homogeneous.
\end{enumerate}
\label{lem:joseph-map}
\end{lemma}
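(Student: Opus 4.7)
The plan is to reduce both parts to direct computations using the decomposition $\mathcal B_q(\lie g)=\bigoplus_{\lambda\in P^+}\mathcal A_q(\lie g)v_\lambda$ of Corollary~\ref{cor:mod alg decomp} together with the commutation relation~\eqref{eq:comm rel B}. Surjectivity of~$\mathbf j$ is immediate from the observation made just before the lemma that $\mathbf j$ restricts to a bijection on each summand $\mathcal A_q(\lie g)v_\lambda$ (one could, for instance, take $\lambda=0$).

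For part~\ref{lem:joseph-map.a}, since the decomposition is a decomposition of $U_q(\lie g)$-modules, it is enough to check intertwining on a homogeneous element $xv_\lambda$ with $x\in\mathcal A_q(\lie g)$ and with $i\in I$ arbitrary. I would apply the Leibniz identity~\eqref{eq:leibnitz-1} for $E_i$; since $E_i(v_\lambda)=0$ by Lemma~\ref{lem:action}, only the first term survives and produces a factor $q^{\frac12(\alpha_i,\lambda)}$ coming from $K_{\frac12\alpha_i}(v_\lambda)$. Then applying~\eqref{eq:defn-joseph-map} and using $|E_i(x)|=|x|+\alpha_i$, the exponents would collapse to $-\frac12(\lambda,|x|)$, which is exactly the factor produced by $E_i\circ\mathbf j$ acting on $xv_\lambda$. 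Since $E_i$ is a $K_{\frac12\alpha_i}$-derivation of~$\mathcal A_q(\lie g)$, the same computation also shows $\mathbf j$ commutes with $E_i$ on the whole summand.

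For part~\ref{lem:joseph-map.b}, I would write $x=x'v_\lambda$ and $y=y'v_\mu$ with $x',y'\in\mathcal A_q(\lie g)$ homogeneous, noting that $|\mathbf j(x)|=|x'|$ and $|\mathbf j(y)|=|y'|$. Using~\eqref{eq:comm rel B} to pull $v_\lambda$ past $y'$ yields $xy=q^{(\lambda,|y'|)}x'y'\,v_{\lambda+\mu}$, after which applying~\eqref{eq:defn-joseph-map} to both sides of the desired identity reduces the claim to verifying the elementary identity of exponents
\[
(\lambda,|y'|)-\tfrac12(\lambda+\mu,|x'|+|y'|)+\tfrac12(\lambda,|x'|)+\tfrac12(\mu,|y'|)=\tfrac12(\lambda,|y'|)-\tfrac12(\mu,|x'|),
\]
which is a one-line simplification.

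The calculations are entirely routine; the only place requiring care is the bookkeeping of the various half-integer $q$-shifts arising from the balanced normalization in~\eqref{eq:defn-joseph-map} versus the unbalanced commutation in~\eqref{eq:comm rel B}. I do not foresee any substantive obstacle, and interpret the symbol $V_\lambda$ appearing in the statement of~\ref{lem:joseph-map.b} as $\mathcal A_q(\lie g)v_\lambda$ in the Gelfand--Kirillov model (with $|x|$ abbreviated to $|\mathbf j(x)|$ in the exponent).
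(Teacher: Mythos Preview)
Your proposal is correct and follows essentially the same route as the paper: part~\ref{lem:joseph-map.a} is dispatched (as you do) by a direct check on generators using the decomposition of Corollary~\ref{cor:mod alg decomp} and the Leibniz rule~\eqref{eq:leibnitz-1} with $E_i(v_\lambda)=0$, while part~\ref{lem:joseph-map.b} is the same computation you outline, the paper merely phrasing it via the inverse relation $x=q^{\frac12(\lambda,|\mathbf j(x)|)}\mathbf j(x)v_\lambda$ rather than writing $x=x'v_\lambda$. Your reading of $V_\lambda$ as $\mathcal A_q(\lie g)v_\lambda$ (and of $\mathbf j(|x|)$ as $|\mathbf j(x)|$) is the intended one.
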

\begin{proof}
Part
~\ref{lem:joseph-map.a} is easily checked using Corollary~\ref{cor:mod alg decomp}. 
To prove part~\ref{lem:joseph-map.b} note that $x=q^{\frac12(\lambda,|\mathbf j(x)|)}\mathbf j(x)v_\lambda$ for all $x\in V_\lambda$ homogeneous and so 
we can write
\begin{multline*}
x\cdot y=q^{\frac12(\lambda+\mu,|\mathbf j(x)|+|\mathbf j(y)|)} \mathbf j(x\cdot y)v_{\lambda+\mu}=q^{\frac12(\lambda,|\mathbf j(x)|)+\frac12(\mu,|\mathbf j(y)|)}
\mathbf j(x)v_\lambda \mathbf j(y)v_\mu
\\=q^{\frac12(\lambda,|\mathbf j(x)|)+\frac12(\mu,|\mathbf j(y)|)+(\lambda,|\mathbf j(y)|)}
\mathbf j(x)\cdot\mathbf j(y)v_{\lambda+\mu}.
\end{multline*}
The assertion is now immediate.
\end{proof}
Given a $U_q(\lie g)$-module $M$, denote by $M^{int}$ the set of all $m\in M$ such that $U_q(\lie g)(m)\in\mathscr O^{int}_q(\lie g)$. The following is well-known
and in fact is easy to check.
\begin{lemma}\label{lem:int elements}
The assignment $M\mapsto M^{int}$ for every $U_q(\lie g)$-module~$M$ and $f\mapsto f$ for any morphism of~$U_q(\lie g)$-modules
defines an additive submonoidal functor from the tensor category of~$U_q(\lie g)$-modules to~$\mathscr O^{int}_q(\lie g)$, that is
$M^{int}\tensor N^{int}\subset (M\tensor N)^{int}$ for any $U_q(\lie g)$-modules $M$, $N$. 
In particular, if $M$ is an algebra object in the category of~$U_q(\lie g)$-modules then $M^{int}$ is its $U_q(\lie g)$-module subalgebra
and an algebra object in the category~$\mathscr O^{int}_q(\lie g)$.
\end{lemma}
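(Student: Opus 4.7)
The plan is to break the statement into four verifications: (i) $M^{\mathrm{int}}$ is a $U_q(\lie g)$-submodule of $M$; (ii) every $U_q(\lie g)$-linear $f:M\to N$ satisfies $f(M^{\mathrm{int}})\subseteq N^{\mathrm{int}}$; (iii) $M^{\mathrm{int}}\tensor N^{\mathrm{int}}\subseteq (M\tensor N)^{\mathrm{int}}$; (iv) the algebra-object assertion. For~(i), given $m\in M^{\mathrm{int}}$ and $u\in U_q(\lie g)$, the submodule $U_q(\lie g)(u(m))$ is contained in $U_q(\lie g)(m)\in\mathscr O^{\mathrm{int}}_q(\lie g)$, and the three defining conditions of~$\mathscr O^{\mathrm{int}}_q(\lie g)$ from Definition~\ref{def:cat O} are obviously inherited by arbitrary submodules, so $u(m)\in M^{\mathrm{int}}$. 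Closure under addition is obtained by noting that $U_q(\lie g)(m+m')$ is a quotient of $U_q(\lie g)(m)\oplus U_q(\lie g)(m')$, and $\mathscr O^{\mathrm{int}}_q(\lie g)$ is closed under finite direct sums and quotients (the conditions on weights, local nilpotence, and $N(\cdot)$ are clearly preserved). Step~(ii) is then immediate: $U_q(\lie g)(f(m))=f(U_q(\lie g)(m))$ is a $U_q(\lie g)$-module quotient of $U_q(\lie g)(m)\in\mathscr O^{\mathrm{int}}_q(\lie g)$.

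The main work is~(iii), which I would reduce to the statement that $V\tensor V'\in\mathscr O^{\mathrm{int}}_q(\lie g)$ whenever $V,V'\in\mathscr O^{\mathrm{int}}_q(\lie g)$. Indeed, given $m\in M^{\mathrm{int}}$, $n\in N^{\mathrm{int}}$, the comultiplication identity $\Delta(U_q(\lie g))\subseteq U_q(\lie g)\tensor U_q(\lie g)$ yields $U_q(\lie g)(m\tensor n)\subseteq U_q(\lie g)(m)\tensor U_q(\lie g)(n)$ as $U_q(\lie g)$-modules, so it suffices to know that the right-hand side belongs to $\mathscr O^{\mathrm{int}}_q(\lie g)$. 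For that tensor-closure, I would verify the three defining properties in turn. The weight decomposition follows from $\Delta(K_\lambda)=K_\lambda\tensor K_\lambda$, giving $(V\tensor V')(\beta)=\bigoplus_{\beta_1+\beta_2=\beta}V(\beta_1)\tensor V'(\beta_2)$. Local nilpotence of $E_i,F_i$ on a pure tensor $v\tensor v'$ follows from the quantum Leibniz formula~\eqref{eq:leibnitz rule} applied to $\Delta(E_i)$ and $\Delta(F_i)$ in~\eqref{eq:comult}: if $E_i^{m}(v)=0$ and $E_i^{m'}(v')=0$, then every summand in the expansion of $E_i^{(n)}(v\tensor v')$ vanishes for $n\ge m+m'-1$, and the conclusion extends linearly.

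For the $N(\cdot)$ condition of Definition~\ref{def:cat O}, I would argue as follows. Given $v\tensor v'$, pick $N(v)$ and $N(v')$ as in the definition. For $u\in U_q^+(\lie g)(\nu)$ with $\nu\in Q^+$, write $\Delta(u)$ as a finite sum of elements of $U_q^+(\lie g)(\nu_1)K_\bullet\tensor U_q^+(\lie g)(\nu_2)K_\bullet$ with $\nu_1+\nu_2=\nu$ and $\nu_1,\nu_2\in Q^+$, so that $\rho^\vee(\nu_1)+\rho^\vee(\nu_2)=\rho^\vee(\nu)$. Choosing $N(v\tensor v')=N(v)+N(v')$ forces $\rho^\vee(\nu_1)\ge N(v)$ or $\rho^\vee(\nu_2)\ge N(v')$ in every summand, hence each summand annihilates $v\tensor v'$ and thus $u(v\tensor v')=0$. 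This extends to arbitrary elements of $V\tensor V'$ by linearity, since any element is a finite sum of pure tensors. Finally, for~(iv), if $M$ is an algebra object in $U_q(\lie g)$-modules then the multiplication $\mu:M\tensor M\to M$ is $U_q(\lie g)$-linear, and the unit generates a trivial module hence lies in $M^{\mathrm{int}}$; by~(ii) and~(iii), $\mu(M^{\mathrm{int}}\tensor M^{\mathrm{int}})\subseteq \mu((M\tensor M)^{\mathrm{int}})\subseteq M^{\mathrm{int}}$, so $M^{\mathrm{int}}$ is a subalgebra and an algebra object in $\mathscr O^{\mathrm{int}}_q(\lie g)$. The only nontrivial step is the bookkeeping for the $N(\cdot)$ condition in~(iii); everything else is formal.
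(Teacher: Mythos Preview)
Your proof is correct and spells out precisely the routine verifications that the paper omits: the paper simply asserts the lemma is ``well-known and in fact is easy to check'' and gives no argument. Your decomposition into (i)--(iv) and the handling of the $N(\cdot)$ condition via the bigrading of $\Delta(U_q^+(\lie g)(\nu))$ are exactly the natural checks. One cosmetic remark: the Leibniz identity you cite as~\eqref{eq:leibnitz rule} is stated in the paper for the module algebra $\mathcal B_q(\lie g)$ and appears \emph{after} this lemma; what you actually need is its tensor-product analogue, which follows directly from iterating the comultiplication~\eqref{eq:comult}, so you might phrase it that way instead of invoking the forward reference.
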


\begin{proposition}\label{prop:V lambda in B}
For any $\lambda\in P^+$ the $U_q(\lie g)$-submodule of~$\mathcal A_q(\lie g)v_\lambda$ generated by~$v_\lambda$ is 
naturally isomorphic to~$V_\lambda$ and coincides with $(\mathcal A_q(\lie g)v_\lambda)^{int}$.
\end{proposition}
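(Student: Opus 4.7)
The proof proceeds in two steps, both based on the explicit $U_q(\lie g)$-action on $\mathcal B_q(\lie g)$ from Lemma~\ref{lem:action}.

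\medskip

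\noindent\textbf{Step 1: $U_q(\lie g)(v_\lambda)\cong V_\lambda$.} I first check that $v_\lambda$ is a highest weight vector of weight~$\lambda$: $K_\mu(v_\lambda)=q^{(\mu,\lambda)}v_\lambda$ by the definition of the $P$-grading, and $E_i(v_\lambda)=0$ because $E_i$ is a $K_{\frac12\alpha_i}$-derivation of $\mathcal B_q(\lie g)$ vanishing on every $v_\mu$ and $v_\lambda$ contains no $x_j$-factor to differentiate. Hence there is a canonical $U_q(\lie g)$-surjection $M(\lambda)\twoheadrightarrow U_q(\lie g)(v_\lambda)$ from the Verma module. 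To show that it factors through the simple quotient~$V_\lambda$, I verify $F_i^{m_i+1}(v_\lambda)=0$ with $m_i=\lambda(\alpha_i^\vee)$. Combining the defining formula for $F_i$ with~\eqref{eq:comm rel B} gives $F_i(v_\lambda)=q_i^{-m_i/2}[m_i]_{q_i}\cdot x_iv_\lambda$; then an easy induction using the Leibniz rule~\eqref{eq:leibnitz-1} together with the auxiliary identity $F_i(x_i^n)=-[n]_{q_i}x_i^{n+1}$ (itself a one-line induction) shows that $F_i^n(v_\lambda)$ is a nonzero scalar times $\prod_{k=0}^{n-1}[m_i-k]_{q_i}\cdot x_i^nv_\lambda$, which vanishes at $n=m_i+1$ thanks to the factor $[0]_{q_i}=0$. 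Simplicity of $V_\lambda$ and $v_\lambda\ne 0$ then upgrade the map $V_\lambda\to U_q(\lie g)(v_\lambda)$ to an isomorphism, and Lemma~\ref{lem:int elements} yields the inclusion $U_q(\lie g)(v_\lambda)\subseteq(\mathcal A_q(\lie g)v_\lambda)^{int}$.

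\medskip

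\noindent\textbf{Step 2: the reverse inclusion.} Using semisimplicity of $\mathscr O^{int}_q(\lie g)$, any integrable submodule decomposes as a direct sum of copies of the $V_\mu$, $\mu\in P^+$, and is therefore generated as a $U_q(\lie g)$-module by its space of highest weight vectors. It thus suffices to show that
$$
(\mathcal A_q(\lie g)v_\lambda)\cap\bigcap_{i\in I}\ker E_i=\kk\, v_\lambda.
$$
Writing an arbitrary element as $xv_\lambda$ with $x\in\mathcal A_q(\lie g)$, the Leibniz rule~\eqref{eq:leibnitz-1} together with $E_i(v_\lambda)=0$ gives $E_i(xv_\lambda)=q^{\frac12(\alpha_i,\lambda)}E_i(x)v_\lambda$. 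Since the map $x\mapsto xv_\lambda$ is a weight-preserving linear bijection $\mathcal A_q(\lie g)\to\mathcal A_q(\lie g)v_\lambda$ by the cross-product structure of $\mathcal B_q(\lie g)$, the claim reduces to $\bigcap_{i\in I}\ker E_i=\kk\cdot 1$ inside $\mathcal A_q(\lie g)\cong U_q^-(\lie g)$. This last assertion is a standard consequence of the nondegeneracy of the Lusztig/Drinfeld pairing on~$U_q^-(\lie g)$; alternatively, it can be proved by induction on the $Q^+$-degree using the PBW-type decomposition $\mathcal A_q(\lie g)=\kk\oplus\sum_i x_i\mathcal A_q(\lie g)$ together with the twisted-derivation behaviour of $E_i$, which forces any homogeneous element of positive $Q^+$-degree to be nonzero on some~$E_j$.

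\medskip

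\noindent\textbf{Main obstacle.} The only step requiring genuine care is the identification $\bigcap_i\ker E_i=\kk$ inside $\mathcal A_q(\lie g)$, which is known but somewhat technical and must be either derived self-contained or cited precisely (it is essentially the nondegeneracy of Lusztig's bilinear form on~$U_q^-$). The inductive computation of $F_i^n(v_\lambda)$ in Step~1 is routine $q$-arithmetic, but the bookkeeping of $q_i$-powers and signs is the sort of thing that tends to hide errors if done in haste.
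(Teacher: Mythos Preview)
Your proof is correct and takes a genuinely different route from the paper's. The paper proceeds via duality: it sets up a non-degenerate contragredient pairing $\lla\cdot,\cdot\rra$ between $M_\lambda$ and $\mathcal A_q(\lie g)v_\lambda$ (Lemma~\ref{lem:dual Verma}), thereby identifying $\mathcal A_q(\lie g)v_\lambda$ with $M_\lambda^\vee$; then the functorial properties of ${}^\vee$ and ${}^{int}$ (Lemma~\ref{lem:cat O}) applied to the projection $M_\lambda\twoheadrightarrow V_\lambda$ immediately give $(\mathcal A_q(\lie g)v_\lambda)^{int}\cong V_\lambda$. Your argument instead establishes the two inclusions by hand: a direct inductive computation of $F_i^n(v_\lambda)$ for Step~1, and the identification of the highest weight space for Step~2. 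Both approaches ultimately lean on the same technical input, namely the non-degenerate pairing of Lemma~\ref{lem:pairing}: the paper uses it to build the contragredient pairing, while you need it (or an equivalent) to conclude $\bigcap_i\ker E_i=\kk\cdot 1$ in $\mathcal A_q(\lie g)$. Your approach is more elementary and self-contained for this particular statement; the paper's duality viewpoint pays off later when it identifies $\mathbf j(V_\lambda)$ in Proposition~\ref{lem:joseph-map-kernels}, where the description of $\ker(M_\lambda\to V_\lambda)$ transports cleanly through the pairing.

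One small remark on your Step~2: you should be explicit that a highest weight vector of $(\mathcal A_q(\lie g)v_\lambda)^{int}$ is automatically a highest weight vector of the ambient module $\mathcal A_q(\lie g)v_\lambda$, so that computing $\bigcap_i\ker E_i$ in the larger space suffices. This is immediate since the $E_i$-action is the restriction, but it is worth a sentence.
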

\begin{proof}
Given $M\in\mathscr O_q(\lie g)$, define $M^\vee=\bigoplus_{\beta\in P} M^\vee(\beta)$ where $M^\vee(\beta)=\Hom_\kk(M(\beta),\kk)$.
Endow $M^\vee$ with a $U_q(\lie g)$-module structure via $(u\cdot f)(m)=f(u^T(m))$, $u\in U_q(\lie g)$, $f\in M^\vee$, $m\in M$, 
where $u\mapsto u^T$, $u\in U_q(\lie g)$ is the unique anti-involution of~$U_q(\lie g)$ such 
that $E_i{}^T=F_i$ and $K_\mu{}^T=K_\mu$, $\mu\in\frac12 P$.
The following is well-known 
\begin{lemma}\label{lem:cat O}
For any symmetrizable Kac-Moody~$\lie g$, we have:
\begin{enumerate}[label={\rm(\alph*)},leftmargin=*]
 \item\label{lem:cat O.b}
 The assignments $M\to M^\vee$, $M\in\mathscr O_q(\lie g)$ define an involutive contravariant functor on~$\mathscr O_q(\lie g)$.
 \item\label{lem:cat O.c'}
 For any $M\in\mathscr O_q(\lie g)$, $(M^{int})^\vee=(M^\vee)^{int}$.

 \item\label{lem:cat O.c} For any $V\in\mathscr O^{int}_q(\lie g)$, $V^\vee$ is naturally isomorphic to~$V$.
\end{enumerate}
\end{lemma}
We need the following well-known fact which essentially coincides with~\cite{BG-schu}*{Lemma~2.10}.
\begin{lemma}\label{lem:pairing}
There exists a unique non-degenerate pairing $\fgfrm{\cdot}{\cdot}:U^-_q(\lie g)\tensor \mathcal A_q(\lie g)\to\kk$ such that 
$\fgfrm{F_i}{x_j}=\delta_{i,j}$, $i,j\in I$,
$\fgfrm{u F_i}{x}=\fgfrm{u}{E_i^*(x)}$, $\fgfrm{F_i u}{x}=\fgfrm{u}{E_i(x)}$, 
$u\in U_q^-(\lie g)$, $x\in \mathcal A_q(\lie g)$ and 
$\fgfrm{u}{x}=0$ for $u\in U_q^-(\lie g)$, $x\in \mathcal A_q(\lie g)$ homogeneous unless $\deg u=|x|$.
\end{lemma}

Denote $M_\lambda$, $\lambda\in P$, the Verma module with highest weight~$\lambda$ (see e.g.~\cite{Lus}*{\S3.4.5}).
For every~$\lambda\in P^+$ we fix~$m_\lambda\in M_\lambda(\lambda)\setminus\{0\}$. Since~$M_\lambda$ is free as a $U_q^-(\lie g)$-module, 
every element of~$M_\lambda$ can be written, uniquely, as $u m_\lambda$ for some~$u\in U_q^-(\lie g)$.
Let $\mathcal M_q(\lie g)=\bigoplus_{\lambda\in P^+} M_\lambda$. 
Define $\lla \cdot,\cdot \rra:\mathcal M_q(\lie g)\tensor \mathcal B_q(\lie g)\to \kk$
by $\lla u(m_\lambda),xv_\mu\rra=\delta_{\lambda,\mu} \fgfrm{u}{\mathbf j(x)}$, $u\in U_q^-(\lie g)$, $x\in \mathcal A_q(\lie g)$, $\lambda,\mu\in P^+$.
It is immediate from the definition that $\lla \mathcal M_q(\lie g)(\beta),\mathcal B_q(\lie g)(\beta')\rra=0$, $\beta,\beta'\in P$, unless $\beta=\beta'$.

The following Lemma seems to be well-known. We provide a proof for reader's convenience.
\begin{lemma}\label{lem:dual Verma}
The pairing $\lla\cdot,\cdot\rra$ is non-degenerate and contragredient, that is 
\begin{equation}\label{eq:contragredient}\lla u'(m),b\rra=\lla m,u'{}^T(b)\rra, \qquad u'\in U_q(\lie g),\,
m\in \mathcal M_q(\lie g),\,b\in \mathcal B_q(\lie g). 
\end{equation}
In particular, $\mathcal A_q(\lie g)v_\lambda$, $\lambda\in P^+$ naturally identifies with~$M_\lambda^\vee$.
\end{lemma}
\begin{proof}
The pairing $\lla\cdot,\cdot\rra$ is non-degenerate as a direct sum of non-degenerate (in view of Lemma~\ref{lem:pairing}) pairings $M_\lambda\tensor \mathcal A_q(\lie g)v_\lambda
\to \kk$. To prove that it is contragredient, it suffices to prove~\eqref{eq:contragredient} for $u'\in \{ K_\mu,E_i,F_i\}$, $\mu\in\frac12P$, $i\in I$.
for all $i\in I$. Moreover, we may assume without loss of generality that $m=u(m_\lambda)$ and~$b=xv_\lambda$ with $u\in U_q^-(\lie g)$, $x\in\mathcal A_q(\lie g)$
homogeneous. We have 
\begin{multline*}
\lla K_\mu(u(m_\lambda)),x v_\lambda\rra=q^{(\mu,\lambda+\deg u)}\delta_{\deg u,|x|}\lla u(m_\lambda),x v_\lambda\rra
\\=\lla u(m_\lambda),K_\mu(xv_\lambda\rra=\lla u(m_\lambda),K_\mu^T(xv_\lambda\rra.
\end{multline*}
Furthermore, by Lemmata~\ref{lem:joseph-map}\ref{lem:joseph-map.a} and~\ref{lem:pairing} we obtain 
\begin{multline*}
\lla F_i(u(m_\lambda)),xv_\lambda\rra=\lla (F_i u)(m_\lambda),xv_\lambda\rra
=\fgfrm{F_iu}{\mathbf j(x)}=\fgfrm{u}{E_i(\mathbf j(x)}\\
=\fgfrm{u}{\mathbf j(E_i(x))}
=\lla u(m_\lambda),E_i(xv_\lambda)\rra=\lla u(m_\lambda),F_i^T(xv_\lambda)\rra.
\end{multline*}
In particular, we proved~\eqref{eq:contragredient} for $u'\in\{ K_\mu,F_i\}$, $\mu\in\frac12 P$, $i\in I$ for all $m\in\mathcal M_q(\lie g)$ and 
$b\in\mathcal B_q(\lie g)$.

It remains to prove that 
$$
\lla E_i(m),b\rra=\lla m,F_i(b)\rra,
$$
for all~$m\in M_\lambda$ and for all $b\in\mathcal A_q(\lie g)v_\lambda$ homogeneous.
We argue by induction on~$\rho^\vee(\lambda-\beta)$ where $m\in M_\lambda(\beta)$. 
If $\beta=\lambda$ then $E_i(m)=0$ while $F_i(b)=b'$ with $|b'|=|b|-\alpha_i$. Since $|b|\in \lambda-Q^+$, $|b'|\not=\beta$ and so $\lla m,b'\rra=0$.
For the inductive step, it suffices to assume that $m=F_j(m')$ where $m'$ is homogeneous.
We have 
\begin{multline*}
\lla E_i(F_j(m')),b\rra=\lla [E_i,F_j](m'),b\rra
+\lla (F_jE_i)(m')),b\rra
\\
=\lla m', [E_j,F_i](b)\rra+\lla m',F_iE_j(b)\rra=\lla m',E_j(F_i(b))\rra
=\lla F_j(m'), F_i(b)\rra,
\end{multline*}
where we used~\eqref{eq:contragredient} for $u'=F_j$ and $u'=[E_i,F_j]=\delta_{i,j}(q_i-q_i^{-1})^{-1}(K_{\alpha_i}-K_{-\alpha_i})=[E_j,F_i]$.

The second assertion of the Lemma is immediate from the first.
\end{proof}

By~\cite{Lus}*{Proposition~3.5.6}, $M_\lambda$ has a unique integrable quotient isomorphic to~$V_\lambda$. 
Applying~${}^{int\vee}$ to the surjection $M_\lambda\to V_\lambda$ we obtain, by Lemmata~\ref{lem:cat O}
and Lemma~\ref{lem:dual Verma}, the desired isomorphism $V_\lambda\cong (M_\lambda^\vee)^{int}=(\mathcal A_q(\lie g)v_\lambda)^{int}$
such that $v_\lambda\mapsto v_\lambda$.
\end{proof}
In view of Proposition~\ref{prop:V lambda in B}, from now on we identify~$V_\lambda$, $\lambda\in P^+$, with the $U_q(\lie g)$-submodule of~$\mathcal A_q(\lie g)v_\lambda$
generated by~$v_\lambda$.
\begin{lemma}\label{lem:mult V lambda}
For any $\lambda,\mu\in P^+$ we have $V_\lambda\cdot V_\mu=V_{\lambda+\mu}$ in~$\mathcal B_q(\lie g)$.
\end{lemma}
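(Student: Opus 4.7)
The plan is to establish the two inclusions $V_\lambda\cdot V_\mu\subset V_{\lambda+\mu}$ and $V_{\lambda+\mu}\subset V_\lambda\cdot V_\mu$ separately, exploiting the module-algebra structure of~$\mathcal B_q(\lie g)$ from Lemma~\ref{lem:action} together with the identification $V_\lambda=(\mathcal A_q(\lie g)v_\lambda)^{int}$ from Proposition~\ref{prop:V lambda in B}.

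First, for the inclusion $V_\lambda\cdot V_\mu\subset V_{\lambda+\mu}$, note that for any homogeneous $x,y\in\mathcal A_q(\lie g)$ the relation~\eqref{eq:comm rel B} gives $(x v_\lambda)(y v_\mu)=q^{(\lambda,|y|)} xy\, v_\lambda v_\mu = q^{(\lambda,|y|)} xy\, v_{\lambda+\mu}$, since $\Gamma$ is abelian. Hence $\mathcal A_q(\lie g)v_\lambda\cdot \mathcal A_q(\lie g)v_\mu\subset \mathcal A_q(\lie g)v_{\lambda+\mu}$, and in particular $V_\lambda\cdot V_\mu\subset \mathcal A_q(\lie g)v_{\lambda+\mu}$. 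Now $V_\lambda$ and $V_\mu$ are integrable, so by Lemma~\ref{lem:int elements} applied to the module algebra~$\mathcal B_q(\lie g)$, their product lies in~$\mathcal B_q(\lie g)^{int}$. Intersecting, one obtains $V_\lambda\cdot V_\mu\subset (\mathcal A_q(\lie g)v_{\lambda+\mu})^{int}=V_{\lambda+\mu}$ by Proposition~\ref{prop:V lambda in B}.

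For the reverse inclusion, I would first verify that $V_\lambda\cdot V_\mu$ is a $U_q(\lie g)$-submodule of~$\mathcal B_q(\lie g)$. Indeed, since $\mathcal B_q(\lie g)$ is a module algebra, for any $u\in U_q(\lie g)$ with $\Delta(u)=\sum u_{(1)}\tensor u_{(2)}$ and any $x\in V_\lambda$, $y\in V_\mu$, we have $u(xy)=\sum u_{(1)}(x)\cdot u_{(2)}(y)\in V_\lambda\cdot V_\mu$ because each factor stays in~$V_\lambda$ respectively~$V_\mu$. Next, observe that $v_\lambda\cdot v_\mu=v_{\lambda+\mu}$ (product in the monoid $\Gamma=P^+$), and $v_{\lambda+\mu}$ is the distinguished highest weight vector of~$V_{\lambda+\mu}$, which is in particular non-zero. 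Thus $V_\lambda\cdot V_\mu$ is a non-zero $U_q(\lie g)$-submodule of the simple module~$V_{\lambda+\mu}$ (simplicity is part of the description of simple objects in~$\mathscr O^{int}_q(\lie g)$ recalled before Definition~\ref{def:cat O}), and it contains the cyclic generator $v_{\lambda+\mu}$. Hence $V_\lambda\cdot V_\mu=V_{\lambda+\mu}$, as desired.

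There is no serious obstacle here: the identification in Proposition~\ref{prop:V lambda in B} pins down $V_\lambda$ inside~$\mathcal B_q(\lie g)$ as precisely the integrable part of~$\mathcal A_q(\lie g)v_\lambda$, and the rest is formal manipulation of the semidirect product relation~\eqref{eq:comm rel B} combined with the module-algebra axioms. The only point worth double-checking is that multiplication in~$\mathcal B_q(\lie g)$ indeed sends $v_\lambda\mapsto v_{\lambda+\mu}$ (which follows from $\Gamma$ being commutative so no $q$-scaling factor intrudes) so that the highest weight vector of~$V_{\lambda+\mu}$ is literally produced by the product $v_\lambda\cdot v_\mu$.
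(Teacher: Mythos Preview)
Your proof is correct and follows essentially the same approach as the paper's own proof: both arguments first show $V_\lambda\cdot V_\mu\subset (\mathcal A_q(\lie g)v_{\lambda+\mu})^{int}=V_{\lambda+\mu}$ using the grading and integrability (the paper phrases this as ``$V_\lambda\cdot V_\mu$ is the image of $V_\lambda\tensor V_\mu$ which is integrable''), and then invoke simplicity of $V_{\lambda+\mu}$ together with $v_\lambda\cdot v_\mu=v_{\lambda+\mu}\ne 0$ to obtain equality. Your write-up simply makes the submodule check and the use of Lemma~\ref{lem:int elements} more explicit than the paper does.
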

\begin{proof}
It is immediate from the definition of~$\mathcal B_q(\lie g)$ 
and Corollary~\ref{cor:mod alg decomp} that $V_\lambda\cdot V_\mu\subset \mathcal A_q(\lie g)v_{\lambda+\mu}$.
Furthermore, since $V_\lambda\cdot V_\mu$ is the image of $V_\lambda\tensor V_\mu$ which is integrable, by Proposition~\ref{prop:V lambda in B} we have 
$V_\lambda\cdot V_\mu\subset (\mathcal A_q(\lie g)v_{\lambda+\mu})^{int}=V_{\lambda+\mu}$
and the latter is a simple $U_q(\lie g)$-module, $V_\lambda\cdot V_\mu=V_{\lambda+\mu}$.
\end{proof}
Denote $\mathcal C_q(\lie g)=\mathcal B_q(\lie g)^{int}$. The following is an immediate corollary of Proposition~\ref{prop:V lambda in B} and
Lemma~\ref{lem:mult V lambda}.
\begin{corollary}\label{cor:Gelfand-Kirillov model}
$\mathcal C_q(\lie g)$ decomposes as $\mathcal C_q(\lie g)=\sum_{\lambda\in P^+} V_\lambda$ as a $U_q(\lie g)$-module algebra.
\end{corollary}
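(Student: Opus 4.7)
The plan is to assemble the decomposition from the three ingredients already in place: Corollary~\ref{cor:mod alg decomp}, Proposition~\ref{prop:V lambda in B}, and Lemma~\ref{lem:int elements}. First I would observe that the operation $M\mapsto M^{int}$ commutes with direct sums of $U_q(\lie g)$-modules. Indeed, if $M=\bigoplus_\alpha M_\alpha$ as $U_q(\lie g)$-modules and $v=\sum_\alpha v_\alpha$ with $v_\alpha\in M_\alpha$ (only finitely many nonzero), then $U_q(\lie g)(v)\subset \sum_\alpha U_q(\lie g)(v_\alpha)$, so $v$ is integrable if and only if each $v_\alpha$ is integrable, since $\mathscr O^{int}_q(\lie g)$ is closed under taking submodules and finite direct sums. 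Consequently, $\bigl(\bigoplus_\alpha M_\alpha\bigr)^{int}=\bigoplus_\alpha M_\alpha^{int}$.

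Applying this to the decomposition $\mathcal B_q(\lie g)=\bigoplus_{\lambda\in P^+}\mathcal A_q(\lie g)v_\lambda$ from Corollary~\ref{cor:mod alg decomp}, I get
$$\mathcal C_q(\lie g)=\mathcal B_q(\lie g)^{int}=\bigoplus_{\lambda\in P^+}\bigl(\mathcal A_q(\lie g)v_\lambda\bigr)^{int}=\bigoplus_{\lambda\in P^+}V_\lambda,$$
where the last equality is exactly the content of Proposition~\ref{prop:V lambda in B}. This establishes the stated (necessarily direct) sum decomposition as $U_q(\lie g)$-modules.

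For the algebra structure, Lemma~\ref{lem:int elements} asserts that whenever $M$ is an algebra object in the category of $U_q(\lie g)$-modules, its integrable part $M^{int}$ is a $U_q(\lie g)$-module subalgebra. Specializing to $M=\mathcal B_q(\lie g)$, which is a $U_q(\lie g)$-module algebra by Lemma~\ref{lem:action}, yields that $\mathcal C_q(\lie g)=\mathcal B_q(\lie g)^{int}$ is a $U_q(\lie g)$-module subalgebra of $\mathcal B_q(\lie g)$, in particular a $U_q(\lie g)$-module algebra in its own right. The multiplicative interaction of the summands in the decomposition is further pinned down by Lemma~\ref{lem:mult V lambda}: $V_\lambda\cdot V_\mu=V_{\lambda+\mu}$ for all $\lambda,\mu\in P^+$.

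There is no real obstacle here, since this corollary is a pure assembly of the three preceding results. The only point that warrants explicit verification is the compatibility of $(-)^{int}$ with direct sums, and this is immediate from the definition because each $\mathcal A_q(\lie g)v_\lambda$ is itself a $U_q(\lie g)$-submodule of $\mathcal B_q(\lie g)$.
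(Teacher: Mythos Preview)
Your proof is correct and follows exactly the route the paper intends: the corollary is stated as an immediate consequence of Proposition~\ref{prop:V lambda in B} and Lemma~\ref{lem:mult V lambda} (together with Corollary~\ref{cor:mod alg decomp} and Lemma~\ref{lem:int elements}), and you have simply spelled out the assembly, including the easy compatibility of $(-)^{int}$ with direct sums.
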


\begin{proposition}\label{lem:joseph-map-kernels}
For any symmetrizable Kac-Moody~$\lie g$ and $\lambda\in P^+$ we have:
\begin{equation}\label{eq:im V lambda}
\mathbf j(V_\lambda)=\bigcap_{i\in I} \ker {E_i^*}^{\lambda(\alpha_i^\vee)+1}.
\end{equation}
\end{proposition}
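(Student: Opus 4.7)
The plan is to exploit the non-degenerate pairing $\fgfrm{\cdot}{\cdot}:U_q^-(\lie g)\tensor \mathcal A_q(\lie g)\to\kk$ of Lemma~\ref{lem:pairing}, whose defining adjointness $\fgfrm{uF_i}{y}=\fgfrm{u}{E_i^*(y)}$ immediately iterates (by induction on $k$, replacing $u$ by $uF_i^{k-1}$) to $\fgfrm{uF_i^k}{y}=\fgfrm{u}{(E_i^*)^k(y)}$ for all $k\ge 0$. Combining this with the realization of $V_\lambda$ inside $\mathcal A_q(\lie g)v_\lambda$ from Proposition~\ref{prop:V lambda in B} should reduce the proof to a short computation.

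First I would make explicit the identification $V_\lambda\subset \mathcal A_q(\lie g)v_\lambda\cong M_\lambda^\vee$: via the contragredient pairing $\lla\cdot,\cdot\rra$ of Lemma~\ref{lem:dual Verma}, the subspace $V_\lambda=(\mathcal A_q(\lie g)v_\lambda)^{int}$ coincides with the annihilator of the maximal proper submodule $N_\lambda:=\ker(M_\lambda\twoheadrightarrow V_\lambda)$. Since $M_\lambda=U_q^-(\lie g)(m_\lambda)$ and, by~\cite{Lus}*{Chap.~6} (cf.\ the proof of Proposition~\ref{prop:stabilizer}\ref{prop:stabilizer.a}), the annihilator of $m_\lambda$ in $U_q^-(\lie g)$ is the left ideal $I_\lambda:=\sum_{i\in I}U_q^-(\lie g)F_i^{\lambda(\alpha_i^\vee)+1}$, one obtains $N_\lambda=I_\lambda(m_\lambda)$.

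Then for any $x\in\mathcal A_q(\lie g)$ the condition $xv_\lambda\in V_\lambda$ amounts to
\[
\lla uF_i^{\lambda(\alpha_i^\vee)+1}(m_\lambda),\,xv_\lambda\rra=0 \qquad \text{for all } u\in U_q^-(\lie g),\ i\in I,
\]
which by the definition $\lla u'(m_\lambda),xv_\lambda\rra=\fgfrm{u'}{\mathbf j(xv_\lambda)}$ and the iterated adjointness above becomes $\fgfrm{u}{(E_i^*)^{\lambda(\alpha_i^\vee)+1}(\mathbf j(xv_\lambda))}=0$ for all such $u$ and $i$. Non-degeneracy of $\fgfrm{\cdot}{\cdot}$ then collapses this to $(E_i^*)^{\lambda(\alpha_i^\vee)+1}(\mathbf j(xv_\lambda))=0$ for every $i\in I$, which is exactly~\eqref{eq:im V lambda}.

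The only non-formal step is the identification $N_\lambda=I_\lambda(m_\lambda)$: the inclusion $\supset$ is obvious, while $\subset$ reduces to showing that $I_\lambda(m_\lambda)$ is $U_q(\lie g)$-stable, i.e.\ stable under each $E_j$. This follows from $E_j(m_\lambda)=0$ together with the standard commutator relation in $U_q(\lie g)$ expressing $[E_j,F_i^{k+1}]$ in terms of lower powers of $F_i$, and embodies the familiar presentation of the integrable quotient $V_\lambda$ of $M_\lambda$. This is the only point requiring genuine verification; everything else is bookkeeping between the two pairings.
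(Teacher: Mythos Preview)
Your approach is essentially the paper's: both identify $V_\lambda\subset M_\lambda^\vee$ as the annihilator of $\mathcal J_\lambda=\sum_{i\in I} U_q^-(\lie g)F_i^{\lambda(\alpha_i^\vee)+1}(m_\lambda)$ (the paper cites~\cite{Lus}*{Proposition~3.5.6} for this presentation) and then translate the orthogonality condition through the adjointness $\fgfrm{uF_i^k}{y}=\fgfrm{u}{(E_i^*)^k(y)}$. The paper packages the translation as a commutative diagram (its Lemma~\ref{lem:diagram}) involving the isomorphism $\xi:\mathcal A_q(\lie g)\to U_q^-(\lie g)^\vee$, $\xi(x)(u)=\fgfrm{u}{x}$, whereas you compute directly with the pairing; the content is identical.

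One slip to fix: you write that ``the annihilator of $m_\lambda$ in $U_q^-(\lie g)$ is $I_\lambda$'', but $M_\lambda$ is free over $U_q^-(\lie g)$, so that annihilator is zero. What you want is that $I_\lambda$ is the annihilator of the image $v_\lambda\in V_\lambda$, equivalently the direct statement $N_\lambda=I_\lambda(m_\lambda)$. Your closing sketch of $E_j$-stability shows $I_\lambda(m_\lambda)$ is a submodule, but to conclude it equals $N_\lambda$ you must also observe that the quotient $M_\lambda/I_\lambda(m_\lambda)$ is integrable (a standard $\lie{sl}_2$ computation), since $V_\lambda$ is characterised as the unique integrable quotient of~$M_\lambda$.
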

\begin{proof}
Note that Lemma~\ref{lem:pairing} yields an isomorphism of $\kk$-vector spaces 
$\xi:\mathcal A_q(\lie g)\to U^-_q(\lie g)^\vee:=\bigoplus_{\gamma\in Q^+} \Hom_\kk(U^-_q(\lie g)(-\gamma),\kk)$
defined by $\xi(x)(u)=\fgfrm{u}{x}$, $x\in\mathcal A_q(\lie g)$, $u\in U^-_q(\lie g)$.
Define $\phi_\lambda^\vee:M_\lambda^\vee\to U_q^-(\lie g)^\vee$ by $\phi_\lambda^\vee(f)(u)=f(u(m_\lambda))$
for all $f\in M_\lambda^\vee$, $u\in U_q^-(\lie g)$.

Define an action of~$U_q^+(\lie g)$ on~$U_q^-(\lie g)^\vee$ by $(u_+\cdot f)(u_-):=f(u_+^T u_-)$, $u_\pm\in U_q^\pm(\lie g)$.

\begin{lemma}\label{lem:diagram}
For any $\lambda\in P^+$, $\phi_\lambda^\vee$ is an isomorphism of~$U_q^+(\lie g)$-modules. Moreover,
the following diagram in the category of~$U_q^+(\lie g)$-modules commutes
 \begin{equation}\label{eq:diagram  J}
 \vcenter{\xymatrix{\mathcal B_q(\lie g)\ar[r]^{\mathbf j} &\mathcal A_q(\lie g)\ar[d]^\xi\\
 M^\vee_\lambda\ar[u]\ar[r]^{\phi^\vee_\lambda}& U_q^-(\lie g)^\vee}}
 \end{equation}
 where the left vertical arrow is obtained by the identification $M_\lambda^\vee\cong \mathcal A_q(\lie g)v_\lambda$
 from Proposition~\ref{prop:V lambda in B}.

 \end{lemma}
 Let $\mathcal J_\lambda$, $\lambda\in P^+$, be the kernel of the canonical projection of~$M_\lambda$ on~$V_\lambda$.
 It is well-known (see e.g.~\cite{Lus}*{Proposition~3.5.6}) that 
$\mathcal J_\lambda=\sum_{i\in I} U^-_q(\lie g) F_i^{\lambda(\alpha_i^\vee)+1}(m_\lambda)$. 
Applying~${}^\vee$ to the projection~$M_\lambda\to V_\lambda$ and using that $V_\lambda\cong V_\lambda^\vee$ 
we obtain an embedding $V_\lambda\to M_\lambda^\vee$.
Note that 
$$
\phi_\lambda^\vee(V_\lambda)=\{ f\in U_q^-(\lie g)^\vee\,:\, f\Big(\sum_{i\in I} U_q^-(\lie g)F_i^{\lambda(\alpha_i^\vee)+1}\Big)=\{0\}\}.
$$
Therefore, 
$\phi_\lambda^\vee(V_\lambda)=\bigcap_{i\in I}\mathcal K_i$ where $\mathcal K_i=\{ f\in U_q^-(\lie g)^\vee\,:\,f(U_q^-(\lie g)F_i^{\lambda(\alpha_i^\vee)+1})=0\}$.
By Lemma~\ref{lem:pairing}, $\xi^{-1}(\mathcal K_i)=\ker E_i^*{}^{\lambda(\alpha_i^\vee)+1}$.  Using~\eqref{eq:diagram  J} we obtain $\mathbf j(V_\lambda)
=\bigcap_{i\in I}\xi^{-1}(\mathcal K_i)=\bigcap_{i\in I}\ker E_i^*{}^{\lambda(\alpha_i^\vee)+1}$.
\end{proof}

\subsection{Realization of \texorpdfstring{$\sigma^I$}{eta I} via quantum twist}
Let $v_{w\lambda}=F_{w,\lambda}(v_\lambda)$, $\lambda\in P^+$ where we use the notation from~\S\ref{subs:spec mon} (see also~\S\ref{subs:extrem vecs}).
This notation agrees with that in~\cite{BR}*{(6.3)}.
We need the following 
\begin{lemma}\label{lem:extrem vecs Ore}
Let $\lie g$ be a symmetrizable Kac-Moody algebra. Then for any 
$w,w'\in W$ and $\lambda,\mu\in P^+$ we have 
\begin{enumerate}[label={\rm(\alph*)},leftmargin=*]
\item\label{lem:extrem vecs Ore.a}
$v_{w\lambda}\cdot v_{w\mu}=v_{w(\lambda+\mu)}$. In particular, for any $w\in W$, the assignments
$v_\lambda\mapsto v_{w\lambda}$, $\lambda\in P^+$, define a homomorphism of monoids $g_w:\Gamma\to \mathcal B_q(\lie g)$; 
\item \label{lem:extrem vecs Ore.a'}
if $\ell(w'w)=\ell(w)+\ell(w')$ then we have 
$$v_{w'\mu}\cdot v_{w' w\lambda}=q^{(w\lambda-\lambda,\mu)}v_{w' w\lambda}\cdot v_{w'\mu};
$$
\item\label{lem:extrem vecs Ore.b} if $\ell(s_i w)=\ell(w)-1$, $i\in I$  then
$v_{w\lambda} x_i= q^{(w\lambda,\alpha_i)}x_i v_{w\lambda}$ for all $\lambda\in P^+$.
\item\label{lem:extrem vecs Ore.c} If $\lie g$ is finite dimensional 
then $v_{w_\circ\lambda} x=
q^{-(w_\circ\lambda,|x|)} x v_{w_\circ\lambda}$ for all $x\in \mathcal A_q(\lie g)$ homogeneous. 
\end{enumerate}
\end{lemma}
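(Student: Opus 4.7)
The plan is to prove the four parts in order, with parts~\ref{lem:extrem vecs Ore.b} and~\ref{lem:extrem vecs Ore.c} leveraging the machinery set up for~\ref{lem:extrem vecs Ore.a} and~\ref{lem:extrem vecs Ore.a'}. For part~\ref{lem:extrem vecs Ore.a}, I would induct on $\ell(w)$. The base $w=1$ reduces to $v_\lambda\cdot v_\mu=v_{\lambda+\mu}$, which is just the monoid law of $\Gamma=P^+$ inside $\mathcal{B}_q(\lie g)$. For the inductive step, write $w=s_i w_0$ with $\ell(w)=\ell(w_0)+1$. Lemma~\ref{lem:EF lambda w}\ref{lem:EF lambda w.b} expresses $v_{w\lambda}=F_i^{(a)}(v_{w_0\lambda})$, $v_{w\mu}=F_i^{(b)}(v_{w_0\mu})$, and $v_{w(\lambda+\mu)}=F_i^{(a+b)}(v_{w_0(\lambda+\mu)})$, with $a=w_0\lambda(\alpha_i^\vee)\geq 0$ and $b=w_0\mu(\alpha_i^\vee)\geq 0$. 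Invoking the induction hypothesis $v_{w_0(\lambda+\mu)}=v_{w_0\lambda}\cdot v_{w_0\mu}$ and then applying the quantum Leibniz rule~\eqref{eq:leibnitz rule}, the only surviving summand of $F_i^{(a+b)}(v_{w_0\lambda}\cdot v_{w_0\mu})$ is $(r,s)=(a,b)$: indeed, $F_i^{(r)}(v_{w_0\lambda})=0$ for $r>a$ since $v_{w_0\lambda}$ is the $E_i$-highest vector of an $(a+1)$-dimensional $i$-th $\lie{sl}_2$-string (a standard feature of extremal vectors at weights pairing non-negatively with $\alpha_i^\vee$), and symmetrically for $v_{w_0\mu}$. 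The two $K$-weight scalars $K_{-\frac{b}{2}\alpha_i}(v_{w_0\lambda})=q_i^{-ab/2}v_{w_0\lambda}$ and $K_{\frac{a}{2}\alpha_i}(v_{w_0\mu})=q_i^{ab/2}v_{w_0\mu}$ cancel, yielding $v_{w\lambda}\cdot v_{w\mu}=v_{w(\lambda+\mu)}$.

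Part~\ref{lem:extrem vecs Ore.a'} is proved by the same mechanism, now inducting on $\ell(w')$. The base $w'=1$ becomes $v_\mu\cdot v_{w\lambda}=q^{(w\lambda-\lambda,\mu)}v_{w\lambda}\cdot v_\mu$, which I would verify by writing $v_{w\lambda}=y\cdot v_\lambda$ for some homogeneous $y\in\mathcal{A}_q(\lie g)$ of degree $w\lambda-\lambda$ and applying the semidirect-product commutation~\eqref{eq:comm rel B} together with $v_\lambda v_\mu=v_\mu v_\lambda$. For the inductive step $w'=s_j w''$ with $\ell(w')=\ell(w'')+1$, the subword property forces $\ell(w''w)=\ell(w'')+\ell(w)$, so Lemma~\ref{lem:EF lambda w}\ref{lem:EF lambda w.b} rewrites $v_{w'\mu}=F_j^{(a)}(v_{w''\mu})$ and $v_{w'w\lambda}=F_j^{(b)}(v_{w''w\lambda})$. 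Applying $F_j^{(a+b)}$ to each side of the induction hypothesis $v_{w''\mu}\cdot v_{w''w\lambda}=q^{(w\lambda-\lambda,\mu)}v_{w''w\lambda}\cdot v_{w''\mu}$ and invoking the same one-term Leibniz collapse (with the $K$-scalars again matching and cancelling) propagates the identity to level $w'$.

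For part~\ref{lem:extrem vecs Ore.b}, write $w=s_i w'$ with $\ell(w)=\ell(w')+1$, so that $v_{w\lambda}=F_i^{(c)}(v_{w'\lambda})$ for $c=w'\lambda(\alpha_i^\vee)\geq 0$. Since $v_{w'\lambda}$ sits at the top of a $(c+1)$-dimensional $i$-th $\lie{sl}_2$-string, $F_i^{(c+1)}(v_{w'\lambda})=0$, and the identity $F_i\cdot F_i^{(c)}=(c+1)_{q_i}F_i^{(c+1)}$ gives $F_i(v_{w\lambda})=0$. Substituting $y=v_{w\lambda}$ into the explicit formula $F_i(y)=(q_i-q_i^{-1})^{-1}(x_i K_{\frac12\alpha_i}(y)-K_{-\frac12\alpha_i}(y)x_i)$ from Lemma~\ref{lem:action}, and using that $K_{\pm\frac12\alpha_i}$ acts on $v_{w\lambda}$ by the scalar $q^{\pm\frac12(w\lambda,\alpha_i)}$, yields after clearing denominators
\begin{equation*}
q^{\frac12(w\lambda,\alpha_i)}x_i v_{w\lambda}=q^{-\frac12(w\lambda,\alpha_i)}v_{w\lambda}x_i,
\end{equation*}
which is the stated commutation relation.

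Finally, part~\ref{lem:extrem vecs Ore.c} follows from~\ref{lem:extrem vecs Ore.b} since $\ell(s_i w_\circ)=\ell(w_\circ)-1$ for every $i\in I$ when $\lie g$ is finite dimensional, so~\ref{lem:extrem vecs Ore.b} handles the generators $x=x_i$ (noting $|x_i|=-\alpha_i$, so that $q^{(w_\circ\lambda,\alpha_i)}=q^{-(w_\circ\lambda,|x_i|)}$); the asserted identity is clearly multiplicative in $x$, so a routine induction on the $(-Q^+)$-degree extends it from the generators to arbitrary homogeneous $x\in\mathcal{A}_q(\lie g)$. The only delicate point throughout is tracking the $K$-weight scalars in the Leibniz expansion, but the $E_i$-annihilation collapses the sum to a single term whose scalars cancel against each other, which is what makes parts~\ref{lem:extrem vecs Ore.a} and~\ref{lem:extrem vecs Ore.a'} drop out cleanly.
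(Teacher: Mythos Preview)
Your proof is correct and follows essentially the same route as the paper for parts~\ref{lem:extrem vecs Ore.a}, \ref{lem:extrem vecs Ore.b}, and~\ref{lem:extrem vecs Ore.c}: induction on $\ell(w)$ via the Leibniz rule~\eqref{eq:leibnitz rule} with the single-term collapse for~\ref{lem:extrem vecs Ore.a}, the vanishing of $F_i(v_{w\lambda})$ plugged into the derivation formula for~\ref{lem:extrem vecs Ore.b}, and multiplicativity on generators for~\ref{lem:extrem vecs Ore.c}. The one difference is part~\ref{lem:extrem vecs Ore.a'}: the paper simply cites \cite{BR}*{Lemma~6.4}, whereas you supply a self-contained argument---base case via~\eqref{eq:comm rel B} and the decomposition $v_{w\lambda}=y\cdot v_\lambda$, inductive step by the same Leibniz collapse as in~\ref{lem:extrem vecs Ore.a}---which is a genuine (if modest) gain in self-containment and works exactly as you describe.
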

\begin{proof}
To prove~\ref{lem:extrem vecs Ore.a} we use induction on~$\ell(w)$, the induction base being trivial. For the inductive step, suppose that 
$\ell(s_i w)=\ell(w)+1$. Then by Lemma~\ref{lem:EF lambda w}\ref{lem:EF lambda w.b} and the induction hypothesis
$$
v_{s_iw(\lambda+\mu)}=F_{s_i w,\lambda+\mu}(v_{\lambda+\mu})=F_i^{(w(\lambda+\mu)(\alpha_i^\vee))}(v_{w(\lambda+\mu)})=F_i^{(w(\lambda+\mu)(\alpha_i^\vee))}(v_{w\lambda}\cdot v_{w\mu}).
$$
Using~\eqref{eq:leibnitz rule} and observing that $F_i^{(r)}(v_{w\lambda})F_i^{(s)}(v_{w\mu})=0$ if $r>w\lambda(\alpha_i^\vee)$ or 
$s>w\mu(\alpha_i^\vee)$
we obtain by Lemma~\ref{lem:EF lambda w}\ref{lem:EF lambda w.b}
\begin{multline*}
F_{s_i w(\lambda+\mu)}=\sum_{r+t=w(\lambda+\mu)(\alpha_i^\vee)} q^{\frac12(rw\mu-tw\lambda,\alpha_i)} F_i^{(r)}(v_{w\lambda})\cdot F_i^{(t)}(v_{w\mu})\\=
F_i^{(w\lambda(\alpha_i^\vee))}(v_{w\lambda})\cdot F_i^{(w\mu(\alpha_i^\vee))}(v_{w\mu})=v_{s_i w\lambda}\cdot v_{s_i w\mu}.
\end{multline*}
Part~\ref{lem:extrem vecs Ore.a'} was established in~\cite{BR}*{Lemma~6.4}. 
To prove part~\ref{lem:extrem vecs Ore.b}, 
note that if $\ell(s_i w)=\ell(w)-1$ then $F_i(v_{w\lambda})=0$. Then $x_i K_{\frac12\alpha_i}(v_{w\lambda})-K_{-\frac12\alpha_i}(v_{w\lambda})x_i=0$,
whence $x_i v_{w\lambda}=q^{-(\alpha_i,w\lambda)} v_{w\lambda} x_i=q^{(w\lambda,|x_i|)} v_{w\lambda}x_i$.
In particular, applying part~\ref{lem:extrem vecs Ore.b} with~$w=w_\circ$
we obtain, using an obvious induction on~$-\rho^\vee(|x|)$, 
$$
v_{w_\circ\lambda} x
=q^{-(w_\circ\lambda,|x|)} x v_{w_\circ\lambda},
$$
which yields part~\ref{lem:extrem vecs Ore.c}.
\end{proof}

Following~\cite{BR}*{\S6.1}, define {\em generalized quantum minors} $\Delta_{w\lambda}\in \mathcal A_q(\lie g)$, $w\in W$, $\lambda\in P^+$ by 
$
\Delta_{w\lambda}:=
\mathbf j(v_{w\lambda})$.
In particular,
\begin{equation}\label{eq:extrem minor}
v_{w\lambda}=q^{\frac12(w\lambda-\lambda,\lambda)}\Delta_{w\lambda} v_\lambda.
\end{equation}

We list some properties of generalized quantum minors which will be used in the sequel.
\begin{lemma}\label{lem:Delta facts}
Let $w,w'\in W$, $\lambda,\mu\in P^+$. Then 
\begin{enumerate}[label={\rm(\alph*)},leftmargin=*]
 \item\label{lem:Delta facts.a}
 $\Delta_{w\lambda}\cdot \Delta_{w\mu}=q^{\frac12 (w\mu-w^{-1}\mu,\lambda)}\Delta_{w(\lambda+\mu)}$;
\item\label{lem:Delta facts.b}
$\Delta_{w\mu}\cdot \Delta_{w w'\lambda}
=q^{(w\mu-\mu,ww'\lambda+\lambda)}\Delta_{w w'\lambda}\cdot \Delta_{w\mu}$;
\item\label{lem:Delta facts.c}
If $\lie g$ is finite-dimensional reductive then $\Delta_{w_\circ\lambda}\cdot\Delta_{w_\circ\mu}=\Delta_{w_\circ(\lambda+\mu)}$ and 
$
\Delta_{w_\circ\lambda}x=q^{-(w_\circ\lambda+\lambda,|x|)} x\Delta_{w_\circ\lambda}$ for any $x\in \mathcal A_q(\lie g)$ homogeneous.
\end{enumerate}
\end{lemma}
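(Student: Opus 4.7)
The plan is to derive all three parts from the corresponding identities for the extremal vectors $v_{w\lambda}$ in Lemma~\ref{lem:extrem vecs Ore}, using either the Joseph map $\mathbf j$ (which is multiplicative up to an explicit $q$-twist by Lemma~\ref{lem:joseph-map}\ref{lem:joseph-map.b}) or the defining relation~\eqref{eq:extrem minor} together with the commutation relation~\eqref{eq:comm rel B}. The central observation is that $|\Delta_{w\nu}|=w\nu-\nu$ for every $w\in W$ and $\nu\in P^+$, which follows from~\eqref{eq:extrem minor} by comparing the weights of the two sides.

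For part~\ref{lem:Delta facts.a}, I would apply $\mathbf j$ to both sides of Lemma~\ref{lem:extrem vecs Ore}\ref{lem:extrem vecs Ore.a}, that is, $v_{w\lambda}\cdot v_{w\mu}=v_{w(\lambda+\mu)}$. Since $v_{w\nu}\in V_\nu$, Lemma~\ref{lem:joseph-map}\ref{lem:joseph-map.b} transforms the left-hand side into $q^{\frac12(\lambda,|\Delta_{w\mu}|)-\frac12(\mu,|\Delta_{w\lambda}|)}\Delta_{w\lambda}\Delta_{w\mu}$, while the right-hand side is $\Delta_{w(\lambda+\mu)}$. Substituting $|\Delta_{w\nu}|=w\nu-\nu$ and simplifying the exponent using the $W$-invariance of $(\cdot,\cdot)$ yields the claim.

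For part~\ref{lem:Delta facts.b}, apply the same strategy to the $q$-commutation $v_{w\mu}\cdot v_{ww'\lambda}=q^{(w'\lambda-\lambda,\mu)}v_{ww'\lambda}\cdot v_{w\mu}$ from Lemma~\ref{lem:extrem vecs Ore}\ref{lem:extrem vecs Ore.a'}, where $\ell(ww')=\ell(w)+\ell(w')$ is implicit in the factorization $ww'$. Applying $\mathbf j$ to both sides via Lemma~\ref{lem:joseph-map}\ref{lem:joseph-map.b} yields a commutation relation between $\Delta_{w\mu}$ and $\Delta_{ww'\lambda}$; invoking $W$-invariance identities such as $(w\mu,ww'\lambda)=(\mu,w'\lambda)$ to symmetrize the resulting exponent then produces the stated form $(w\mu-\mu,ww'\lambda+\lambda)$.

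For part~\ref{lem:Delta facts.c}, the first identity is immediate from~\ref{lem:Delta facts.a} with $w=w_\circ$, since $w_\circ^{-1}=w_\circ$ forces the exponent to vanish. For the second identity, start from Lemma~\ref{lem:extrem vecs Ore}\ref{lem:extrem vecs Ore.c}, namely $v_{w_\circ\lambda}\,x=q^{-(w_\circ\lambda,|x|)}x\,v_{w_\circ\lambda}$, substitute $v_{w_\circ\lambda}=q^{\frac12(w_\circ\lambda-\lambda,\lambda)}\Delta_{w_\circ\lambda}v_\lambda$ on both sides, and use~\eqref{eq:comm rel B} to slide $v_\lambda$ past $x$ (which introduces the factor $q^{(\lambda,|x|)}$). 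Cancelling the common factor $v_\lambda$ on the right (legitimate because $\mathcal A_q(\lie g)v_\lambda$ is a free rank-one left $\mathcal A_q(\lie g)$-module on $v_\lambda$) leaves $q^{(\lambda,|x|)}\Delta_{w_\circ\lambda}x=q^{-(w_\circ\lambda,|x|)}x\Delta_{w_\circ\lambda}$, which rearranges to the claim. The principal obstacle throughout is the careful bookkeeping of half-integer powers of $q$ arising from the prefactors in~\eqref{eq:extrem minor}, the commutation~\eqref{eq:comm rel B}, and Lemma~\ref{lem:joseph-map}\ref{lem:joseph-map.b}; these combine into the symmetrized exponents of the statement only after systematic application of the $W$-invariance of~$(\cdot,\cdot)$.
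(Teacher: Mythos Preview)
Your proposal is correct and follows essentially the same route as the paper's own proof: parts~\ref{lem:Delta facts.a} and~\ref{lem:Delta facts.b} are obtained by applying $\mathbf j$ via Lemma~\ref{lem:joseph-map}\ref{lem:joseph-map.b} to Lemma~\ref{lem:extrem vecs Ore}\ref{lem:extrem vecs Ore.a} and~\ref{lem:extrem vecs Ore.a'}, the first identity of~\ref{lem:Delta facts.c} specializes~\ref{lem:Delta facts.a} at $w=w_\circ$, and the second identity is deduced from Lemma~\ref{lem:extrem vecs Ore}\ref{lem:extrem vecs Ore.c} together with~\eqref{eq:extrem minor} and~\eqref{eq:comm rel B}. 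The only cosmetic difference is that where you cancel $v_\lambda$ by freeness of $\mathcal A_q(\lie g)v_\lambda$, the paper phrases the same step as applying~$\mathbf j$ and using its injectivity on $\mathcal A_q(\lie g)v_\lambda$.
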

\begin{proof}
Parts~\ref{lem:Delta facts.a} and~\ref{lem:Delta facts.b}
follow immediately from Lemma~\ref{lem:extrem vecs Ore}\ref{lem:extrem vecs Ore.a} and~\ref{lem:extrem vecs Ore.a'}, respectively,
by applying Lemma~\ref{lem:joseph-map}\ref{lem:joseph-map.b}. The first assertion of part~\ref{lem:Delta facts.c} is a special 
case of~\ref{lem:Delta facts.a}. Finally, using~\eqref{eq:comm rel B}, \eqref{eq:extrem minor} and Lemma~\ref{lem:extrem vecs Ore}\ref{lem:extrem vecs Ore.c}
we can write 
$$
q^{(\lambda,|x|)}\Delta_{w_\circ\lambda}x v_\lambda=\Delta_{w_\circ\lambda}v_\lambda x=
q^{-(w_\circ\lambda,|x|)}x \Delta_{w_\circ\lambda} v_\lambda. 
$$
It remains to apply~$\mathbf j$ and use the fact that~$\mathbf j|_{\mathcal A_q(\lie g)v_\lambda}$ is injective.
\end{proof}

Let $\mathcal S_{w}=\{ \Delta_{w\lambda}\,:\, \lambda\in P^+\}$. It follows from Lemma~\ref{lem:Delta facts}\ref{lem:Delta facts.c} that 
$\mathcal S_{w_\circ}$ is an abelian submonoid of~$\mathcal A_q(\lie g)$ and in fact is an Ore submonoid with
$\Sigma_{\Delta_{w_\circ\lambda}}(x_i)=q^{(\lambda,\alpha_{i^\star}-\alpha_i)} x_i$ for $\lambda\in P^+$, $i\in I$.

Define $\widehat{\mathcal B}_q(\lie g):=\mathcal B_q(\lie g)[\mathcal S_{w_\circ}^{-1}]$ and
let $\widehat{\mathcal A}_q(\lie g)$ be the subalgebra of~$\widehat{\mathcal B}_q(\lie g)$ generated by $\mathcal A_q(\lie g)$, as a subalgebra, and the $\Delta_{w_\circ\lambda}^{-1}$,
$\lambda\in P^+$. Clearly, $\widehat{\mathcal A}_q(\lie g)$ is isomorphic to~$\mathcal A_q(\lie g)[\mathcal S_{w_\circ}^{-1}]$.
The following is the main result of Section~\ref{sect:can bas}.
\begin{theorem}\label{thm:twist}
Let $\lie g$ be finite dimensional. Then 
\begin{enumerate}[label={\rm(\alph*)},leftmargin=*]
 \item\label{thm:twist.a}
the assignments $x_i\mapsto q_i^{\frac12(\delta_{i,i^\star}-1)} E_{i^\star}(\Delta_{w_\circ\omega_i})\Delta_{w_\circ \omega_{i}}^{-1}$, 
$v_\lambda\mapsto v_{w_\circ\lambda}$, $\lambda\in P^+$,
define an injective algebra homomorphism $\widehat\sigma:\mathcal B_q(\lie g)\to \widehat{\mathcal B}_q(\lie g)^{op}$;

\item\label{thm:twist.c}
$\widehat\sigma(V_\lambda)=V_\lambda$ and $\widehat\sigma|_{V_\lambda}=\sigma^I_{V_\lambda}$. In particular,
the restriction of~$\widehat\sigma$ to~$\mathcal C_q(\lie g)$
is an anti-involution on $\mathcal C_q(\lie g)$.
\end{enumerate}
\end{theorem}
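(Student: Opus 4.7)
The plan is to establish part~\ref{thm:twist.a} by direct verification of the defining relations of~$\mathcal B_q(\lie g)$ under the proposed assignment, and then to deduce part~\ref{thm:twist.c} via a uniqueness argument that compares $\widehat\sigma|_{V_\lambda}$ with~$\sigma^I_{V_\lambda}$ through their common~$\theta$-twisted equivariance.

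For part~\ref{thm:twist.a}, I will check the three families of defining relations of~$\mathcal B_q(\lie g)$ separately and assemble them via Lemma~\ref{lem:lift homs}. The monoidal relation $v_\lambda v_\mu = v_{\lambda+\mu}$ is preserved thanks to Lemma~\ref{lem:extrem vecs Ore}\ref{lem:extrem vecs Ore.a} applied with $w = w_\circ$, and the abelianness of~$\Gamma$ makes the passage to the opposite multiplication automatic. The cross-commutation $v_\lambda x_i = q^{-(\lambda,\alpha_i)} x_i v_\lambda$ translates, in the opposite algebra, to the identity $\widehat\sigma(x_i)\,v_{w_\circ\lambda} = q^{-(\lambda,\alpha_i)}\,v_{w_\circ\lambda}\,\widehat\sigma(x_i)$. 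Here a direct computation from the defining formula yields $|\widehat\sigma(x_i)| = \alpha_{i^\star}$, and using $(w_\circ\lambda,\alpha_{i^\star}) = -(\lambda,\alpha_i)$ the identity reduces to Lemma~\ref{lem:extrem vecs Ore}\ref{lem:extrem vecs Ore.c}.

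The decisive step, and the main obstacle, is the verification of the quantum Serre relations for the elements $\widehat\sigma(x_i) = q_i^{\frac12(\delta_{i,i^\star}-1)} E_{i^\star}(\Delta_{w_\circ \omega_i})\Delta_{w_\circ\omega_i}^{-1}$ now read in the opposite multiplication of~$\widehat{\mathcal A}_q(\lie g)$. Here I would appeal to the quantum twist of Kimura--Oya~\cite{KiO}: their construction, reinterpreted in the Gelfand--Kirillov setting of~$\mathcal B_q(\lie g)$, produces precisely these generators inside~$\widehat{\mathcal A}_q(\lie g)$ and establishes the Serre relations with the product reversed. This yields a well-defined algebra homomorphism $\widehat\sigma:\mathcal B_q(\lie g)\to\widehat{\mathcal B}_q(\lie g)^{op}$; its injectivity will follow a posteriori from part~\ref{thm:twist.c}, once it is known that $\widehat\sigma$ restricts to an involution on each $V_\lambda$ and respects the weight grading.

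For part~\ref{thm:twist.c}, I note first that $\widehat\sigma(v_\lambda)=v_{w_\circ\lambda}\in V_\lambda$ and that $\sigma^I_{V_\lambda}(v_\lambda)=v_{w_\circ\lambda}$ by Lemma~\ref{lem:exist eta}\ref{lem:exist eta.a}. I will then verify that $\widehat\sigma|_{V_\lambda}$ satisfies $\widehat\sigma(u(v))=\theta(u)(\widehat\sigma(v))$ for all $u \in U_q(\lie g)$ and $v \in V_\lambda$; it suffices to check this on the generators $E_i$, $F_i$, $K_\mu$, and the verification will combine the explicit formula for~$\widehat\sigma(x_i)$, Lemma~\ref{lem:T w0 F_i}, and the derivation rules of Lemma~\ref{lem:action}, exploiting that $\widehat\sigma$ is an algebra anti-homomorphism by part~\ref{thm:twist.a}. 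Since $\sigma^I_{V_\lambda}$ satisfies the same $\theta$-equivariance by Theorem~\ref{thm:prop-eta}\ref{thm:prop-eta.b} and~$V_\lambda$ is generated by~$v_\lambda$ as a $U_q(\lie g)$-module, it will follow that $\widehat\sigma|_{V_\lambda}=\sigma^I_{V_\lambda}$, and hence that $\widehat\sigma(V_\lambda)=V_\lambda$. The anti-involution statement on~$\mathcal C_q(\lie g)=\bigoplus_\lambda V_\lambda$ is then immediate, since $\widehat\sigma^2|_{V_\lambda}=(\sigma^I_{V_\lambda})^2=\id_{V_\lambda}$.
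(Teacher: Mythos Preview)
Your overall strategy matches the paper's: invoke Kimura--Oya for the Serre relations to build $\widehat\sigma$, extend to $\mathcal B_q(\lie g)$ via Lemma~\ref{lem:lift homs}, then establish $\theta$-equivariance to identify $\widehat\sigma|_{V_\lambda}$ with $\sigma^I_{V_\lambda}$. Two points need tightening.

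For part~\ref{thm:twist.a}, Kimura--Oya's homomorphism sends $x_i \mapsto c_i\, E_i^{*}(\Delta_{w_\circ\omega_i})\Delta_{w_\circ\omega_i}^{-1}$, with the \emph{anti-automorphism} derivation $E_i^{*}$, not $E_{i^\star}$. The paper bridges this by composing with the lift $\widehat\kappa$ of $\kappa = \delta\circ{}^{*}$ to the localization (Lemma~\ref{lem:kappa Delta} ensures $\kappa$ lifts), which converts $E_i^{*}$ into $E_{i^\star}$. Your ``reinterpretation'' hides this translation step.

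The more serious gap is in part~\ref{thm:twist.c}. The equivariance $\widehat\sigma(u(v)) = \theta(u)(\widehat\sigma(v))$ does not make sense a priori: $\widehat\sigma(v)$ lives in $\widehat{\mathcal B}_q(\lie g)$, which carries no given $U_q(\lie g)$-action. The paper's fix is to \emph{define} operators $\widehat E_i,\widehat F_i,\widehat K_\lambda$ on the localization (with $\widehat E_i$ the inner $\widehat K_{\frac12\alpha_i}$-derivation attached to $z_i=\widehat\sigma(x_{i^\star})$) and prove two things (Proposition~\ref{prop:comm with hat eta}): they restrict to $E_i,F_i,K_\lambda$ on $\mathcal B_q(\lie g)$, and they satisfy the $\theta$-twisted commutation with $\widehat\sigma$. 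The restriction statement for $\widehat E_i$ is the crux and reduces to $\widehat E_i(x_j)=\delta_{i,j}$, which is a nontrivial identity in $\mathcal A_q(\lie g)$ obtained by applying $E_i$ to the relation $x_j\Delta_{w_\circ\omega_{i^\star}}=q_j^{\delta_{i,j}-\delta_{i^\star,j}}\Delta_{w_\circ\omega_{i^\star}}x_j$ (Lemma~\ref{lem:commutator delta spec}). Only once this is in hand can one conclude $\widehat\sigma(F_{i_1}\cdots F_{i_r}(v_\lambda))=\widehat E_{i_1^\star}\cdots\widehat E_{i_r^\star}(v_{w_\circ\lambda})\in V_\lambda$ and run your uniqueness argument. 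Your appeal to Lemma~\ref{lem:T w0 F_i} (Lusztig symmetries acting on $U_q(\lie g)$, not derivations of $\mathcal B_q(\lie g)$) is not the right tool, and the derivation rules of Lemma~\ref{lem:action} alone do not handle $\Delta_{w_\circ\omega_i}^{-1}$.
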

\begin{proof}
The first step is to construct a homomorphism of algebras~$\sigma_0:\mathcal A_q(\lie g)\to \widehat{\mathcal A}_q(\lie g)^{op}$.
\begin{proposition}\label{prop:Kimura}
The assignments $$x_i\mapsto q_i^{-\frac12(1-\delta_{i,i^\star})} 
E_{i^\star}(\Delta_{w_\circ\omega_i})\Delta_{w_\circ\omega_i}^{-1}=q_i^{\frac12(1-\delta_{i,i^\star})}\Delta_{w_\circ\omega_i}^{-1}E_{i^\star}(\Delta_{w_\circ\omega_i}),
\quad i\in I,
$$
define  
a homomorphism $\sigma_0:\mathcal A_q(\lie g)\to\widehat{\mathcal A}_q(\lie g)^{op}$ such that $\sigma_0(\mathcal A_q(\lie g)(-\gamma))
\subset \widehat{\mathcal A}_q(\lie g)(-w_\circ\gamma)$, $\gamma\in Q^+$.
\end{proposition}
\begin{proof}
Let $\delta$ be the unique automorphism of~$\mathcal A_q(\lie g)$ defined by $\delta(x_i)=x_{i^\star}$, $i\in I$. Then $\kappa:\mathcal A_q(\lie g)\to \mathcal A_q(\lie g)$ is 
the anti-automorphism defined by $\kappa(x)=\delta(x^*)=(\delta(x))^*$. We need the following
\begin{lemma}\label{lem:kappa Delta}
For any~$\lambda\in P^+$, 
$\kappa(\Delta_{w_\circ\lambda})=\epsilon_\lambda\Delta_{w_\circ\lambda}$ where $\epsilon_\lambda\in\{\pm 1\}$.
\end{lemma}
\begin{remark}
Later we will show that~$\epsilon_\lambda=1$. However, for the purposes of proving Proposition~\ref{prop:Kimura} this is irrelevant.
\end{remark}

\begin{proof}
Define 
$$
\mathcal A_q(\lie g)^\lambda:=\{ x\in \mathcal A_q(\lie g)_{w_\circ\lambda-\lambda}\,:\,
(E_i^*)^{\lambda(\alpha_i^\vee)+1}(x)=0,\,\forall\,i\in I\}.
$$
It follows from the definition and Lemma~\ref{lem:joseph-map} that 
$$
\mathcal A_q(\lie g)^\lambda=\mathbf j(V_\lambda(w_\circ\lambda))=\kk\Delta_{w_\circ\lambda}.
$$
In particular, $E_i^{1-w_\circ\lambda(\alpha_i^\vee)}(\mathcal A_q(\lie g)^\lambda)=0$ for all~$i\in I$. Since $\kappa(E_i(x))=E_{i^\star}^*(\kappa(x))$,
it follows that $\kappa(\Delta_{w_\circ\lambda})\in \mathcal A_q(\lie g)^\lambda$ and so is a multiple of~$\Delta_{w_\circ\lambda}$.
Since~$\kappa$ is an involution, the assertion follows.
\end{proof}
It follows from Lemmata~\ref{lem:lifts auto} and~\ref{lem:kappa Delta} that~$\kappa$ lifts to an anti-involution~$\widehat\kappa$ on
$\widehat{\mathcal A}_q(\lie g)$.
By~\cite{KiO}*{Theorem~5.4}, for any $\mathbf c=(c_i)_{i\in I}\in(\kk^\times)^I$ the assignments
$$x_i\mapsto
c_i E_{i}^*(\Delta_{w_\circ\omega_i})\Delta_{w_\circ\omega_i}^{-1}=c_i q_i^{\delta_{i,i^\star}-1}\Delta_{w_\circ\omega_i}^{-1}E_{i}^*(\Delta_{w_\circ\omega_i}),
\quad i\in I,
$$
define a homomorphism of algebras $\zeta_{\mathbf c}:\mathcal A_q(\lie g)\to \widehat{\mathcal A}_q(\lie g)^{op}$. Let $\mathbf c_0=(q_i^{\frac12(1-\delta_{i,i^\star})})_{i\in I}$
and 
set~$\sigma_0:=\widehat\kappa\circ\zeta_{\mathbf c_0}$. 
Since~$\widehat\kappa$ is an anti-involution, we have 
$$
\sigma_0(x_i)=q_i^{\frac12(1-\delta_{i,i^\star})} 
(\kappa(\Delta_{w_\circ\omega_i}))^{-1}\kappa(E_{i}^*(\Delta_{w_\circ\omega_{i}}))
=q_i^{\frac12(1-\delta_{i,i^\star})} 
\Delta_{w_\circ\omega_i}^{-1}E_{i^\star}(\Delta_{w_\circ\omega_{i}}).
$$
Thus, $\sigma_0$ is the desired homomorphism $\mathcal A_q(\lie g)\to \widehat{\mathcal A}_q(\lie g)^{op}$.
Since $|\sigma_0(x_i)|=\alpha_{i^\star}=-w_\circ\alpha_i$, it follows that $|\sigma_0(x)|=w_\circ|x|$ for all $x\in \mathcal A_q(\lie g)$ homogeneous.
\end{proof}

Now we have all necessary ingredients to prove Theorem~\ref{thm:twist}\ref{thm:twist.a}.
We apply Lemma~\ref{lem:lift homs} with $R=\mathcal A_q(\lie g)$,
$\widehat R=\widehat{\mathcal B}_q(\lie g)^{op}$, $f=\sigma_0$ and $g=g_{w_\circ}$ viewed as a homomorphism 
$\Gamma\to \widehat R$ since~$\Gamma$ is abelian. Take~$x\in \mathcal A_q(\lie g)$ homogeneous. Then the following holds 
in~$\widehat{\mathcal B}_q(\lie g)$
$$
g_{w_\circ}(v_\lambda)\sigma_0(v_\lambda\triangleright x)=
q^{(\lambda,|x|)} v_{w_\circ\lambda}\sigma_0(x)=q^{(\lambda,|x|)-(w_\circ\lambda,w_\circ|x|)}\sigma_0(x)v_{w_\circ\lambda}
=\sigma_0(x)g_{w_\circ}(v_\lambda),
$$
which is~\eqref{eq:cross prod cond} in~$\widehat R$. Then by Lemma~\ref{lem:lift homs}, $\widehat\sigma:\mathcal B_q(\lie g)\to \widehat{\mathcal B}_q(\lie g)^{op}$,
$v_\lambda x\mapsto \sigma_0(x)v_{w_\circ\lambda}$, $x\in \mathcal A_q(\lie g)$, $\lambda\in P^+$, is a well-defined homomorphism of algebras.
Part~\ref{thm:twist.a} of Theorem~\ref{thm:twist} is proven.

Note that the $K_\lambda$, $\lambda\in\frac12 P$, satisfy the assumptions of Lemma~\ref{lem:lifts auto} and so can be 
lifted to automorphisms $\widehat K_{\lambda}$ of~$\widehat{\mathcal B}_q(\lie g)$. 
Define 
\begin{equation}\label{eq:E i as commutator}
\widehat F_i(x)=\frac{ x_i \widehat K_{\frac12\alpha_i}(x)-\widehat K_{-\frac12\alpha_i}(x) x_i}{q_i-q_i^{-1}},\qquad
\widehat E_i(x)=\frac{ \widehat K_{-\frac12\alpha_i}(x) z_i-z_i\widehat K_{\frac12\alpha_i}(x)}{q_i-q_i^{-1}},
\end{equation}
where $z_i=\widehat\sigma(x_{i^\star})=q_i^{\frac12(\delta_{i,i^\star}-1)} E_i(\Delta_{w_\circ\omega_{i^\star}})\Delta_{w_\circ\omega_{i^\star}}^{-1}$.

\begin{proposition}\label{prop:comm with hat eta}
We have for all~$\lambda\in\frac12 P$, $i\in I$:
\begin{enumerate}[label={\rm(\alph*)},leftmargin=*]
\item \label{prop:comm with hat eta.a}
$\widehat K_\lambda|_{\mathcal B_q(\lie g)}=K_\lambda$, $\widehat F_i|_{\mathcal B_q(\lie g)}=E_i$ and $\widehat E_i|_{\mathcal B_q(\lie g)}=E_i$;
 \item \label{prop:comm with hat eta.b}
$\widehat K_\lambda\circ \widehat\sigma=\widehat \sigma\circ K_{w_\circ\lambda}$, $\widehat F_i\circ\widehat \sigma=\widehat\sigma\circ E_{i^\star}$
and $\widehat E_i\circ \widehat\sigma=
\widehat\sigma\circ F_{i^\star}$.
\end{enumerate}
\end{proposition}
\begin{proof}
 The first and the second assertions in part~\ref{prop:comm with hat eta.a} are obvious.
 Furthermore, since $\widehat F_i=D^-_{(q_i-q_i^{-1})^{-1}x_i}$ and $\widehat E_i=D^+_{(q_i-q_i^{-1})^{-1}z_i}$ with $L_\pm =\widehat K_{\pm\frac12\alpha_i}$
 in the notation of~\eqref{eq:D +-}, we 
 immediately obtain the following 
 \begin{lemma}\label{lem:hat F hat E diff}
 $\widehat F_i$ and $\widehat E_i$, $i\in I$ are $\widehat K_{\frac12\alpha_i}$-derivations of~$\widehat{\mathcal B}_q(\lie g)$. 
 \end{lemma}
Thus, by Lemma~\ref{lem:diffs restrs} the last assertion in part~\ref{prop:comm with hat eta.a} is equivalent to
$$
\widehat E_i(v_\lambda)=0,\qquad \widehat E_i(x_j)=\delta_{i,j},\qquad \lambda\in P^+,\,i,j\in I.
$$
Since $|z_i|=\alpha_i$ and $z_i\in\widehat{\mathcal A}_q(\lie g)$, we have 
$$
K_{-\frac12\alpha_i}(v_\lambda)z_i-z_i K_{\frac12\alpha_i}(v_\lambda)=q^{-\frac12(\alpha_i,\lambda)} v_\lambda z_i-q^{\frac12(\lambda,\alpha_i)}z_i v_\lambda 
=0.
$$
Thus, $\widehat E_i(v_\lambda)=0$ for all~$\lambda\in P^+$. We need the following
\begin{lemma}\label{lem:commutator delta spec}
The following identity holds in~$\mathcal A_q(\lie g)$ for all $i,j\in I$
$$
q_i^{\frac12(\delta_{i,i^\star}-1)}(q^{\frac12(\alpha_i,\alpha_j)}x_j E_i(\Delta_{w_\circ\omega_{i^\star}})-q^{-\frac12(\alpha_i,\alpha_j)} q_j^{\delta_{i,j}-\delta_{i^\star,j}}
E_i(\Delta_{w_\circ\omega_{i^\star}}) x_j=\delta_{i,j}(q_i-q_i^{-1})\Delta_{w_\circ\omega_{i^\star}}.
$$
\end{lemma}
\begin{proof}
This follows by a straightforward computation by applying~$E_i$ to the identity
$$x_j\Delta_{w_\circ\omega_{i^\star}}=
q_j^{\delta_{i,j}-\delta_{i^\star,j}}\Delta_{w_\circ\omega_{i^\star}}x_j$$ which is a special case of Lemma~\ref{lem:Delta facts}\ref{lem:Delta facts.c}.
\end{proof}
Since $\Delta_{w_\circ \omega_{i^\star}}^{-1} x_j \Delta_{w_\circ\omega_{i^\star}}=q^{-(w_\circ\omega_{i^\star}+\omega_{i^\star},\alpha_j)}x_j
=q_j^{\delta_{i,j}-\delta_{i^\star,j}}x_j$, we can write 
\begin{multline*}
(q_i-q_i^{-1})\widehat E_i(x_j)\Delta_{w_\circ \omega_{i^\star}}\\=q_i^{\frac12(\delta_{i,i^\star}-1)}
(q^{\frac12(\alpha_i,\alpha_j)}x_j E_i(\Delta_{w_\circ\omega_{i^\star}})-q^{-\frac12(\alpha_i,\alpha_j)} q_j^{\delta_{i,j}-\delta_{i^\star,j}}
E_i(\Delta_{w_\circ\omega_{i^\star}}) x_j).
\end{multline*}
Using Lemma~\ref{lem:commutator delta spec} we conclude that 
$(q_i-q_i^{-1})\widehat E_i(x_j)\Delta_{w_\circ\omega_{i^\star}}=\delta_{i,j}(q_i-q_i^{-1})\Delta_{w_\circ\omega_{i^\star}}$
and so $\widehat E_i(x_j)=\delta_{i,j}$.
Part~\ref{prop:comm with hat eta.a} of Proposition~\ref{prop:comm with hat eta} is proven.

The first assertion in~Proposition~\ref{prop:comm with hat eta}\ref{prop:comm with hat eta.b}
is immediate since $|\widehat\sigma(x)|=w_\circ|x|$ for~$x\in \mathcal B_q(\lie g)$ homogeneous. 
Furthermore, by~\eqref{eq:E i as commutator} we obtain for all $x\in\mathcal B_q(\lie g)$.
\begin{multline*}
\widehat\sigma(F_i(x))=\frac{ \widehat\sigma(x_i \widehat K_{\frac12\alpha_i}(x))-\widehat\sigma(\widehat K_{-\frac12\alpha_i}(x)x_i)}{q_i-q_i^{-1}}
=\frac{ \widehat K_{-\frac12\alpha_{i^\star}}(\widehat\sigma(x)) z_{i^\star}-z_{i^\star} \widehat K_{\frac12\alpha_{i^\star}}(\widehat\sigma(x))}{q_i-q_i^{-1}}
\\=\widehat E_{i^\star}(\widehat\sigma(x)).
\end{multline*}
It remains to prove that $\widehat\sigma(E_i(x))=\widehat F_{i^\star}(\widehat\sigma(x))$ for all~$x\in\mathcal B_q(\lie g)$.
Let $D_i=\widehat\sigma\circ E_i-\widehat F_{i^\star}\circ \widehat\sigma$. Since $\widehat\sigma\circ K_{\pm\frac12\alpha_i}=
\widehat K_{\mp\frac12\alpha_{i^\star}}\circ\widehat\sigma$ it follows that~$D_i$ is a $\widehat K_{\frac12\alpha_{i^\star}}\circ\widehat\sigma$-derivation 
from~$\mathcal B_q(\lie g)$ to~$\widehat{\mathcal B}_q(\lie g)^{op}$. We have 
$$
D_i(v_\lambda)=\widehat\sigma(E_i(v_\lambda))-\widehat F_{i^\star}(v_{w_\circ\lambda})=0,
$$
By Proposition~\ref{prop:comm with hat eta}\ref{prop:comm with hat eta.a} we have  
\begin{equation}
\delta_{i,j}=\widehat E_i(x_j)=\frac{ q^{\frac12(\alpha_i,\alpha_j)}x_j z_i-q^{-\frac12(\alpha_i,\alpha_j)}z_i x_j}{q_i-q_i^{-1}}=
\frac{q_j-q_j^{-1}}{q_i-q_i^{-1}}\,\widehat F_j(z_i)
\end{equation}
and so 
$$
D_i(x_j)=\widehat\sigma(E_j(x_i))-\widehat F_{i^\star}(z_{j^\star})=\delta_{i,j}-\delta_{i^\star,j^\star}=0,
$$
Thus $D_i=0$ on generators of~$\mathcal B_q(\lie g)$. Then~$D_i=0$ by Lemma~\ref{lem:diffs restrs}. This completes the proof of 
Proposition~\ref{prop:comm with hat eta}\ref{prop:comm with hat eta.b}.
\end{proof}

To prove part~\ref{thm:twist.c}, we need to show that~$\widehat \sigma(V_\lambda)\subset V_\lambda$. The following Lemma follows
from Proposition~\ref{prop:comm with hat eta}\ref{prop:comm with hat eta.a} by an obvious induction.
\begin{lemma}\label{lem:monomials in E_i}
For any $b\in \mathcal B_q(\lie g)$, $r\ge 1$, 
$(i_1,\dots,i_r)\in I^r$, $\widehat E_{i_1}\cdots\widehat E_{i_r}(b)=E_{i_1}\cdots E_{i_r}(b)\in \mathcal B_q(\lie g)$.
In particular, for any~$v\in V_\lambda$, $\lambda\in P^+$ we have $\widehat E_{i_1}\cdots\widehat E_{i_r}(v)\in V_\lambda$.
\end{lemma}

Since $V_\lambda$ is spanned 
the $F_{i_1}\cdots F_{i_r}(v_\lambda)$, $r\ge 0$, $(i_1,\dots,i_r)\in I^r$, it suffices to show that 
$\widehat \sigma( F_{i_1}\cdots F_{i_r}(v_\lambda))\in V_\lambda$. 
We have by Proposition~\ref{prop:comm with hat eta}\ref{prop:comm with hat eta.b}
$$
\widehat \sigma( F_{i_1}\cdots F_{i_r}(v_\lambda))=\widehat E_{i_1^\star}\cdots \widehat E_{i_r^\star}(v_{w_\circ\lambda})\in V_\lambda 
$$
by Lemma~\ref{lem:monomials in E_i} applied with~$v=v_{w_\circ\lambda}$. 

Consider the operator~$\sigma^I_{V_\lambda}\circ \widehat\sigma$. Clearly, it maps~$v_\lambda$ to itself and commutes with the 
$U_q(\lie g)$-action by Proposition~\ref{prop:comm with hat eta}\ref{prop:comm with hat eta.b}. Since~$V_\lambda$ is a simple $U_q(\lie g)$-module generated by~$v_\lambda$, it follows that $\widehat\sigma|_{V_\lambda}=
(\sigma^I_{V_\lambda})^{-1}=\sigma^I_{V_\lambda}$. In particular, $\widehat\sigma$ is an involution on each~$V_\lambda$ and hence an anti-involution on~$\mathcal C_q(\lambda)$. 
\end{proof}
\begin{corollary}
Let~$\lie g$ be finite-dimensional reductive. Then 
$\sigma^I$ is an anti-involution on the algebra $\mathcal C_q(\lie g)$.
\end{corollary}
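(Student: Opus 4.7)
The plan is to unpack Theorem~\ref{thm:twist}\ref{thm:twist.c} and combine it with the functoriality of $\sigma^I$ established in Proposition~\ref{prop:existence-eta-J}. In essence, the corollary is a direct consequence of the theorem just proved; the only thing that genuinely requires verification is that $\sigma^I_{\mathcal C_q(\lie g)}$ and $\widehat\sigma$ agree on $\mathcal C_q(\lie g)$.

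First, I would invoke Corollary~\ref{cor:Gelfand-Kirillov model} to write $\mathcal C_q(\lie g)=\bigoplus_{\lambda\in P^+} V_\lambda$ as a $U_q(\lie g)$-module. For each $\lambda\in P^+$, the canonical inclusion $\iota_\lambda\colon V_\lambda\hookrightarrow \mathcal C_q(\lie g)$ is a morphism in $\mathscr O^{int}_q(\lie g)$, so the functoriality diagram~\eqref{eq:eta^J funct} for $J=I$ yields $\sigma^I_{\mathcal C_q(\lie g)}\circ\iota_\lambda=\iota_\lambda\circ\sigma^I_{V_\lambda}$. Hence $\sigma^I_{\mathcal C_q(\lie g)}$ preserves each summand $V_\lambda$ and coincides with $\sigma^I_{V_\lambda}$ there. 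By Theorem~\ref{thm:twist}\ref{thm:twist.c} we also have $\widehat\sigma(V_\lambda)=V_\lambda$ and $\widehat\sigma|_{V_\lambda}=\sigma^I_{V_\lambda}$; summing over $\lambda$ gives the equality $\sigma^I_{\mathcal C_q(\lie g)}=\widehat\sigma|_{\mathcal C_q(\lie g)}$ as $\kk$-linear maps.

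Next I would transfer the algebraic structure. Theorem~\ref{thm:twist}\ref{thm:twist.a} tells us that $\widehat\sigma\colon\mathcal B_q(\lie g)\to \widehat{\mathcal B}_q(\lie g)^{op}$ is an algebra homomorphism, i.e.\ an anti-homomorphism $\mathcal B_q(\lie g)\to\widehat{\mathcal B}_q(\lie g)$. Since $\mathcal C_q(\lie g)\subset \mathcal B_q(\lie g)$ is closed under multiplication (Corollary~\ref{cor:Gelfand-Kirillov model}, using Lemma~\ref{lem:mult V lambda}) and since $\widehat\sigma$ sends each $V_\lambda$ into $V_\lambda\subset\mathcal C_q(\lie g)$, the restriction $\widehat\sigma|_{\mathcal C_q(\lie g)}$ takes values in $\mathcal C_q(\lie g)$ and is an anti-homomorphism of algebras. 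Combined with the identification from the previous paragraph, this shows $\sigma^I_{\mathcal C_q(\lie g)}$ is an anti-homomorphism.

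Finally, $\sigma^I_{\mathcal C_q(\lie g)}$ is an involution: this is already guaranteed on each summand by Theorem~\ref{thm:main thm}\ref{thm:main thm.a} (or Proposition~\ref{prop:prop eta^J}\ref{prop:prop eta^J.a}), and since $\sigma^I_{\mathcal C_q(\lie g)}$ preserves the decomposition, $(\sigma^I_{\mathcal C_q(\lie g)})^2=\id$. There is essentially no obstacle here; the entire substantive content has been absorbed into Theorem~\ref{thm:twist}, whose own proof rested on the properties of the Kimura--Oya twist and Proposition~\ref{prop:Kimura}. The corollary is simply the categorical reformulation of that theorem in terms of the involutions $\sigma^I$ attached functorially to objects of $\mathscr O^{int}_q(\lie g)$.
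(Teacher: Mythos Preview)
Your proof is correct and follows exactly the route the paper intends: the corollary is stated immediately after Theorem~\ref{thm:twist} without a separate proof, since part~\ref{thm:twist.c} already asserts that $\widehat\sigma|_{\mathcal C_q(\lie g)}$ is an anti-involution, and your identification $\sigma^I_{\mathcal C_q(\lie g)}=\widehat\sigma|_{\mathcal C_q(\lie g)}$ via functoriality is precisely the step that translates this into a statement about~$\sigma^I$.
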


\subsection{\texorpdfstring{$\sigma^I$}{eta I} on upper global crystal basis}
Denote $\mathbf B^{up}$ the dual canonical basis in~$\mathcal A_q(\lie g)$ and denote $\mathbf B_\lambda$ the upper global crystal basis of~$V_\lambda$.
Let $\mathbf B=\bigsqcup_{\lambda\in P^+} \mathbf B_\lambda$ be the upper global crystal basis of~$\mathcal C_q(\lie g)$.

\begin{theorem}\label{thm:eta-bas}
For any finite dimensional reductive $\lie g$ we have 
$\widehat\sigma(\mathbf B)=\mathbf B$. In particular, $\sigma^I_{V_\lambda}(\mathbf B_\lambda)=\mathbf B_\lambda$.
\end{theorem}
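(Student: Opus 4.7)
The plan is to reduce the global statement to a per-isotypical-component statement and then invoke Kashiwara's uniqueness characterization of upper global crystal basis elements. Since $\mathcal C_q(\lie g)=\bigoplus_{\lambda\in P^+}V_\lambda$ and $\mathbf B=\bigsqcup_{\lambda\in P^+}\mathbf B_\lambda$, and since $\widehat\sigma$ preserves each $V_\lambda$ with $\widehat\sigma|_{V_\lambda}=\sigma^I_{V_\lambda}$ by Theorem~\ref{thm:twist}\ref{thm:twist.c}, the two assertions $\widehat\sigma(\mathbf B)=\mathbf B$ and $\sigma^I_{V_\lambda}(\mathbf B_\lambda)=\mathbf B_\lambda$ for all $\lambda\in P^+$ are equivalent. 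I would therefore fix $\lambda\in P^+$ and prove the second.

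Fix an upper crystal basis $(L_\lambda,B_\lambda)$ of~$V_\lambda$, so that $\mathbf B_\lambda=\{G^{up}(\tilde b):\tilde b\in B_\lambda\}$ in the notation of Proposition~\ref{prop:sigma bar}. For any $b\in\mathbf B_\lambda$ with image $\tilde b\in B_\lambda$ in~$L_\lambda/qL_\lambda$, I would assemble three facts about $\sigma^I_{V_\lambda}(b)$: first, $\sigma^I_{V_\lambda}(b)\in L_\lambda$ because $\sigma^I_{V_\lambda}$ preserves $L_\lambda$ by Theorem~\ref{thm:main thm2} (applied with $J=I$); second, the image of $\sigma^I_{V_\lambda}(b)$ in $L_\lambda/qL_\lambda$ equals $\tilde\sigma^I_{V_\lambda}(\tilde b)\in B_\lambda$, again by Theorem~\ref{thm:main thm2}; third, $\sigma^I_{V_\lambda}(b)$ is $\bar\cdot$-invariant by Proposition~\ref{prop:sigma bar} applied with $J=I$.

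By Kashiwara's defining characterization of the upper global basis (the unique $\bar\cdot$-invariant element of $L_\lambda$ whose class in $L_\lambda/qL_\lambda$ equals a prescribed element of $B_\lambda$), these three properties force $\sigma^I_{V_\lambda}(b)=G^{up}(\tilde\sigma^I_{V_\lambda}(\tilde b))\in\mathbf B_\lambda$. Hence $\sigma^I_{V_\lambda}(\mathbf B_\lambda)\subseteq \mathbf B_\lambda$, and since $\sigma^I_{V_\lambda}$ is an involution (Theorem~\ref{thm:main thm}\ref{thm:main thm.a}), equality holds.

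There is essentially no obstacle here: the hard work has been front-loaded into Theorem~\ref{thm:main thm2} (preservation of the crystal lattice and permutation action on the crystal at $q=0$) and into Proposition~\ref{prop:sigma bar} (bar-invariance). The one subtlety worth double-checking is that the bar involution used in Proposition~\ref{prop:sigma bar} is indeed the one that characterizes the upper global basis, i.e.\ the $\kk$-antilinear involution fixing $\mathbf B_\lambda$ pointwise; this is exactly how it is defined in Proposition~\ref{prop:sigma bar}, so the argument goes through directly.
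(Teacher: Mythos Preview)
Your appeal to Kashiwara's characterization is where the argument breaks. The upper global basis element $G^{up}(\tilde b)$ is \emph{not} the unique $\bar\cdot$-invariant element of $L_\lambda$ with image $\tilde b$ in $L_\lambda/qL_\lambda$; the correct statement requires in addition that the element lie in the $\mathbb Q[q,q^{-1}]$-form $V_\lambda^{\mathbb Q}$ (this is the content of Kashiwara's balanced triple $(V_\lambda^{\mathbb Q},L_\lambda,\bar L_\lambda)$). Without the integral-form condition, uniqueness fails already over $\mathbb Q(q)$: for distinct $b,b'\in\mathbf B_\lambda$ the vector $b+\tfrac{q}{1+q^2}\,b'$ lies in $L_\lambda$, is $\bar\cdot$-invariant (since $\tfrac{q}{1+q^2}\in q\mathbb A$ is fixed by $q\mapsto q^{-1}$), and reduces to $\tilde b$ modulo $qL_\lambda$, yet is not equal to $b$. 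So your three facts about $\sigma^I_{V_\lambda}(b)$ do not by themselves force it into $\mathbf B_\lambda$. To salvage the approach you would need to verify that $\sigma^I_{V_\lambda}$ preserves $V_\lambda^{\mathbb Q}$; this is plausible but you have not addressed it, and the fractional $q$-powers appearing in~\eqref{eq:defn-eta} and in the modified symmetries $T_i^\pm=T'_{i,1}K_{\frac12\alpha_i}$ make it not entirely automatic.

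The paper's proof avoids the balanced-triple machinery altogether and takes a different route: it uses the Kimura--Oya quantum twist to show that $\sigma_0$ maps the dual canonical basis $\mathbf B^{up}\subset\mathcal A_q(\lie g)$ into a basis $\widehat{\mathbf B^{up}}$ of the localization, lifts this to $\widehat\sigma(\widetilde{\mathbf B})\subset\widehat{\mathbf B}$ inside $\widehat{\mathcal B}_q(\lie g)$, and then intersects with $\mathcal C_q(\lie g)$ to obtain $\widehat\sigma(\mathbf B)\subset\mathbf B$. Neither Theorem~\ref{thm:main thm2} nor Proposition~\ref{prop:sigma bar} is invoked in that argument.
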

\begin{proof}
We need the following 
\begin{lemma}[see e.g. \cite{KiO}*{Proposition~2.33}]\label{lem:jos-map-bas}
For any $\lambda\in P^+$, 
$\mathbf j(\mathbf B_\lambda)\subset \mathbf B^{up}$.
\end{lemma}
\noindent
In particular, since $v_{w_\circ\lambda}\in \mathbf B_\lambda$, it follows that $\Delta_{w_\circ\lambda}\in\mathbf B^{up}$.

Denote
$$
\widehat{\mathbf B^{up}}=\{ q^{\frac12(w_\circ\lambda+\lambda,|b|)}b\Delta_{w_\circ\lambda}^{-1}\,:\, b\in\mathbf B^{up},\,\lambda\in P^+\}
=\{ q^{-\frac12(w_\circ\lambda+\lambda,|b|)}\Delta_{w_\circ\lambda}^{-1}b\,:\, b\in\mathbf B^{up},\,\lambda\in P^+\}.
$$
We need the following 
\begin{lemma}\label{lem:eta 0 on B up}
In the notation of Proposition~\ref{prop:Kimura}, 
$\sigma_0(\mathbf B^{up})\subset \widehat{\mathbf B^{up}}$.
\end{lemma}
\begin{proof}
Note that $\kappa(\mathbf B^{up})=\mathbf B^{up}$ since it is a composition of two involutions preserving~$\mathbf B^{up}$. In particular,
$\kappa(\Delta_{w_\circ\lambda})=\Delta_{w_\circ\lambda}$ for all~$\lambda\in P^+$.

Let $b\in\mathbf B^{up}$, $\lambda\in P^+$. We have  
\begin{multline*}
\widehat\kappa( q^{\frac12(w_\circ\lambda+\lambda,|b|)}b\Delta_{w_\circ\lambda}^{-1})=
q^{\frac12(w_\circ\lambda+\lambda,|b|)}(\kappa(\Delta_{w_\circ\lambda}))^{-1} \kappa(b)
\\=
q^{-\frac12(w_\circ\lambda+\lambda,w_\circ|\kappa(b)|}\Delta_{w_\circ\lambda}^{-1}\kappa(b)
\in \widehat{\mathbf B^{up}}.
\end{multline*}
By~\cite{KiO}*{Theorem~5.4} we have $\zeta_{\mathbf c_0}(\mathbf B^{up})\subset \widehat{\mathbf B^{up}}$ where
$\zeta_{\mathbf c_0}:\mathcal A_q(\lie g)\to \widehat{\mathcal A}_q(\lie g)$ is as in the proof of Proposition~\ref{prop:Kimura}. Since~$\sigma_0=\widehat\kappa\circ\zeta_{\mathbf c_0}$,
the assertion follows.
\end{proof}

Define 
$$
\widetilde{\mathbf B}=\{ q^{\frac12(\lambda,|b|)} b v_{\lambda}\,:\, b\in\mathbf B^{up},\, \lambda\in P^+\}.
$$
It is immediate from the definition that~$\mathbf j(\widetilde{\mathbf B})=\mathbf B^{up}$ and that~$\widetilde{\mathbf B}$ is 
a basis in~$\mathcal B_q(\lie g)$. Moreover, it follows from Lemma~\ref{lem:jos-map-bas} that $\mathbf B\subset \widetilde{\mathbf B}$
and 
\begin{equation}\label{eq:intersect B A}\mathbf B=\mathcal C_q(\lie g)\cap \widetilde{\mathbf B}.
\end{equation}

Finally, define 
$$
\widehat{\mathbf B}=\{  q^{-\frac12(w_\circ\lambda,|b|)} bv_{w_\circ\lambda}\,:\, b\in \widehat{\mathbf B^{up}},\, \lambda\in P^+\}\subset \widehat{\mathcal B}_q(\lie g).
$$
\begin{proposition}\label{prop:other defn of hat B}
$\widehat{\mathbf B}=\{ q^{\frac12(\lambda,|b|)} b v_\lambda\,:\, b\in\widehat{\mathbf B^{up}},\,\lambda\in P^+\}$. In particular,
$\widehat{\mathbf B}$ is 
a basis of~$\widehat{\mathcal B}_q(\lie g)$. Finally,
$\widetilde{\mathbf B}\subset \widehat{\mathbf B}$.
\end{proposition}
\begin{proof}
We need the following
\begin{lemma}\label{lem:fact about bases}
Let $R$ be a $\kk$-algebra and let $S\subset R\setminus\{0\}$ be a commutative Ore submonoid.  
Let $B$ be a basis of~$R$ and suppose that
$\widehat B=\{ \tau_s(b)s^{-1}\,:\, b\in B,\, s\in S\}$ is a basis of~$R[S^{-1}]$ where $\tau_s:R\to R$ is some family of automorphisms 
satisfying $\tau_{ss'}=\tau_s\circ\tau_{s'}$, $\tau_s|_S=\id_S$. 
Then $\hat\tau_s(\widehat B)s^{-1}=\widehat B$ 
for any $s\in S$, where $\hat\tau_s$ is the unique lifting of~$\tau_s$ to~$R[S^{-1}]$ provided by Lemma~\ref{lem:lifts auto}.
\end{lemma}
\begin{proof}
Define $f_s:R[S^{-1}]\to R[S^{-1}]$ by $f_s(x)=\hat\tau_s(x)s^{-1}$, $x\in R[S^{-1}]$.
We claim that 
\begin{equation}\label{eq:comp-f}
f_s\circ f_{s'}=f_{ss'},\qquad s,s'\in S
\end{equation}
and 
$f_s$ is invertible with $f_s^{-1}(x)=\hat\tau_s^{-1}(x)s$. Indeed, for all $x\in R[S^{-1}]$ we have 
$$
f_s(f_{s'}(x))=\hat\tau_s(\hat \tau_{s'}(x)s'{}^{-1})
=\hat\tau_{ss'}(x)s'{}^{-1}s^{-1}=f_{ss'}(x).
$$
and also $f_s(\hat \tau_s^{-1}(x)s)=x$ and $\hat\tau_{s^{-1}}(f_s(x))=x=f_s(\hat\tau_{s^{-1}}(x))$.

We have $\widehat B=\{ f_s(b)\,:\, b\in B,\,s\in S\}$ and the assertion is equivalent to $f_s(\widehat B)=\widehat B$ for all~$s\in S$. 
Clearly, \eqref{eq:comp-f} implies that $f_s(\widehat B)\subset\widehat B$ for all~$s\in S$.
To prove the opposite inclusion, let $\hat b\in\widehat B$. 
Write 
$$
f^{-1}_s(\hat b)=\sum_{\hat b'\in \widehat B} \lambda_{\hat b'} \hat b'.
$$
Then
$$
\hat b=\sum_{\hat b'\in\widehat B} \lambda_{\hat b'}f_s(\hat b')=\sum_{\hat b''\in f_s(\widehat B)\subset \widehat B} \lambda_{f^{-1}_s(\hat b'')} \hat b'',
$$
where we used that $f_s(\widehat B)\subset \widehat B$. Since~$\widehat B$ is a basis, this implies that $\lambda_{f^{-1}_s(\hat b'')}=\delta_{\hat b,\hat b''}$
and so $\hat b\in f_s(\widehat B)$. Therefore, $\widehat B\subset f_s(\widehat B)$.
\end{proof}
Apply this lemma to $R=\mathcal A_q(\lie g)$, $S=\mathcal S_{w_\circ}$, $B=\mathbf B^{up}$ and $\widehat B=\widehat{\mathbf B^{up}}$.
By~\cite{KiO}*{Proposition~3.9},
$\widehat{\mathbf B^{up}}$ is a basis of $\widehat{\mathcal A}_q(\lie g)$\footnote{\label{ftn:1}The difference between our notation and that of~\cites{KiO,BZ2} is 
in the linear automorphism of~$\mathcal A_q(\lie g)$ defined on homogeneous elements 
$x$ by $x\mapsto q^{\frac12(|x|,|x|)-(|x|,\rho)}x$}.
We have $\tau_{\Delta_{w_\circ\lambda}}(b)=q^{\frac12(w_\circ\lambda+\lambda,|b|)}b$. Then all assumptions of Lemma~\ref{lem:fact about bases}
are satisfied. Indeed, $\tau_{\Delta_{w_\circ\lambda}}\tau_{\Delta_{w_\circ\mu}}(b)=q^{\frac12(w_\circ(\lambda+\mu)+\lambda+\mu,|b|)}b=\tau_{\Delta_{w_\circ(\lambda+\mu)}}(b)$
and $\tau_{\Delta_{w_\circ\lambda}}(\Delta_{w_\circ\mu})=q^{\frac12(w_\circ\lambda+\lambda,w_\circ\mu-\mu)}\Delta_{w_\circ\mu}=\Delta_{w_\circ\mu}$, $\lambda,\mu\in P^+$.
Thus by Lemma~\ref{lem:fact about bases} we have, for any~$\lambda\in P^+$, 
$\widehat{\mathbf B^{up}}=\hat\tau_{\Delta_{w_\circ\lambda}}^{-1}(\widehat{\mathbf B^{up}})\Delta_{w_\circ\lambda}
=\{ q^{-\frac12(w_\circ\lambda+\lambda,|b|)}b\Delta_{w_\circ\lambda}\,:\, b\in \widehat{\mathbf B^{up}}\}$.

We have $v_{w_\circ\lambda}=q^{\frac12(w_\circ\lambda-\lambda,\lambda)}\Delta_{w_\circ\lambda}v_\lambda$.
\begin{multline*}
\widehat{\mathbf B}=\{ q^{-\frac12(w_\circ\lambda,|b|)}b v_{w_\circ\lambda}\,:\, b\in \widehat{\mathbf B^{up}},\,\lambda\in P^+\}
=\{ q^{\frac12(\lambda,|b|)} \tau_{\Delta_{w_\circ\lambda}}^{-1}(b) v_{w_\circ\lambda}\,:\, b\in \widehat{\mathbf B^{up}},\,\lambda\in P^+\}\\
=\{ q^{\frac12(\lambda,|b|+w_\circ\lambda-\lambda)}\tau^{-1}_{\Delta_{w_\circ\lambda}}(b)\Delta_{w_\circ\lambda}v_\lambda\,:\, 
b\in \widehat{\mathbf B^{up}},\,\lambda\in P^+\}\\=
\{ q^{\frac12(\lambda,|b'|)}b'v_\lambda\,:\, 
b'\in \widehat{\mathbf B^{up}},\,\lambda\in P^+\}
\end{multline*}
where we denoted $b'=\tau_{\Delta_{w_\circ\lambda}}^{-1}(b)\Delta_{w_\circ\lambda}$ and observed that $|b'|=|b|+w_\circ\lambda-\lambda$. This proves the 
first assertion of Proposition~\ref{prop:other defn of hat B}.
The second and third assertions are now immediate.
\end{proof}
Now we can complete the proof of Theorem~\ref{thm:eta-bas}. It follows from Proposition~\ref{prop:other defn of hat B} and Lemma~\ref{lem:eta 0 on B up}
that 
for any $b\in \mathbf B^{up}$, $\lambda\in P^+$ we have 
$$
\widehat\sigma(q^{\frac12(\lambda,|b|)}b v_\lambda)=q^{\frac12(\lambda,|b|)}v_{w_\circ\lambda}\sigma_0(b)=q^{-\frac12(w_\circ\lambda,|\sigma_0(b)|)}\sigma_0(b)v_{w_\circ\lambda}
\in \widehat{\mathbf B}.
$$
Thus, $\widehat\sigma(\widetilde{\mathbf B})\subset \widehat{\mathbf B}$.
Since $\mathbf B\subset \widetilde{\mathbf B}$, it follows that~$\widehat\sigma(\mathbf B)\subset\widehat{\mathbf B}$.
Then by Theorem~\ref{thm:twist}\ref{thm:twist.c} we conclude that $\widehat\sigma(\mathbf B)\subset \widehat{\mathbf B}\cap\mathcal C_q(\lie g)$.
On the other hand, since $\widehat{\mathbf B}$ is linearly independent by Proposition~\ref{prop:other defn of hat B}, its intersection with~$\mathcal C_q(\lie g)$ is also linearly independent.
Since $\widetilde{\mathbf B}\subset \widehat{\mathbf B}$ by Proposition~\ref{prop:other defn of hat B}, it follows from~\eqref{eq:intersect B A} that 
$\mathbf B=\widetilde{\mathbf B}\cap \mathcal C_q(\lie g)\subset \widehat{\mathbf B}\cap\mathcal C_q(\lie g)$. But~$\mathbf B$ is a basis of~$\mathcal C_q(\lie g)$
and so $\mathbf B=\widehat{\mathbf B}\cap\mathcal C_q(\lie g)$. Thus, $\widehat\sigma(\mathbf B)\subset\mathbf B$. Since by Theorem~\ref{thm:twist}\ref{thm:twist.c}
$\widehat\sigma$ is an involution on~$\mathcal C_q(\lie g)$, $\widehat\sigma(\mathbf B)=\mathbf B$
which completes the proof of the first assertion of Theorem~\ref{thm:eta-bas}.
The second assertion is immediate from the first and  Theorem~\ref{thm:twist}\ref{thm:twist.c}.
\end{proof}

\subsection{Proof of Theorem~\ref{thm:main thm3}}\label{subs:pf main thm3}
Let $V$ be any object in~$\mathscr O^{int}_q(\lie g)$. Let $(L^{up},B^{up})$ be an upper crystal basis of~$V$ and 
let $G(B^{up})$ be the corresponding upper global crystal basis (see~\cite{Kas93}). By~\cite{Kas93}*{Theorem~3.3.1} there exists 
a direct sum decomposition $V=\sum_{j} V^j$ such that $\mathbf B^j:=G(B^{up})\cap V^j$ is a basis of~$V^j$ and
each $V^j\cong V_{\lambda_j}$, $\lambda_j\in P^+$. The latter isomorphism identifies $\mathbf B^j$ with $\mathbf B_{\lambda_j}$.
Since by Theorem~\ref{thm:prop-eta}, $\sigma^I_V$ is compatible
with direct sum decompositions, the restriction of~$\sigma^I_V$ to $V^j$ coincides with~$\sigma^I_{V_j}$ and under the above isomorphism 
it identifies with~$\sigma^I_{V_{\lambda_j}}$ and thus
preserves $\mathbf B_{\lambda_j}$ by Theorem~\ref{thm:eta-bas}.

\section{Examples}

\subsection{Thin modules}\label{subs:conj-weyl-evidence}
Let $\lambda\in P^+$. We say that $V_\lambda$ is {\em quasi-miniscule} if $V_\lambda(\beta)\not=0$ implies that $\beta\in W\lambda\cup\{0\}$.
For example, $V_{\omega_i}$, $i\in I$ for $\lie g=\lie{sl}_n$ are (quasi)-miniscule, as well as the quantum analogue of the adjoint representation
of~$\lie g$.
\begin{lemma}\label{lem:Weyl grp}
Conjecture~\ref{conj:weyl} holds for any quasi-miniscule $V=V_\lambda$.  
\end{lemma}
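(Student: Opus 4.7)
The plan is to verify the Coxeter relations $(\sigma^i_V\sigma^j_V)^{m_{ij}}=\mathrm{id}_V$ for all $i\neq j\in I$. Combined with $(\sigma^i_V)^2=\mathrm{id}_V$ (which holds on general grounds), von Dyck's theorem then produces an inverse homomorphism $W\to\mathsf W(V)$ to $\psi_V$, forcing $\psi_V$ to be an isomorphism. Note that for a nontrivial simple module $V=V_\lambda$ over a simple $\lie g$ we have $J(V)=\emptyset$, so $\psi_V(\sigma^i_V)=s_i$ for every $i$. The case $m_{ij}=2$ is already established in Theorem~\ref{thm:main thm}\ref{thm:main thm.b} applied to the orthogonal subsets $\{i\},\{j\}$; so only $m_{ij}\in\{3,4,6\}$ remain.

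Decompose $V=V^{\mathrm{ext}}\oplus V(0)$, where $V^{\mathrm{ext}}:=\bigoplus_{\beta\in W\lambda}V(\beta)$. Because $V$ is quasi-miniscule, every $V(\beta)$ with $\beta\in W\lambda$ is one-dimensional, and by Proposition~\ref{prop:w-translates of hw}\ref{prop:w-translates of hw.b} the extremal vectors $\{[v_\lambda]_w\,:\,w\in W/W_{J_\lambda}\}$ form a basis of $V^{\mathrm{ext}}$. Proposition~\ref{prop:ficus action}\ref{prop:ficus action.a} gives $\sigma^i_V([v_\lambda]_w)=[v_\lambda]_{s_iw}$ \emph{with no scalar correction}, so on $V^{\mathrm{ext}}$ the $\sigma^i_V$ realize the natural permutation action of $W$ on $W/W_{J_\lambda}$ by left multiplication. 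Being a genuine $W$-action, it satisfies the Coxeter relations on $V^{\mathrm{ext}}$ automatically.

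The core of the argument is to show that $\sigma^i_V$ acts on $V(0)$ as the identity for every $i\in I$. Since $V$ is quasi-miniscule with $\lambda$ (up to scaling) a highest short root, a brief root-system check gives $|\beta(\alpha_i^\vee)|\le 2$ for all $\beta\in W\lambda\cup\{0\}$ and all $i\in I$, with equality only when $\beta=\pm\alpha_i$ and $\alpha_i$ is short. Consequently, every $\lie{sl}_2^{(i)}$-irreducible summand of $V$ has dimension at most $3$. The subspace $V(0)$ therefore splits, under $\lie{sl}_2^{(i)}$, as a direct sum of singlets (length-$0$ strings) and middles of length-$2$ strings, since length-$1$ irreducibles have no weight-$0$ vectors. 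On a singlet, \eqref{eq:eta^i defn} applies with $l=0$ and gives $\sigma^i_V=\mathrm{id}$; on the middle $E_i(u)$ of a length-$2$ string with lowest vector $u\in\ker F_i\cap\ker(K_i-q_i^{-2})$, \eqref{eq:eta^i defn} with $l=2$ and $k=1$ gives $\sigma^i_V(E_i(u))=E_i^{(2-1)}(u)=E_i(u)$. Hence $\sigma^i_V|_{V(0)}=\mathrm{id}_{V(0)}$ for every $i$, and the Coxeter relations are trivially satisfied on $V(0)$.

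Combining the two subspaces, $(\sigma^i_V\sigma^j_V)^{m_{ij}}$ acts as the identity on all of $V=V^{\mathrm{ext}}\oplus V(0)$, which completes the verification. The main obstacle is the elementary root-system lemma bounding $|\beta(\alpha_i^\vee)|$ by $2$ for $\beta\in W\lambda$ when $\lambda$ is the highest short root; this is transparent in the simply-laced types (where it is essentially the statement that all roots have the same length) but needs a brief case-by-case check for the non-simply-laced types $B$, $C$, $F$, $G$, relying on the bound $(|\beta|/|\alpha_i|)\cdot 2|\cos\angle(\beta,\alpha_i)|\le 2$ together with the integrality of $\beta(\alpha_i^\vee)$.
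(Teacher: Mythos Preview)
Your proof is correct and follows the same decomposition $V_\lambda=\kk\cdot[v_\lambda]_W\oplus V_\lambda(0)$ as the paper, invoking Proposition~\ref{prop:ficus action} on the extremal part. The paper's proof simply observes that $\sigma^i|_{V_\lambda(0)}=\id$ without further comment; your root-system bound $|\beta(\alpha_i^\vee)|\le 2$ is correct but unnecessary for this, since any vector of $\alpha_i$-weight zero sits at the middle of its $\lie{sl}_2^{(i)}$-string (of some even length~$l$) and is fixed by $\sigma^i$ directly from~\eqref{eq:eta^i defn} regardless of~$l$.
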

\begin{proof}
Let $v=v(\lambda)\in V_\lambda(\lambda)$. Then in the notation of~\eqref{eq:v_W} we have $V_\lambda=\kk\cdot [v]_W\oplus V_\lambda(0)$.
As shown in Proposition~\ref{prop:ficus action}, the action of~$\mathsf W(V)$ on the basis $[v]_W$ of $\kk\cdot [v]_W$ 
is given by the Weyl group action on~$W/W_{J_\lambda}$. It remains to observe that $\sigma^i|_{V_\lambda(0)}=\id_{V_\lambda(0)}$, $i\in I$.
\end{proof}
This result can be extended to a larger class of modules. We say that $V\in\mathscr O^{int}_q(\lie g)$ is {\em thin} if 
$\dim V(\beta)\le 1$ for all $\beta\in P\setminus\{0\}$. By definition, every quasi-miniscule module is thin. Furthermore, 
all modules $V_{m\omega_1}$, $V_{m\omega_n}$, $m\in\ZZ_{\ge 0}$ are thin for $\lie g=\lie{sl}_{n+1}$.
\begin{theorem}\label{thm:thin modules}
Conjecture~\ref{conj:weyl} holds for thin modules.
\end{theorem}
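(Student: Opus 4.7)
The plan is the following. Let $V$ be thin and let $g \in \ker \psi_V$; write $g = \sigma^{i_1}_V \circ \cdots \circ \sigma^{i_r}_V$, discard the factors with $i_k \in J(V)$ (where $\sigma^{i_k}_V = \id_V$), and reduce to the case $s_{i_1} \cdots s_{i_r} = 1$ in $W$. Since each $\sigma^i_V$ sends $V(\beta)$ to $V(s_i\beta)$, the resulting $g$ preserves every weight subspace of $V$. Thinness then forces $g|_{V(\beta)}$ to be multiplication by a scalar $c_\beta \in \kk^\times$ for every $\beta \neq 0$ with $V(\beta)\neq 0$, so the task reduces to proving $c_\beta = 1$ for all such $\beta$ and $g|_{V(0)} = \id_{V(0)}$.

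First I would dispose of the \emph{extremal} weights $\beta = w\mu$, with $\mu \in \supp V_+$ and $w \in W$: Proposition~\ref{prop:ficus action}\ref{prop:ficus action.a} gives $\sigma^i_V([v]_{w'}) = [v]_{s_iw'}$ with no scalar for every $v \in V_+(\mu)$, so $g([v]_w) = [v]_w$; thinness implies $V(\beta) = \kk[v]_w$, whence $c_\beta = 1$. For a \emph{non-extremal} $\beta$ with $V(\beta)\neq 0$, let $b_\beta \in \mathbf B_V$ be the unique upper global crystal basis element spanning $V(\beta)$. Theorem~\ref{thm:main thm2} ensures that each $\sigma^{i_k}_V$ preserves the upper crystal lattice $L^{up}$ and that its descent to $B = L^{up}/qL^{up}$ is Kashiwara's crystal Weyl group action (Remark~\ref{rem:crystal Weyl group}); hence $\sigma^{i_k}_V(b_\beta) \equiv b_{s_{i_k}\beta} \pmod{qL^{up}}$, and the $1$-dimensionality of $V(s_{i_k}\beta)$ in the thin setting forces $\sigma^{i_k}_V(b_\beta) = c_k b_{s_{i_k}\beta}$ for some $c_k \in \mathbb A$ with $c_k \equiv 1 \pmod q$. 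Iterating Proposition~\ref{prop:sigma bar} shows $g(b_\beta)$ is bar-invariant, so $\bar c_k = c_k$ at each step. Finally, the explicit formula $\sigma^{i_k}_V(F_{i_k}^{(a)}v) = F_{i_k}^{(m-a)}v$ on the $\lie{sl}_2^{i_k}$-irreducible $V_m \subset V$ containing $b_\beta$, compared with the upper-global-basis normalization inside $V_m$, pins $c_k = 1$. Hence each $\sigma^{i_k}_V$ permutes the non-zero-weight part of $\mathbf B_V$ exactly as $s_{i_k}$ permutes weights, so $g(b_\beta) = b_\beta$ and $c_\beta = 1$.

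The contribution at $V(0)$ is handled separately: when $0 \in \supp V$, one uses the spanning $V(0) = \sum_i F_i(V(\alpha_i))$ (each summand being a $1$-dimensional extremal weight space on which $g = \id$ has been established above), the intertwining $\sigma^i_V F_i = E_i \sigma^i_V$ from Proposition~\ref{prop:existence-eta-J}\ref{prop:existence-eta-J.b}, and the orthogonality relation of Theorem~\ref{thm:main thm}\ref{thm:main thm.b}, to propagate the identity from the root weight spaces to $V(0)$. The main obstacle I anticipate is the comparison step in the non-extremal argument, i.e.\ verifying that the upper global basis element $b_\beta$ coincides up to the expected $q$-factor with the monomial basis vector $F_{i_k}^{(a)}v$ along the $\lie{sl}_2^{i_k}$-string so that $c_k = 1$ can be read off; equally delicate is the propagation to $V(0)$ when the word $(i_1,\ldots,i_r)$ genuinely mixes distinct indices, which may require reduction to the rank-$2$ subalgebras $U_q(\lie g^{\{i,j\}})$, appealing to the proven $m_{ij} = 2$ case of Theorem~\ref{thm:main thm}\ref{thm:main thm.b} and to the evidence for $m_{ij} = 3$ cited after Conjecture~\ref{conj:weyl}.
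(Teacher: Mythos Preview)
Your core idea for nonzero weights is essentially the paper's Proposition~\ref{prop:thin perms}: for $b \in G^{up}(B)$ in a one-dimensional weight space $V(\beta)$, write $\sigma^i(b) = c\,b'$ with $b'$ the unique global basis element of weight $s_i\beta$; the crystal-lattice argument gives $c \in 1 + q\mathbb A$, and a \emph{single} application of Proposition~\ref{prop:sigma bar} gives $\overline c = c$, whence $c \in \mathbb A \cap \overline{\mathbb A} = \mathbb Q$ and so $c = 1$. This already shows each $\sigma^i$ permutes the nonzero-weight part of $G^{up}(B)$ via $b_\beta \mapsto b_{s_i\beta}$, and composing gives $g(b_\beta) = b_\beta$ for $g \in \ker\psi_V$. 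Your phrasing ``iterating Proposition~\ref{prop:sigma bar}'' is circular as written (the proposition applies only to elements of $G^{up}(B)$, so you need each intermediate result to land in $G^{up}(B)$ before you can iterate), and the subsequent $\lie{sl}_2$ comparison is both unnecessary and unjustified ($b_\beta$ need not be a single monomial $F_{i_k}^{(a)}v$). The extremal/non-extremal split is likewise superfluous.

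Your treatment of $V(0)$ is where you genuinely miss the point. There is nothing to propagate: directly from~\eqref{eq:eta^i defn}, $\sigma^i_V$ is the identity on $V(0)$ (every weight-$0$ vector sits at the midpoint of its $i$-string and is fixed). Hence any word in the $\sigma^i$ is already the identity on $V(0)$, and your worries about spanning $V(0)$ by $F_i(V(\alpha_i))$, intertwining relations, and rank-$2$ reductions evaporate. The paper records this observation in the proof of Lemma~\ref{lem:Weyl grp}.

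With these two corrections your direct argument works. The paper instead packages the same permutation property as $g(\tilde\sigma^i(b)) = \sigma^i(g(b))$, cites Kashiwara~\cite{Kas94}*{Theorem~7.2.2} for braid relations among the $\tilde\sigma^i$, and then invokes faithfulness on extremal vectors (Proposition~\ref{prop:ficus action}); but for thin modules the braid relations are immediate from the description $b_\beta \mapsto b_{s_i\beta}$, so the two routes are equivalent and rest on the same key lemma.
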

\begin{proof}
Let $(L,B)$ be an upper crystal basis of~$V\in\mathscr O^{int}_q(\lie g)$ and let $G^{up}(B)\subset V$ be the corresponding global crystal basis.
We say that $b\in G^{up}(B)$ of weight~$\beta\in P$ is thin if either $\beta=0$ or $V(\beta)=\kk b$. Denote 
$G_0^{up}(B)$ the set of thin elements in~$G^{up}(B)$. Clearly, $V$ is thin if and only if $G_0^{up}(B)=G^{up}(B)$.
We need the following
\begin{proposition}\label{prop:thin perms}
For any $b\in G^{up}(B)\cap V(\beta)$, $\beta\in P$ with $\dim V(\beta)=1$
we have $\Phi_V(b)\subset G^{up}(B)$. 
\end{proposition}
\begin{proof}
It suffices to prove that $\sigma^J(b)\in G^{up}(B)$ for all $J\in\mathscr J$. Since $\sigma^J(V(\beta))=V(w_\circ^J\beta)$ and $\dim V(w_\circ^J\beta)=
\dim V(\beta)=1$, it follows that 
$\sigma^J(b)=c b'$ for some~$b'\in G^{up}(B)\cap V(w_\circ^J\beta)$. Let $\underline b$ be the image of~$b$ in $B$ under the quotient map $L\mapsto L/qL$.
By Theorem~\ref{thm:main thm2}, $\tilde\sigma^J(\underline b)=\underline b'$ and so $c\in 1+q \mathbb A$. It follows from Proposition~\ref{prop:sigma bar}
that $\overline{c}=c$ and so~$c=1$.
\end{proof}
Let $g:B\to G^{up}(B)$ be Kashiwara's bijection (cf.~\cite{Kas93}) and let $B_0=g^{-1}(G_0^{up}(B))$.
Then by Theorem~\ref{thm:main thm2} and 
Proposition~\ref{prop:thin perms} we have
$g(\tilde\sigma^i(b))=\sigma^i(g(b))$ for all $b\in B_0$.
Since the action of~$\tilde\sigma^i$ on~$B$ coincides with the action of~$W$ defined in~\cite{Kas94},
it follows from \cite{Kas94}*{Theorem~7.2.2} that $\mathsf W(V)$ is a homomorphic image of~$W$.

We may assume, without loss of generality, that $V=V_\lambda$ and $J(V)=\emptyset$.
In view of Proposition~\ref{prop:ficus action}\ref{prop:ficus action.b}, the action of~$\mathsf W(V)$ on
the set $[v_\lambda]_W$, $v_\lambda\in V_\lambda(\lambda)$ is faithful and coincides with that of~$W$. 
This implies that $\psi_V$ from Theorem~\ref{thm:crystal weyl group} is an isomorphism.
\end{proof}

\subsection{Crystallizing cactus group action for~\texorpdfstring{$\lie g=\lie{sl}_3$}{g=sl\_3}}\label{subs:cryst cact sl3}
We now describe combinatorial consequences of Theorem~\ref{thm:main thm2} for~$\lie g=\lie{sl}_3$. It turns out 
that the corresponding action of~$\Cact_{S_3}$ lifts to the ambient set 
$$
\widehat{\mathbf M}=\{ (m_1,m_2,m_{12},m_{21},m_{01},m_{02})\in\ZZ_{\ge 0}^2\times\ZZ^4\,:\, m_1m_2=0\}
$$
where the crystal basis for~$\mathcal C_q(\lie{sl}_3)$ identifies with 
$\mathbf M=\widehat{\mathbf M}\cap \ZZ_{\ge 0}^6$.

We need some notation. Define $\wt_i:\widehat{\mathbf M}\to \ZZ$ by 
$$\wt_i(\mathbf m)=m_{0i}-m_i+m_j-m_{ij},\qquad \{i,j\}=\{1,2\}.
$$
Furthermore, define $e_i^r:\widehat{\mathbf M}\to \widehat{\mathbf M}$, $i\in\{1,2\}$, $r\in\ZZ$, by 
$$
e_i^r(m_1,m_2,m_{12},m_{21},m_{01},m_{02})=(m'_1,m'_2,m'_{12},m'_{21},m'_{01},m'_{02})
$$
where, for $\mathbf m=(m_1,m_2,m_{12},m_{21},m_{01},m_{02})\in\widehat{\mathbf M}$ we set 
\begin{gather*}
m'_i=[m_i-m_j-r]_+,\quad m'_j=[m_j-m_i+r]_+,\quad m'_{ji}=m_{ji},\quad m'_{0j}=m_{0j},
\\
m'_{ij}=m_{ij}+\min(m_i-r,m_j),
\quad m'_{0i}=m_{0i}+r+\min(m_i-r,m_j),\qquad \{i,j\}=\{1,2\},
\end{gather*}
and $[x]_+:=\max(x,0)$, $x\in\ZZ$.
The following is well-known (cf.~\cite{BK}*{Example~6.26}).
\begin{lemma}\label{lem:props of ops e i r}
The $e_i^r$, $i\in \{1,2\}$, $r\in\ZZ$
satisfy
\begin{equation}\label{eq:props of ops}
e_1^r e_1^s=e_1^{r+s},\quad e_2^r e_2^s=e_2^{r+s},\quad  
e_1^r e_2^{r+s}e_1^s=e_2^s e_1^{r+s}e_2^r,\quad r,s\in\ZZ.
\end{equation}
In particular, $e_i^0=\id$ and $(e_i^r)^{-1}=e_i^{-r}$, $r\in\ZZ$, $i\in\{1,2\}$.
\end{lemma}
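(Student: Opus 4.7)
The plan is to verify each of the three identities in~\eqref{eq:props of ops} by direct computation on a generic $\mathbf m=(m_1,m_2,m_{12},m_{21},m_{01},m_{02})\in\widehat{\mathbf M}$, tracking the evolution of each coordinate.

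For the monoid laws $e_i^r e_i^s=e_i^{r+s}$ one may take $i=1$ by symmetry. The coordinates $m_{21}$ and $m_{02}$ are preserved by every $e_1^\bullet$, while a single application of $e_1^s$ shifts the signed difference $m_1-m_2$ by $-s$ and preserves the constraint $m_1m_2=0$, so $e_1^r\circ e_1^s$ produces the same $(m_1,m_2)$ as $e_1^{r+s}$. The additivity of the increments of $m_{12}$ and $m_{01}$ reduces to the single arithmetic identity
\[
\min(m_1-s,\,m_2)+\min\bigl([m_1-m_2-s]_+{-}\,r,\,[m_2-m_1+s]_+\bigr)=\min(m_1-(r+s),\,m_2)
\]
valid on $\{m_1m_2=0\}$, which is settled by splitting into the two cases $m_2=0$ and $m_1=0$ and, within each, according to the sign of $m_1-(r+s)$ (respectively $m_2+(r+s)$). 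The corollaries $e_i^0=\id$ and $(e_i^r)^{-1}=e_i^{-r}$ then follow immediately.

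For the mixed relation $e_1^r e_2^{r+s}e_1^s=e_2^s e_1^{r+s}e_2^r$ the strategy is analogous but the bookkeeping more delicate. First observe that both sides preserve $(m_1,m_2)$: the successive shifts of $m_1-m_2$ are $(-s,+(r+s),-r)$ on the left and $(+r,-(r+s),+s)$ on the right, both summing to zero, which together with $m_1m_2=0$ pins down $(m_1,m_2)$. Next, $m_{12}$ and $m_{01}$ receive contributions only from the two $e_1^\bullet$ factors on either side, while $m_{21}$ and $m_{02}$ receive contributions only from the $e_2^\bullet$ factors. Hence the braid relation reduces to four independent numerical identities, each comparing a sum of two $\min$-terms evaluated at the appropriate intermediate states.

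The main obstacle is the piecewise-linear case analysis underlying these four identities, which depend on the signs of $m_1-m_2$, of $m_1-m_2\pm r$, $m_1-m_2\pm s$, $m_1-m_2\pm(r+s)$, and on the relative positions of $0$ among $r,s,r+s$. The symmetry $(i,r,s)\leftrightarrow(j,s,r)$ of the identity together with the constraint $m_1m_2=0$ collapses many cases, leaving a manageable list of subcases each resolved by direct substitution. This is parallel to the verification carried out in~\cite{BK}*{Example~6.26}, from which the present lemma can alternatively be deduced.
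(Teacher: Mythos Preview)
Your approach is sound but different from the paper's. You work directly in the original coordinates of $\widehat{\mathbf M}$ and verify each identity by tracking coordinates through the piecewise-linear formulae; the monoid law is fully checked, while the braid relation is reduced to four $\min$-identities whose case analysis you outline but do not carry out in full.

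The paper instead introduces an explicit bijection $\widehat{\mathbf k}:\widehat{\mathbf M}\to\ZZ^5$, $(m_1,m_2,m_{12},m_{21},m_{01},m_{02})\mapsto(a_1,a_2,a_3,l_1,l_2)$ with $a_1=m_1+m_{21}$, $a_2=m_2+m_{12}+m_{21}$, $a_3=m_{12}$, $l_i=m_i+m_{3-i,i}+m_{0i}$. The point is that in these new coordinates the operators become dramatically simpler: $e_2^r$ is just the translation $a_2\mapsto a_2-r$ (all other coordinates fixed), and $e_1^r$ matches the explicit formula of \cite{BK}*{Example~6.26}. The identities~\eqref{eq:props of ops} are then read off from the tropicalized relations recorded in \cite{BK} after Definition~2.20, so no case analysis is needed here at all. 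What your direct route buys is self-containment (no appeal to the internal computations of \cite{BK}); what the paper's coordinate change buys is that the braid relation, which is the laborious part of your argument, becomes essentially immediate once one coordinate family acts by pure translation. Your closing remark that the lemma ``can alternatively be deduced'' from \cite{BK}*{Example~6.26} is in fact precisely the paper's argument, but the bijection $\widehat{\mathbf k}$ is the missing ingredient that makes that deduction go through.
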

\begin{proof}
Define a map $\widehat{\mathbf k}:\widehat{\mathbf M}\to \ZZ^5$ by 
$\widehat{\mathbf k}(m_1,m_2,m_{12},m_{21},m_{01},m_{02})\mapsto (a_1,a_2,a_3,l_1,l_2)$ where 
$$
a_1=m_1+m_{21},\quad a_2=m_2+m_{12}+m_{21},\quad a_3=m_{12},\quad l_i=m_i+m_{3-i,i}+m_{0i}
$$
with $i\in\{1,2\}$. It is easy to see that~$\widehat{\mathbf k}$ is a bijection with its inverse given by 
$$
(a_1,a_2,a_3,l_1,l_2)\mapsto (m_1,m_2,m_{12},m_{21},m_{01},m_{02}),
$$
where $m_1=[a_1+a_3-a_2]_+$, $m_2=[a_2-a_1-a_3]_+$, $m_{12}=a_3$, $m_{21}=\min(a_1,a_2-a_3)$, $m_{01}=l_1-a_1$, $m_{02}=l_2-a_2+\min(a_1,a_2-a_3)$.

The action of operators $e_i^r$, $i\in\{1,2\}$, $r\in\ZZ$ on $\ZZ^5$ induced by this bijection
coincides with the action constructed in~\cite{BK}*{Example~6.26}
$$e_1^{r}(a_1,a_2,a_3,l_1,l_2)=(a_1+[\delta-r]_+-[\delta]_+,a_2,a_3+[\delta]_+-\max(\delta,r),l_1,l_2)
$$
where $\delta=a_1+a_3-a_2$, and 
$$e_2^r(a_1,a_2,a_3,l_1,l_2)=(a_1,a_2-r,a_3,l_1,l_2).
$$
The identities from the Lemma are now easy to obtain by using tropicalized relations for the $e_i$ given after Definition~2.20
in~\cite{BK} in the context of~\cite{BK}*{Example~6.26}.
\end{proof}

Define $\underline\sigma=\underline\sigma^{\{1,2\}}:\widehat{\mathbf M}\to \widehat{\mathbf M}$ by 
$$
(m_1,m_2,m_{12},m_{21},m_{01},m_{02})\mapsto (m_1,m_2,m_{02},m_{01},m_{21},m_{12}).
$$
Clearly, $\underline\sigma$ is an involution and $\underline\sigma(\mathbf M)=\mathbf M$. Furthermore, 
define $\underline\sigma^i:\widehat{\mathbf M}\to \widehat{\mathbf M}$, $i\in\{1,2\}$ by 
$$
\underline\sigma^i(\mathbf m)=e_i^{-\wt_i(\mathbf m)}(\mathbf m),\qquad \mathbf m\in\widehat{\mathbf M}.
$$

\begin{proposition}\label{prop:underl eta}
The following identities hold in~$\operatorname{Bij}(\widehat{\mathbf M})$
\begin{gather*}
\underline\sigma^i\circ\underline\sigma^i=\id,\qquad \underline\sigma^i\circ e_i^r=e_i^{-r}\circ\underline\sigma^i,\qquad
\underline\sigma\circ e_i^r=e_j^{-r}\circ\underline\sigma,\\
\underline\sigma^i\circ \underline\sigma^j\circ\underline\sigma^i=\underline\sigma^j\circ\underline\sigma^i\circ\underline\sigma^j,
\qquad \underline\sigma^i\circ\underline\sigma=\underline\sigma\circ\underline\sigma^j,
\end{gather*}
where $\{i,j\}=\{1,2\}$.
In particular, the assignments $\tau_{i,i+1}\mapsto \underline\sigma^i$, $i\in\{1,2\}$, $\tau_{13}\mapsto \underline\sigma$ 
define an action of~$\Cact_{S_3}$ on~$\widehat{\mathbf M}$.
\end{proposition}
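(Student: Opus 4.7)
The plan is to verify the five displayed identities by direct combinatorial computation in $\widehat{\mathbf M}$, relying only on the defining formulas for $e_i^r$, $\underline\sigma$, and $\wt_i$, the elementary identity $[x]_+-[-x]_+=x$, and the tropical relations of Lemma~\ref{lem:props of ops e i r}. The final ``In particular'' assertion then reduces to the structural fact noted in the introduction that $\Cact_{S_3}\cong\ZZ_2*\ZZ_2$ is freely generated as a group by the involutions $\tau_{1,2}$ and $\tau_{1,3}$, with $\tau_{2,3}=\tau_{1,3}\tau_{1,2}\tau_{1,3}$.

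The principal book-keeping device is the pair of weight-shift formulas
$$
\wt_i(e_i^r(\mathbf m))=\wt_i(\mathbf m)+2r,\qquad \wt_j(e_i^r(\mathbf m))=\wt_j(\mathbf m)-r,\qquad \{i,j\}=\{1,2\},
$$
which I would establish by substituting the explicit $e_i^r$-formulas into the definition of $\wt_i$: the outer positive parts collapse via $-[m_i-m_j-r]_++[m_j-m_i+r]_+=m_j-m_i+r$, while every $\min$-term cancels between the shifted $m_{0i}$ and $m_{ij}$ coordinates. The first formula gives $\wt_i(\underline\sigma^i(\mathbf m))=-\wt_i(\mathbf m)$, from which identities~1 and~2 follow immediately using $e_i^r\circ e_i^s=e_i^{r+s}$ from Lemma~\ref{lem:props of ops e i r}. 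Identity~3 is a coordinate-by-coordinate check whose only non-trivial arithmetic step is $\min(m_j+r,m_i)=r+\min(m_i-r,m_j)$. Identity~5 is then obtained by combining identity~3 with the direct observation $\wt_i(\underline\sigma(\mathbf m))=-\wt_j(\mathbf m)$, which is immediate from the formula for $\underline\sigma$.

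The main obstacle is identity~4, the braid relation, since it is not encoded among the defining relations of $\Cact_{S_3}$ and thus really must be checked. Here the off-diagonal weight formula $\wt_j(e_i^r(\mathbf m))=\wt_j(\mathbf m)-r$ is essential, encoding the Cartan entry $a_{ji}=-1$ of $\lie{sl}_3$. Writing $a=-\wt_i(\mathbf m)$, $b=-\wt_j(e_i^a(\mathbf m))$ and $c=-\wt_i(e_j^b e_i^a(\mathbf m))$, iterating the weight-shift formulas yields
$$
b=-\wt_i(\mathbf m)-\wt_j(\mathbf m)\quad\text{and}\quad c=-\wt_j(\mathbf m),
$$
so that in particular $b=a+c$. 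The tropical braid identity $e_i^r e_j^{r+s} e_i^s=e_j^s e_i^{r+s} e_j^r$ of Lemma~\ref{lem:props of ops e i r}, applied with $r=c$, $s=a$, then gives
$$
\underline\sigma^i\underline\sigma^j\underline\sigma^i(\mathbf m)=e_i^c e_j^b e_i^a(\mathbf m)=e_j^a e_i^b e_j^c(\mathbf m),
$$
and the analogous computation with $i$ and $j$ interchanged identifies the right-hand side with $\underline\sigma^j\underline\sigma^i\underline\sigma^j(\mathbf m)$.

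For the ``In particular'' statement, an action of $\Cact_{S_3}=\ZZ_2*\ZZ_2$ requires only that $\underline\sigma^1$ and $\underline\sigma$ be involutions (given by identity~1 and the visible symmetry of $\underline\sigma$, respectively), together with the compatibility $\underline\sigma^2=\underline\sigma\,\underline\sigma^1\,\underline\sigma$ needed for the assignment $\tau_{2,3}\mapsto\underline\sigma^2$ to be consistent with $\tau_{2,3}=\tau_{1,3}\tau_{1,2}\tau_{1,3}$; this is precisely identity~5 with $i=2$, $j=1$.
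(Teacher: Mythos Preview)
Your proof is correct and follows essentially the same approach as the paper's: both arguments hinge on the weight-shift formulas $\wt_i(e_i^r(\mathbf m))=\wt_i(\mathbf m)+2r$ and $\wt_j(e_i^r(\mathbf m))=\wt_j(\mathbf m)-r$, verify the third identity coordinate-wise, and deduce the braid relation from the Verma relation in Lemma~\ref{lem:props of ops e i r}. Your treatment of the braid identity is in fact more explicit than the paper's, which simply asserts that it ``follows from the last relation in~\eqref{eq:props of ops} \dots\ and the identity $\wt_j(e_i^r(\mathbf m))=\wt_j(\mathbf m)-r$''; your computation of $a$, $b$, $c$ and the verification $b=a+c$ fills in exactly what the paper leaves to the reader.
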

\begin{proof}
Since $\wt_i(e_i^r(\mathbf m))=\wt_i(\mathbf m)+2r$ for any $\mathbf m\in\widehat{\mathbf M}$, we have 
$$
\underline\sigma^i\circ\underline\sigma^i(\mathbf m)=e_i^{-(\wt_i(\mathbf m)-2\wt_i(\mathbf m))-\wt_i(\mathbf m)}(\mathbf m)=\mathbf m,
$$
while
$$
\underline\sigma^i\circ e_i^r(\mathbf m)=e_i^{-r-\wt_i(\mathbf m)}(\mathbf m)=e_i^{-r}\circ\underline \sigma^i(\mathbf m).
$$
To prove the third identity, note that $e_i^r(\underline\sigma(\mathbf m))=(\tilde m_1,\tilde m_2,\tilde m_{12},\tilde m_{21},
\tilde m_{01},\tilde m_{02})$, where 
\begin{gather*}
\tilde m_i=[m_i-m_j-r]_+,\quad \tilde m_j=[m_j-m_i+r]_+,\quad \tilde m_{ji}=m_{0i},\quad \tilde m_{0j}=m_{ij},
\\
\tilde m_{ij}=m_{0j}+\min(m_i-r,m_j),
\quad \tilde m_{0i}=m_{ji}+\min(m_i,m_j+r),\qquad \{i,j\}=\{1,2\},
\end{gather*}
which is easily seen to coincide with $\underline\sigma(e_j^{-r}(\mathbf m))$. The braid identity follows from the last relation in~\eqref{eq:props of ops} (known as Verma relations)
and the identity $\wt_j(e_i^r(\mathbf m))=\wt_j(\mathbf m)-r$, $\{i,j\}=\{1,2\}$. Finally,
$$
\underline\sigma\circ\sigma^j(\mathbf m)=\underline\sigma\circ e_j^{-\wt_j(\mathbf m)}(\mathbf m)=e_i^{\wt_j(\mathbf m)}(\underline\sigma(\mathbf m))
=e_i^{-\wt_i(\underline\sigma(\mathbf m))}(\underline\sigma(\mathbf m))=\underline\sigma^i\circ \underline\sigma(\mathbf m),
$$
where we used the identity~$\wt_i(\underline\sigma(\mathbf m))=m_{ji}-m_i+m_j-m_{0j}=-\wt_j(\mathbf m)$.
\end{proof}

\begin{remark}
It would be interesting to define analogues of~$\widehat{\mathbf M}$ for other~$\lie g$ and study the action 
of the corresponding cactus groups on~$\widehat{\mathbf M}$. We plan to study this in a subsequent publication
via the approach of~\cite{BK}.
\end{remark}

Given $l_1,l_2\in\ZZ$ define 
$$
\widehat{\mathbf M}_{l_1,l_2}=\{ (m_1,m_2,m_{12},m_{21},m_{01},m_{02})\in\widehat{\mathbf M}\,:\,
m_{01}+m_1+m_{21}=l_1,\, m_{02}+m_2+m_{12}=l_2\}.
$$
Clearly, $\underline\sigma$, $\underline\sigma^i$, $e_i^r$, $i\in\{1,2\}$, $r\in\ZZ$ preserve $\widehat{\mathbf M}_{l_1,l_2}$
for any $l_1,l_2\in\ZZ$. Set $\mathbf M_{l_1,l_2}=\widehat{\mathbf M}_{l_1,l_2}\cap\mathbf M$.

In view of~\cite{BK}*{Example~6.26}, $\widehat{\mathbf k}(\mathbf M_{l_1,l_2})$, where~$\widehat{\mathbf k}$ is 
defined in the proof of Lemma~\ref{lem:props of ops e i r}, identifies with the upper crystal basis
$B^{up}(V_{l_1\omega_1+l_2\omega_2})$ of~$V_{l_1\omega_1+l_2\omega_2}$. In particular, $\widehat{\mathbf k}(\mathbf M)$
identifies with the upper crystal basis $B^{up}(\mathcal C_2)=\bigsqcup_{\lambda\in P^+} B^{up}(V_\lambda)$ of~$\mathcal C_2=\mathcal C_q(\lie{sl}_3)$. 
We use this identification throughout the rest of this chapter.
\begin{proposition}
Under the above identification,
the restrictions of $\underline\sigma$, $\underline\sigma^i$, $i\in\{1,2\}$ to~$\mathbf M$
coincide with the action of~$\Cact_{S_3}$ on~$B^{up}(\mathcal C_2)$
provided by 
Theorem~\ref{thm:main thm2} with $\lie g=\lie{sl}_3$ and~$V=\mathcal C_2$.
\end{proposition}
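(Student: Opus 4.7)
The plan is to identify $\underline{\sigma}^i$ and $\underline{\sigma}$ with the corresponding cactus generators via the characterization results for $\tilde{\sigma}^J$ on upper crystal bases established in Section~\ref{sect:action on crystals}. The preliminary step is a dictionary: under $\widehat{\mathbf{k}}\colon \mathbf{M}\xrightarrow{\sim} B^{up}(\mathcal C_2)$, I would check that the combinatorial operators $e_i^r$ on $\mathbf{M}$ correspond to the powers $(\tilde{e}_i^{up})^r$ of the upper Kashiwara operators, and that the function $\wt_i\colon\mathbf{M}\to\ZZ$ corresponds to the pairing $b\mapsto \wt(b)(\alpha_i^\vee)$. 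The explicit formulas for $e_i^r$ on $(a_1,a_2,a_3,l_1,l_2)$ recorded in the proof of Lemma~\ref{lem:props of ops e i r} are exactly the tropicalizations produced by \cite{BK}*{Example~6.26}, which encode the upper $\lie{sl}_3$-crystal structure on $\mathcal C_q(\lie{sl}_3)$; the weight identification follows by direct inspection of the defining formulas of $\widehat{\mathbf{k}}$.

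With this dictionary in hand, the case of $\underline{\sigma}^i$ is immediate from Remark~\ref{rem:crystal Weyl group}: on a class $\tilde b\in L(\beta)/qL(\beta)$ of weight $\beta$, one has $\tilde{\sigma}^i_V(\tilde b)=(\tilde{e}_i^{up})^{-\beta(\alpha_i^\vee)}(\tilde b)$, which under $\widehat{\mathbf k}$ corresponds to $e_i^{-\wt_i(\mathbf{m})}(\mathbf{m})=\underline{\sigma}^i(\mathbf{m})$.

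For $\underline{\sigma}=\underline{\sigma}^{\{1,2\}}$, corresponding to $\sigma^I$ with $I=\{1,2\}$, I would apply Corollary~\ref{cor:char eta}: on each simple summand $V_\lambda$ the involution $\tilde{\sigma}^I_{V_\lambda}$ is the unique nonzero map $B^{up}(V_\lambda)\cup\{0\}\to B^{up}(V_\lambda)\cup\{0\}$ satisfying the intertwining relation~\eqref{eq:eta cr op up}, namely $\tilde{\sigma}\circ(\tilde{e}_i^{up})^s=(\tilde{e}_{i^\star}^{up})^{-s}\circ\tilde{\sigma}$. For $\lie{sl}_3$ one has $1^\star=2$ and $2^\star=1$, so this becomes $\tilde{\sigma}\circ(\tilde e_i^{up})^s=(\tilde e_j^{up})^{-s}\circ\tilde{\sigma}$ with $\{i,j\}=\{1,2\}$, which via the dictionary is precisely the third identity of Proposition~\ref{prop:underl eta}. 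Since $\underline{\sigma}$ preserves each fiber $\mathbf{M}_{l_1,l_2}$, identified with $B^{up}(V_{l_1\omega_1+l_2\omega_2})$, and restricts there to a bijection (hence to a nonzero map), Corollary~\ref{cor:char eta} forces $\underline{\sigma}|_{\mathbf{M}_{l_1,l_2}}=\tilde{\sigma}^I_{V_{l_1\omega_1+l_2\omega_2}}$. Assembling these identifications over all $\lambda\in P^+$ completes the argument.

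The main obstacle is the opening dictionary step: verifying that $\widehat{\mathbf{k}}$ intertwines the combinatorially defined $e_i^r$ with the genuine upper Kashiwara operators raised to the $r$-th power. Once this translation is in place, both $\underline{\sigma}^i$ and $\underline{\sigma}$ are pinned down by the uniqueness statements of Remark~\ref{rem:crystal Weyl group} and Corollary~\ref{cor:char eta}, and no further combinatorial calculation on~$\mathbf{M}$ is required.
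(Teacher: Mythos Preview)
Your proposal is correct and follows essentially the same approach as the paper: you use Remark~\ref{rem:crystal Weyl group} to identify $\underline\sigma^i$ with $\tilde\sigma^{\{i\}}$, and Corollary~\ref{cor:char eta} together with the intertwining relation from Proposition~\ref{prop:underl eta} to identify $\underline\sigma$ with $\tilde\sigma^{\{1,2\}}$. The only difference is that you make the dictionary step (matching $e_i^r$ with $(\tilde e_i^{up})^r$ and $\wt_i$ with the weight pairing) explicit, whereas the paper treats this as already packaged into the identification $\widehat{\mathbf k}(\mathbf M)\cong B^{up}(\mathcal C_2)$ announced just before the proposition via \cite{BK}*{Example~6.26}.
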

\begin{proof}
It follows from Corollary~\ref{cor:char eta} applied to $f=\underline\sigma$ extended to~$B_\lambda\cup\{0\}$, 
and Proposition~\ref{prop:underl eta} that $\underline\sigma$ coincides with $\tilde\sigma^{\{1,2\}}_{V_\lambda}$ for all~$\lambda\in P^+$.
On the other hand, by Remark~\ref{rem:crystal Weyl group} we have $\underline\sigma^i=\tilde\sigma^{\{i\}}_{V_\lambda}$, $i\in\{1,2\}$,
$\lambda\in P^+$.
\end{proof}

\subsection{Gelfand-Kirillov model for \texorpdfstring{$\lie g=\lie{sl}_3$}{g=sl\_3}}
Our goal here is to illustrate results and constructions from Section~\ref{sect:can bas} for $\lie g=\lie{sl}_3$ and provide
some evidence for Conjecture~\ref{conj:weyl}. We freely use the notation from Section~\ref{sect:can bas} and~\S\ref{subs:cryst cact sl3}. 
In this case the algebra $\mathcal A_2=\mathcal A_q(\lie g)$ is generated by the $x_i$, $i\in\{1,2\}$ subject to the relations 
\begin{equation}\label{eq:Serre sl3}
x_i^2 x_j-(q+q^{-1})x_i x_j x_i+x_j x_i^2=0,\qquad \{i,j\}=\{1,2\}.
\end{equation}
Define 
$$
x_{ij}=\frac{q^{\frac12} x_i x_j-q^{-\frac12} x_j x_i}{q-q^{-1}},\qquad \{i,j\}=\{1,2\}.
$$
Then $x_i x_j=q^{\frac12} x_{ij}+q^{-\frac12}x_{ji}$, $\{i,j\}=\{1,2\}$ and~\eqref{eq:Serre sl3} 
is equivalent to $x_i x_{ij}=q x_{ij}x_i$ or $x_i x_{ji}=q^{-1} x_{ji}x_i$, $\{i,j\}=\{1,2\}$.
The following is well-known${}^{\ref{ftn:1}}$ (see e.g.~\cite{BZ2}).
\begin{lemma}\label{lem:B up}
The dual canonical basis~$\mathbf B^{up}$ in the algebra~$\mathcal A_2$ is
$$
\mathbf B^{up}=\{ q^{\frac12(m_1-m_2)(m_{21}-m_{12})}x_1^{m_1} x_2^{m_2} x_{12}^{m_{12}}x_{21}^{m_{21}}\,:\,
(m_1,m_2,m_{12},m_{21})\in\ZZ_{\ge0}^4,\, m_1m_2=0\}.
$$
\end{lemma}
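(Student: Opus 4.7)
The plan is to realize $\mathbf{B}^{up}$ as the suitably normalized union of two dual PBW bases, one for each of the two reduced words $(1,2,1)$ and $(2,1,2)$ of the longest element $w_\circ\in W(\lie{sl}_3)$, and then invoke the Lusztig--Kashiwara characterization of the dual canonical basis.

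First, I would extract the commutation relations among $x_1, x_2, x_{12}, x_{21}$ directly from \eqref{eq:Serre sl3}. The quantum Serre relations are equivalent to $x_i x_{ij}=q\,x_{ij}x_i$ and $x_i x_{ji}=q^{-1}x_{ji}x_i$ for $\{i,j\}=\{1,2\}$, as is already noted after \eqref{eq:Serre sl3}. A short further computation, expanding the defining expressions for $x_{12}$ and $x_{21}$ and reducing
\[
(q-q^{-1})(x_{12}x_{21}-x_{21}x_{12})=x_1 x_2^2 x_1-x_2 x_1^2 x_2
\]
via Serre twice, yields $x_{12}x_{21}=x_{21}x_{12}$. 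Together these relations show that the two families $\{x_1^{m_1}x_{12}^{m_{12}}x_{21}^{m_{21}}\}_{m_i\ge 0}$ and $\{x_2^{m_2}x_{12}^{m_{12}}x_{21}^{m_{21}}\}_{m_i\ge 0}$ are each PBW-type spanning sets for $\mathcal{A}_2$, overlapping precisely on the subalgebra generated by $x_{12}, x_{21}$ (i.e.\ at $m_1=m_2=0$).

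Next, I would identify these families with dual PBW bases via the pairing $\fgfrm{\cdot}{\cdot}$ of Lemma~\ref{lem:pairing}. For the reduced word $(1,2,1)$, Lusztig's PBW root vectors in $U_q^-(\lie{sl}_3)$ are $F_1,\; T''_{s_1}(F_2),\; T''_{s_1}T''_{s_2}(F_1)$, realizing the roots $\alpha_1,\alpha_1+\alpha_2,\alpha_2$; under the pairing these correspond, up to explicit $q$-scalars, to $x_1, x_{12}$, and (a rescaled version of) $x_{21}$, and symmetrically for the reduced word $(2,1,2)$ one gets $x_2, x_{21}$, and a rescaled $x_{12}$. A weight-by-weight dimension count against the graded components of $U_q^-(\lie{sl}_3)$ confirms that the union of the two families (with the natural identification on the overlap) is in fact a $\kk$-basis of $\mathcal{A}_2$.

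Finally, I would invoke the characterization of $\mathbf{B}^{up}$ as the unique basis of the $\ZZ[q,q^{-1}]$-integral form of $\mathcal{A}_2$ that is compatible with any fixed PBW basis modulo $q$ times the crystal lattice, and satisfies the appropriate bar-invariance on the dual side. The combinatorial constraint $m_1m_2=0$ reflects the bipartition of $\mathbf{B}^{up}$ according to the two reduced words, and the normalization factor $q^{\frac{1}{2}(m_1-m_2)(m_{21}-m_{12})}$ is exactly what is needed to make the two dual-PBW descriptions agree on their overlap and to match the canonical-basis-compatible normalization (using $\overline{x_{12}}=x_{21}$ together with the commutation relations above). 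The main obstacle is the careful bookkeeping of $q$-powers when transitioning between the two PBW orderings and aligning them with the conventions of the dual bar on $\mathcal{A}_2$; but these are all finite elementary computations in rank $2$, so once the commutation relations and the dual PBW identifications are in hand the result follows by direct verification.
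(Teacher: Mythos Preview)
The paper does not give a proof of this lemma; it simply labels the result ``well-known'' and refers to~\cite{BZ2}. So there is no paper-proof to compare against, only your sketch to evaluate on its own merits.

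Your outline contains a genuine error that makes the proposed route unworkable. For the reduced word $(1,2,1)$ the third Lusztig root vector is $T''_{s_1}T''_{s_2}(F_1)$, and since $s_1s_2(\alpha_1)=\alpha_2$ this element is (a scalar multiple of) $F_2$, \emph{not} something corresponding to~$x_{21}$. The dual PBW basis for $(1,2,1)$ is therefore built from $x_1,\,x_{12},\,x_2$ (and for $(2,1,2)$ from $x_2,\,x_{21},\,x_1$), not from $x_1,\,x_{12},\,x_{21}$. So neither of your two families is a dual PBW basis, and the plan of exhibiting $\mathbf B^{up}$ as a union of two such bases collapses. Relatedly, your claim that each family is by itself a ``PBW-type spanning set for $\mathcal A_2$'' is false: the family $\{x_1^{m_1}x_{12}^{m_{12}}x_{21}^{m_{21}}\}$ only hits weight spaces $-n_1\alpha_1-n_2\alpha_2$ with $n_1\ge n_2$, and symmetrically for the other family; it is only their \emph{union} (glued along $m_1=m_2=0$) that gives a basis, as your later dimension count correctly shows.

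What actually works is a direct verification in this small rank: check that each normalized monomial in the statement is $\bar\cdot$\,-invariant (using $\overline{x_{12}}=x_{21}$ and the $q$-commutation relations you listed, so that the prefactor $q^{\frac12(m_1-m_2)(m_{21}-m_{12})}$ is exactly what is needed), check that it lies in the upper crystal lattice with the correct image modulo~$q$ (equivalently, that its expansion in a fixed dual PBW basis is unitriangular with off-diagonal entries in $q\ZZ[q]$), and then invoke uniqueness of the dual canonical basis. This is the content of~\cite{BZ2} specialized to $A_2$; the monomial form with the constraint $m_1m_2=0$ reflects the fact that $x_1$ and $x_2$ are exchangeable cluster variables while $x_{12},x_{21}$ commute with everything up to powers of~$q$, not a bipartition into PBW bases.
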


We have~$\widehat{\mathcal A_2}=\mathcal A_2[\mathcal S_{w_\circ}^{-1}]=\mathcal A_2[x_{12}^{-1},x_{21}^{-1}]$.
It follows from Lemma~\ref{lem:B up} that 
$$
\widehat{\mathbf B^{up}}=\{ q^{\frac12(m_1-m_2)(m_{21}-m_{12})}x_1^{m_1} x_2^{m_2} x_{12}^{m_{12}}x_{21}^{m_{21}}\,:\,
(m_1,m_2,m_{12},m_{21})\in\ZZ_{\ge0}^2\times \ZZ^2,\, m_1m_2=0\}.
$$
The following is immediate
\begin{lemma}
\begin{enumerate}[label={\rm(\alph*)},leftmargin=*]
 \item 
The algebra $\mathcal B_2:=\mathcal B_q(\lie g)$ is generated by~$\mathcal A_2$ and $\kk[v_1,v_2]$, where $v_i=v_{\omega_i}$, as subalgebras
subject to the relations
$$
v_i x_j=q^{-\delta_{i,j}}x_j v_i,\qquad i,j\in\{1,2\}.
$$
\item $\mathcal B_2$ is a $U_q(\lie g)$-module algebra with the $U_q(\lie g)$-action defined in Lemma~\ref{lem:action}.
\end{enumerate}
\label{lem:B 2 present}
\end{lemma}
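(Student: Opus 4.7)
The plan is to verify both parts by specializing the general construction of $\mathcal{B}_q(\lie g)$ as the semidirect product $\mathcal{A}_q(\lie g)\rtimes\kk[\Gamma]$ (with $\Gamma=P^+$) to the particular case $\lie g=\lie{sl}_3$, for which everything collapses to the simple presentation stated. There is no substantive obstacle here; the task is just to unpack definitions from \S\ref{subs:GK-model}.

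For part~(a), first observe that for $\lie g=\lie{sl}_3$ one has $P^+=\ZZ_{\ge 0}\omega_1\oplus\ZZ_{\ge 0}\omega_2$, so $\Gamma$ is the free commutative monoid on $\omega_1,\omega_2$. Hence $\kk[\Gamma]=\kk[v_1,v_2]$ with $v_\lambda=v_1^{l_1}v_2^{l_2}$ for $\lambda=l_1\omega_1+l_2\omega_2$. Using $\omega_i(\alpha_j^\vee)=\delta_{i,j}$ together with the standard normalization $(\alpha_i,\alpha_i)=2$ one computes $(\omega_i,\alpha_j)=\delta_{i,j}$, so the defining commutation relation~\eqref{eq:comm rel B} of $\mathcal{B}_q(\lie g)$, applied to $\lambda=\omega_i$ and $x=x_j$ (of degree $|x_j|=-\alpha_j$), yields exactly
\[
v_i x_j=q^{(\omega_i,-\alpha_j)}x_j v_i=q^{-\delta_{i,j}}x_j v_i,\qquad i,j\in\{1,2\}.
\]
This produces a surjective algebra homomorphism $\phi:\mathcal{B}'\twoheadrightarrow \mathcal{B}_2$, where $\mathcal{B}'$ is the algebra defined by the presentation in the lemma. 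For injectivity, the stated relations let us move every monomial in generators into the normal form $a\cdot v_1^{l_1}v_2^{l_2}$ with $a\in\mathcal{A}_2$, so $\mathcal{B}'$ is spanned by $\mathcal{A}_2\cdot\kk[v_1,v_2]$. By Corollary~\ref{cor:mod alg decomp} the multiplication map $\mathcal{A}_2\tensor\kk[v_1,v_2]\to\mathcal{B}_2$, $a\tensor v_1^{l_1}v_2^{l_2}\mapsto a v_{l_1\omega_1+l_2\omega_2}$, is a $\kk$-linear isomorphism. Composing with $\phi$ shows $\mathcal{B}'\to \mathcal{B}_2$ is bijective.

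Part~(b) is an immediate specialization: Lemma~\ref{lem:action} endows $\mathcal{B}_q(\lie g)$ with a $U_q(\lie g)$-module algebra structure in $\mathscr{O}_q(\lie g)$ for every symmetrizable Kac-Moody $\lie g$, and in the $\lie{sl}_3$ case the formulas for $K_\lambda$, $F_i$ and $E_i$ from Lemma~\ref{lem:action} transport verbatim to the presentation above once $v_\lambda$ is rewritten as $v_1^{l_1}v_2^{l_2}$. In particular one may simply take this as the definition of the $U_q(\lie g)$-action on $\mathcal{B}_2$ and verify compatibility on the generators $x_1,x_2,v_1,v_2$, which is routine from \eqref{eq:leibnitz-1}.
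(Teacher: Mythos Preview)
Your proposal is correct and takes essentially the same approach as the paper, which simply declares the lemma ``immediate'' without giving an explicit proof. Your argument just unpacks what that immediacy means: specializing the general semidirect product construction and the commutation relation~\eqref{eq:comm rel B} to $\lie g=\lie{sl}_3$, exactly as intended.
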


Abbreviate $z_i=F_i(v_i)=v_{s_i\omega_i}$ and $z_{ij}=F_iF_j(v_j)=v_{s_is_j\omega_j}$. Clearly,
\begin{equation}\label{eq:z via x}
z_i=q^{-\frac12} x_i v_i,\qquad z_{ij}=q^{-\frac12} x_{ij}v_j,\qquad \{i,j\}=\{1,2\}.
\end{equation}
The following Lemma is an immediate consequence of Lemma~\ref{lem:B 2 present}
\begin{lemma}
\begin{enumerate}[label={\rm(\alph*)},leftmargin=*]
 \item \label{lem:C present.a}
The algebra $\mathcal C_2=\mathcal C_q(\lie{sl}_3)$ is generated by $v_1$, $v_2$, $z_1$, $z_2$,
$z_{12}$ and $z_{21}$
subject to the relations 
$$
v_1 v_2=v_2 v_1,\quad 
v_i z_j=q^{-\delta_{i,j}} z_j v_i,\quad v_i z_{12}=q^{-1}z_{12} v_i,\quad v_i z_{21}=q^{-1}z_{21}v_i,\quad i,j\in\{1,2\}.
$$
$$
z_i z_j=q v_i z_{ij}+q^{-1}z_{ji}v_j,\quad z_k z_{ij}=q^{-\delta_{j,k}}z_{ij}z_k,\quad \{i,j\}=\{1,2\},\, k\in \{1,2\}
$$
and $z_{12}z_{21}=z_{21}z_{12}$.
\item \label{lem:C present.b} $\mathcal C_2$ is a $U_q(\lie g)$-module algebra via 
\begin{alignat*}{4}
&E_k(v_i)=0,&\quad&E_k(z_i)=\delta_{k,i}v_i,&\quad& E_k(z_{ji})=\delta_{k,j}z_i,\\
&F_k(v_i)=\delta_{i,k}z_i,&& F_k(z_i)=\delta_{k,j}z_{ji},&&  F_k(z_{ij})=0,&\quad& \{i,j\}=\{1,2\},\, k\in\{1,2\}.
\end{alignat*}
\item \label{lem:C present.c} The $P$-grading on~$\mathcal C_q(\lie g)$ is given by $|v_i|=\omega_i$, $|z_i|=s_i\omega_i=\omega_i-\alpha_i$, $|z_{ij}|=
s_js_i\omega_j=\omega_j-\alpha_i-\alpha_j$,
$\{i,j\}=\{1,2\}$.
\end{enumerate}
\label{lem:C present}
\end{lemma}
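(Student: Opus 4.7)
The plan is to reduce the statement to Lemma~\ref{lem:B 2 present} together with Corollary~\ref{cor:Gelfand-Kirillov model} and the identities~\eqref{eq:z via x}. First I would establish the generating set and part~\ref{lem:C present.c}: by Corollary~\ref{cor:Gelfand-Kirillov model} we have $\mathcal C_2=\sum_{\lambda\in P^+}V_\lambda$, and Lemma~\ref{lem:mult V lambda} yields $V_{a\omega_1+b\omega_2}=V_{\omega_1}^{\cdot a}\cdot V_{\omega_2}^{\cdot b}$, so it suffices to know that each three-dimensional fundamental module $V_{\omega_i}$ has basis $\{v_i,z_i,z_{ji}\}$; this in turn follows from Proposition~\ref{prop:V lambda in B} together with the evident fact that $F_k(v_i)=\delta_{k,i}z_i$, $F_i F_j(v_j)=z_{ij}$, and that the string $(\omega_j,s_j\omega_j,s_is_j\omega_j)$ exhausts the weights of $V_{\omega_j}$. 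Part~\ref{lem:C present.c} is then immediate from $|v_i|=\omega_i$ and $|F_k(y)|=|y|-\alpha_k$.

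Part~\ref{lem:C present.b} follows by restricting the $U_q(\lie g)$-module algebra structure of $\mathcal B_2$ given in Lemma~\ref{lem:action} to $\mathcal C_2$ (which is legitimate by Lemma~\ref{lem:int elements}) and computing on the six generators: the $F_k$-formulas are the defining equalities $z_i=F_i(v_i)$, $z_{ij}=F_iF_j(v_j)$ together with the observation $F_k(z_{ij})=0$ for dimension reasons in $V_{\omega_j}$; the $E_k$-formulas follow from $E_k(v_i)=0$ (highest weight) and the commutators $[E_k,F_l]=\delta_{k,l}(K_{\alpha_k}-K_{-\alpha_k})/(q_k-q_k^{-1})$ applied inductively.

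For part~\ref{lem:C present.a} the commutation relations among $v_i$ and $z_j$ come directly from Lemma~\ref{lem:B 2 present} via~\eqref{eq:z via x}; the slightly less trivial relation $v_k x_{ij}=q^{-1}x_{ij}v_k$ needed to commute $v_k$ past $z_{ij}$ is a one-line Serre-relation calculation using $v_j x_k=q^{-\delta_{j,k}}x_k v_j$. The key quadratic relation $z_iz_j=qv_iz_{ij}+q^{-1}z_{ji}v_j$ is obtained by substituting $x_ix_j=q^{\frac12}x_{ij}+q^{-\frac12}x_{ji}$ (which is the definition of $x_{ij},x_{ji}$) into $z_iz_j=q^{-1}x_ix_jv_iv_j$ and using $v_iv_j=v_jv_i$ together with the commutation just derived. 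The relations $z_kz_{ij}=q^{-\delta_{j,k}}z_{ij}z_k$ follow from the Serre-derived identities $x_ix_{ij}=qx_{ij}x_i$ and $x_jx_{ij}=q^{-1}x_{ij}x_j$, and $z_{12}z_{21}=z_{21}z_{12}$ from $x_{12}x_{21}=x_{21}x_{12}$ (which is immediate from~\eqref{eq:Serre sl3}).

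The main obstacle is completeness of the presentation. To prove it, let $\tilde{\mathcal C}_2$ denote the abstract algebra on these six generators with the listed relations; by the relations every monomial can be rewritten as a $\kk$-linear combination of ordered monomials $v_1^{l_1}v_2^{l_2}z_1^{m_1}z_2^{m_2}z_{12}^{m_{12}}z_{21}^{m_{21}}$ with $m_1m_2=0$, so $\tilde{\mathcal C}_2$ is spanned by a set indexed by $\mathbf M$. The canonical surjection $\tilde{\mathcal C}_2\twoheadrightarrow \mathcal C_2$ sends this spanning set to the image under $\mathbf j^{-1}$ of the dual canonical basis described in Lemma~\ref{lem:B up} (combined with~\eqref{eq:z via x} and~\eqref{eq:extrem minor}), which is a $\kk$-basis of $\mathcal C_2$ by~\eqref{eq:intersect B A}; hence the surjection is an isomorphism. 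The delicate point here is verifying that the given relations suffice to bring every monomial to the stated normal form, i.e., a Bergman-style confluence check on the two critical overlaps $z_iz_j\cdot z_k$ and $z_iz_j\cdot z_{kl}$; this reduction, together with the PBW-type dimension match with $\mathbf M$, closes the argument.
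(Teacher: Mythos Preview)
Your proposal is correct and follows the same route the paper indicates: the paper records no proof beyond the sentence ``The following Lemma is an immediate consequence of Lemma~\ref{lem:B 2 present}'', so your reduction to the presentation of $\mathcal B_2$ together with~\eqref{eq:z via x} is exactly what is intended. You have simply supplied the details the paper omits, including the completeness argument via a PBW-type spanning set matched against the basis $\{\mathbf b_{\mathbf m}:\mathbf m\in\mathbf M\}$; this is a reasonable way to close what the paper leaves implicit.
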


The following is an immediate corollary of Theorem~\ref{thm:twist}.
\begin{corollary}\label{cor:eta explicit}
The assignments
$$
v_i\mapsto z_{ji},\quad z_i\mapsto z_i,\quad z_{ij}\mapsto v_j,\qquad \{i,j\}=\{1,2\}.
$$
define an anti-involution of~$\mathcal C_2$ which coincides with~$\sigma=\sigma^{\{1,2\}}_{\mathcal C_2}$.
\end{corollary}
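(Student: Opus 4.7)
The plan is to deduce everything from the two structural results already established for $\sigma^I$: Theorem~\ref{thm:twist} (which gives that $\sigma := \sigma^{\{1,2\}}_{\mathcal C_2}$ is an anti-involution) and Proposition~\ref{prop:prop eta^J} (which computes $\sigma$ on highest-weight vectors and describes how $\sigma$ intertwines the $U_q(\lie g)$-action with $\theta$). Once $\sigma$ is known to be an anti-involution, the identities on the six generators $v_i, z_i, z_{ij}$ listed in the statement determine it on all of $\mathcal C_2$, so it suffices to verify those six identities.

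First I would compute $\sigma(v_i)$ for $i\in\{1,2\}$. Since $\lie g=\lie{sl}_3$ we have $w_\circ = s_2 s_1 s_2 = s_1 s_2 s_1$ and $1^\star = 2$, $2^\star = 1$. With $\mathbf i = (2,1,2)\in R(w_\circ)$ and $\lambda=\omega_1$, the exponents from~\eqref{eq:F i lambda} are $a_1 = s_1 s_2\omega_1(\alpha_2^\vee)= 1$, $a_2 = s_2\omega_1(\alpha_1^\vee)=1$, $a_3 = \omega_1(\alpha_2^\vee)=0$, whence $F_{w_\circ,\omega_1} = F_2 F_1$. By Proposition~\ref{prop:prop eta^J}\ref{prop:prop eta^J.a'} this gives $\sigma(v_1) = F_2 F_1(v_1) = F_2(z_1) = z_{21}$, and symmetrically $\sigma(v_2) = z_{12}$.

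Next I would apply Proposition~\ref{prop:prop eta^J}\ref{prop:prop eta^J.b}, namely $\sigma\circ F_j = E_{j^\star}\circ \sigma$, together with Lemma~\ref{lem:C present}\ref{lem:C present.b} to handle the remaining generators. For $z_i = F_i(v_i)$ this yields
\begin{equation*}
\sigma(z_i) = \sigma(F_i(v_i)) = E_{i^\star}(\sigma(v_i)) = E_{i^\star}(z_{i^\star,i}) = z_i,
\end{equation*}
since $E_k(z_{ji})=\delta_{k,j}z_i$. For $z_{ij} = F_i(z_j)$ the same intertwining identity together with $E_k(z_j) = \delta_{k,j}v_j$ gives $\sigma(z_{ij}) = E_{i^\star}(z_j) = E_j(z_j) = v_j$, using $i^\star = j$ for $\{i,j\}=\{1,2\}$.

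This accounts for all six generators. Finally, since $\sigma$ is an anti-homomorphism by Theorem~\ref{thm:twist}\ref{thm:twist.c} and $\mathcal C_2$ is generated by $v_1, v_2, z_1, z_2, z_{12}, z_{21}$ as an algebra (Lemma~\ref{lem:C present}\ref{lem:C present.a}), the computed values on generators determine $\sigma$, proving the corollary. There is no serious obstacle here, since Theorem~\ref{thm:twist} has done the nontrivial work of establishing the anti-involution property globally; the remaining content is just the direct verification on generators via the defining properties of~$\sigma^I$, and the only delicate point is the correct bookkeeping of exponents in $F_{w_\circ,\omega_i}$ and of the involution $i\mapsto i^\star$.
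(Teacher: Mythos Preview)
Your proof is correct and is precisely the natural way to unpack the paper's claim that this is ``an immediate corollary of Theorem~\ref{thm:twist}'': you use Theorem~\ref{thm:twist}\ref{thm:twist.c} for the anti-involution property and then verify the values on generators via Proposition~\ref{prop:prop eta^J}\ref{prop:prop eta^J.a'},\ref{prop:prop eta^J.b} together with the explicit $U_q(\lie g)$-action from Lemma~\ref{lem:C present}\ref{lem:C present.b}. The paper gives no further argument, so your write-up is exactly the expected elaboration.
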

Given $\mathbf m\in\widehat{\mathbf M}$, define $\mathbf b_{\mathbf m}\in\mathcal C_2$ as 
\begin{equation}\label{eq: elt of bas}
\mathbf b_{\mathbf m}=
q^{\frac12(m_1(m_{21}-m_{01})+m_2(m_{12}-m_{02})-(m_{12}+m_{21})(m_{01}+m_{02}))} z_1^{m_1} z_2^{m_2} z_{12}^{m_{12}}z_{21}^{m_{21}} v_1^{m_{01}}v_2^{m_{02}}
\end{equation}
if $\mathbf m\in\mathbf M$ and $\mathbf b_{\mathbf m}=0$ if $\mathbf m\in\widehat{\mathbf M}\setminus\mathbf M$.
Thus, $|\mathbf b_{\mathbf m}|=(m_{01}+m_1+m_{21})\omega_1+(m_{02}+m_2+m_{12})\omega_2-(m_1+m_{12}+m_{21})\alpha_1-(m_2+m_{12}+m_{21})\alpha_2
=\wt_1(\mathbf m)\omega_1+\wt_2(\mathbf m)\omega_2$,
$\mathbf m\in \mathbf M$.

The following is a consequence of Lemma~\ref{lem:C present}, \eqref{eq:intersect B A} and Proposition~\ref{prop:other defn of hat B}
\begin{lemma}
The upper global crystal basis~$\mathbf B$ of~$\mathcal C_2$ is $\mathbf B=\{ \mathbf b_{\mathbf m}\,:\, \mathbf m\in\mathbf M\}$.
Furthemore, for each $\lambda=l_1\omega_1+l_2\omega_2\in P^+$ we have 
$\mathbf B_\lambda=\{ \mathbf b_{\mathbf m}\,:\, \mathbf m\in \mathbf M_{l_1,l_2}\}$.
Moreover, under the identification of~$\mathbf M$ with the upper crystal basis of~$\mathcal C_q(\lie g)$ (cf.~\S\ref{subs:cryst cact sl3}),
the map $\mathbf M\to\mathbf B$ defined by $\mathbf m\mapsto \mathbf b_\mathbf m$, $\mathbf m\in\mathbf M$
is Kashiwara's bijection~$G$ (\cite{Kas93}) 
between an upper crystal basis of~$\mathcal C_2$ and its upper global crystal basis. 
\end{lemma}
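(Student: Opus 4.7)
The plan is to verify the three assertions by combining (i) a PBW-type basis for $\mathcal{C}_2$ read off from Lemma~\ref{lem:C present}\ref{lem:C present.a}, (ii) the explicit dual canonical basis $\mathbf{B}^{up}$ from Lemma~\ref{lem:B up}, and (iii) the intersection formula $\mathbf{B} = \widetilde{\mathbf{B}} \cap \mathcal{C}_q(\mathfrak{g})$ together with the description of $\widetilde{\mathbf{B}}$ established in the proof of Theorem~\ref{thm:eta-bas}.

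First I would check that the ordered monomials $z_1^{m_1}z_2^{m_2}z_{12}^{m_{12}}z_{21}^{m_{21}}v_1^{m_{01}}v_2^{m_{02}}$ with $\mathbf{m}\in\mathbf{M}$ form a $\mathbb{k}$-basis of $\mathcal{C}_2$; this is a standard diamond-lemma argument using the $q$-commutation relations in Lemma~\ref{lem:C present}\ref{lem:C present.a}, the only non-$q$-commutation being the relation $z_1z_2=qv_1z_{12}+q^{-1}z_{21}v_2$ (which forces the constraint $m_1m_2=0$ and reduces other orderings into the chosen one). Consequently $\{\mathbf{b}_\mathbf{m}:\mathbf{m}\in\mathbf{M}\}$ is a basis of $\mathcal{C}_2$. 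The weight computation from Lemma~\ref{lem:C present}\ref{lem:C present.c} gives $|\mathbf{b}_\mathbf{m}|=\wt_1(\mathbf{m})\omega_1+\wt_2(\mathbf{m})\omega_2$, and since each of $v_i,z_i,z_{ij}$ lies in a $V_{\omega_k}$, iterating Lemma~\ref{lem:mult V lambda} shows $\mathbf{b}_\mathbf{m}\in V_\lambda$ for $\lambda=l_1\omega_1+l_2\omega_2$ with $l_i=m_i+m_{ji}+m_{0i}$. This yields the second assertion at the level of \emph{some} basis of each $V_\lambda$, indexed exactly by $\mathbf{M}_{l_1,l_2}$.

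The key step is upgrading this to the statement that each $\mathbf{b}_\mathbf{m}$ actually lies in the upper global crystal basis $\mathbf{B}$. By \eqref{eq:intersect B A} it suffices to exhibit $\mathbf{b}_\mathbf{m}\in\widetilde{\mathbf{B}}$. Given $\mathbf{m}\in\mathbf{M}$, take $b=q^{\frac12(m_1-m_2)(m_{21}-m_{12})}x_1^{m_1}x_2^{m_2}x_{12}^{m_{12}}x_{21}^{m_{21}}\in\mathbf{B}^{up}$ (Lemma~\ref{lem:B up}) and $\lambda=m_{01}\omega_1+m_{02}\omega_2$; then $q^{\frac12(\lambda,|b|)}bv_\lambda\in\widetilde{\mathbf{B}}$ by definition of the latter. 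Rewriting this via~\eqref{eq:z via x} ($x_iv_i=q^{1/2}z_i$, $x_{ij}v_j=q^{1/2}z_{ij}$) requires splitting off $v_1^{m_1+m_{21}}$ and $v_2^{m_2+m_{12}}$ from the right factor $v_1^{m_{01}+m_1+m_{21}}v_2^{m_{02}+m_2+m_{12}}=v_\lambda$ (obtained by moving them past the $x$-monomial using $x_iv_j=q^{-\delta_{ij}}v_jx_i$ and $x_{ij}v_k=q^{-1}v_kx_{ij}$ from Lemma~\ref{lem:B 2 present}), collecting the resulting $q$-powers, and verifying that the total exponent matches the one in~\eqref{eq: elt of bas}. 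This exponent matching is the technical heart of the proof and is the main obstacle: it is a bookkeeping exercise on $q$-powers involving $(\omega_i,\alpha_j)=\delta_{ij}$, the weight of $b$, and the commutation constants. Once done, $\mathbf{b}_\mathbf{m}\in\widetilde{\mathbf{B}}\cap\mathcal{C}_2=\mathbf{B}$, and combined with the basis property from the first step we conclude $\mathbf{B}=\{\mathbf{b}_\mathbf{m}:\mathbf{m}\in\mathbf{M}\}$ and $\mathbf{B}_\lambda=\{\mathbf{b}_\mathbf{m}:\mathbf{m}\in\mathbf{M}_{l_1,l_2}\}$.

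For the final assertion, it remains to identify the map $\mathbf{m}\mapsto\mathbf{b}_\mathbf{m}$ with Kashiwara's bijection $G$. Under the identification of $\mathbf{M}_{l_1,l_2}$ with the upper crystal basis of $V_{l_1\omega_1+l_2\omega_2}$ from~\cite{BK}*{Example 6.26} (recalled in~\S\ref{subs:cryst cact sl3} via $\widehat{\mathbf{k}}$), the class of $\mathbf{b}_\mathbf{m}$ modulo $qL^{up}$ must correspond to the combinatorial tuple $\mathbf{m}$. Since $G$ is characterized by: (a) being a section of $L^{up}\twoheadrightarrow L^{up}/qL^{up}$, (b) taking values in the $\bar{\cdot}$-invariant elements, and (c) respecting the weight decomposition, it suffices to check $\overline{\mathbf{b}_\mathbf{m}}=\mathbf{b}_\mathbf{m}$ (which follows from bar-invariance of each $z_i,z_{ij},v_i$ and of the $q$-power prefactor, the latter because the prefactor in~\eqref{eq: elt of bas} is constructed symmetrically) and that $\mathbf{b}_\mathbf{m}\bmod qL^{up}$ is the required crystal element, which can be computed inductively by applying $\tilde{e}_i^{up}$ operators and matching with the action of $e_i^r$ on $\mathbf{M}$ from Proposition~\ref{prop:underl eta}.
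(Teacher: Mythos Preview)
Your approach is essentially the one the paper indicates (the paper gives no detailed proof, only cites Lemma~\ref{lem:C present}, \eqref{eq:intersect B A} and Proposition~\ref{prop:other defn of hat B}), and the outline is sound. Two points deserve correction, however.

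First, in the key step you write $\lambda=m_{01}\omega_1+m_{02}\omega_2$ and then a few lines later set $v_\lambda=v_1^{m_{01}+m_1+m_{21}}v_2^{m_{02}+m_2+m_{12}}$. These are inconsistent: the correct choice is $\lambda=l_1\omega_1+l_2\omega_2$ with $l_i=m_i+m_{ji}+m_{0i}$, so that $v_\lambda=v_1^{l_1}v_2^{l_2}$. Your subsequent description (move $v_1^{m_1+m_{21}}$ and $v_2^{m_2+m_{12}}$ leftward past the $x$-monomial, then absorb them via~\eqref{eq:z via x}) is the right computation, so this appears to be a slip rather than a conceptual error; but as written the identity $q^{\frac12(\lambda,|b|)}bv_\lambda=\mathbf b_{\mathbf m}$ cannot hold with the wrong $\lambda$.

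Second, in the last paragraph your justification of $\overline{\mathbf b_{\mathbf m}}=\mathbf b_{\mathbf m}$ is both unnecessary and incorrect as stated. It is unnecessary because once you have established $\mathbf b_{\mathbf m}\in\mathbf B$ via \eqref{eq:intersect B A}, bar-invariance is automatic (elements of an upper global crystal basis are bar-fixed by definition). It is incorrect because a nonzero power $q^c$ is never bar-invariant, and bar-invariance of each factor $v_i,z_i,z_{ij}$ does not propagate to an ordered monomial in noncommuting bar-invariant elements. What actually remains for the last assertion is only to check that $\mathbf b_{\mathbf m}\bmod qL^{up}$ corresponds to $\mathbf m$ under the identification of \S\ref{subs:cryst cact sl3}; your proposed inductive verification via the $\tilde e_i^{up}$-action versus the combinatorial $e_i^r$ is the right way to do this.
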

The following is an explicit form of Theorem~\ref{thm:eta-bas} for~$\lie g=\lie{sl}_3$ and follows from~\eqref{eq: elt of bas} and Corollary~\ref{cor:eta explicit}. 
\begin{lemma}
We have $
\sigma(\mathbf b_{\mathbf m})=\mathbf b_{\underline\sigma(\mathbf m)}$ for all $\mathbf m\in\mathbf M$.
\end{lemma}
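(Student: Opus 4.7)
The idea is to compute $\sigma(\mathbf b_{\mathbf m})$ directly from the explicit formula~\eqref{eq: elt of bas} by combining the anti-involution property of~$\sigma$ (Corollary~\ref{cor:eta explicit}) with the straightening relations for~$\mathcal C_2$ given in Lemma~\ref{lem:C present}\ref{lem:C present.a}, and then to match the resulting $q$-power with that in the definition of~$\mathbf b_{\underline\sigma(\mathbf m)}$. The case $\mathbf m\in\widehat{\mathbf M}\setminus \mathbf M$ is vacuous since~$\mathbf b_{\mathbf m}=0$ and $\underline\sigma$ evidently preserves the subset $\mathbf M\subset\widehat{\mathbf M}$.

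So assume $\mathbf m=(m_1,m_2,m_{12},m_{21},m_{01},m_{02})\in\mathbf M$, i.e.\ $m_1 m_2=0$. Since~$\sigma$ is an anti-involution sending $v_i\mapsto z_{ji}$, $z_i\mapsto z_i$ and $z_{ij}\mapsto v_j$ for $\{i,j\}=\{1,2\}$, applying $\sigma$ to~\eqref{eq: elt of bas} reverses the order of the factors and yields, up to the prefactor $q^{\frac12(m_1(m_{21}-m_{01})+m_2(m_{12}-m_{02})-(m_{12}+m_{21})(m_{01}+m_{02}))}$, the monomial
\[
z_{12}^{m_{02}}\, z_{21}^{m_{01}}\, v_1^{m_{21}}\, v_2^{m_{12}}\, z_2^{m_2}\, z_1^{m_1}.
\]
By symmetry in $1\leftrightarrow 2$ it suffices to treat $m_1=0$ (the other case being analogous). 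Then all remaining factors pairwise $q$-commute: moving $z_2^{m_2}$ to the leftmost position uses $v_2 z_2^{m_2}=q^{-m_2}z_2^{m_2}v_2$, $v_1 z_2=z_2 v_1$, $z_{21}z_2=z_2 z_{21}$ and $z_{12}^{m_{02}}z_2^{m_2}=q^{m_{02}m_2}z_2^{m_2}z_{12}^{m_{02}}$, contributing a total factor $q^{m_2(m_{02}-m_{12})}$. This brings the expression into the standard form $z_2^{m_2}z_{12}^{m_{02}}z_{21}^{m_{01}}v_1^{m_{21}}v_2^{m_{12}}$, which is precisely the monomial appearing in $\mathbf b_{\underline\sigma(\mathbf m)}$.

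It remains to check the $q$-powers agree. Combining the two contributions in the case $m_1=0$ gives the exponent
\[
\tfrac12 m_2(m_{12}-m_{02})-\tfrac12(m_{12}+m_{21})(m_{01}+m_{02})+m_2(m_{02}-m_{12})=\tfrac12 m_2(m_{02}-m_{12})-\tfrac12(m_{02}+m_{01})(m_{21}+m_{12}),
\]
which is exactly the exponent of $\mathbf b_{\underline\sigma(\mathbf m)}$ as read off from~\eqref{eq: elt of bas} with $\underline\sigma(\mathbf m)=(0,m_2,m_{02},m_{01},m_{21},m_{12})$. The case $m_2=0$ is proved in the same way by moving $z_1^{m_1}$ to the leftmost position and using the corresponding $q$-commutations $v_1 z_1^{m_1}=q^{-m_1}z_1^{m_1}v_1$, $z_{21}^{m_{01}}z_1^{m_1}=q^{m_{01}m_1}z_1^{m_1}z_{21}^{m_{01}}$, $z_{12}z_1=z_1 z_{12}$, $v_2 z_1=z_1 v_2$.

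The only mild obstacle is the bookkeeping of the $q$-powers, but since $m_1 m_2=0$ no genuine straightening (via the relation $z_i z_j=q v_i z_{ij}+q^{-1}z_{ji}v_j$) ever occurs, so the computation is purely a product of scalar $q$-commutations and the identification of exponents is routine.
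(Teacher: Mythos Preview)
Your proof is correct and follows exactly the approach indicated in the paper, which simply asserts that the lemma ``follows from~\eqref{eq: elt of bas} and Corollary~\ref{cor:eta explicit}.'' You have carried out the routine $q$-commutation computation that the paper leaves implicit; the key observation (which you make explicitly) that $m_1m_2=0$ forces the straightening to involve only scalar $q$-commutations is precisely why the verification is elementary.
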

\begin{remark}\label{rem:tau 13 not in subgroup}
It is easy to check that 
$$
|\mathbf B_\lambda(0)|=|\{\mathbf m\in \mathbf M_{l_1,l_2}\,:\, \wt_1(\mathbf m)=\wt_2(\mathbf m)=0\}|=
\begin{cases}
\min(l_1,l_2)+1,& l_1\equiv l_2\pmod 3\\
0,&\text{otherwise}.
\end{cases}
$$
It follows from the definition of~$\underline\sigma$ that $\sigma$ is trivial on~$\mathbf B_\lambda(0)$ if and only if~$\dim V_\lambda(0)=1$
(that is, if and only if $\min(l_1,l_2)=0$ and $\max(l_1,l_2)\in 3\ZZ_{>0}$). Thus, $\tau_{1,3}\notin\mathsf K_{\lie{sl}_3}$ in the notation 
introduced after Problem~\ref{prob:kernel}.
On the other hand, it is immediate from the definitions
that the $\sigma^i$, $i\in\{1,2\}$ act trivially on~$V_\lambda(0)$ for any $V\in\mathscr O^{int}_q(\lie g)$. In particular, $\sigma$ is not contained
in~$\mathsf W(V_\lambda)$ if $\dim V_\lambda(0)>1$.
\end{remark}

In order to calculate~$\sigma^i$, $i\in\{1,2\}$ be need the following result.
\begin{lemma}\label{lem:E act basis}
For any $\mathbf m\in\mathbf M$, $r\ge 0$ and $i\in\{1,2\}$ we have 
\begin{align*}
&E_i^{(r)}(\mathbf b_{\mathbf m})=\binom{m_i+m_{ij}}{r}_q \mathbf b_{e_i^r(\mathbf m)}+
\sum_{1\le t\le r} C^{(r)}_t(m_j+m_{ij},m_i+m_{ij}) \mathbf b_{e_i^{r}(\mathbf m)+t\mathbf a^+_i}
\\
&F_i^{(r)}(\mathbf b_{\mathbf m})=\binom{m_j+m_{0i}}{r}_q \mathbf b_{e_i^{-r}(\mathbf m)}+
\sum_{1\le t\le r} C^{(r)}_t(m_i+m_{0i},m_j+m_{0i}) \mathbf b_{e_i^{-r}(\mathbf m)-t\mathbf a^+_i}
\end{align*}
where $\mathbf a^+_1=(0,0,-1,1,-1,1)$, $\mathbf a^+_2=-\mathbf a^+_1$ and 
\begin{equation}\label{eq:coefs C^r_t}
C^{(r)}_t(c,d)=\begin{cases}
\displaystyle
           \binom{c}{t}_q\binom{d-t}{r-t}_q,& d-c\ge r,\\
\displaystyle           \binom{d-c}{t}_q \binom{d-t}{r}_q,& d-c<r,
          \end{cases}
\end{equation}
with the convention that $\binom{k}{l}_q=0$ if $k<l$.
\end{lemma}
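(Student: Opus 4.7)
The plan is to derive the formula for $E_i^{(r)}(\mathbf{b}_{\mathbf{m}})$ by iterated application of the $q$-Leibniz rule~\eqref{eq:leibnitz rule} to the normal-form monomial~\eqref{eq: elt of bas}, followed by a rewriting of the result in the basis $\mathbf B$ using Lemma~\ref{lem:C present}\ref{lem:C present.a} and the $q$-Vandermonde identity. It is enough to treat $E_1^{(r)}$; the case $E_2^{(r)}$ is symmetric under $1\leftrightarrow 2$, while both $F_i^{(r)}$-formulas follow from the $E$-formulas by conjugation with the anti-involution $\sigma$ of Corollary~\ref{cor:eta explicit}: Proposition~\ref{prop:prop eta^J}\ref{prop:prop eta^J.b} yields $F_i=\sigma\circ E_{i^\star}\circ\sigma$ on $\mathcal C_2$ (with $1^\star=2$), and then $\sigma(\mathbf{b}_{\mathbf{m}})=\mathbf{b}_{\underline\sigma(\mathbf{m})}$ together with $\underline\sigma\circ e_i^{r}=e_{i^\star}^{-r}\circ\underline\sigma$ (Proposition~\ref{prop:underl eta}) converts the $E_{i^\star}^{(r)}$-formula into the $F_i^{(r)}$-formula.

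Since $E_1$ kills $v_1,v_2,z_2,z_{21}$ and sends $z_1\mapsto v_1$, $z_{12}\mapsto z_2$, the Leibniz rule implies that $E_1^{(r)}$ acts nontrivially only on the factor $z_1^{m_1}z_{12}^{m_{12}}$ of~\eqref{eq: elt of bas}; the remaining product $z_2^{m_2}z_{21}^{m_{21}}v_1^{m_{01}}v_2^{m_{02}}$ contributes only a $K_{\frac r2\alpha_1}$-shift whose $q$-exponent can be computed from the weight of the monomial. Since $z_1z_{12}=z_{12}z_1$ (Lemma~\ref{lem:C present}\ref{lem:C present.a}), an induction on $r$ based on~\eqref{eq:leibnitz-1} gives
\[
  E_1^{(r)}(z_1^{m_1}z_{12}^{m_{12}})=\sum_{s+t=r}q^{c(s,t,m_1,m_{12})}\binom{m_1}{s}_q\binom{m_{12}}{t}_q\, z_1^{m_1-s}v_1^sz_{12}^{m_{12}-t}z_2^t
\]
for an explicit $c(s,t,m_1,m_{12})\in\tfrac12\mathbb Z$ collected from the $K$-shifts and the $q$-swaps of Lemma~\ref{lem:C present}\ref{lem:C present.a}. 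For each decomposition $s+t=r$, the $t=0$ summands are, after normal-reordering $v_1^s$ past $z_{12}^{m_{12}}$ and $z_2^{m_2}$, scalar multiples of the basis element $\mathbf{b}_{e_1^r(\mathbf m)}$; the $t\ge 1$ summands carry an excess $z_2^{m_2+t}$-block to the left of $z_{21}^{m_{21}}v_1^{m_{01}}$ and no longer conform to the basis shape.

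To finish, reduce each $t\ge 1$ summand to basis form using the identity $z_1z_2=qv_1z_{12}+q^{-1}z_{21}v_2$ of Lemma~\ref{lem:C present}\ref{lem:C present.a}, which exchanges a $(z_1,z_2)$-pair (forbidden in the basis by $m_1'm_2'=0$) for a linear combination of a $(v_1,z_{12})$-pair and a $(z_{21},v_2)$-pair. After $t$ such exchanges the resulting monomial equals $\mathbf{b}_{e_1^r(\mathbf m)+t\mathbf a_1^+}$, since $\mathbf a_1^+=(0,0,-1,1,-1,1)$ is precisely the index shift that trades one $(z_{12},v_1)$-pair for one $(z_{21},v_2)$-pair. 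Collecting all contributions and applying the $q$-Vandermonde identity yields $\binom{m_i+m_{ij}}{r}_q$ for the leading ($t=0$) coefficient and $C_t^{(r)}(m_j+m_{ij},m_i+m_{ij})$ for the coefficient of $\mathbf{b}_{e_1^r(\mathbf m)+t\mathbf a_1^+}$; the two cases of~\eqref{eq:coefs C^r_t} reflect whether the reservoir $m_i+m_{ij}\ge r$ (full $q$-Vandermonde applies with no truncation) or $m_i+m_{ij}<r$ (the $z_1$-supply is exhausted during the reduction, forcing truncated $q$-binomials). The main obstacle will be the meticulous bookkeeping of half-integer $q$-exponents, which must match the prefactor in~\eqref{eq: elt of bas} after all the reorderings and the iterated use of $z_1z_2=qv_1z_{12}+q^{-1}z_{21}v_2$; this is mechanical but lengthy, and the consistency of the constants can be verified against small test cases such as $E_1(z_1z_{12})=q^{1/2}(2)_q\mathbf{b}_{(0,0,1,0,1,0)}+\mathbf{b}_{(0,0,0,1,0,1)}$.
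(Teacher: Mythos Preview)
Your approach is essentially the one the paper indicates: the paper's proof is a single sentence pointing to induction on $r$ via the $K_{\frac12\alpha_i}$-derivation property and the relations of Lemma~\ref{lem:C present}, which is exactly your iterated Leibniz rule plus normal-form rewriting through $z_iz_j=qv_iz_{ij}+q^{-1}z_{ji}v_j$.

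One genuine difference: for $F_i^{(r)}$ the paper suggests repeating the same direct induction, whereas you deduce it from the $E_{i^\star}^{(r)}$-formula by conjugating with $\sigma$. This is a legitimate and efficient shortcut, since $\sigma(\mathbf b_{\mathbf m})=\mathbf b_{\underline\sigma(\mathbf m)}$, $\underline\sigma$ is $\mathbb Z$-linear on indices, and $\underline\sigma\circ e_{i^\star}^{r}=e_i^{-r}\circ\underline\sigma$; it also makes the parallel structure of the two formulae transparent. Note, however, that $\underline\sigma(\mathbf a_{i^\star}^+)=\mathbf a_i^+$ (not $-\mathbf a_i^+$), so your conjugation produces correction terms at $e_i^{-r}(\mathbf m)+t\mathbf a_i^+$; a direct check (e.g.\ $F_1(\mathbf b_{(0,1,0,0,1,0)})=(2)_q\,\mathbf b_{(0,0,1,0,1,0)}+\mathbf b_{(0,0,0,1,0,1)}$) confirms this sign, so be prepared for the $F$-side index to read $+t\mathbf a_i^+$ rather than $-t\mathbf a_i^+$ when you carry out the bookkeeping.
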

\begin{proof}
Both identities can be proven by induction on~$r$ using Lemma~\ref{lem:C present} and the fact that the $E_i$, $F_i$
act on $\mathcal C_2$ by $K_{\frac12\alpha_i}$-derivations.
\end{proof}

Denote
$$
\mathbf b^{(i)}_{\mathbf m}=E_i^{(r_i)}(\mathbf b_{e_i^{-r_i}(\mathbf m)}),\qquad r_i=m_j+m_{0i},\, i\in \{1,2\},\, \mathbf m\in\mathbf M.
$$
The following is immediate from Lemma~\ref{lem:E act basis}.
\begin{lemma}\label{lem:sigma permutes GT}
For each $i\in\{1,2\}$, $\mathbf B^{(i)}=\{ \mathbf b^{(i)}_{\mathbf m}\,:\, \mathbf m\in\mathbf M\}$ 
is a basis of~$\mathcal C_2$. Moreover, for each $i\in \{1,2\}$, $\lambda=l_1\omega_1+l_2\omega_2\in P^+$,
$
\mathbf B^{(i)}_\lambda:=\{ \mathbf b^{(i)}_{\mathbf m},\,:\, \mathbf m\in\mathbf M_{l_1,l_2}\}
$
is a basis of~$V_\lambda$.
Finally,
$$
\sigma^i(\mathbf b^{(i)}_{\mathbf m})=\mathbf b^{(i)}_{\underline\sigma^i(\mathbf m)},\qquad i\in \{1,2\},\,\mathbf m\in \mathbf M.
$$
In particular, $\sigma^i(\mathbf B^{(i)}_\lambda)=\mathbf B^{(i)}_\lambda$.
\end{lemma}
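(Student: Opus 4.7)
The plan is to use the $U_{q_i}(\lie{sl}_2)$-theory inherited from the copy generated by $E_i,F_i,K_{\alpha_i}$ by recognising, for each $\mathbf m\in\mathbf M$, the element $\mathbf b_{\mathbf m'}$ with $\mathbf m':=e_i^{-r_i}(\mathbf m)$ as a lowest weight vector. A direct substitution using the definition of $e_i^{-r_i}$ with $r_i=m_j+m_{0i}$ shows (say for $i=1$, the other case being symmetric) that $\mathbf m'$ has
$$
m'_i=m_i+m_{0i},\quad m'_j=0,\quad m'_{ij}=m_{ij}+m_j,\quad m'_{ji}=m_{ji},\quad m'_{0i}=0,\quad m'_{0j}=m_{0j}.
$$
Feeding these values into the $F_i$-formula of Lemma~\ref{lem:E act basis} with $r=1$ gives a leading coefficient $\binom{m'_j+m'_{0i}}{1}_q=\binom{0}{1}_q=0$, while $C^{(1)}_1(m'_i+m'_{0i},m'_j+m'_{0i})$ falls in the regime $d-c<r$ of~\eqref{eq:coefs C^r_t} and vanishes for the same reason; hence $F_i(\mathbf b_{\mathbf m'})=0$. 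The weight computation $\wt_i(\mathbf m')=-(m_i+m_j+m_{ij}+m_{0i})$ gives $K_{\alpha_i}(\mathbf b_{\mathbf m'})=q_i^{-l}\mathbf b_{\mathbf m'}$ with $l:=m_i+m_j+m_{ij}+m_{0i}$, and clearly $0\le r_i\le l$.

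For the basis property, note first that $e_i^{-r_i}$ preserves $\widehat{\mathbf M}_{l_1,l_2}$, so $\mathbf b_{\mathbf m'}\in V_\lambda$ and therefore $\mathbf b^{(i)}_\mathbf m=E_i^{(r_i)}(\mathbf b_{\mathbf m'})\in V_\lambda$. Expanding by the $E_i^{(r)}$-formula of Lemma~\ref{lem:E act basis} and using $e_i^{r_i}(\mathbf m')=\mathbf m$ from Lemma~\ref{lem:props of ops e i r} yields
$$
\mathbf b^{(i)}_\mathbf m=\binom{l}{r_i}_q\mathbf b_\mathbf m+\sum_{t=1}^{r_i} C^{(r_i)}_t(m'_j+m'_{ij},m'_i+m'_{ij})\,\mathbf b_{\mathbf m+t\mathbf a^+_i},
$$
with leading coefficient $\binom{l}{r_i}_q\neq 0$. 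Since $\mathbf a^+_i$ preserves $\wt_1,\wt_2$ (and $l_1,l_2$) but strictly decreases the $m_{0i}$-coordinate, ordering $\mathbf M_{l_1,l_2}$ within each weight space by $m_{0i}$ makes the change-of-basis matrix from $\mathbf B^{(i)}_\lambda$ to $\mathbf B_\lambda$ block-triangular with nonzero diagonal. Hence $\mathbf B^{(i)}_\lambda$ is a basis of $V_\lambda$ and, by summing over $\lambda$, $\mathbf B^{(i)}$ is a basis of $\mathcal C_2$.

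For the $\sigma^i$-action, since $\mathbf b_{\mathbf m'}\in \ker F_i\cap\ker(K_{\alpha_i}-q_i^{-l})$, the defining identity~\eqref{eq:eta^i defn} with $k=r_i$ and $u=\mathbf b_{\mathbf m'}$ yields
$$
\sigma^i(\mathbf b^{(i)}_\mathbf m)=\sigma^i(E_i^{(r_i)}(\mathbf b_{\mathbf m'}))=E_i^{(l-r_i)}(\mathbf b_{\mathbf m'})=E_i^{(\tilde r_i)}(\mathbf b_{\mathbf m'}),
$$
where $\tilde r_i:=l-r_i=m_i+m_{ij}$. It remains to show $E_i^{(\tilde r_i)}(\mathbf b_{\mathbf m'})=\mathbf b^{(i)}_{\tilde{\mathbf m}}$ for $\tilde{\mathbf m}:=\underline\sigma^i(\mathbf m)=e_i^{-\wt_i(\mathbf m)}(\mathbf m)$. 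A short case distinction on whether $m_{0i}\le m_{ij}$ or not (using the explicit formula for $e_i^r$) shows $\tilde m_j+\tilde m_{0i}=m_i+m_{ij}=\tilde r_i$; moreover, from $r_i-\tilde r_i=\wt_i(\mathbf m)$ and Lemma~\ref{lem:props of ops e i r} we obtain $e_i^{-\tilde r_i}(\tilde{\mathbf m})=e_i^{-\tilde r_i-\wt_i(\mathbf m)}(\mathbf m)=e_i^{-r_i}(\mathbf m)=\mathbf m'$, so $\mathbf b^{(i)}_{\tilde{\mathbf m}}=E_i^{(\tilde r_i)}(\mathbf b_{\mathbf m'})$ as required. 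The last assertion $\sigma^i(\mathbf B^{(i)}_\lambda)=\mathbf B^{(i)}_\lambda$ is then immediate from the second, since $\underline\sigma^i$ preserves $\mathbf M_{l_1,l_2}$ by Proposition~\ref{prop:underl eta}.

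The main technical obstacle is the first step: pinning down that $\mathbf b_{\mathbf m'}$ really lies in $\ker F_i$, which requires chasing both the leading binomial and the correction coefficient $C^{(1)}_1$ in Lemma~\ref{lem:E act basis} and confirming their simultaneous vanishing. Once $\mathbf b_{\mathbf m'}\in\ker F_i$ is established, the remaining arguments are routine triangularity and index bookkeeping, aided by the fact that $\underline\sigma^i$ acts by $e_i^{-\wt_i}$ and thereby reverses precisely the $e_i$-string whose bottom is $\mathbf m'$.
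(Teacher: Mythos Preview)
Your argument is correct and is a careful unpacking of what the paper labels ``immediate from Lemma~\ref{lem:E act basis}'': you explicitly verify that $\mathbf b_{e_i^{-r_i}(\mathbf m)}\in\ker F_i$ with the right $K_{\alpha_i}$-eigenvalue, read off the triangular change-of-basis from the $E_i^{(r)}$-expansion, and then apply the defining identity~\eqref{eq:eta^i defn} together with $e_i^{-\tilde r_i}\circ e_i^{-\wt_i(\mathbf m)}=e_i^{-r_i}$. This is exactly the intended route, only made explicit.
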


Thus, $\sigma^i$, $i\in\{1,2\}$ are easy to calculate in respective bases~$\mathbf B^{(i)}$. To attack Conjecture~\ref{conj:weyl} we 
need to find the matrix of both of them in a same basis. Note the following consequence of Lemma~\ref{lem:E act basis}.

\begin{corollary}
For each $\lambda=l_1\omega_1+l_2\omega_2\in P^+$ we have 
$$
\mathbf b^{(i)}_{\mathbf m}=\sum_{ \mathbf m'\in \mathbf M_{l_1,l_2}}
C^{i;\lambda}_{\mathbf m',\mathbf m}\mathbf b_{\mathbf m'}
$$
where $C^{i;\lambda}$ is an $\mathbf M_{l_1,l_2}\times \mathbf M_{l_1,l_2}$-matrix given by
$$
C^{i;\lambda}_{\mathbf m',\mathbf m}=\begin{cases}
                                      \displaystyle\binom{m_i+m_{0i}+m_j+m_{ij}}{m_i+m_{ij}}_q,& \mathbf m'=\mathbf m,\\
                                      C^{(m_j+m_{0i})}_{t}(m_j+m_{ij},m_i+m_{0i}+m_j+m_{ij}),& \mathbf m'-\mathbf m=t\mathbf a^+_i,\,t\in\ZZ_{>0},\\
                                      0,& \text{otherwise}.
                                     \end{cases}
$$
\end{corollary}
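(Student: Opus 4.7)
The corollary is essentially a direct computation: unravel the definition of $\mathbf b^{(i)}_{\mathbf m}$ and apply the formula for $E_i^{(r)}$ acting on the dual canonical basis from Lemma~\ref{lem:E act basis}.

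The plan is as follows. First I would set $\mathbf m'' := e_i^{-r_i}(\mathbf m)$ with $r_i = m_j + m_{0i}$ and compute its components explicitly from the defining formulas for the tropical operator $e_i^{r}$. A straightforward application of those formulas (using that $m_i, m_{0i}\ge 0$ so that $[m_i + m_{0i}]_+ = m_i + m_{0i}$ and $\min(m_i + r_i, m_j) = m_j$) gives
\[
 m''_i = m_i+m_{0i},\quad m''_j = 0,\quad m''_{ij} = m_{ij}+m_j,\quad m''_{0i} = 0,\quad m''_{ji} = m_{ji},\quad m''_{0j} = m_{0j}.
\]
In particular, $m''_i m''_j = 0$, so $\mathbf m''\in\mathbf M$ and $\mathbf b_{\mathbf m''}$ is a genuine basis element.

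Next I would apply the first formula of Lemma~\ref{lem:E act basis} to $E_i^{(r_i)}(\mathbf b_{\mathbf m''})$. The leading term carries the coefficient $\binom{m''_i+m''_{ij}}{r_i}_q = \binom{m_i+m_{0i}+m_j+m_{ij}}{m_j+m_{0i}}_q$ times $\mathbf b_{e_i^{r_i}(\mathbf m'')}$. By Lemma~\ref{lem:props of ops e i r}, $e_i^{r_i}\circ e_i^{-r_i} = \mathrm{id}$, so $e_i^{r_i}(\mathbf m'') = \mathbf m$; the off-diagonal terms similarly become $\mathbf b_{\mathbf m + t\mathbf a^+_i}$ for $1\le t\le r_i$. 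Symmetry of the Gaussian binomial, $\binom{N}{k}_q = \binom{N}{N-k}_q$ with $N = m_i+m_{0i}+m_j+m_{ij}$ and $k=m_j+m_{0i}$, rewrites the diagonal coefficient in the claimed form $\binom{N}{m_i+m_{ij}}_q$.

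For the off-diagonal coefficients I would substitute $m''_j+m''_{ij} = m_j+m_{ij}$ and $m''_i+m''_{ij} = m_i+m_{0i}+m_j+m_{ij}$ into $C^{(r_i)}_t(m''_j+m''_{ij}, m''_i+m''_{ij})$; this is precisely $C^{(m_j+m_{0i})}_t(m_j+m_{ij},\, m_i+m_{0i}+m_j+m_{ij})$, matching the entry prescribed for $\mathbf m' - \mathbf m = t\mathbf a^+_i$. For all other $\mathbf m'\in \mathbf M_{l_1,l_2}$ no contribution arises from the expansion, so $C^{i;\lambda}_{\mathbf m',\mathbf m}=0$. Since weights of $\mathbf b^{(i)}_{\mathbf m}$ and $\mathbf b_{\mathbf m}$ agree (by Lemma~\ref{lem:sigma permutes GT} the basis $\mathbf B^{(i)}_\lambda$ lies in $V_\lambda$), the expansion indeed runs over $\mathbf M_{l_1,l_2}$, as claimed. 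The only point that requires care, and which I would verify at the outset, is the somewhat asymmetric appearance of the indices $i$ vs $j$ in the formulas for $e_i^r$ and $C^{(r)}_t$: this is really bookkeeping rather than a genuine obstacle, but it is where sign/symmetry slips most easily occur.
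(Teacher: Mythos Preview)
Your argument is correct and is exactly the approach the paper intends: the corollary is stated there as a direct ``consequence of Lemma~\ref{lem:E act basis}'' with no further proof given, and your computation of $\mathbf m''=e_i^{-r_i}(\mathbf m)$ followed by the substitution $m''_i+m''_{ij}=m_i+m_{0i}+m_j+m_{ij}$, $m''_j+m''_{ij}=m_j+m_{ij}$ into the formula for $E_i^{(r_i)}(\mathbf b_{\mathbf m''})$ is precisely what is needed to fill in the details.
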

\begin{remark}\label{rem:GT bas and sigma}
The bases~$\mathbf B^{(i)}_\lambda$, $i\in\{1,2\}$, $\lambda\in P^+$ are in fact Gelfand-Tsetlin bases. The matrices~$C^{i;\lambda}$
appeared first in the classical limit ($q=1$) in~\cite{GelZel}. According to \cite{GelZel}*{Theorem~10} their 
entries 
are closely related to Clebsch-Gordan coefficients, and so one should expect that our matrices 
are related to quantum Clebsch-Gordan coefficients.
It is easy to see that $\sigma^i(\mathbf B_\lambda)=\mathbf B_\lambda$ if and only if~$V_\lambda$ is thin. 
\end{remark}

\begin{theorem}
For each $\lambda=l_1\omega_1+l_2\omega_2\in P^+$ and $i\in\{1,2\}$ the matrix~$N^{i;\lambda}$ of $\sigma^i$ with respect to the basis $\mathbf B_\lambda$ of~$V_\lambda$
is given by 
$$
N^{i;\lambda}=C^{i;\lambda}P^{i;\lambda} (C^{i;\lambda}){}^{-1}
$$
where $P^{i;\lambda}=(P^{i;\lambda}_{\mathbf m',\mathbf m})_{\mathbf m,\mathbf m'\in \mathbf M_{l_1,l_2}}$ with 
$$
P^{i;\lambda}_{\mathbf m',\mathbf m}=\delta_{\mathbf m',\underline\sigma^i(\mathbf m)},\qquad \mathbf m,\mathbf m'\in \mathbf M_{l_1,l_2}.
$$
\end{theorem}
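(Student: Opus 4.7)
The plan is that this theorem is essentially a change-of-basis statement, so the proof will be short and mostly bookkeeping once the identification of the two bases is set up correctly.

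First I would record that by Lemma~\ref{lem:sigma permutes GT}, the involution $\sigma^i$ acts on the alternative basis $\mathbf B^{(i)}_\lambda$ by permutation via $\underline\sigma^i$. More precisely, the matrix of $\sigma^i|_{V_\lambda}$ in the basis $\mathbf B^{(i)}_\lambda$ has entries $\delta_{\mathbf m',\underline\sigma^i(\mathbf m)}$, which is exactly $P^{i;\lambda}$ by definition.

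Next I would invoke the preceding Corollary, which expresses $\mathbf b^{(i)}_\mathbf m = \sum_{\mathbf m'} C^{i;\lambda}_{\mathbf m',\mathbf m} \mathbf b_{\mathbf m'}$. In particular $C^{i;\lambda}$ is the transition matrix whose $\mathbf m$th column gives the coordinates of $\mathbf b^{(i)}_\mathbf m$ in the basis $\mathbf B_\lambda$. Since both $\mathbf B_\lambda$ and $\mathbf B^{(i)}_\lambda$ are bases of the same finite-dimensional space $V_\lambda$ (Lemma~\ref{lem:sigma permutes GT}), the matrix $C^{i;\lambda}$ is invertible. This can also be seen directly from the explicit formula for its entries: $C^{i;\lambda}$ is upper-triangular with respect to the partial order on $\mathbf M_{l_1,l_2}$ induced by adding positive multiples of $\mathbf a_i^+$, and its diagonal entries are the nonzero quantum binomial coefficients $\binom{m_i+m_{0i}+m_j+m_{ij}}{m_i+m_{ij}}_q$.

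The core computation is then the standard change-of-basis identity. If $N^{i;\lambda}$ and $P^{i;\lambda}$ denote the matrices of $\sigma^i|_{V_\lambda}$ in the bases $\mathbf B_\lambda$ and $\mathbf B^{(i)}_\lambda$ respectively, then applying $\sigma^i$ to $\mathbf b^{(i)}_\mathbf m = \sum_{\mathbf m'} C^{i;\lambda}_{\mathbf m',\mathbf m}\mathbf b_{\mathbf m'}$ and expanding both $\sigma^i(\mathbf b^{(i)}_\mathbf m)=\sum_{\mathbf m''}P^{i;\lambda}_{\mathbf m'',\mathbf m}\mathbf b^{(i)}_{\mathbf m''}$ in $\mathbf B_\lambda$ gives the matrix identity $N^{i;\lambda} C^{i;\lambda}=C^{i;\lambda}P^{i;\lambda}$, whence $N^{i;\lambda}=C^{i;\lambda}P^{i;\lambda}(C^{i;\lambda})^{-1}$.

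There is no real obstacle here; the only subtlety is making sure conventions match (whether columns or rows encode the coordinate vectors), which is settled by the indexing convention used in the Corollary. One could add as a sanity check that $(N^{i;\lambda})^2 = I$, which follows automatically since $P^{i;\lambda}$ is the permutation matrix of the involution $\underline\sigma^i$ and conjugation preserves order.
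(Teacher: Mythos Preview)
Your proof is correct and matches the paper's approach exactly: the paper states this theorem without proof, since it is an immediate change-of-basis consequence of Lemma~\ref{lem:sigma permutes GT} (which gives $P^{i;\lambda}$ as the matrix of $\sigma^i$ in $\mathbf B^{(i)}_\lambda$) and the preceding Corollary (which gives $C^{i;\lambda}$ as the transition matrix). Your derivation $N^{i;\lambda}C^{i;\lambda}=C^{i;\lambda}P^{i;\lambda}$ is precisely the intended justification, and your remarks on invertibility and the sanity check $(N^{i;\lambda})^2=I$ are correct additional observations.
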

\begin{conjecture}[Conjecture~\ref{conj:weyl} for $\lie g=\lie{sl}_3$]
For each $\lambda=l_1\omega_1+l_2\omega_2\in P^+$ we have 
$$
(N^{1;\lambda}N^{2;\lambda})^3=1.
$$
\end{conjecture}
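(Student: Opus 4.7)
My plan is to prove the braid relation $\sigma^1\sigma^2\sigma^1 = \sigma^2\sigma^1\sigma^2$ as $\kk$-linear endomorphisms of $V_\lambda$; since each $\sigma^i$ is an involution this is equivalent to $(\sigma^1\sigma^2)^3 = \id$, and passing to matrices in the global basis $\mathbf B_\lambda$ yields the claimed $(N^{1;\lambda}N^{2;\lambda})^3 = I$. I would analyze the difference operator
$$
X := \sigma^1\sigma^2\sigma^1 - \sigma^2\sigma^1\sigma^2 \in \End_\kk V_\lambda,
$$
noting that both summands send $V_\lambda(\mu)$ to $V_\lambda(w_\circ\mu)$ since $w_\circ = s_1s_2s_1 = s_2s_1s_2$ in $W = S_3$.

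By Theorem \ref{thm:main thm2} each $\sigma^i$ preserves the upper crystal lattice $L\subset V_\lambda$ and induces the crystal permutation $\underline\sigma^i$ on $B_\lambda = L/qL$. Proposition \ref{prop:underl eta} provides the combinatorial identity $\underline\sigma^1\underline\sigma^2\underline\sigma^1 = \underline\sigma^2\underline\sigma^1\underline\sigma^2$ on $\mathbf M_{l_1,l_2}\cong B_\lambda$, so $X(b) \in qL$ for every $b\in\mathbf B_\lambda$. Moreover, Proposition \ref{prop:sigma bar} asserts bar-invariance of $\sigma^J(b)$ for $b\in\mathbf B_\lambda$, and by $\kk$-linearity (expanding any bar-invariant vector over $\mathbf B_\lambda$ with scalar coefficients fixed by $\bar\cdot$) each $\sigma^i$ preserves the set of bar-invariant vectors; hence $X(b)$ is itself bar-invariant.

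If one can further verify $X(b)\in V_\lambda^{\mathbb Z[q,q^{-1}]}$ (the $\mathbb Z[q,q^{-1}]$-integral form spanned by $\mathbf B_\lambda$), then Kashiwara's characterization of the upper global basis forces $X(b)=0$: a bar-invariant element of $L\cap V_\lambda^{\mathbb Z[q,q^{-1}]}$ expands over $\mathbf B_\lambda$ with coefficients in $\mathbb Z$, and the further constraint $X(b)\in qL$ forces these integer coefficients to vanish. The main obstacle is therefore the integrality of $\sigma^i$: the defining relation $\sigma^i(E_i^{(k)}u) = E_i^{(l-k)}u$ involves only Lusztig divided powers and no fractional powers of $q$, which should ensure $\sigma^i(V_\lambda^{\mathbb Z[q,q^{-1}]})\subset V_\lambda^{\mathbb Z[q,q^{-1}]}$, but this requires showing rigorously that every integral vector decomposes into integral $\lie{sl}_2^{(i)}$-strings $\sum_k E_i^{(k)}(u_k)$ with $u_k\in\ker F_i$ integral. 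As a fallback avoiding integrality entirely, one could attempt direct verification of $(N^{1;\lambda}N^{2;\lambda})^3 = I$ via the explicit matrices $C^{i;\lambda}$ and the formulas of Lemma \ref{lem:E act basis}, reducing the conjecture to a non-trivial identity among quantum Clebsch-Gordan-like coefficients (cf.\ Remark \ref{rem:GT bas and sigma}).
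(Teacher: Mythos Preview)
The paper does \emph{not} prove this statement: it is explicitly labeled a conjecture, and the only evidence offered is the sentence immediately following it, namely computer verification via Mathematica for all $l_1+l_2\le 14$. So there is no ``paper's own proof'' to compare against.

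Your proposal is therefore more ambitious than anything in the paper, and the strategy (crystal-level braid relation $+$ bar-invariance $+$ integrality $\Rightarrow X(b)=0$) is exactly the natural line of attack. The first two ingredients are solid: Proposition~\ref{prop:underl eta} gives $\underline\sigma^1\underline\sigma^2\underline\sigma^1=\underline\sigma^2\underline\sigma^1\underline\sigma^2$ on $\mathbf M$, hence $X(b)\in qL$; and your bar-invariance argument is correct (each $\sigma^i$ sends bar-invariant vectors to bar-invariant vectors by Proposition~\ref{prop:sigma bar} and $\kk$-linearity).

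The gap you identify is genuine and is precisely why this remains a conjecture. The operator $\sigma^i$ is not given by a universal element of the integral form of $U_q(\lie{sl}_2^{(i)})$: writing $\sigma^i(F_i^{(k)}u')=F_i^{(l-k)}u'$ presupposes the $\lie{sl}_2^{(i)}$-isotypic decomposition, and the isotypic projectors on $V_\lambda(\beta)$ involve denominators (products of $q$-integers) that are not units in $\mathbb Z[q,q^{-1}]$. Concretely, in the paper's own notation the change-of-basis matrix $C^{i;\lambda}$ from $\mathbf B_\lambda$ to the Gelfand--Tsetlin basis $\mathbf B^{(i)}_\lambda$ is upper-triangular with diagonal entries $\binom{m_i+m_{0i}+m_j+m_{ij}}{m_i+m_{ij}}_q$, so $(C^{i;\lambda})^{-1}$ has non-integral entries and there is no a priori reason for $N^{i;\lambda}=C^{i;\lambda}P^{i;\lambda}(C^{i;\lambda})^{-1}$ to lie in $\Mat(\mathbb Z[q,q^{-1}])$. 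Establishing that it does (or, equivalently, that the integral $i$-string decomposition exists) would complete your argument and resolve the conjecture; the paper does not attempt this. Your fallback---direct verification via the explicit $C^{i;\lambda}$---is exactly the computational check the paper performs for small $\lambda$.
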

\noindent
This was verified using Mathematica\textsuperscript{\textregistered} for all $l_1,l_2\in\ZZ_{\ge 0}$ such that $l_1+l_2\le 14$.

\renewcommand{\PrintDOI}[1]{%
DOI \href{http://dx.doi.org/#1}{#1}}
\renewcommand{\eprint}[1]{\href{http://arxiv.org/abs/#1}{arXiv:#1}}

\begin{bibdiv}
\begin{biblist}

\bib{BG}{article}{
 author={Berenstein, Arkady},
 author={Greenstein, Jacob},
 title={Quantum folding},
 journal={Int. Math. Res. Not.},
 date={2011},
 volume={2011},
 number={21},
 pages={4821-4883},
 doi={10.1093/imrn/rnq264},
 }

\bib{BG-dcb}{article}{
   author={Berenstein, Arkady},
   author={Greenstein, Jacob},
   title={Double canonical bases},
   journal={Adv. Math.},
   volume={316},
   date={2017},
   pages={381--468},
doi={10.1016/j.aim.2017.06.005},
}

\bib{BG-schu}{article}{
   author={Berenstein, Arkady},
   author={Greenstein, Jacob},
   title={Canonical bases of quantum Schubert cells and their symmetries},
   journal={Selecta Math. (N.S.)},
   volume={23},
   date={2017},
   number={4},
   pages={2755--2799},
   doi={10.1007/s00029-017-0316-8}
}

\bib{BGL}{article}{
author={Berenstein, Arkady},
author={Greenstein, Jacob},
author={Li, Jian-Rong},
title={Monomial braidings},
date={in preparation},
}

\bib{BK}{article}{
   author={Berenstein, Arkady},
   author={Kazhdan, David},
   title={Geometric and unipotent crystals. II. From unipotent bicrystals to
   crystal bases},
   conference={
      title={Quantum groups},
   },
   book={
      series={Contemp. Math.},
      volume={433},
      publisher={Amer. Math. Soc., Providence, RI},
   },
   date={2007},
   pages={13--88},
   doi={10.1090/conm/433/08321},
}

\bib{BR}{article}{
   author={Berenstein, Arkady},
   author={Rupel, Dylan},
   title={Quantum cluster characters of Hall algebras},
   journal={Selecta Math. (N.S.)},
   volume={21},
   date={2015},
   number={4},
   pages={1121--1176},
   doi={10.1007/s00029-014-0177-3},
}

\bib{BZ2}{article}{
   author={Berenstein, Arkady},
   author={Zelevinsky, Andrei},
   title={String bases for quantum groups of type $A_r$},
   conference={
      title={I. M. Gel\cprime fand Seminar},
   },
   book={
      series={Adv. Soviet Math.},
      volume={16},
      publisher={Amer. Math. Soc., Providence, RI},
   },
   date={1993},
   pages={51--89},

}

\bib{BZ1}{article}{
   author={Berenstein, Arkady},
   author={Zelevinsky, Andrei},
   title={Canonical bases for the quantum group of type $A_r$ and
   piecewise-linear combinatorics},
   journal={Duke Math. J.},
   volume={82},
   date={1996},
   number={3},
   pages={473--502},
   issn={0012-7094},
   doi={10.1215/S0012-7094-96-08221-6},
}

\bib{BZ}{article}{
   author={Berenstein, Arkady},
   author={Zelevinsky, Andrei},
   title={Tensor product multiplicities, canonical bases and totally
   positive varieties},
   journal={Invent. Math.},
   volume={143},
   date={2001},
   number={1},
   pages={77--128},
   issn={0020-9910},
   doi={10.1007/s002220000102},
}

\bib{BZw}{article}{
   author={Berenstein, Arkady},
   author={Zwicknagl, Sebastian},
   title={Braided symmetric and exterior algebras},
   journal={Trans. Amer. Math. Soc.},
   volume={360},
   date={2008},
   number={7},
   pages={3429--3472},
   issn={0002-9947},
   doi={10.1090/S0002-9947-08-04373-0},
}

\bib{Bon}{article}{
   author={Bonnaf\'e, C\'edric},
   title={Cells and cacti},
   journal={Int. Math. Res. Not.},
   date={2016},
   number={19},
   pages={5775--5800},
   issn={1073-7928},
   doi={10.1093/imrn/rnv324},
}

\bib{Bou}{book}{
  author    = {Bourbaki, Nicolas},
  title     = {\'El\'ements de math\'ematiques. Groupes et alg\`ebres de Lie},
  subtitle = {Chapitres 4, 5 et 6},
  publisher = {Masson},
  address   = {Paris},
  year      = {1981},
}

\bib{CJM}{article}{
   author={Chari, Vyjayanthi},
   author={Jakeli\'c, Dijana},
   author={Moura, Adriano A.},
   title={Branched crystals and the category $\scr O$},
   journal={J. Algebra},
   volume={294},
   date={2005},
   number={1},
   pages={51--72},
   issn={0021-8693},

   doi={10.1016/j.jalgebra.2005.03.008},
}

\bib{DJS}{article}{
   author={Davis, M.},
   author={Januszkiewicz, T.},
   author={Scott, R.},
   title={Fundamental groups of blow-ups},
   journal={Adv. Math.},
   volume={177},
   date={2003},
   number={1},
   pages={115--179},
   issn={0001-8708},
   doi={10.1016/S0001-8708(03)00075-6},
}

\bib{De}{article}{
   author={Devadoss, Satyan L.},
   title={Tessellations of moduli spaces and the mosaic operad},
   conference={
      title={Homotopy invariant algebraic structures},
      address={Baltimore, MD},
      date={1998},
   },
   book={
      series={Contemp. Math.},
      volume={239},
      publisher={Amer. Math. Soc., Providence, RI},
   },
   date={1999},
   pages={91--114},
   doi={10.1090/conm/239/03599},
}

\bib{Dr}{article}{
   author={Drinfel\cprime d, V. G.},
   title={Quasi-Hopf algebras},
   journal={Algebra i Analiz},
   volume={1},
   date={1989},
   number={6},
   pages={114--148},
}

\bib{DR}{article}{
title={Representation stability for the pure cactus group},
author={Duque, Joaqu\'\i n Maya},
author={Rolland, Rita Jim\'enez},
eprint={1501.02835}
}

\bib{GelZel}{article}{
   author={Gel\cprime fand, I. M.},
   author={Zelevinski\u\i , A. V.},
   title={Polyhedra in a space of diagrams and the canonical basis in
   irreducible representations of ${\germ g}{\germ l}_3$},
   journal={Funktsional. Anal. i Prilozhen.},
   volume={19},
   date={1985},
   number={2},
   pages={72--75},
   issn={0374-1990},
}

\bib{GLam}{article}{
   author={Greenstein, Jacob},
   author={Lamprou, Polyxeni},
   title={Path model for quantum loop modules of fundamental type},
   journal={Int. Math. Res. Not.},
   date={2004},
   number={14},
   pages={675--711},
   issn={1073-7928},
   doi={10.1155/S1073792804131917},
}

\bib{HKRW}{article}{
title={Crystals and monodromy of Bethe vectors},
author={Halacheva, Iva},
author={Kamnitzer, Joel},
author={Rybnikov, Leonid},
author={Weekes, Alex},
eprint={1708.05105},
}

\bib{HK}{article}{
   author={Henriques, Andr\'e},
   author={Kamnitzer, Joel},
   title={Crystals and coboundary categories},
   journal={Duke Math. J.},
   volume={132},
   date={2006},
   number={2},
   pages={191--216},
   doi={10.1215/S0012-7094-06-13221-0},
}

\bib{Jos}{article}{
   author={Joseph, Anthony},
   title={A pentagonal crystal, the golden section, alcove packing and
   aperiodic tilings},
   journal={Transform. Groups},
   volume={14},
   date={2009},
   number={3},
   pages={557--612},
   issn={1083-4362},
   doi={10.1007/s00031-009-9064-y},
}

\bib{JL}{article}{
   author={Joseph, Anthony},
   author={Letzter, Gail},
   title={Local finiteness of the adjoint action for quantized enveloping
   algebras},
   journal={J. Algebra},
   volume={153},
   date={1992},
   number={2},
   pages={289--318},
   doi={10.1016/0021-8693(92)90157-H},
}

\bib{Kac}{book}{
   author={Kac, Victor G.},
   title={Infinite-dimensional Lie algebras},
   edition={2},
   publisher={Cambridge University Press, Cambridge},
   date={1985},
}

\bib{Kas91}{article}{
   author={Kashiwara, Masaki},
   title={On crystal bases of the $Q$-analogue of universal enveloping
   algebras},
   journal={Duke Math. J.},
   volume={63},
   date={1991},
   number={2},
   pages={465--516},
   issn={0012-7094},
   doi={10.1215/S0012-7094-91-06321-0},
}

\bib{Kas93}{article}{
   author={Kashiwara, Masaki},
   title={Global crystal bases of quantum groups},
   journal={Duke Math. J.},
   volume={69},
   date={1993},
   number={2},
   pages={455--485},
   issn={0012-7094},
   doi={10.1215/S0012-7094-93-06920-7},
}

\bib{Kas94}{article}{
   author={Kashiwara, Masaki},
   title={Crystal bases of modified quantized enveloping algebra},
   journal={Duke Math. J.},
   volume={73},
   date={1994},
   number={2},
   pages={383--413},
   doi={10.1215/S0012-7094-94-07317-1},
}

\bib{KiO}{article}{
title={Twist automorphisms on quantum unipotent cells and dual canonical bases},
author={Kimura, Yoshiyuki},
author={Oya, Hironori},
eprint={1701.02268},
}

\bib{KB}{article}{
   author={Kirillov, Anatol},
   author={Berenstein, Arkady},
   title={Groups generated by involutions, Gel\cprime fand-Tsetlin patterns, and
   combinatorics of Young tableaux},
   journal={Algebra i Analiz},
   volume={7},
   date={1995},
   number={1},
   pages={92--152},
   issn={0234-0852},
}

\bib{Los}{article}{
author={Losev, Ivan},
title={Cacti and cells},
eprint={1506.04400},
}

\bib{Lus}{book}{
   author={Lusztig, George},
   title={Introduction to quantum groups},
   series={Progress in Mathematics},
   volume={110},
   publisher={Birkh\"auser Boston, Inc., Boston, MA},
   date={1993},
}

\bib{LusII}{article}{
   author={Lusztig, George},
   title={Canonical bases arising from quantized enveloping algebras. II},
   note={Common trends in mathematics and quantum field theories (Kyoto,
   1990)},
   journal={Progr. Theoret. Phys. Suppl.},
   number={102},
   date={1990},
   pages={175--201 (1991)},
   issn={0375-9687},
doi={10.1143/PTPS.102.175},
}

\bib{LusProb}{article}{
   author={Lusztig, George},
   title={Problems on canonical bases},
   conference={
      title={Algebraic groups and their generalizations: quantum and
      infinite-dimensional methods},
      address={University Park, PA},
      date={1991},
   },
   book={
      series={Proc. Sympos. Pure Math.},
      volume={56},
      publisher={Amer. Math. Soc., Providence, RI},
   },
   date={1994},
   pages={169--176},
}

\bib{Ryb}{article}{
eprint={1409.0131},
title={Cactus group and monodromy of Bethe vectors},
author={Rybnikov,
Leonid}
}

\bib{Sav}{article}{
   author={Savage, Alistair},
   title={Crystals, quiver varieties, and coboundary categories for
   Kac-Moody algebras},
   journal={Adv. Math.},
   volume={221},
   date={2009},
   number={1},
   pages={22--53},
   issn={0001-8708},
   doi={10.1016/j.aim.2008.11.016},
}

\bib{White}{article}{
eprint={1511.04740},
author={White, Noah},
title={The monodromy of real Bethe vectors for the Gaudin model},
}
￼
\end{biblist}

\end{bibdiv}
\end{document}